 \DeclareFontFamily{U}{mathc}{}
\DeclareFontShape{U}{mathc}{m}{it}%
{<->s*[1.03] mathc10}{}
\DeclareMathAlphabet{\mathscr}{U}{mathc}{m}{it}
\newcommand{\e}{\varepsilon}
\newcommand{\set}[1]{\left\{#1\right\}}
\newcommand{\norm}[1]{{\left\Vert#1\right\Vert}}
\newcommand{\abs}[1]{\left\vert#1\right\vert}
\newcommand{\rest}[1]{ \arrowvert_{#1}}
\newcommand{\unsur}[1]{\frac{1}{#1}}
\newcommand{\lrpar}[1]{\left(#1\right)}
\newcommand{\bra}[1]{\left\langle #1\right\rangle}
\newcommand{\inv}{^{-1}}
\renewcommand{\d}{\mathrm{d}}
\DeclareMathOperator{\supp}{Supp}
\DeclareMathOperator{\leb}{Leb}
\DeclareMathOperator{\length}{length}
\DeclareMathOperator{\id}{id}
\DeclareMathOperator{\jac}{Jac}
\DeclareMathOperator{\dist}{dist}
\DeclareMathOperator{\Lim}{Lim}
\DeclareMathOperator{\Ker}{Ker}
\DeclareMathOperator{\Pent}{Pent} 
\DeclareMathOperator{\Per}{Per} 
\DeclareMathOperator{\Exc}{Exc} 
\DeclareMathOperator{\Ax}{Geo}
\newcommand{\C}{\mathbf{C}}
\newcommand{\R}{\mathbf{R}}
\newcommand{\Q}{\mathbf{Q}}
\newcommand{\Z}{\mathbf{Z}}
\newcommand{\N}{\mathbf{N}}
\newcommand{\bfe}{{\mathbf{e}}}
\newcommand{\Hyp}{\mathbb{H}}
\renewcommand{\P}{\mathbb{P}}
\newcommand{\NS}{{\mathrm{NS}}}
\newcommand{\Pic}{{\mathrm{Pic}}}
\newcommand{\vol}{{\sf{vol}}}
\def\Jac{\mathrm{Jac}}
 \def\Diff{{\mathsf{Diff}}}
\newcommand{\Aut}{\mathsf{Aut}}
\newcommand{\Isom}{\mathsf{Isom}}
 \newcommand{\Bir}{{\mathsf{Bir}}}
 \newcommand{\PGL}{{\sf{PGL}}}
\renewcommand{\O}{{\sf{O}}}
\newcommand{\SO}{{\sf{SO}}}
\newcommand{\GL}{{\sf{GL}}}
\newcommand{\SL}{{\sf{SL}}}
\newcommand{\SU}{{\sf{SU}}}
\newcommand{\m}{\mathscr{m}}
\theoremstyle{plain}
\newtheorem{thm}{Theorem}[section]
\newtheorem{cor}[thm]{Corollary}
\newtheorem{pro}[thm]{Proposition}
\newtheorem{lem}[thm]{Lemma}
\newtheorem{que}[thm]{Question} 
\newtheorem{prob}[thm]{Problem}
\theoremstyle{definition}
\newtheorem{defi}[thm]{Definition}
\newtheorem{eg}[thm]{Example}
\newtheorem{rem}[thm]{Remark}
\numberwithin{equation}{section}       
\numberwithin{equation}{section}       
\begin{document}

\setlength{\parskip}{.2em}
\setlength{\baselineskip}{1.26em}   

\begin{abstract}
We first present an overview   of our previous work on the dynamics of subgroups of 
automorphism groups of compact complex surfaces, 
together with a selection of open problems and new classification results. 
Then, we study two families of examples in depth: the first one comes from 
folding plane pentagons,  and the second one is 
a family of groups introduced by Jérémy Blanc, which exhibits interesting new dynamical features. 
\end{abstract}

\title[Automorphism groups of projective surfaces]{Dynamics of automorphism groups of projective surfaces: classification, examples and outlook}
\date{\today}

\author{Serge Cantat}
\address{Serge Cantat, IRMAR, Campus de Beaulieu,
b\^atiments 22-23
263 avenue du G\'en\'eral Leclerc, CS 74205
35042  RENNES C\'edex, France}
\email{serge.cantat@univ-rennes1.fr}
 \author{Romain Dujardin}
\address{Romain Dujardin,  Sorbonne Universit\'e, CNRS, Laboratoire de Probabilit\'es, Statistique  et Mod\'elisation  (LPSM), F-75005 Paris, France}
\email{romain.dujardin@sorbonne-universite.fr}

\thanks{The research of the first author is partially funded 
by the European Research Council (ERC GOAT 101053021); he also benefited from the stimulating atmosphere of the Center Henri Lebesgue, a French government “Investissements d’Avenir”, bearing the following reference ANR-11-LABX-0020-01. }
\maketitle

\setcounter{tocdepth}{1}
\tableofcontents

 \newpage
 
\section*{Introduction}

This article is the  sixth  of a series dedicated to  the dynamics of groups of automorphisms of 
compact complex surfaces~\cite{cantat_groupes, stiffness, finite_orbits, invariant, hyperbolic}. 
Our purpose is to review our previous work and to enrich it with new examples, applications, and 
open problems. Let us briefly summarize its contents and  new features. 

In the first two sections, we offer a detailed presentation  of our results, 
illustrated with a large number of open questions. 

A standing assumption in our previous papers is 
that the surfaces into consideration are projective. 
It is natural to question this assumption and to relate the classification of surfaces to the dynamics of their groups of automorphisms.
This is dealt with in the first part of the paper: in \S~\ref{par:projectivity_of_the_surface}, 
we show that only projective surfaces can   carry ``non-elementary'' automorphism groups and we describe a few examples on Hopf surfaces in \S~\ref{subs:remarks_non_kahler}.  

 Then, we explain how our  theory applies  to the following geometric examples: \

(1) The first one comes from classical Euclidean geometry, and 
 is given by folding plane pentagons of given side length 
along their diagonals. Surprisingly enough, this gives rise to a group action on a K3 surface, 
which is reminiscent  of the Wehler family of examples, which has been a 
thread in our  work. 
These examples are studied in Part~\ref{part:pentagons}.   
Section~\ref{sec:pentagons_geometry}
describes the underlying algebraic geometry of the problem, 
which goes back to the work of Darboux~\cite{darboux} on quadrilaterals 
and elliptic functions. The ergodic theory of random pentagon foldings is analyzed in Section~\ref{sec:pentagons_dynamics},   in the spirit of the work of Benoist and Hulin~\cite{benoist-hulin, benoist-hulin2}.\

(2)  In Part~\ref{part:ergodic_blanc} (Sections~\ref{sec:invariant_curves_parabolic} to~\ref{sec:stiffness_blanc}),
 we focus on groups of automorphisms introduced by Blanc in~\cite{Blanc:Michigan}. Each of these examples is determined
by the choice of a plane cubic curve $C\subset \P^2_\C$, an integer $m\geq 3$, and $m$ points $q_1$, $\ldots$, $q_m$ on $C$. 
To each $q_j$, one associates a Jonquières involution, that fixes $C$ pointwise and preserves the pencil of lines through $q_j$. The group 
generated by these $m$ involutions lifts to a group of automorphisms on a rational surface $X$ which is obtained by blowing-up $5m$ points of $C$. 
A key property of  these automorphism groups is that they
 preserve a singular volume form with poles along the strict transform $C_X$ of $C$ 
 and whose total mass is infinite.
Using our previous results, we prove that, {\emph{for an appropriate  choice of the points $q_i$, 
the only ergodic stationary measures on $X(\R)$ 
are fixed points and are contained in $C_X(\R)$}}. In particular, random orbits   almost surely converge, on average, to the curve $C_X(\R)$.  
Thus, as we shall explain,  this  behavior differs strongly from that of 
automorphisms of non-rational surfaces.

\section{Overview and open problems}
 
In this section, we provide a detailed  overview of our former results, 
with short introductions to their proofs, as well as  a description of our main new results 
and examples. 
 
\subsection{From orbit closures to stationary measures}\label{subs:generalities}
Let $X$ be a compact space and $\Gamma$ be an infinite  
group of homeomorphisms of $X$.
We make the standing assumption that $\Gamma$ is countable; this will actually not be a restriction in the cases of interest to this paper. 
Our general  aim   is to study the dynamics of such an action. 
In particular we  wish to address the  following usual problems:
\begin{enumerate} 
\item[(Pb1)] describe the orbit closures $\overline {\Gamma \cdot x}$, for $x\in X$;
\item[(Pb2)] study the finite  orbits of $\Gamma$
\begin{equation}
\mathrm{Per}(\Gamma) = \set{x; \ \Gamma\cdot x \text{ is finite}};
\end{equation}
by definition, points of $\mathrm{Per}(\Gamma)$ will be called {\bf{$\Gamma$-periodic points}}. 
\item[(Pb3)] classify $\Gamma$-invariant measures, that is, probability measures $\mu$ on $X$ such that $f_*\mu=\mu$ for all $f\in \Gamma$;
\item[(Pb4)] describe the asymptotic distribution of $\Gamma$-orbits.
\end{enumerate}
Observe that the last question is not properly formulated until  
a specific way of going to infinity in $\Gamma$ (in other words, a notion of ``time'') has been described. 
A common choice for this  is to fix a probability measure $\nu$ on $\Gamma$ such that $\bra{\supp(\nu)} = \Gamma$ 
and to explore $\Gamma$ by walking at random according to $\nu$. 
Then the notion of asymptotic distribution of the orbit of $x\in X$ may  either refer to the asymptotics 
of the orbital 
averages
\begin{equation}\label{eq:orbital}
\int \delta_{f_{n-1}\cdots f_0(x)} \d\nu(f_0)\cdots \d\nu(f_{n-1}),
\end{equation}
 or to the time averages along random trajectories
 \begin{equation}\label{eq:empiric}
 \unsur{n} \sum_{k=1}^n   \delta_{f_{n-1}\cdots f_0(x)}
 \end{equation}
 for  $\nu^{\N}$-almost every $(f_n)_{n\geq 0}$. 
In both cases, understanding the limit points essentially boils down to the following problem:
\begin{enumerate}
\item[(Pb5)] classify $\nu$-stationary probability measures.
\end{enumerate}
Of course, (Pb5) subsumes (Pb3). 
To understand the meaning and relevance of (Pb5), let us first recall that a probability measure $\mu$ on $X$ is  $\nu$-\textbf{stationary} if 
 \begin{equation}
\mu = \nu\ast\mu:= \int f_\varstar \mu \, \d\nu(f).
 \end{equation}
 Since the measure in Equation~\eqref{eq:orbital} is the $n$-th convolution $\nu^{\ast n}  \ast\delta_x$, any limit point of this sequence of measures is $\nu$-stationary. 
 Breiman's ergodic theorem shows that the same is true 
for the random empirical measures  in Equation~\eqref{eq:empiric} (see \cite[\S 3.2]{benoist-quint_book}). In particular, if $K$ is a compact $\Gamma$-invariant subset of $X$, 
then starting from $x\in K$ one constructs $\nu$-stationary measures with support in $K$; 
these may also be obtained by applying a 
  fixed point theorem to the operator $\mu\mapsto \nu\ast \mu$ acting on probability measures on $K$. 
  As a consequence, 
taking $K=\overline{\Gamma \cdot x}$, we see  that  a solution to 
 (Pb5) is also useful for (Pb1). This  approach, using ergodic theoretic methods 
 to study orbit closures (that is,   use (Pb5) and (Pb3) to study  (Pb1)), is now commonplace in this area of research. 

So, the set of stationary measures on $X$ is a non-empty, compact and convex subset 
of the set of probability measures on $X$. It contains $\Gamma$-invariant probability measures, but in many situations invariant measures fail to exist. 
Thus,  stationary measures can be viewed as  the 
correct analogues of invariant measures when studying (large) groups of transformations instead of cyclic groups (i.e.\ the iterations of a single homeomorphism). 
In  this respect, it may seem  hopeless at first sight to classify all stationary measures, but, in fact, they often  satisfy  rigidity properties  
which make such a classification feasible. Results in homogeneous dynamics, in particular the work of Ratner~\cite{Ratner:Duke, Ratner:Annals, Ghys:Bourbaki} and Benoist and Quint \cite{benoist-quint1}, illustrate perfectly this line of thought. 
Similar phenomenon also appear in non-homogeneous dynamics, notably in the work of 
Eskin and Mirzhakani~\cite{eskin-mirzakhani} and Brown and Rodriguez Hertz~\cite{br}. 
Our work is strongly influenced by these former results and the methods developed to reach them.

\begin{rem}
For $\nu$-stationary (resp. $\Gamma$-invariant)
measures, there is a notion of ergodicity: such a measure is ergodic if it is an 
extremal point of the compact convex set of $\nu$-stationary (resp.\ $\Gamma$-invariant) measures. 
It is standard, albeit non-trivial,  that these two notions coincide (see~\cite[Prop. 2.8]{benoist-quint_book}), that is 
a $\nu$-stationary measure $\mu$ is ergodic if and only if every
 almost $\Gamma$-invariant subset $A$  (i.e.\ $\mu(f(A)\Delta A) = 0$ for every $f\in \Gamma$)
has measure $0$ or $1$. 
Hence, in the classification problems   (Pb5) and (Pb3), it is enough to restrict to ergodic measures. 
\end{rem}

\subsection{Non-elementary groups of automorphisms and the classification of surfaces} \label{subs:NE}

In this paper, $X$ will be a compact complex surface. We denote by $\Aut(X)$ 
its group of holomorphic diffeomorphisms, which we call {\textbf{automorphisms}}, even when 
$X$ is not algebraic. The group $\Gamma$ will be contained in  $\Aut(X)$. 

It would also be natural to consider actions by automorphisms on quasi-projective or affine surfaces, for instance by polynomial automorphisms of $\C^2$, or even by  birational transformations. 
Extending our results to this more general setting or to higher dimensions 
is an important challenge, which would undoubtedly lead to serious difficulties. 
References 
include~\cite{Cantat:BHPS, Rebelo-Roeder, gamburd_icm}
(see also Example~\ref{eg:intro_markov}, 
Remark~\ref{rem:wehler_markov_finite_orbits}, Question~\ref{que:intro_henon}, 
and Theorem~\ref{thm:rigidity_henon} below). 

Two related constraints will be imposed on $\Gamma$. Firstly,  $\Gamma$ must be sufficiently large to expect some measure rigidity
 property. For instance, if $\Gamma$ is abelian, its stationary measures are automatically invariant; and if $\Gamma$ is  generated by an automorphism with positive entropy, there are uncountably many invariant ergodic measures, without any hope of classifying them.  Secondly,  the action of $\Gamma$ on the cohomology of $X$ must also be ``sufficiently large''. To explain this,
recall that  $\Aut(X)$ is a complex Lie group and, assuming $X$ to be Kähler, a theorem of Fujiki and Lieberman asserts that the connected component $\Aut(X)^0$ of $\Aut(X)$ is a subgroup of finite index in the kernel of the homomorphism 
\begin{equation}
 \Aut(X) \ni f \mapsto f^*\in \GL(H^{2}(X;\Z))
\end{equation}
that describes the action on the cohomology (see~\cite{fujiki, lieberman}). 
Then, using the invariance of the intersection form on $H^2(X;\Z)$, the Hodge index theorem, and the Tits alternative, one obtains easily (see~\cite{Cantat:Milnor, stiffness}) the equivalence of the following properties for any subgroup $\Gamma\subset \Aut(X)$:
\begin{enumerate}[(a)]
\item[(a)]  the image $\Gamma^*$ of $\Gamma$ in $ \GL(H^{2}(X;\Z))$ is not virtually Abelian;
\item[(b)]  the image $\Gamma^*$ of $\Gamma$ in $ \GL(H^{2}(X;\Z))$ contains a pair of linear maps $(f^*,g^*)$ generating a non-Abelian free group;
\item[(c)] $\Gamma$ contains a pair of automorphisms $(f,g)$ with positive topological entropy generating a non-Abelian free group.
\end{enumerate}
Moreover, the classification of compact Kähler surfaces implies that if these properties are satisfied for some $\Gamma\subset \Aut(X)$, then either $X$ is a torus or $\Aut(X)^0$ is trivial. From this, one deduces  that, up to finite index, only two distinct regimes  need to be studied: 
\begin{enumerate}
\item the case where $\Gamma$ is contained in $\Aut(X)^0$, 
\item the case where $\Gamma$ satisfies the three equivalent properties (a), (b), (c). 
\end{enumerate}
The first case boils down to homogeneous dynamics, the main example being given by subgroups of $\PGL_3(\C)$ acting by linear projective transformations on $\P^2(\C)$. Thus, we focus on the second case that is, we shall impose that  $\Gamma$ satisfies (a), (b), (c); examples include certain  rational, K3, and Enriques surfaces, as well as certain tori (the only case that can satisfy (a), (b), (c) and $\dim(\Aut(X)^0)\geq 1$). 

Even if our understanding of non-Kähler compact complex surfaces is less satisfactory, we shall see in Section~\ref{sec:projective} that these properties are also relevant in this more general context. So,  we introduce the following

\begin{defi}
Let $X$ be a compact complex surface. A subgroup $\Gamma$ of $\Aut(X)$ is \textbf{non-elementary} if $\Gamma^*$ contains a non-Abelian free group, otherwise, it is {\textbf{elementary}}. 
\end{defi}

A first important result, proven below in Section~\ref{sec:projective}, says that the existence of a non-elementary subgroup 
$\Gamma\subset \Aut(X) $ forces 
$X$ to be projective, hence, by the GAGA principle, it also  forces 
 $\Gamma$ to act by regular algebraic transformations.  
 Consequently, the study of non-elementary actions belongs to 
 algebraic dynamics.  
 
\begin{thm} \label{thm:X-is-projective_intro}
Let $X$ be a compact complex surface such that there exists a non-elementary subgroup 
$\Gamma\subset \Aut(X)$. Then $X$ is projective and it must be  a blow-up of $\P^2$,   a K3 surface, 
 an Enriques surface, or   an Abelian surface 
\end{thm}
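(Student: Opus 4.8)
The plan is to combine the Kähler case (already essentially reduced in the excerpt) with a separate argument ruling out the non-Kähler surfaces via the Enriques–Kodaira classification. First I would reduce to the Kähler case: it suffices to show that a compact complex surface carrying a non-elementary automorphism group is necessarily Kähler, since then the discussion preceding the Definition (using Fujiki–Lieberman, the Hodge index theorem, and the Tits alternative, together with the classification of compact Kähler surfaces) shows that the only Kähler possibilities compatible with properties (a)--(c) are blow-ups of $\P^2$, K3 surfaces, Enriques surfaces, and tori — and conversely that any such $X$ carrying a non-elementary $\Gamma$ must be projective, because a nonzero class with positive self-intersection exists in $\NS(X)$ (any loxodromic $f^*$ provides one among the real and imaginary parts of an eigenvector, hence $h^{2,0}$-considerations force the class to be algebraic) so that the Kodaira embedding/projectivity criterion applies. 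So the heart of the matter is the non-Kähler exclusion.

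For that, recall (Kodaira) that a compact complex surface is Kähler if and only if its first Betti number $b_1$ is even; the non-Kähler compact complex surfaces are exactly those of odd $b_1$, which by the Enriques–Kodaira classification fall into: surfaces of class VII (including Hopf and Inoue surfaces, $b_1=1$), primary and secondary Kodaira surfaces, and non-algebraic properly elliptic surfaces with odd $b_1$. I would go through these classes and show that in each case $\Aut(X)^\ast := \Aut(X)/\ker(f\mapsto f^\ast)$ acting on $H^2(X;\Z)$ — or better, on $H^2(X;\R)$ modulo its radical for the intersection form — cannot contain a non-abelian free group acting with a loxodromic element. The key numerical obstruction is that an automorphism of positive entropy needs the spectral radius of $f^\ast$ on $H^{1,1}$ to exceed $1$ (Gromov–Yomdin), which by the Hodge index theorem requires a hyperbolic lattice of rank $\geq 2$ inside $H^{1,1}(X)\cap H^2(X;\Z)$; this already fails for class VII surfaces with $b_2$ small, and for the minimal ones one invokes that blow-ups needed to create large $b_2$ (the global spherical shell / Dloussky–Oeljeklaus–Toma picture) do not produce non-elementary groups since the exceptional configuration is preserved. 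For Kodaira surfaces and non-algebraic elliptic surfaces, the elliptic fibration is canonical, hence $\Aut(X)$-invariant, forcing $\Gamma^\ast$ to preserve the fiber class and therefore to be virtually abelian on cohomology — contradicting non-elementarity. For tori, bielliptic surfaces, and the like with even $b_1$ we are already in the Kähler case.

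The main obstacle I expect is the class VII surfaces, and specifically those with $b_2 > 0$ obtained from global spherical shells: one must argue that although $b_2$ can be made arbitrarily large (so the naive rank obstruction disappears), the intersection form restricted to the span of the curves is negative definite and the automorphism permutes a finite configuration of rational curves, so $\Gamma^\ast$ stabilizes a finite-index subgroup on which it acts through a finite group — hence is elementary. Carrying this through cleanly requires either citing the structure theory for class VII surfaces (Dloussky, Oeljeklaus–Toma, Teleman) or giving a self-contained cohomological argument: no non-Kähler surface admits a class $\alpha\in H^{1,1}(X;\R)$ with $\alpha^2>0$ and $\alpha$ orthogonal to another such class, because the signature of the intersection form on $H^{1,1}$ of a surface with odd $b_1$ is $(1,\,b_2-1)$ only in the Kähler-adjacent cases; in general $h^{1,1}=b_2$ and the form can be negative semi-definite, eliminating loxodromic behavior outright. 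The remaining routine verification is to confirm that none of the sporadic small-$b_2$ cases slips through, which is a short check against the classification tables.
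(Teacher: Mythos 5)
Your reduction to the Kähler case and the final appeal to the classification are in the spirit of the paper, but there is a genuine gap in your projectivity step. You claim that a single loxodromic $f^*$ already produces a class of positive self-intersection in $\NS(X)$, ``hence $h^{2,0}$-considerations force the class to be algebraic,'' so Kodaira's criterion applies. This is false as stated: the eigenclasses $\theta_f^{\pm}$ live in $H^{1,1}(X;\R)$ but there is no reason for them (or for $\theta_f^++\theta_f^-$) to be algebraic, and indeed there exist non-projective K3 surfaces carrying positive-entropy automorphisms (McMullen's examples with Siegel disks), so the existence of a loxodromic element cannot imply projectivity. The paper's argument is precisely designed around this: it first shows (Lemma~\ref{lem:proj_root}) that $X$ is projective if and only if the loxodromic $f^*$ acts on $H^{2,0}(X;\C)$ by a root of unity, by splitting $\chi_f$ into its Salem factor and cyclotomic factors and observing that $\Ker(S_f(f^*))$ is a rational Minkowski subspace of $H^{1,1}$; and it then uses \emph{non-elementarity} in an essential way (Lemma~\ref{lem:virtually_cyclic_non_projective}): if $X$ were non-projective, a rank-two free group of loxodromic elements would have to act faithfully through the unitary character $f\mapsto J(f)$ on the one-dimensional space $H^{2,0}$, which is impossible since the commutator subgroup would then consist of loxodromic elements acting trivially on $H^{2,0}$, forcing projectivity after all. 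Your proposal never invokes the non-elementary hypothesis at this point, so it would ``prove'' that any Kähler surface with one positive-entropy automorphism is projective, which is wrong.

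On the non-Kähler exclusion you take a genuinely different and heavier route. The paper first proves (Lemma~\ref{lem:non-elementary_free_groups}, via Kronecker's lemma, proximality and ping-pong on $\bigwedge^m H^*(M;\C)$, plus Yomdin) that a non-elementary group always contains an automorphism of positive topological entropy, and then quotes the theorem of \cite{Cantat:CRAS} that a compact complex surface with a positive-entropy automorphism is Kähler. Your plan is to run through the Enriques--Kodaira classes with odd $b_1$ by hand. This can be made to work — the uniform mechanism is that $b^+=2h^{2,0}$ for non-Kähler surfaces, so for class VII the intersection form on $H^2$ is negative semi-definite (killing loxodromic behavior outright), while Kodaira surfaces and non-Kähler elliptic surfaces carry a canonical $\Aut$-invariant elliptic fibration whose fiber class is fixed — but it requires citing the class VII structure theory and checking the small-$b_2$ cases, exactly the ``remaining routine verification'' you defer. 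The paper's route avoids all of this at the cost of one external reference, and in addition the entropy-positive free subgroup it constructs is reused later (characterization (d) in \S\ref{par:compact-kahler-surfaces}).
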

 
A consequence  is that, unless $X$ is a torus, $\Aut(X)^0$ is trivial, hence 
$\Aut(X)$ is   discrete, and  the countability assumption on $\Gamma$ is automatically satisfied. 
As suggested by this theorem, examples of non-elementary group actions are scarce and rely on algebro-geometric constructions. Besides the following two classes of examples, 
several new ones will be discussed in this paper. 

\begin{eg} The first examples are found on (certain) compact tori $X=\C^2/\Lambda$. Recall
 that 
every automorphism of such a surface is induced by an affine transformation 
$x\in X\mapsto Ax + B$, where  $A\in \GL_2(\C)$ preserves $\Lambda$ and $B\in \C^2$. 
By  Theorem~\ref{thm:X-is-projective_intro}, if 
 $\Aut(X)$ is non-elementary, then  $X$ must be projective, that is, be an Abelian surface.
 
More generally, a \textbf{Kummer group} $(X,\Gamma)$ is, by definition,   a compact complex surface $X$ endowed with a subgroup $\Gamma$ of $\Aut(X)$,  
such that there exists a torus $X'=\C^2/\Lambda$, a subgroup  $\Gamma'\subset \Aut(X')$, and a generically finite, dominant rational map $\varphi\colon X'\to X$ that 
 semi-conjugates $\Gamma'$ to $\Gamma$ (i.e.\ $\varphi\circ \Gamma'=\Gamma\circ \varphi$). We refer to \cite[\S 4]{finite_orbits} for details and  a classification of such examples.  Kummer examples can be analyzed with tools from homogeneous dynamics, and as we will see, they often stand out for their exceptional, distinctive properties, somewhat similar to those of 
  monomial, Tchebychev, and Lattès mappings in one-dimensional dynamics (see~\cite{Cantat:Panorama-Synthese}). 
 \end{eg}
 
\begin{eg}\label{eg:intro_Wehler} 
Consider the family of all  K3 surfaces in 
$\P^1\times \P^1\times \P^1$ (equivalently, smooth surfaces of tri-degree $(2,2,2)$); they depend on $26$ parameters. 
If $X$ is such a surface, the three natural projections onto $\P^1\times \P^1$ are dominant morphisms of degree $2$, and each of them determines a regular involution of $X$ (which exchanges the two points in the fibers). Thus, $X$ comes with a group $\Gamma\subset \Aut(X)$, generated by this involutions. For a general choice of $X$, $\Gamma$ is non-elementary.
We shall refer to these examples $(X,\Gamma)$ as \textbf{Wehler examples}. These were studied 
 thoroughly in our work (see~\cite{stiffness, finite_orbits,hyperbolic}). 
The family  of pentagon folding groups introduced in Part~\ref{part:pentagons} is in many ways 
reminiscent from  the Wehler family.
\end{eg}

\begin{eg}\label{eg:intro_markov}
Let us mention an example which is unfortunately excluded by our assumptions, 
because the surface is not compact (i.e. when compactifying 
 the surface, the automorphisms become birational transformations). 
 Consider the affine surface $M_0\subset \C^3$ defined by
\begin{equation}
 x^2+y^2+x^2 = 3xyz.
 \end{equation}
It is a cubic surface, endowed  with three natural $2$-to-$1$ ramified covers onto $\C^2$, obtained by
forgetting one of the variables, each of which defining a regular involution on $M_0$. 
The dynamics of the group generated by these involutions was studied by Markov in his 1879 thesis, 
in relation  to the study of binary quadratic forms and Diophantine approximation, and $M_0$ is now called \emph{the Markov surface}. It can be considered as a degenerate   Wehler example. 
We refer to~\cite{Cantat:BHPS, deroin-dujardin, Rebelo-Roeder, gamburd_icm} and~\cite{Rebelo-Roeder2} in this volume for a dynamical viewpoint on these surfaces.
\end{eg}

\subsection{Further remarks: parabolic elements, fields of definition, irrational surfaces} 
\label{par:parabolic_field_kodaira}
From now on we will assume that $\Gamma\subset \Aut(X)$ is non-elementary, and therefore that $X$ 
is projective. Our strongest results feature an additional hypothesis on $\Gamma$, namely the 
existence of parabolic elements. The vocabulary is as follows. An automorphism $f\in \Aut(X)$ is 
{\textbf{elliptic}} if $f^*$ is a finite order element of $\GL(H^2(X;\Z))$. It is {\textbf{loxodromic}} if the 
spectral radius $\lambda_f$ of $f^*$ is $>1$; in that case, $\lambda_f$ is the only eigenvalue of $f^*$ 
with modulus $>1$, it is simple, and $\log(\lambda_f)$ is equal to the topological entropy of $f\colon X\to 
X$. Otherwise, $f$ is {\bf{parabolic}}; this means that some iterate $(f^*)^k$, for some $k\geq 1$, is unipotent and distinct from the identity. In this case, there is a unique genus $1$ fibration $\pi\colon X\to B$ which is $f$-invariant; this means that there is an automorphism $f_B$ of the Riemann surface $B$ such that $\pi\circ f=f_B\circ \pi$; moreover, $f_B$ has finite order, except if $X$ is a torus $\C^2/\Lambda$.  Hence, assuming $X$ is not a torus and changing $f$ into a positive iterate, $f$ preserves every fiber, each smooth fiber is a genus $1$ curve, and $f$ acts upon it as a translation; such 
maps are   referred to as \textbf{Halphen twists}. 
The analysis of these parabolic automorphisms leads to rich structures and is connected to the field of discrete integrable systems (see~\cite[\S 3]{invariant} for an account, and~\cite{duistermaat} for a thorough treatment).

With this vocabulary, $\Gamma$ is non-elementary if and only if it contains a pair of loxodromic elements $(f,g)$ that generates a non-Abelian free group.
The hypothesis that $\Gamma$ contains a parabolic element is
 of a different nature --it is analogous, in Ratner's theory, to the existence of unipotent elements. 
 If $\Gamma$ contains two parabolic elements preserving distinct fibrations, then $\Gamma$ is non-elementary; conversely, if $\Gamma$ contains a parabolic element and is non-elementary, then it contains two parabolic elements with distinct invariant fibrations. We refer to~\cite{Cantat:Milnor, stiffness} for   these results. 

Our results may also depend on the field of definition.  When $X$ is  projective, $X$ and $\Gamma$ are defined by polynomial equations and formulas with coefficients in a subfield of $\C$. Some of our results require 
that $X$ and $\Gamma$ be defined over $\R$, in which case one might restrict the dynamics to the real part $X(\R)\subset X$. Some require that $X$ and $\Gamma$ are defined over $\overline{\Q}$.

Another thing that the reader should keep in mind comes from the classification of surfaces. By Theorem~\ref{thm:X-is-projective_intro}, surfaces for which  $\Aut(X)$ is non-elementary fall into
 two types.

\smallskip

\noindent--  If 
$X$ is rational, 
then $X$ is a blow-up of the plane $\P^2$ at at least 10 points; as we shall see, 
a non-elementary subgroup $\Gamma\subset \Aut(X)$ may or may 
not preserve a continuous volume form.

\smallskip

\noindent--  If 
$X$ is not rational,
then $X$ is a blow-up of a K3,   Enriques, or   Abelian surface $X_0$; moreover, by the uniqueness of the minimal model, the group $\Aut(X)$ is obtained by pulling back of a subgroup of $\Aut(X_0)$. On $X_0$, there is a natural $\Aut(X_0)$-invariant volume form, induced by the triviality of the canonical bundle $K_{X_0}$ or its square $K_{X_0}^{\otimes 2}$. This volume form lifts to $X$ as an $\Aut(X)$-invariant form which vanishes only along the exceptional divisor of the birational morphism $X\to X_0$. 

\smallskip

Some of our dynamical results do  require the existence of 
such an invariant volume form, a hypothesis that might fail only for rational surfaces. 
Note that if $X$ is irrational and defined over $\R$, there is an $\Aut(X_\R)$-invariant 
volume form on $X(\R)$ as well. 
 

\subsection{Stiffness}\label{subs:intro_stiffness}
Let us now come back to our five initial problems.
As explained in the previous sections, we only deal 
with non-elementary group actions on projective surfaces. We refer to~\cite[\S 10.3]{stiffness} 
for elementary actions on compact Kähler surfaces and to~\S~\ref{subs:remarks_non_kahler}
 below for some remarks on the  non-Kähler case. 

We   say that a probability measure $\mu$ on $X$ is \textbf{Zariski diffuse} if it gives no mass to proper Zariski closed subsets of $X$. 
Ergodic stationary or invariant measures which are not Zariski diffuse are easily  analyzed 
(see \cite[Prop. 10.6]{stiffness}), so we focus on   Zariski diffuse   measures. 

In certain situations, (Pb5) is actually equivalent to (Pb3); this happens whenever 
all $\nu$-stationary measures are invariant: this is the \textbf{stiffness}  (or $\nu$-stiffness) property of 
Furstenberg~\cite{furstenberg_stiffness}. An obvious obstruction to stiffness is when there is no $\Gamma$-invariant measure at all. The overall philosophy of~\cite{stiffness} is the converse: 
\emph{non-elementary 
group actions on compact complex surfaces admitting  a 
 natural  Zariski diffuse invariant measure  tend to be stiff} (in fact, this ``principle''   also applies
 to elementary groups). In this respect, recall from  \S~\ref{par:parabolic_field_kodaira}, that  if 
$X$ is not rational,  then, working on the minimal model $X_0$ of $X$, there is a natural invariant volume form. 
 
Let us be more specific. Let  $\Gamma\subset\Aut(X)$ be a non-elementary group of automorphisms of a complex projective surface. Fix a probability measure $\nu$ on $\Gamma$ satisfying  the {\emph{generating condition}} 
 \begin{equation}\tag{S}
\supp(\nu) \; \text{ generates } \;  \Gamma
\end{equation} 
and the  \emph{moment condition} 
 \begin{equation}\tag{M}
\int \lrpar{\log\norm{f}_{C^1(X)}+ \log\norm{f\inv}_{C^1(X)}} \, \d\nu (f) < + \infty.
\end{equation}
Condition (S) is natural and necessary  if we want the random dynamics  to faithfully describe the group action.  Condition (M) is   required to apply   the tools of smooth ergodic theory; note that by the Cauchy estimates, it automatically implies the same finiteness for the $C^k$ norm. 
Pick an ergodic  $\nu$-stationary measure $\mu$. Then $\mu$ admits two Lyapunov exponents $\lambda^- \leq \lambda^+$. These exponents are defined by applying the Oseledets theorem fiberwise 
to the (non-invertible) dynamical system  associated to the random dynamics: 
\begin{equation}
F_+: \ \begin{matrix}
\Omega\times X & \longrightarrow & \Omega \times X \\
(\omega, x) & \longmapsto& (\sigma\omega, f_\omega^1(x))
\end{matrix}
\end{equation}
where $\Omega = \Aut(X)^\N$, $\omega = (f_n)_{n\geq 0}$, $\sigma$ is the shift 
and $f_\omega^n = f_{n-1}\circ \cdots \cdots \circ f_0$ (so that $f_\omega^1 = f_0$). 

If $\lambda^-<0<\lambda^+$, one says that  $\mu$ is {\textbf{hyperbolic}}. 
Then, the  Oseledets theorem provides 
a measurable line field of stable directions $E^s_\omega(x)$ 
defined for $\nu^\N\times\mu$-almost every $(\omega,x)$. 
 We say that $E^s$ is 
\textbf{non-random} if it does not depend on $\omega$; more precisely, if for $\mu$-almost every $x$, there is a line $E^s(x)\subset T_xX$ such that $E^s_\omega(x) = E^s(x)$ for $\nu^\N$-almost every trajectory $\omega$. 
Our first key result is the following (see~\cite[Thms C and 9.1]{stiffness}).

\begin{thm}\label{thm:non_random}
Let $\Gamma\subset \Aut(X)$ be a non-elementary group of automorphisms of a (necessarily projective) compact complex surface. 
Let $\nu$ be a probability measure on $\Gamma$ satisfying Conditions (S) and (M), and let $\mu$ be a Zariski diffuse, ergodic, and hyperbolic $\nu$-stationary measure. 
If the field of Oseledets stable directions is non-random, then  $\mu$ is $\Gamma$-invariant. 
\end{thm}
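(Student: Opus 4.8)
The plan is to prove that $\mu$ coincides, $\nu^{\N}$-almost surely, with its backward limit measures, a property equivalent to $\Gamma$-invariance. Write $\Omega=\Aut(X)^{\N}$, $\omega=(f_0,f_1,\ldots)$, and note that $\nu^{\N}\times\mu$ is $F_+$-invariant precisely because $\mu$ is $\nu$-stationary. For $\nu^{\N}$-almost every $\omega$ the weak limit $\mu_\omega:=\lim_{n\to\infty}(f_0\circ f_1\circ\cdots\circ f_{n-1})_*\mu$ exists (for each continuous $\varphi$, $\langle(f_0\cdots f_{n-1})_*\mu,\varphi\rangle$ is a bounded martingale); it satisfies $\mu_\omega=(f_0)_*\mu_{\sigma\omega}$ and $\int\mu_\omega\,\d\nu^{\N}(\omega)=\mu$. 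Since $\mu_{\sigma\omega}$ does not involve $f_0$, one always has $\mathbb{E}[\mu_\omega\,|\,f_0]=(f_0)_*\mu$; hence, if one shows that $\mu_\omega$ does not depend on $f_0$, then $(f_0)_*\mu=\mathbb{E}[\mu_\omega\,|\,f_0]=\mathbb{E}[\mu_\omega]=\mu$ for $\nu$-almost every $f_0$, and Condition (S) promotes this to $\Gamma$-invariance. So everything reduces to: the non-randomness of $E^s$ forces $\mu_\omega$ to be measurable with respect to $\sigma(f_1,f_2,\ldots)$.

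I would then bring in the Pesin theory of the random system $F_+$, legitimate thanks to Condition (M) and to hyperbolicity: for $\nu^{\N}\times\mu$-almost every $(\omega,x)$ there is a local stable manifold $W^s_\omega(x)$ tangent to $E^s_\omega(x)$ along which $f_\omega^n$ contracts at the rate $e^{n\lambda^-}$, and, on the two-sided natural extension, a local unstable manifold $W^u_\omega(x)$ governed by the past. Two bookkeeping facts organize the argument: $E^s_\omega(x)$ and $W^s_\omega(x)$ are measurable functions of the future $(f_0,f_1,\ldots)$, whereas $E^u_\omega(x)$ and $W^u_\omega(x)$ are functions of the past; and the hypothesis says exactly that, up to null sets, $E^s_\omega(x)=E^s(x)$ depends on $x$ alone --- in particular not on $f_0$.

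The heart of the proof is an \emph{invariance principle} of Ledrappier type, in the form developed by Brown--Rodriguez Hertz~\cite{br} and adapted to automorphisms. Consider the derivative cocycle acting on the projectivized tangent bundle $\P(TX)$: the image of $\nu^{\N}\times\mu$ under $(\omega,x)\mapsto\bigl(\omega,x,E^s_\omega(x)\bigr)$ is invariant under the induced skew product, and non-randomness of $E^s$ says that its disintegration over $\Omega\times X$ is the deterministic section $x\mapsto E^s(x)$ --- a section measurable with respect to the future \emph{and} the past. The invariance principle transfers this past/future symmetry from the projective extension down to the base: one shows that the conditional measures of $\mu$ along the stable plaques are non-random, that the (a priori only measurable) stable holonomies preserve this conditional structure, and finally that $\mu_\omega$ cannot depend on $f_0$, so $\mu_\omega=\mu$ almost surely. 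The quantitative engine at this stage is a Ledrappier--Young entropy formula for random dynamical systems, relating the fibered entropy of $F_+$ to $\lambda^+$ and to the pointwise dimension of the unstable conditionals, the absence of randomness being exactly the equality case that forces $\mu$ to coincide with its backward limits. One must isolate beforehand the degenerate situation in which the stable or unstable conditionals are purely atomic: there $\mu$ is carried by a measurable family of stable (or unstable) manifolds reduced to points, hence by an invariant algebraic curve or a finite orbit, and Zariski-diffuseness together with the classification of invariant curves and finite orbits in~\cite{finite_orbits} gives invariance, or a contradiction, at once.

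The main obstacle is this core step, namely running the invariance principle in the genuinely \emph{non-linear, non-uniformly hyperbolic} setting of surface automorphisms. The stable and unstable holonomy maps are defined only almost everywhere and only measurably a priori; one has to control them --- and establish the accompanying absolute-continuity and ``no spurious atoms'' statements --- well enough to convert invariance of a measure on $\P(TX)$ into invariance of $\mu$ on $X$, and the entropy accounting must be done with care because $F_+$ is not invertible. Throughout, non-elementarity of $\Gamma$ and Zariski-diffuseness of $\mu$ are exactly what keep the holonomy and entropy arguments from degenerating along invariant curves or invariant fibrations.
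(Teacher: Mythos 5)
Your opening reduction (showing that the backward limit measures $\mu_\omega$ do not depend on $f_0$, hence $(f_0)_*\mu=\mu$ for $\nu$-a.e.\ $f_0$) is sound, and your two bookkeeping facts about the future/past measurability of $E^s$ and $E^u$ are the right starting point. But the core of your argument --- running the Brown--Rodriguez Hertz invariance principle, with Ledrappier--Young and absolute continuity of stable/unstable conditionals, to convert non-randomness of $E^s$ into non-randomness of $\mu_\omega$ --- is precisely the step that is not available in this setting, and it is not the route the proof actually takes. The machinery of \cite{br} is a theorem about random dynamics on a \emph{real} surface (two real dimensions, one-dimensional stable and unstable manifolds, and, in the form used here, an invariant area); in Theorem~\ref{thm:non_random} the measure $\mu$ lives on the complex surface $X$ (four real dimensions), the Oseledets spaces are complex lines, the pointwise dimensions of the conditionals range in $[0,2]$ rather than being forced to equal $1$, and no volume invariance is assumed. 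Extending the exponential-drift/invariance-principle argument to this setting is exactly the open problem the paper flags (the announced work of \cite{BEFR} and \cite{roda}); asserting it as the ``heart of the proof'' leaves the theorem unproved. Your side remark that atomic stable conditionals force $\mu$ onto an invariant curve or a finite orbit is also off: atomic conditionals correspond to vanishing fiber entropy, not to algebraic support, and the zero-entropy alternative requires its own argument.

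The actual proof in \cite{stiffness} goes through complex geometry instead. Non-randomness of $E^s$ yields a dichotomy: either $\mu$ satisfies a strong form of zero fiber entropy, which is shown directly to imply invariance, or the \emph{global} Pesin stable manifolds $W^s_\omega(x)$ are themselves non-random. In the second case each stable manifold is an entire curve $\C\to X$, to which one attaches its Ahlfors--Nevanlinna closed positive current; one proves this current is unique, depends only on $\omega$, and hence, under non-randomness, that its cohomology class $[T^s_\omega]\in H^2(X;\R)$ is non-random. Furstenberg's theory of random products of matrices applied to the $(\Gamma^*,\nu)$-action on $H^{1,1}(X;\R)$, together with non-elementarity, forces $[T^s_\omega]$ to depend non-trivially on $\omega$ --- a contradiction that eliminates the second branch. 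This is where non-elementarity genuinely enters, a mechanism absent from your sketch. In short: your reduction is fine, but the engine you propose does not exist in the generality required, and the complex-analytic input (Nevanlinna currents and the random walk on cohomology) that replaces it is missing from your proposal.
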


When $\lambda^+\geq \lambda^- \geq0$,  the \emph{invariance principle} of 
Crauel~\cite{crauel} (this terminology is due to Avila-Viana~\cite{avila-viana}) 
directly implies that $\mu$ is invariant.  
If $\Gamma$ preserves a volume form, we have 
$\lambda^++\lambda^-=0$, so there are two possibilities: either   $\lambda^+=\lambda^-=0$ 
and we can apply Crauel's principle, or $\mu$ is hyperbolic. 
We thus see that for the  stiffness problem we just have to 
consider hyperbolic measures. 

Now, assume in addition that $X$ and $\Gamma$ are defined over $\R$, that $X(\R)$ is non-empty, and that $\Gamma$ preserves an area form on $X(\R)$; more generally, 
we can consider a $\Gamma$-invariant, smooth, totally real surface 
$\Sigma\subset X$ on which $\Gamma$ preserves an area form. 
Then, we can apply the  results of 
Brown and Rodriguez Hertz~\cite{br}, which assert that 
for a volume preserving action on a  $2$-dimensional real surface, the randomness (or non-non-randomness, according to the terminology of~\cite{br}) 
of the stable line field $E^s_\omega(x)$ implies the invariance of $\mu$ (see~\cite[Thm 3.4]{br}). 
 From this and the previous considerations, we get: 

 \begin{thm}\label{thm:stiffness_for_real_surfaces}
 Let $\Gamma\subset \Aut(X)$ be a non-elementary group of automorphisms of a compact complex surface. 
Let $\nu$ be a probability measure on $\Gamma$ satisfying Conditions (S) and (M).
Let $\Sigma\subset X$ be a $\Gamma$-invariant, smooth, totally real surface that supports a $\Gamma$-invariant area form.
Then, any Zariski diffuse $\nu$-stationary measure on $\Sigma$ is $\Gamma$-invariant. 
 \end{thm}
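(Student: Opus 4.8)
The plan is to reduce the statement to an ergodic, Zariski diffuse measure, and then to treat it via the dichotomy ``the stable line field is non-random / it is random'', handling the first case with Theorem~\ref{thm:non_random} and the second with the measure rigidity theorem of Brown and Rodriguez Hertz. Since $\Gamma$ is non-elementary, $X$ is projective by Theorem~\ref{thm:X-is-projective_intro}, so Zariski diffuseness is meaningful. First I would write $\mu$ as an integral of ergodic $\nu$-stationary measures; as $\Gamma$-invariance passes to such integrals, it is enough to prove the invariance of almost every ergodic component. A short argument --- resting on the finiteness of the set of periodic curves of a non-elementary group --- shows that almost every component is either again Zariski diffuse, or supported on a finite $\Gamma$-orbit, or carried by one of those periodic curves; in the last two cases the component is invariant by the elementary analysis of \cite[Prop.~10.6]{stiffness}. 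So from now on $\mu$ may be assumed ergodic and Zariski diffuse. Since $\Gamma$ preserves $\Sigma$, the derivative cocycle over $F_+$ preserves the rank $2$ real subbundle $T\Sigma$ over $\Sigma$, and I will use the Lyapunov exponents $\lambda^-\le\lambda^+$ of $\mu$ for this restricted cocycle; because $\Sigma$ is totally real, these agree with the exponents attached to $\mu$ on $TX$ in \S~\ref{subs:intro_stiffness}, and the Oseledets stable directions correspond accordingly.

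Next, exactly as in the volume-preserving discussion recalled in \S~\ref{subs:intro_stiffness}, the invariance of the area form on $\Sigma$ forces the Jacobian of each $f\in\Gamma$ along $T\Sigma$ to have modulus $1$, hence $\lambda^++\lambda^-=0$, so only two cases can occur. If $\lambda^+=\lambda^-=0$, then we are in the regime $\lambda^+\ge\lambda^-\ge0$ and Crauel's invariance principle~\cite{crauel} directly gives that $\mu$ is $\Gamma$-invariant. Otherwise $\lambda^-<0<\lambda^+$, i.e.\ $\mu$ is hyperbolic, and the Oseledets theorem, applied fibrewise to $F_+$, produces a measurable stable line field $E^s_\omega(x)\subset T_x\Sigma$ defined for $\nu^{\N}\times\mu$-almost every $(\omega,x)$.

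For this last, hyperbolic, case I would split according to whether $E^s$ is non-random or random. If it is non-random, then all the hypotheses of Theorem~\ref{thm:non_random} are met --- $\Gamma$ non-elementary, $\mu$ Zariski diffuse, ergodic and hyperbolic, with non-random stable field --- so $\mu$ is $\Gamma$-invariant. If it is random, I would instead regard the random walk of $\Gamma$ on $\Sigma$ as a volume-preserving random dynamical system on a closed $2$-dimensional real manifold --- condition~(M) supplying, via the Cauchy estimates, the higher-order moment bounds this requires --- and invoke \cite[Thm~3.4]{br}, which again yields that $\mu$ is $\Gamma$-invariant. Since these two alternatives are exhaustive, $\mu$ is $\Gamma$-invariant in every case, as desired.

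As a plan this is short precisely because all the weight sits inside the two results being quoted: Theorem~\ref{thm:non_random} itself, that is, \cite[Thms~C and~9.1]{stiffness}, whose proof marries the machinery of Brown and Rodriguez Hertz with the algebraicity of the $\Gamma$-action in order to exclude a non-random stable field for a non-invariant $\mu$, and \cite[Thm~3.4]{br}. At the level of the argument above, the step I expect to demand the most care is making the non-random/random dichotomy genuinely exhaustive while confirming that \emph{both} of its branches deliver invariance: one must be certain that the preliminary reduction has really left us with a Zariski diffuse ergodic measure before appealing to Theorem~\ref{thm:non_random}, and that in the random branch every hypothesis of \cite{br} --- a closed surface, an invariant area form, the $C^{1+\alpha}$ regularity of the maps, and the required integrability --- is actually in force. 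The remaining point that calls for a genuine, if short, argument is the reduction to ergodic components, which leans on the finiteness of the set of $\Gamma$-periodic curves.
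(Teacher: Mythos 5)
Your proposal is correct and follows essentially the same route as the paper: the invariant area form on $\Sigma$ forces $\lambda^++\lambda^-=0$, the case $\lambda^\pm=0$ is handled by Crauel's invariance principle, and in the hyperbolic case the non-random/random dichotomy for $E^s$ is resolved by Theorem~\ref{thm:non_random} and \cite[Thm 3.4]{br} respectively. Your explicit reduction to ergodic components (the non-diffuse components carry zero weight since the periodic curves are finitely many and each finite orbit is $\mu$-null) is a harmless elaboration of what the paper leaves implicit.
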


We  expect that similar techniques will provide the same result on the complex surface $X$, without restricting to a real submanifold. 
Indeed, the following result  ought to be true: \emph{under the assumptions of Theorem~\ref{thm:non_random}, 
if $\Gamma$ preserves a smooth volume form on $X$, then every Zariski diffuse $\nu$-stationary 
measure is $\Gamma$-invariant.} 
In particular, this automatic  invariance of Zariski diffuse stationary measures should hold for K3 surfaces. 
At the time of writing these lines, new results have been announced by Brown, Eskin, Filip and  Rodriguez-Hertz~\cite{BEFR} and Roda~\cite{roda}, 
which  should  complete this story.

\noindent{\textbf{Notes on the proof.-- }}The proof of Theorem~\ref{thm:non_random} occupies most of~\cite{stiffness}. 
It relies on the following chain of ideas: if $E^s$ is non-random then either (1) $\mu$ 
satisfies a strong form of zero fiber entropy (see Section~\ref{sec:classification} for this notion) which makes it 
invariant  or (2) the (global) stable manifolds $W^s_\omega(x)$, obtained from Pesin theory, are non-random as well. 
Now, almost every stable manifold $W^s_\omega(x)$ is biholomorphic to $\C$ and, to any non-constant entire curve $\C\to X$, one can associate its
so-called Ahlfors-Nevannlina currents (obtained as limits of integration on large disks in $\C$). 
Thus, to almost every point $(\omega,x)$ is associated a set of closed positive 
  currents. 
In this dynamical situation, we show that there is in fact a unique current $T_\omega^s$ associated to $W^s_\omega(x)$ 
and that $T_\omega^s$ depends only on $\omega$. 
Thus, if the stable manifolds are non-random, the $T_\omega^s$ are non-random as well, hence so are their cohomology classes $[T_\omega^s]\in H^{2}(X, \R)$. 
But these cohomology classes can be analyzed by looking at the action of $\Gamma$ on $H^2(X;\R)$. Using  
Furstenberg's theory of random products of matrices (applied  to   the action of 
$(\Gamma^*, \nu)$ on  $ H^{2}(X, \R)$) and the non-elementary assumption, we prove that these classes $[T_\omega^s]$ should in fact depend  non-trivially on $\omega$. This contradiction rules out case (2), and the theorem follows.  
 
\subsection{A new instance of stiffness} 

Theorem~\ref{thm:stiffness_for_real_surfaces} requires the existence of an invariant area form. In Part~\ref{part:ergodic_blanc}, we study a family of examples on rational surfaces, first defined by Blanc in~\cite{Blanc:Michigan}, that do not preserve any   smooth volume form.  
The following theorem summarizes   the main properties of our examples:

\begin{thm}\label{thm:blanc_examples}
\newcounter{numero}
Let $C$ be a smooth, connected, real cubic curve in $\P^2$. 
Let $k\geq 4$ be an integer. One can find a set of 
$5k$ points of $C(\C)$, consisting of 
 $3k$ real points  and $k$ pairs of complex conjugate points,  
 such that, after blowing up the plane at these $5k$ points, 
one gets  the following objects:
\begin{enumerate}
\item a rational surface $X$, in which the strict transform $C_X$ of $C$ is a curve of genus $1$;
\item a non-elementary group $\Gamma\subset \Aut(X)$ isomorphic to the free product $\bigast_{1}^k\Z/2\Z$;
\item a rational $2$-form $\Omega_X$ that does not vanish, has poles of order $1$ along $C_X$, and is $\Gamma$-invariant; 
\item $X$, $\Gamma$, $C_X$, and $\Omega_X$ are defined over $\R$; the total volume of $\Omega_X\wedge \overline{\Omega_X}$ is infinite and, after restriction to $X(\R)$, the total area of $\Omega_{X}$ is also infinite.\setcounter{numero}{\value{enumi}}
\end{enumerate}
The dynamics on $X(\C)$ satisfies:
\begin{enumerate}\setcounter{enumi}{\value{numero}}
\item every orbit $\Gamma\cdot x\subset X(\C)$ is infinite, except for $x\in C_X(\C)$, in which case $\Gamma\cdot x=\set{x}$;
\item $(\Omega_X\wedge \overline{\Omega_X})$-almost every $\Gamma$-orbit is dense in $X(\C)$.\setcounter{numero}{\value{enumi}}
\end{enumerate}
And for the real dynamics on $X(\R)$ we get:
\begin{enumerate}\setcounter{enumi}{\value{numero}}
\item every infinite orbit in $X(\R)$ is dense in $X(\R)$; 
\item if $\nu$ is a probability measure on $\Gamma$ satisfying Conditions (S) and (M), then every
 ergodic $\nu$-stationary measure on $X(\R)$  is a Dirac measure  $\delta_x$ at some point  
 $x\in C_X(\R)$; such stationary measures have zero Lyapunov exponents; 
\item if $\nu$ is a probability measure as in (8), 
the action of $\Gamma$ on $H^1(X(\R);\R)$ has a positive Lyapunov exponent (in the sense of Furstenberg's theory of random products of matrices).
\end{enumerate}
\end{thm}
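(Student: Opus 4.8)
The plan is to run first the algebro-geometric construction of Blanc~\cite{Blanc:Michigan}, keeping track of the field of definition, and then to feed the resulting pair $(X,\Gamma)$ into the ergodic-theoretic machinery of~\cite{finite_orbits, hyperbolic, invariant, stiffness}. For items (1)--(4): to each $q_j\in C$ one attaches a Jonquières involution $\sigma_j$ of $\P^2$ of degree $3$, fixing $C$ pointwise, permuting the lines through $q_j$, and having five base points, all on $C$; blowing up the $5k$ base points of $\sigma_1,\dots,\sigma_k$ lifts the $\sigma_j$ to automorphisms of a rational surface $X$ in which the strict transform $C_X$ of $C$ is isomorphic to $C$, hence of genus $1$, with $C_X^2=9-5k<0$. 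Choosing $3k$ of the base points real and the rest in $k$ conjugate pairs makes $X$, $C_X$ and the $\sigma_j$ defined over $\R$. Once $k\geq 4$, a ping-pong argument for the $\Gamma$-action on $H^2(X;\Z)$ (as in~\cite{Blanc:Michigan, Cantat:Milnor}) shows that $\sigma_j^2=\mathrm{id}$ are the only relations, so $\Gamma\cong\bigast_1^k\Z/2\Z$ and $\Gamma$ is non-elementary. Finally, the rational $2$-form on $\P^2$ with a simple pole along $C$ is unique up to scale and is fixed by every Jonquières involution fixing $C$; it lifts to a nowhere vanishing $\Gamma$-invariant $2$-form $\Omega_X$ with a simple pole along $C_X$, and since $C_X$ has genus $1$ both $\Omega_X\wedge\overline{\Omega_X}$ on $X(\C)$ and $\Omega_X$ on $X(\R)$ have infinite total mass. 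A genericity condition on the $q_j$, built into the choice, ensures that $C_X$ is the \emph{only} $\Gamma$-invariant curve; this is used throughout what follows.

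For items (5)--(7): each $\sigma_j$ fixes $C_X$ pointwise, so $\Gamma\cdot x=\{x\}$ on $C_X$. For $x\notin C_X$, the classification of finite orbits of non-elementary automorphism groups~\cite{finite_orbits} confines $\mathrm{Per}(\Gamma)$ to a $\Gamma$-invariant curve, which by the above is $C_X$; hence every orbit off $C_X$ is infinite, giving (5). Items (6) and (7) follow from the rigidity of orbit closures for non-elementary groups from~\cite{hyperbolic, invariant}: a minimal set is contained in an invariant curve or equals the whole surface, so on $X(\R)$ every infinite orbit closure is $X(\R)$, while on $X(\C)$ the conservativity and ergodicity of the infinite invariant measure $\Omega_X\wedge\overline{\Omega_X}$ (again~\cite{invariant}) force almost every orbit to be dense.

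The core is items (8)--(9). Let $\mu$ be an ergodic $\nu$-stationary measure on $X(\R)$, $\nu$ satisfying (S) and (M). If $\mu$ is not Zariski diffuse, ergodicity concentrates it on a $\Gamma$-invariant algebraic subset lying inside $C_X$, and since $C_X(\R)$ is pointwise $\Gamma$-fixed this forces $\mu=\delta_x$ with $x\in C_X(\R)$. At such a point $T_xC_X(\R)$ is a common eigenline of eigenvalue $1$ for every $D_xf$ ($f\in\Gamma$), and on a complementary line each involution acts by $\pm1$; in an adapted basis every cocycle value $D_xf^n_\omega$ is triangular with diagonal entries in $\{\pm1\}$, so $\|D_xf^n_\omega\|=O(n)$ and the Lyapunov exponents of $\delta_x$ vanish. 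It then remains to exclude Zariski diffuse ergodic stationary measures. Such a $\mu$ gives no mass to $C_X(\R)$, hence is carried by $X(\R)\setminus C_X(\R)$, on which $\Gamma$ preserves the smooth area form $\Omega_X$; Theorem~\ref{thm:stiffness_for_real_surfaces} — or rather the Brown--Rodriguez Hertz argument~\cite{br} behind it, which needs only a finite stationary measure satisfying (M) on a totally real surface with an invariant area form — then makes $\mu$ $\Gamma$-invariant. Now volume preservation and the invariance principle force $\mu$ to be absolutely continuous with respect to $\Omega_X$ (absolute continuity of stable/unstable conditionals in the hyperbolic case, Crauel's principle in the zero-exponent case, as in~\cite{stiffness, hyperbolic}); its support, $\Gamma$-invariant and not contained in a curve, is all of $X(\R)\setminus C_X(\R)$, whose $\Omega_X$-mass is infinite by (4) — a contradiction, which proves (8). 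For (9), one studies the $\Gamma$-action on the finite-dimensional space $H^1(X(\R);\R)$, which preserves the intersection form; using the free group of loxodromic elements of $\Gamma^*$ on $H^2(X(\C);\Z)$ and the real structure, one checks this representation is strongly irreducible and has a proximal element, so Furstenberg's theorem on random matrix products gives a strictly positive top Lyapunov exponent.

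The main obstacle is item (8), specifically the exclusion of Zariski diffuse ergodic stationary measures. Two things must be secured: that the stiffness mechanism of Theorem~\ref{thm:stiffness_for_real_surfaces} and~\cite{br} really applies on the \emph{non-compact} surface $X(\R)\setminus C_X(\R)$ carrying an area form of infinite total mass; and that a Zariski diffuse $\Gamma$-invariant probability measure is necessarily proportional to that infinite volume, so that its existence is self-contradictory — that is, one must control how finite invariant measures could pile up near the polar curve $C_X$ without charging it. Making this rigidity precise is where the real work lies; item (9) is then a representation-theoretic verification in the same circle of ideas, exhibiting the expected contrast between the trivial tangential dynamics along $C_X(\R)$ and the chaotic action on cohomology.
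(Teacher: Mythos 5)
Your outline follows the right general strategy, but at each of the genuinely difficult points it substitutes a citation for an argument that the cited result does not actually provide. The most serious gap is in item (8). You propose to obtain invariance of a Zariski diffuse stationary measure from Theorem~\ref{thm:stiffness_for_real_surfaces}, but that theorem presupposes a $\Gamma$-invariant area form on the totally real surface; here the only invariant area form is non-integrable along $C_X(\R)$, and proving stiffness in this setting is precisely the content of the theorem, not an input to it. The paper's argument is organized differently: one first shows $\lambda^++\lambda^-=0$ for any ergodic stationary $\mu$ via the invariant meromorphic form (Proposition~\ref{pro:sum_exponents}, which itself requires checking that $\mu$ does not charge the polar divisor and extracting return times away from it); Crauel's principle reduces to the hyperbolic case; Theorem~\ref{thm:non_random} and \cite{br} show that a non-invariant hyperbolic $\mu$ must be fiberwise SRB; the Ledrappier--Young formula and $\lambda^+=\abs{\lambda^-}$ upgrade this to absolute continuity of $\mu$; and the contradiction comes from a separate harmonic-function argument (Lemma~\ref{lem:absolute_continuity}), resting on the ergodicity of the infinite volume (Proposition~\ref{prop:ergodicity_volume}), which shows that \emph{no} stationary probability measure can be absolutely continuous with respect to that infinite invariant volume. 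Your alternative step ``volume preservation and the invariance principle force $\mu$ to be absolutely continuous'' is incorrect as stated: the invariance principle yields invariance when the exponents are nonnegative, not absolute continuity, and for an invariant Zariski diffuse measure the correct tool is the classification theorem of \cite{invariant} (which uses the parabolic elements), not volume preservation.

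The remaining items also have concrete holes. For (5), Theorem~B of \cite{finite_orbits} does not confine $\Per(\Gamma)$ to an invariant curve: in the non-Kummer case it leaves a possibly nonempty finite set of sporadic finite orbits, it requires $X$ and $\Gamma$ to be defined over $\overline{\Q}$ (a constraint you never impose on the $q_i$), and the paper must run an extra perturbation of the last point $q_k$ (Proposition~\ref{pro:no_finite_orbit_strong_version}, together with an analysis of how $\Gamma$ moves points of the exceptional locus) to destroy those sporadic orbits. For (7), the general orbit-closure results of \cite{invariant} leave open exactly the case $\mathrm{Acc}(\Gamma\cdot x)\subset \mathrm{STang}_\Gamma=C_X$; ruling it out is the whole point of Theorem~\ref{thm:blanc_orbit_closures_real} and requires a local study of the invariant genus-$1$ fibrations of the parabolic elements near the singular fiber containing $C_X$ (the Morse singularity at $r$, rotation numbers tending to $0$, and an interval-crossing argument). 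For (9), the action on $H^1(X(\R);\R)$ cannot be read off from the action on $H^2(X(\C);\Z)$: there is a large subspace on which $\Gamma$ acts through a finite group, the positivity of the exponent depends on the chosen ordering of the $q_i$ and $p_{i,j}$ along $C(\R)$, and the paper establishes strong irreducibility and unboundedness of the relevant quotient representation by explicit matrix computations and a Galois-theoretic argument before invoking Furstenberg's criterion.
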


The last property contains in particular 
the existence of automorphisms $f\in \Gamma$ with an eigenvalue $>1$ on $H^1(X(\R);\R)$: such 
automorphisms have positive entropy in the real locus $X(\R)$. The proof occupies all of Part~\ref{part:ergodic_blanc} and uses
most of our previous results. In particular, the data ($C$ and the points that we blow up) must be defined over $\overline \Q$.

 It is instructive to compare this family of examples with the following ones coming from homogeneous dynamics. 

\begin{eg} 
Consider the group $\SL_2(\R)$, acting by linear projective transformations on the real projective line $\P^1(\R)$. 
The diagonal action on $\P^1(\R)\times \P^1(\R)$ preserves the form 
\begin{equation}
\frac{dx\wedge dy}{(x-y)^2}.
\end{equation}
This meromorphic section of the canonical bundle $K_{\P^1\times \P^1}$ does not vanish and has a double pole along the diagonal; its total area is 
infinite. Now, consider $\Gamma=\SL_2(\Z)\subset \SL_2(\R)$ acting diagonally on $\P^1(\R)\times \P^1(\R)$. The closure of every $\Gamma$-orbit  is equal to $\P^1(\R)\times \P^1(\R)$ or to $\Delta$.
If $\nu$ is a probability measure on $\Gamma$   satisfying  Condition (S),
then there is a unique $\nu$-stationary measure $\mu_\nu$ on $\P^1(\R)\times\P^1(\R)$, this measure is supported on $\Delta$, and it is not invariant. Moreover, the action of $\Gamma$ on $\pi_1(\P^1(\R)\times\P^1(\R))\simeq \Z^2$ is trivial, and the entropy of every $f\in \Gamma$ vanishes. 
The standard action of $\SL_2(\Z)$ on $\R^2\subset \P^2(\R)$ provides a similar example, for which the origin is fixed and rational points have discrete orbits in~$\R^2$. \end{eg} 

\noindent{\textbf{Notes on the proof.-- }}
The mechanism for the stiffness property in 
Theorem~\ref{thm:blanc_examples} is quite 
different from that of Theorem~\ref{thm:stiffness_for_real_surfaces}. 
This time, we use 
Theorem~\ref{thm:non_random} and~\cite{br}, together with   the existence of a singular invariant volume to prove by contradiction that all ergodic  stationary probability 
measures $\mu$ have vanishing Lyapunov exponents. So by the invariance principle all stationary measures are invariant, and the existence of parabolic elements in the group allows for a complete classification (see Theorem~\ref{thm:intro_classification_invariant_measures} below). 

\smallskip

All the examples of non-elementary groups encountered so far admit an invariant 
 probability measure. 

\begin{que}
Does there exist a non-elementary group action on a rational projective surface without any invariant probability measure? 
\end{que}

Such an example could be found only on a rational surface. 
A natural candidate would be Lesieutre's tri-Coble 
examples, briefly described in \S~\ref{subs:lesieutre} below.   

Another question, which is somehow dual to the previous one, is the following:

\begin{que}
If $\Gamma \subset \Aut(X)$ is a 
non-elementary  subgroup preserving a smooth volume form, 
does $\Gamma$ admit a hyperbolic stationary measure on $X$
(for some probability measure on $\Gamma$ satisfying (S) and (M))?  
\end{que}

If $\Gamma$ possesses a parabolic element, then by \cite[Thm 1.3]{hyperbolic}, 
the invariant volume itself is hyperbolic. 
In the general case,    if there is no hyperbolic measure, then by the invariance principle,  every stationary measure is invariant, so this is really a question about the classification of invariant measures (more precisely, about the second line of the table in Section~\ref{sec:classification}). Note that without the 
volume preservation assumption, Theorem~\ref{thm:blanc_examples} shows that the answer to this question (on $X(\R)$) is negative. 
Note also the connection with the following fundamental open problem from conservative dynamics:
 \emph{if $f$ is a volume-preserving loxodromic 
 automorphism on a compact complex surface $X$, does the invariant volume have 
positive  measure theoretic entropy?}

These questions,  as well as the proof of Theorem~\ref{thm:blanc_examples} 
are a source of motivation for the classification of invariant measures, 
which   we discuss now.

 \subsection{Classification of invariant measures}\label{subs:classification}
Probability measures invariant under  a  non-elemen\-tary group are subject to so many constraints 
that it is reasonable to hope for a complete classification. 
Still, in its full generality, this problem remains out of reach for the moment\footnote{
To illustrate the difficulty of this kind of  question, it may be interesting to 
recall that  Furstenberg's famous problem asking for a
classification of measures on the circle invariant under $\times 2$ and $\times 3$ is still open to date 
(this is  however  quite different from our setting   since it concerns an Abelian semigroup).}.


\begin{thm}\label{thm:intro_classification_invariant_measures}
Let $\Gamma\subset\Aut(X)$ be a non-elementary group of automorphisms of a projective surface  containing a parabolic element. 
Let $\mu$ be an ergodic,  Zariski diffuse, $\Gamma$-invariant measure. 
There exists a proper $\Gamma$-invariant subvariety $Z\subset X$ such that:
\begin{itemize}
\item either $\mu$ is a measure with a smooth positive density in $X\setminus Z$;
\item or there exists a $\Gamma$-invariant totally real surface embedded  in $X\setminus Z$ such   
that $\mu$ is a measure with a smooth positive density on $\Sigma$. 
\end{itemize}
\end{thm}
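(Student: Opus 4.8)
The plan is to use the parabolic element to bring $\mu$ into a homogeneous normal form along an invariant genus-one fibration, and then to cross two such fibrations. When $X$ is a torus the statement follows from homogeneous dynamics (measures invariant under a non-elementary subgroup of the affine group of $\C^2/\Lambda$ are Haar measures of cosets of subtori, and one reads off the two alternatives), so I would assume $X$ is not a torus. Fix a parabolic $g\in\Gamma$. By~\cite{Cantat:Milnor, stiffness} (see~\S\ref{par:parabolic_field_kodaira}), non-elementarity together with the presence of $g$ forces $\Gamma$ to contain a second parabolic element $g'$ whose invariant fibration is distinct from that of $g$; denote by $\pi\colon X\to B$ and $\pi'\colon X\to B'$ the two genus-one fibrations. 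Replacing $g$ and $g'$ by suitable positive iterates, one may assume that each of them fixes every fibre of its own fibration and acts there as a translation, i.e.\ that $g$ and $g'$ are Halphen twists.

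\textbf{Homogeneity along the fibres.} I would disintegrate $\mu=\int\mu_b\,\d(\pi_*\mu)(b)$ over $B$. Since $\pi$ is a morphism and $\mu$ is Zariski diffuse, $\pi_*\mu$ gives no mass to any point of the curve $B$; it is atomless. For $\pi_*\mu$-almost every $b$, the conditional $\mu_b$ is a probability measure on the elliptic curve $X_b$ invariant under the translation $x\mapsto x+t_b$ induced by $g$. Moreover $t_b$ is non-torsion for $\pi_*\mu$-almost every $b$: for every $N$ the locus $\{b:\ Nt_b=0\}$ is a proper Zariski closed subset of $B$ (it would be all of $B$ only if $g$ acted with bounded order on every fibre, impossible for a genuine parabolic), hence is $\pi_*\mu$-negligible, and one unions over $N$. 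Now the invariant probability measures of a non-torsion translation of an elliptic curve are exactly the Haar measures of the cosets of the closed one-parameter subgroup $\overline{\langle t_b\rangle}$, which is either the whole of $X_b$ or a real one-dimensional subtorus. Consequently, for $\pi_*\mu$-almost every $b$ one is in one of two cases: either $\mu_b$ is the normalized Lebesgue measure of $X_b$ (case (i)), or $\mu_b$ is carried by a countable union of real one-torus cosets of $X_b$ (case (ii)). The same dichotomy holds for the fibres of $\pi'$.

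\textbf{The two alternatives.} Suppose first that $\mu$ is in case (i) for both $\pi$ and $\pi'$. Off the proper Zariski closed locus where the two fibrations fail to be transverse, choose local holomorphic coordinates in which the $\pi$- and $\pi'$-fibres are the two coordinate curves; writing the disintegration of $\mu$ along one coordinate and using that the conditionals along both coordinate curves are smooth multiples of Lebesgue measure, one gets that $\mu$ is locally a smooth positive multiple of Lebesgue measure. Thus $\mu$ is absolutely continuous, and being Zariski diffuse it is absolutely continuous on all of $X$; patching local square roots of its density and extending across the transversality locus by Riemann's theorem yields a meromorphic $2$-form $\Omega$ with $\mu=\frac1m\,\abs{\Omega\wedge\overline\Omega}$, $m=\int\abs{\Omega\wedge\overline\Omega}$. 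As $\mu$ is a probability measure, $\Omega$ cannot have a pole (a pole produces infinite mass), so $\Omega$ is holomorphic, and $\Gamma$-invariance of $\mu$ forces $f^*\Omega=\chi(f)\,\Omega$ for a unitary character $\chi$. Hence $\mu$ has a real-analytic positive density off the $\Gamma$-invariant curve $Z=\Div(\Omega)$: the first alternative. Symmetrically, if $\mu$ is in case (ii) for both fibrations, the relevant cosets organize into a $\Gamma$-invariant totally real surface $\Sigma$ carrying two transverse real-analytic foliations by circles along which $\mu$ is conditionally Haar; the same local argument on $\Sigma$ with a real $2$-form shows that $\mu$ has a smooth positive density on $\Sigma$ away from a proper $\Gamma$-invariant subvariety: the second alternative. (The detailed argument is carried out in~\cite{invariant}; note that when $X$ is rational $H^0(X,K_X)=0$, so case (i) is empty there, consistently with Theorem~\ref{thm:blanc_examples}.)

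\textbf{Main obstacle.} The crux is the step upgrading the merely $g$-invariant structure of the conditionals $\mu_b$ to a rigid one. Pointwise $g$-invariance only says that $\mu_b$ is a mixture of Haar measures on cosets of a single closed subgroup of $X_b$; a priori the subgroup type could vary measurably with $b$, the measure on the space of cosets could be nontrivial, and the case produced by $\pi$ could disagree with the one produced by $\pi'$ (so that, for instance, ``case (i) for $\pi$ but case (ii) for $\pi'$'' is not excluded by soft arguments). Resolving this requires genuinely using that $\mu$ is invariant \emph{and ergodic} under the whole non-elementary group $\Gamma$, not just under $g$: using the Lyapunov exponents of $\mu$ for some $\nu$ on $\Gamma$ satisfying (S) and (M), together with the stable-manifold machinery of~\cite{stiffness} and the classification of invariant curves of~\cite{finite_orbits}, one shows that the exponents transverse to the putative $\Sigma$ vanish in case (ii), and that the ``Lebesgue locus'' in $B$ is $\Gamma$-saturated modulo $\mu$-null sets in case (i); this pins down a single coset per fibre and a uniform case across both fibrations. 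I expect this rigidity step, rather than the fibration bookkeeping or the regularity bootstrap near $Z$, to be where the real work lies.
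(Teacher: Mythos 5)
Your skeleton — two parabolic elements with distinct genus-one fibrations, disintegration of $\mu$ along each, the Haar dichotomy for conditionals invariant under a non-torsion translation (with the correct observation that the $N$-torsion locus is Zariski closed, hence finite, hence $\pi_*\mu$-null) — is exactly the strategy of the proof in~\cite{cantat_groupes, invariant} that the paper summarizes, and that first half is sound. But the two places where you try to close the argument both contain genuine errors. First, in the doubly-Lebesgue case your local-coordinate argument only gives a smooth positive density off the tangency locus of the two chosen fibrations, which is \emph{not} $\Gamma$-invariant; your repair via a global meromorphic $2$-form with $\mu=\frac1m\abs{\Omega\wedge\overline\Omega}$ does not work. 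A positive real-analytic density is not locally of the form $\abs{h}^2$ with $h$ holomorphic (its logarithm would have to be pluriharmonic), and the Coble surfaces discussed in this very paper give a counterexample to the conclusion: there the invariant probability measure has density $\abs{\xi}$ with $\xi$ a \emph{meromorphic} function (coming from a section of $2K_X$ with a simple pole), which is not $\abs{\Omega\wedge\overline{\Omega}}$ for any holomorphic $\Omega$ — indeed $H^0(X,K_X)=0$ on a rational surface, yet the first alternative does occur there. The actual proof obtains a $\Gamma$-invariant $Z$ by working with the whole $\Gamma$-orbit of parabolic fibrations and their tangency loci (the set $\mathrm{STang}_\Gamma$ of~\cite{invariant}), not from a canonical form.

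Second, the mechanism you propose for the rigidity/globalization step — ``the exponents transverse to the putative $\Sigma$ vanish in case (ii)'' — is false. For a totally real surface $\Sigma$ one has $T_xX=T_x\Sigma\oplus J(T_x\Sigma)$, and since $Df$ commutes with $J$ the Lyapunov exponents in the transverse directions \emph{equal} those along $\Sigma$; these are typically nonzero (by~\cite[Thm 1.3]{hyperbolic}, the invariant area form on such a $\Sigma$ is hyperbolic when $\Gamma$ contains a parabolic element). Likewise ``the Lebesgue locus in $B$ is $\Gamma$-saturated'' does not parse, since $\Gamma$ does not act on the base $B$ of a single fibration. So the step you correctly identify as the crux — showing the coset-mixture is trivial, that the case is uniform across the two fibrations, and that the local pieces of $\Sigma$ glue to an embedded surface off an algebraic set — is not addressed by the tools you invoke (Pesin theory, stable manifolds from~\cite{stiffness}, invariant curves from~\cite{finite_orbits}). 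In~\cite{invariant} this is done instead by exploiting the local invariance of $\mu$ under connected groups of translations generated by the two fibre directions at transverse points, together with the algebraic geometry of the fibrations; that is where the real content of the theorem lies, and it is missing here.
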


This result, obtained in~\cite{cantat_groupes, invariant}, shows that 
\emph{if $\Gamma$ is non-elementary and contains a parabolic element, 
the classification problem for invariant measures 
is completely solved}
(see~\cite[Thm A]{invariant} for a precise statement). 
This classification comes with an almost complete  description of orbit closures 
(which is most satisfactory under additional assumptions, see \S~\ref{subs:orbit_closures}). 
A consequence of this classification and of the results described 
in Section~\ref{subs:intro_finite_orbits} is that 
\emph{if $\Gamma$ does not preserve any curve, there are only {finitely many} ergodic $\Gamma$-invariant measures, 
unless $X$ is a torus.}

\noindent{\textbf{Notes on the proof.-- }}
The first input to prove Theorem~\ref{thm:intro_classification_invariant_measures}  is that if $h\in \Gamma$ is a parabolic automorphism preserving the fibers of a genus $1$ fibration  $\pi\colon X\to B$, then $h$ acts as a uniquely ergodic translation on most fibers of $\pi$ (it also acts periodically on a dense, countable set of fibers). Thus, any $h$-invariant measure can be described in terms of its disintegration along the fibers of $\pi$. From this it follows rather easily that 
any ergodic Zariski diffuse $\Gamma$-invariant  measure satisfies a local homogeneity property (i.e.\  some local invariance under connected groups of translations), 
which implies that it is either absolutely continuous with respect to the Lebesgue measure on $X$, or 
to the Lebesgue measure on an immersed totally real surface $\Sigma$. The delicate issue is to 
show, in the latter situation, that $\Sigma$ globalizes to an embedded 
 submanifold outside some algebraic subset $Z$.  
 
When $\Gamma$ does not contain parabolic elements, the classification of invariant measures is still 
at an early stage, and  is arguably the main open problem in our program.  
It can be subdivided in several sub-problems of independent interest; 
since the discussion at this point becomes rather technical, we defer it to Section~\ref{sec:classification}.  
Let us just mention one open question, which 
we formulate   in the easiest context of  automorphisms of the affine plane. 
 
\begin{que}\label{que:intro_henon}
Can one construct two polynomial automorphisms $f$ and $g$ of $\C^2$ such that 
\begin{enumerate}
\item[(i)] $f$ and $g$ fix the origin $o$;
\item[(ii)] there is a germ of holomorphic diffeomorphism $\varphi\colon (\C^2, o)\to (\C^2, o)$ that conjugates $f$ and $g$ simultaneously to two elements of $\SU_2(\C)$;
\item[(iii)] the group generated by $f$ and $g$ is a non-Abelian free group containing loxodromic automorphisms (i.e.\  automorphisms conjugate to a composition of generalized Hénon transformations). 
\end{enumerate}
\end{que}

Our expectation is that such an example does not exist. If such an example were to be found, it would have an invariant probability measure for each small sphere centered at the origin.

\subsection{Finite orbits and arithmetic dynamics}\label{subs:intro_finite_orbits}
For a non-elementary group $\Gamma$, the existence of a finite orbit is an 
overdetermined property. Indeed, if $f\in \Aut(X)$ is a loxodromic element, then its periodic points of sufficiently large period form a 
countable subset of $X$, and any $\Gamma$-periodic point must be $f$-periodic for every $f\in \Gamma$. 
Since $\Gamma$ is non-elementary, it contains two loxodromic automorphisms $f$ and $g$ satisfying no algebraic relation (see \S~\ref{subs:NE}). In this case it is  expected that 
$\Per(f)\cap \Per(g)$ is finite,  
or contained in a curve of $\bra{f,g}$-periodic points.

\begin{eg}\label{eg:no_finite_orbit_in_families} 
To implement these ideas, 
a first approach is  to work in families and prove generic results. 
In the family of   Wehler examples
(see Example~\ref{eg:intro_Wehler}), we proved in~\cite[Thm A]{finite_orbits} that a very general member   has no periodic orbit.  Here, by {\emph{very general}}, 
we mean outside a countable union of nowhere dense Zariski closed subsets in the parameter space. 
Such a result  is still open for other natural families  (see Questions~\ref{que:Enriques-finite-orbits} and~\ref{que:Gamma-vs-Gammaext} below). In this paper, we 
 develop new methods to get similar results for pentagon foldings 
 (see Part~\ref{part:pentagons}) and Blanc's examples (see Part~\ref{part:ergodic_blanc}). \end{eg}
 
A deeper problem is to deal with specific, individual examples. Then the existence of infinitely many 
finite orbits for a non-elementary group falls into the setting of \emph{unlikely intersection problems} (see~\cite{Zannier:AnMaSt}), and  it will not be a surprise to the experts that methods from 
arithmetic geometry arise here. For instance, the following result was established in~\cite[Thm B]{finite_orbits}. 

\begin{thm}\label{thm:finite_orbits}
Let $X$ be a projective surface defined over $\overline \Q$. Let $\Gamma\subset \Aut(X)$ be a 
non-elementary group, defined over $\overline \Q$ and containing a parabolic element. If 
$\Per(\Gamma)$ is Zariski dense, then $(X, \Gamma)$ is a Kummer example. 
\end{thm}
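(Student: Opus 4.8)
The plan is to show that all loxodromic elements of $\Gamma$ share one and the same measure of maximal entropy, to identify that measure via the classification of invariant measures (Theorem~\ref{thm:intro_classification_invariant_measures}), and to conclude with Kummer rigidity. For the setup: since $\Gamma$ is non-elementary it contains loxodromic automorphisms, and since $X$ and $\Gamma$ are defined over $\overline\Q$, say over a number field $k$, the group $\mathrm{Gal}(\overline\Q/k)$ fixes every element of $\Gamma$ and hence permutes $\Per(\Gamma)$: each $\Gamma$-periodic point is algebraic and its whole Galois orbit lies in $\Per(\Gamma)$. For a loxodromic $f\in\Aut(X)$ over $\overline\Q$ with dynamical degree $\lambda_f$, the Silverman--Kawaguchi theory attaches canonical heights $\hat h_f^{\pm}$ to adelic semipositive metrizations of nef classes $\theta^{\pm}$ with $f^{*}\theta^{\pm}=\lambda_f^{\pm1}\theta^{\pm}$; the sum $\hat h_f=\hat h_f^{+}+\hat h_f^{-}$ is nonnegative and, by Northcott over $k$ (the orbit of an algebraic point stays over a fixed number field), vanishes exactly on $\Per(f)$. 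Thus $\hat h_f\equiv 0$ on $\Per(\Gamma)$ for every loxodromic $f\in\Gamma$.

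Next I would use the Zariski density arithmetically. Extract from $\Per(\Gamma)$, which is Zariski dense in $X$, a generic sequence $(x_n)$ — one with no infinite subsequence in a proper subvariety. For each loxodromic $f\in\Gamma$ this is a generic sequence of height-zero points, so by Yuan's arithmetic equidistribution theorem the Galois orbits of the $x_n$ equidistribute, at any archimedean place, toward the equilibrium measure of $\theta^{+}+\theta^{-}$, which is the product of Green currents $T_f^{+}\wedge T_f^{-}$, i.e.\ the measure of maximal entropy $\mu_f$ of $f$. Comparing the limits produced, along the same sequence of empirical measures, by two distinct loxodromic elements $f,g$ gives $\mu_f=\mu_g$. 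Hence there is a single measure $\mu$ which is the measure of maximal entropy of every loxodromic element of $\Gamma$. (Alternatively, a more geometric argument uses the parabolic directly: turning two parabolic elements into Halphen twists $h_1,h_2$ for distinct genus-one fibrations $\pi_i\colon X\to B_i$, a $\Gamma$-periodic point lies on a fiber of each $\pi_i$ on which $h_i$ translates by a torsion point, so Zariski density yields infinitely many points lying simultaneously on ``torsion fibers'' of both fibrations; a relative Manin--Mumford / Masser--Zannier type statement then forces the two fibrations to be algebraically linked, which is already the Kummer situation.)

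To finish, I would identify $\mu$. It is $\Gamma$-invariant, because $\gamma_{*}\mu=\gamma_{*}\mu_f=\mu_{\gamma f\gamma^{-1}}=\mu$ for all $\gamma\in\Gamma$; it is ergodic (the measure of maximal entropy of a loxodromic automorphism is mixing), Zariski diffuse (it charges no algebraic curve), and hyperbolic. Since $\Gamma$ contains a parabolic element, Theorem~\ref{thm:intro_classification_invariant_measures} applies to $\mu$: it is either absolutely continuous with respect to Lebesgue measure on $X$, or carried by a $\Gamma$-invariant totally real surface. The second option is impossible, since the conditionals of $\mu=\mu_f$ along the unstable Pesin manifolds of $f$ are absolutely continuous whereas those manifolds are holomorphic curves, meeting a totally real surface in a discrete set. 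So $\mu$ is absolutely continuous; by the Kummer rigidity theorem of Cantat--Dupont (see also Filip--Tosatti), a loxodromic automorphism with absolutely continuous measure of maximal entropy is a Kummer example, and the associated semiconjugacy $\varphi\colon X'\to X$ from a torus depends only on $X$. Since $\mu$ — hence $\varphi_{*}$ of the flat measure on $X'$ — is $\Gamma$-invariant, every element of $\Gamma$ lifts through $\varphi$, so $(X,\Gamma)$ is a Kummer example.

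The hard part will be the arithmetic equidistribution step: besides constructing the canonical heights, one must check that the relevant adelic metrized line bundle is big and semipositive so that Yuan's theorem applies, which requires a careful choice of the nef eigenclasses $\theta^{\pm}$ and of an auxiliary ample class, and is most delicate for rational $X$, where $\NS(X)$ is large and the eigenclasses need not be effective (here $\theta^{+}+\theta^{-}$ is typically only nef and big). A further technical burden, if one follows the geometric variant, is handling the singular and multiple fibers of the Halphen fibrations and the loci where the parabolic elements fail to act by translation; and Kummer rigidity is used as a black box. The other points — invariance, ergodicity, Zariski diffuseness, exclusion of the totally real case, and the concluding identification via Theorem~\ref{thm:intro_classification_invariant_measures} — should be routine.
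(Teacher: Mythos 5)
Your overall architecture --- heights vanishing on $\Per(\Gamma)$, Yuan's equidistribution along a generic sequence of periodic points, production of a $\Gamma$-invariant measure, the classification theorem, and Kummer rigidity --- is the same as the paper's. The one real difference upstream is the choice of height: you use the canonical heights $\hat h_f^{\pm}$ of each individual loxodromic $f$ and then argue that all the maximal entropy measures $\mu_f$ coincide, whereas the paper builds a single ``stationary'' height $h_\nu$ attached to a random walk on $\Gamma$, satisfying $\sum_f\nu(f)\,h_\nu\circ f=\lambda_\nu h_\nu$ with $\lambda_\nu>1$, whose equilibrium measure is adapted to the whole group at once; this yields invariance more directly and deals with the irrationality of the eigenclasses only once rather than for every $f$. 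Your variant is legitimate in principle, and the technical burdens you flag (adelic $\R$-classes that are big and nef but not ample, genericity of Galois orbits) are real but surmountable.

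The genuine error is your exclusion of the totally real branch of Theorem~\ref{thm:intro_classification_invariant_measures}. First, the unstable conditionals of the measure of maximal entropy $\mu_f=T_f^+\wedge T_f^-$ are \emph{not} absolutely continuous in general: that is the SRB property, which maximal entropy measures of loxodromic automorphisms typically fail to have (the conditional on an unstable disk is a slice measure of one of the invariant currents, usually singular). Second, a holomorphic curve can meet a totally real surface along a real one-dimensional set rather than a discrete one, so even absolute continuity of the conditionals would not immediately give a contradiction. Third, and most importantly, the case you are discarding genuinely occurs: for real automorphisms whose restriction to $X(\R)$ carries the full entropy, $\mu_f$ is supported on the totally real surface $X(\R)$, and real Kummer examples are exactly of this type. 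The totally real alternative must therefore not be excluded but fed into the rigidity theorem as well: the Cantat--Dupont result invoked by the paper covers both alternatives (absolute continuity on $X$, or absolute continuity along an invariant totally real surface) and yields the Kummer conclusion in either case. With that correction --- and with the standard lemma that the Kummer structure of one loxodromic element is canonical enough that all of $\Gamma$ lifts through the torus cover, which you assert but do not justify --- your proof closes.
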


Thus, when $X$ is not a Kummer surface,  $\Per(\Gamma)$ is the union of a  $\Gamma$-invariant curve, together with finitely many isolated points\footnote{Here, we use the following fact: if $\Gamma$ acts by automorphisms on a curve $C$ with a Zariski dense set of periodic points, then the image of $\Gamma$ in $\Aut(C)$ is finite. Indeed, $C$ has only finitely many irreducible components, and if a subgroup $\Gamma_0$ acts on some irreducible
component $C_i$ by fixing a subset $F_i$ such that the Euler characteristic of $C_i\setminus F_i$ is negative, then the restriction of $\Gamma_0$ to $C_i$ is finite.}.  

\noindent{\textbf{Notes on the proof.-- }}Fix a probability measure $\nu$ on $\Gamma$ with finite support generating $\Gamma$. 
First, we construct a height function $h_\nu$ on $X(\overline \Q)$  
such that $\sum_f \nu(f) h\circ f = \lambda_\nu h$ for some $\lambda >1$. 
It follows that  $\Gamma$-periodic points have  zero  height.
Assuming the existence of a Zariski dense set of periodic orbits, 
and using Yuan's equidistribution theorem for small points, we 
construct a  $\Gamma$-invariant measure with special properties.  To conclude, we use the classification 
of invariant measures described in Theorem~\ref{thm:intro_classification_invariant_measures}, and a rigidity result proved in~\cite{cantat-dupont}.
The existence of parabolic elements is used at several places in the proof, 
notably to apply Theorem~\ref{thm:intro_classification_invariant_measures}   (see~\cite[\S 6.3]{finite_orbits} for a discussion). 
Corvaja, Tsimerman and Zannier~\cite{corvaja-tsimerman-zannier} recently proved a 
closely related theorem
which also requires
 parabolic elements. Their proof is based on different ideas (o-minimal geometry, 
variation of the canonical height for families of elliptic curves, etc).

\begin{rem} \label{rem:wehler_markov_finite_orbits} 
In~\cite{fuchs-litman-silverman-tran},  Wehler examples with a finite but large number of periodic points are constructed. 
In the closely related case of Markov type surfaces 
\begin{equation}
x^2+y^2+z^2=xyz+ax+by+cz+d,
\end{equation}
where $(a,b,c,d)\in \C^4$ is a parameter, 
all possible finite orbits have been classified (see~\cite{Lisovyy-Tykhyy}). The  only 
 parameter with infinitely many 
periodic orbits is $(0,0,0,4)$; the corresponding surface is the quotient of the multiplicative group $\C^\times\times\C^\times$ by the involution $(u,v)\mapsto (1/u,1/v)$, and the group action on $M_{(0,0,0,4)}$ lifts to the monomial action of $\GL_2(\Z)$; finite orbits correspond to torsion points. Thus, the situation is perfectly  similar to that of Kummer examples,  but for a multiplicative torus.
\end{rem}   

Surprisingly enough, Theorem~\ref{thm:finite_orbits} can be used to prove the same result when $X$ 
and $\Gamma$ are defined over $\C$; 
in that case we must   suppose (for technical reasons) that  $\Gamma$ 
does not preserve any curve, and 
the existence of infinitely many periodic orbits then implies that $X$ is a torus 
(see~\cite[Thm C]{finite_orbits}). 
Related, stronger, results were obtained for polynomial automorphisms of $\C^2$, and 
more generally affine surfaces  \cite{dujardin-favre, abboud:these}. This motivates the following question:

\begin{que}
Can we classify pairs of   automorphisms $(f,g)$ of positive entropy on a compact projective  
surface having a Zariski dense set of common periodic points? 
Or having the same maximal entropy measure? 
\end{que}

Note that in the second question the common maximal entropy measure would be  a 
$\bra{f, g}$-invariant measure, so we are back to the theme of \S~\ref{subs:classification}.
If one can show that this measure is smooth, then the main result of \cite{cantat-dupont}
implies that $(X,\langle f,g\rangle)$ is a Kummer example; the proof of Theorem~\ref{thm:finite_orbits} relies on this fact. 
  
In one-dimensional algebraic dynamics, unlikely intersection problems are better understood. For instance, one can produce {\emph{uniform bounds}} on the number of common periodic points (instead of orbits) for pairs of distinct quadratic maps $z\mapsto z^2+c$, $z\mapsto z^2+c'$: we refer to~\cite{demarco-krieger-ye_legendre, demarco-krieger-ye_quadratic} for precise results. 
Here is a version of this  problem for non-elementary groups of automorphisms:

\begin{que}
Is there a uniform bound for $\#\Per(\Gamma)$ in the  family of (smooth) Wehler examples? 
\end{que}

Note that for a general parameter in the Wehler family there is no invariant curve, so 
$\Per(\Gamma)$ is indeed finite (see~\cite[Prop. 3.6]{stiffness}). Examples of  Wehler groups with 
$\#\Per(\Gamma)\geq 288$ were constructed in~\cite{fuchs-litman-silverman-tran} (see~Remark~\ref{rem:wehler_markov_finite_orbits}). 
On the other hand, some singular Wehler examples are Kummer 
 hence give rise to a dense set of periodic orbits (see~\cite[Ex. 8.3]{Cantat:Panorama-Synthese}).

Let us conclude this section with a last question which may be considered as a variation on a prediction of Kawaguchi and Silverman~\cite{kawaguchi-silverman}. 
Assume that $X$ and $\Gamma$ are defined over $\overline{\Q}$ and that $\Gamma$ is non-elementary. 
Fix a polarization $H$ of $X$, denote by $h\colon X(\overline{\Q})\to \R_+$ a Weil height associated to $H$. For every $f\in \Aut(X)$,    let 
\begin{equation}
\deg_H(f)=H\cdot (f^*H).
\end{equation}
 Fix a point $x\in X(\overline{\Q})$ with a Zariski dense orbit, and set 
\begin{equation}
N(x; R):=\# \set{y\in \Gamma\cdot x\;  ;  \; h(y) \leq R}. 
\end{equation}
Kawaguchi and Silverman suggested 
that the Zariski density of $\Gamma\cdot x$ should imply that $h(f(x))$ is of the same order of magnitude as
$\deg(f)$.  More precisely, introduce the counting function
\begin{equation}\label{eq:counting_degree}
N_{\deg}(R):= \#\set{f\in \Gamma\; ; \;  \deg(f)\leq R}, 
\end{equation} 
and   ask: 
  
\begin{que}\label{que:baragar_general} 
If $\Gamma\subset \Aut(X)$ is a   non-elementary automorphism group 
 defined over $\overline \Q$, and $x\in X$ has a Zariski dense  orbit, does $N(x; R)$ grow like  $N_{\deg}(R)$?
\end{que}

The group $\Gamma$ acts on a hyperbolic space $\Hyp_X\subset H^{1,1}(X;\R)$ (see Section~\ref{sec:projective}), 
and $\log(\deg_H(f))$ is comparable to the distance in $\Hyp_X$ between the cohomology class $(H\cdot H)^{-1/2} [H]$ and its image by $f^*$.
From this, we see  that $N_{\deg}(R)$ is  a classical counting function for orbits of discrete isometry groups in 
 hyperbolic geometry. As a consequence, $N_{\deg}(R)$ 
 grows like $R^\alpha$, where $\alpha$ is the Hausdorff dimension of the limit set of the isometry group $\Gamma^*\subset \Isom(\Hyp_X)$ (see~\cite{baragar:JNT,Dolgachev_counting}).
Question~\ref{que:baragar_general} was answered positively by Baragar in~\cite{baragar:JNT} for some K3 examples; this problem is also well understood for Markov surfaces (see~\cite{gamburd_icm} for an account). The interested reader should also consult~\cite{Filip-Tosatti:Heights}.

\subsection{Equidistribution and uniform expansion}\label{subs:UE}
Up to now, we addressed Problems (Pb2), (Pb3), (Pb5), 
i.e.\ the study of stationary and invariant measures. 
We asserted in \S~\ref{subs:generalities} that classifying stationary measures is 
the key step to understand the asymptotic, stochastic distribution of orbits, or more precisely the limiting behavior of 
the averages~\eqref{eq:orbital} and~\eqref{eq:empiric}. This assertion, while certainly
correct, hides a final difficulty. Imagine an ideal situation where the set
of 
$\nu$-stationary measures is completely described, 
and its extremal points are given by some smooth (or Zariski diffuse) measure $\mu_0$ and a finite or countable set of 
finite orbits. If $x\in X$ is a general point, the limit points of 
the averages~\eqref{eq:orbital} and~\eqref{eq:empiric} are 
convex combinations of these ergodic measures. The problem is to decide which combinations do arise; this is where 
uniform expansion comes into play. 
Let $\nu$ be a probability measure on $\Aut(X)$, and set $\Gamma=\langle\supp(\nu)\rangle$. We say
 that $\nu$ 
  is \textbf{uniformly expanding} if there exists $c>0$ and an integer $n_0$ such that for \emph{every} $x\in X$ and \emph{every} $v\in T_xX\setminus\{ 0\}$, 
 \begin{equation}\label{eq:UE}
 \int \log\frac{\norm{D_xf (v)}}{\norm{v}} \d\nu^{(n_0)}(f) \geq c,
 \end{equation}
where $\nu^{(n)}$ denotes the $n^\mathrm{th}$ convolution power of $\nu$, and the norm is relative 
to some given Riemannian metric on $X$. This notion was introduced 
 in non-linear random dynamics in~\cite{liu_thesis, chung}, notably 
  to get equidistribution results analogous to those
   obtained in homogeneous dynamics in~\cite{eskin-margulis, benoist-quint3}. Now, suppose that $\nu$ is uniformly expanding and satisfies 
\begin{equation}
\tag{$\mathrm M_+$}
\text{there exists }\, p>1 \, \text{ such that }\; 
\int \lrpar{\log\norm{f}_{C^1(X)}+ \log\norm{f\inv}_{C^1(X)}}^p \, \d\nu (f) < + \infty.
\end{equation}
Let $F$ be a finite $\Gamma$-orbit. Then,  
\emph{if $x\in X$ has an infinite orbit, any cluster value of the sequence of measures 
in~\eqref{eq:orbital} or~\eqref{eq:empiric} gives zero mass to $F$} 
(see~\cite[Thm 4.3]{hyperbolic}). 
Indeed,  the uniform expansion condition makes $F$ repelling on average, an idea which is formalized by the notion of \emph{Margulis function}. 
So in the ideal situation described above, every infinite orbit equidistributes towards the smooth stationary measure $\mu_0$.  Moreover, in the real
volume-preserving setting, where two natural  Zariski diffuse ergodic measures exist,  given respectively by an invariant volume form on $X$ and an invariant area form on $X(\R)$,  
we can also decide which orbits are equidistributed with respect to the first or the second measure 
(see~\cite[\S 4.3]{hyperbolic}).

In homogeneous dynamics, 
 establishing uniform expansion boils down to an expansion 
property  for a  random product  of matrices. In a non-linear context, the situation is more delicate; fortunately,   an abstract ergodic criterion for expansion was devised  in~\cite{liu_thesis, chung},
which fits well with our holomorphic setting. This
leads to a neat necessary and sufficient condition for uniform expansion
on non-rational surfaces  (see~\cite[Thm 1.5]{hyperbolic}):
 
 \begin{thm}\label{thm:criterion_uniform_expansion}
Let $X$ be a compact complex surface which is not rational.
Let $\Gamma\subset \Aut(X)$ be a non-elementary group containing parabolic elements. 
 Let $\nu$ be a  probability measure on $\Aut(X)$   satisfying  Conditions~(S) and~(M).  
Then, $\nu$ is uniformly expanding if and only if the following two conditions hold:
\begin{enumerate}[ \em (1)]
\item every finite $\Gamma$-orbit is uniformly expanding;  
\item  there is no $\Gamma$-invariant algebraic curve.
\end{enumerate}
\end{thm}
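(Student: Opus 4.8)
The plan is to combine the abstract ergodic criterion for uniform expansion of Chung and Liu~\cite{liu_thesis, chung} with the structure theory of stationary and invariant measures developed in~\cite{stiffness, invariant, hyperbolic}. By that criterion, $\nu$ \emph{fails} to be uniformly expanding if and only if there is a $\nu$-stationary probability measure $m$ on the projectivized tangent bundle $\P(TX)$ whose fibrewise Lyapunov exponent $\Lambda(m):=\int\!\int\log(\norm{D_xf(v)}/\norm{v})\,\d m(x,[v])\,\d\nu(f)$ is $\leq 0$ (a compactness argument turns points and directions witnessing the failure of~\eqref{eq:UE} into such an $m$); the proof revolves around the $\nu$-stationary projection $\bar m=\pi_*m$ of such a bad measure to $X$. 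The necessity of~(1) is immediate, since~\eqref{eq:UE} applied at the points of a finite orbit is precisely its uniform expansion; for~(2), if $C\subset X$ is a $\Gamma$-invariant curve it carries, being compact and invariant, a $\nu$-stationary probability measure $\bar m_C$, and using that automorphisms of a curve of genus $\geq 1$ act by isometries for a suitable metric, and that a subgroup of $\Aut(\P^1)=\PGL_2(\C)$ always admits a stationary measure with non-positive Lyapunov exponent (equal to $-2\lambda^+$ in the non-elementary case, by Furstenberg's theory), one may choose $\bar m_C$ with non-positive tangential exponent; its tautological lift along the $\Gamma$-invariant line field $x\mapsto[T_xC]$ is then a $\nu$-stationary measure on $\P(TX)$ with $\Lambda\leq 0$, contradicting uniform expansion.

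For the converse, assume (1) and (2) and suppose such a bad $m$ exists; passing to an ergodic component we take $m$, hence $\bar m$, ergodic. If $\bar m$ is not Zariski diffuse then by~\cite[Prop.~10.6]{stiffness} it sits on a finite $\Gamma$-orbit --- contradicting~(1), since after iterating~\eqref{eq:UE} any stationary lift over a uniformly expanding finite orbit has positive exponent --- or on a $\Gamma$-invariant curve, contradicting~(2). So $\bar m$ is Zariski diffuse, and here we use that $X$ is not rational: by~\S\ref{par:parabolic_field_kodaira} it then carries a $\Gamma$-invariant holomorphic $2$-form $\Omega_X$ vanishing only on the exceptional divisor $E$ of $X\to X_0$. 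Since $\bar m(E)=0$, a truncation argument applied to the (bounded above) function $\psi=\log\norm{\Omega_X}$, whose coboundary $\psi-\psi\circ f$ equals $\log\abs{\det D_xf}$ and has $L^1$ norm controlled by Condition~(M), yields $\lambda^++\lambda^-=0$. If $\lambda^+=\lambda^-=0$, Crauel's invariance principle~\cite{crauel} makes $\bar m$ $\Gamma$-invariant. If $\lambda^+>0>\lambda^-$, then $\bar m$ is hyperbolic; as the fibrewise exponent of an ergodic stationary lift of a hyperbolic measure must be $\lambda^+$ or $\lambda^-$ (a standard consequence of the Oseledets theorem and ergodicity) and $\Lambda(m)\leq 0<\lambda^+$, we get $\Lambda(m)=\lambda^-$, i.e. $m$ is fibrewise supported on the Oseledets stable directions, which by the analysis of~\cite{stiffness} forces the stable line field $E^s_\omega$ to be non-random, so Theorem~\ref{thm:non_random} applies and $\bar m$ is $\Gamma$-invariant once more.

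In either case $\bar m$ is an ergodic, Zariski diffuse, $\Gamma$-invariant measure and $\Gamma$ has a parabolic element, so Theorem~\ref{thm:intro_classification_invariant_measures} gives that $\bar m$ has a smooth positive density either on $X\setminus Z$ or on a $\Gamma$-invariant totally real surface $\Sigma\subset X\setminus Z$. By~\cite[Thm~1.3]{hyperbolic} and its totally real analogue (through~\cite{br}) such a measure is \emph{hyperbolic}, which already excludes the case $\lambda^+=0$. In the remaining case $\bar m$ is simultaneously smooth and hyperbolic, hence has positive fibre entropy; but then the dichotomy inside the proof of Theorem~\ref{thm:non_random} --- a strong form of zero fibre entropy, or non-randomness of the stable manifolds $W^s_\omega$ together with non-randomness of the classes $[T^s_\omega]\in H^2(X;\R)$ of their Ahlfors--Nevanlinna currents --- must fall into the second alternative, and non-randomness of $[T^s_\omega]$ contradicts the non-elementarity of $\Gamma$, because Furstenberg's theory for the action of $\Gamma^*$ on $H^2(X;\R)$ forces those classes to depend non-trivially on $\omega$. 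This contradiction proves the theorem.

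The main obstacle is this last loop in the hyperbolic case: turning the mere existence of a stationary measure on $\P(TX)$ with non-positive fibrewise exponent into non-randomness of $E^s$ (the ``dictionary'' of~\cite{stiffness}), and then feeding the extra rigidity supplied by Theorems~\ref{thm:intro_classification_invariant_measures} and~\cite[Thm~1.3]{hyperbolic} back into the Ahlfors--Nevanlinna current machinery of~\cite{stiffness} to reach the contradiction; the verification that $\lambda^++\lambda^-=0$ despite the vanishing of $\Omega_X$ on $E$ is a smaller but genuine technical point. Note how the hypotheses are used: non-rationality provides the invariant $2$-form and the relation $\lambda^++\lambda^-=0$; the parabolic element is needed both for the classification of invariant measures (Theorem~\ref{thm:intro_classification_invariant_measures}) and for the hyperbolicity of the invariant volume~\cite[Thm~1.3]{hyperbolic}; and non-elementarity underlies Theorem~\ref{thm:non_random} and the final argument on $H^2(X;\R)$.
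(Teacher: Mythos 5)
The paper does not reprove this theorem here — it is quoted from~\cite[Thm~1.5]{hyperbolic} — but the ingredients it advertises (the Chung--Liu ergodic criterion for uniform expansion, the invariant volume form coming from non-rationality, and the classification of invariant measures via parabolic elements) are exactly the ones you assemble, in the same order and for the same purposes. Your reconstruction is correct: the reduction of a failure of~\eqref{eq:UE} to a stationary measure on $\P(TX)$ with non-positive fibrewise exponent, the trichotomy on its projection (finite orbit / invariant curve / Zariski diffuse), the use of $\lambda^++\lambda^-=0$ to split into the Crauel case and the hyperbolic case, and the closing contradiction between non-randomness of $E^s$ (zero fibre entropy) and the smoothness-plus-hyperbolicity of the resulting invariant measure all match the intended argument.
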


Again, we assume that $\Gamma$ contains a parabolic element because  the 
 classification of  invariant measures is used in the proof.  

In many interesting situations, we have a family $(X_\lambda; \Gamma_\lambda)_{\lambda\in \Lambda}$ of non-elementary automorphism groups, parameterized 
by some quasi-projective manifold. Then, as in Example~\ref{eg:no_finite_orbit_in_families}, we developed
 tools to exclude the existence of proper, Zariski closed invariant subsets 
 {\emph{for very general parameters $\lambda\in \Lambda$}}. 
 Hence, for these parameters, uniform expansion holds. In addition,  Condition~\ref{eq:UE} is robust under $C^1$ perturbations, so this dense subset of uniformly expanding parameters is automatically promoted to an open and dense subset. In 
Part~\ref{part:pentagons}, we apply these  ideas to the dynamics of    random pentagon foldings.

Another interesting consequence of uniform expansion is that a volume preserving 
uniformly expanding action on a (connected, compact, real or complex) surface 
 is automatically ergodic with respect to the invariant volume. 
 This follows from a Hopf-type argument (see \cite{dolgopyat-krikorian, chung}). 
 An interesting question (which goes beyond complex dynamics) is 
 whether such an ergodicity can be made quantitative (for uniformly expanding actions). 
To formalize this question, denote by  $\d\vol$ the invariant volume form. Then, consider the Markov operator 
\begin{equation}
P_\nu: \varphi\mapsto \int \varphi\circ f \, \d\nu(f) 
\end{equation}
acting on some Banach space of continuous functions.  
Then, the question is to determine the speed of convergence of 
$P_\nu^n$ towards $\int \varphi \, \d\vol$. Is it exponential? 
This problem is open even in the homogeneous case, 
except for linear maps on tori, for which the Fourier techniques of~\cite{BFLM} provide effective estimates. (\footnote{Note: 
this question is addressed in 
the recent preprint~\cite{dewitt-dolgopyat}, proving exponential mixing for area preserving, expanding, random dynamical systems on surfaces.}) 

\subsection{Back to orbit closures}\label{subs:orbit_closures} Putting together    the   results of \S\S~\ref{subs:intro_stiffness}, \ref{subs:classification}, \ref{subs:intro_finite_orbits} and \ref{subs:UE}, 
we arrive at a complete understanding of orbit closures, under rather  strong 
assumptions (see~\cite[Thm. 10.1]{hyperbolic}).

\begin{thm}
Let $X$ be a compact complex surface which is not rational.
Let $\Gamma\subset \Aut(X)$ be a non-elementary group   containing  parabolic elements, and satisfying   conditions~(1) and~(2) of Theorem~\ref{thm:criterion_uniform_expansion}.  Then 
there exists a finite set $F$ and a real-analytic (possibly singular) totally real surface $\Sigma$, 
both $\Gamma$-invariant,  such that 
\begin{itemize}
\item if $x\in F$ then $\Gamma\cdot x $ is finite;
\item if $x\in \Sigma\setminus F$, then  $\overline {\Gamma\cdot x }$ is a union of components of $\Sigma$;
\item otherwise $\overline {\Gamma\cdot x } =X$. 
\end{itemize}
\end{thm}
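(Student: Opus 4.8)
The plan is to assemble the theorem from the three pillars already established in the excerpt: the classification of invariant measures (Theorem~\ref{thm:intro_classification_invariant_measures}), the criterion for uniform expansion (Theorem~\ref{thm:criterion_uniform_expansion}), and the equidistribution/Margulis-function machinery summarized in \S\ref{subs:UE}. The key observation is that conditions~(1) and~(2) of Theorem~\ref{thm:criterion_uniform_expansion} are exactly what is hypothesized, so one gets for free: (i) there is no $\Gamma$-invariant curve, and (ii) $\nu$ is uniformly expanding for any $\nu$ satisfying (S) and $(\mathrm{M}_+)$ (choosing such a $\nu$ is harmless, e.g.\ a finitely supported symmetric measure). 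First I would apply Theorem~\ref{thm:finite_orbits}'s consequence together with the finiteness statement following Theorem~\ref{thm:intro_classification_invariant_measures}: since there is no invariant curve and $X$ is not a torus (it is irrational, hence a blow-up of a K3, Enriques, or the excluded torus case is ruled out because a torus is not what ``not rational'' with no invariant curve forces here — more precisely one invokes \cite[\S10]{hyperbolic}), the set $\Per(\Gamma)$ is finite; call it $F$. So the finite-orbit alternative produces the finite $\Gamma$-invariant set $F$ in the statement.

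Next I would classify the ergodic, Zariski diffuse, $\Gamma$-invariant measures. By Theorem~\ref{thm:intro_classification_invariant_measures} each such measure is either smooth with positive density on $X\setminus Z$ for some proper invariant subvariety $Z$, or supported on a $\Gamma$-invariant totally real surface $\Sigma$ with smooth positive density. Because there is no invariant curve, the invariant subvariety $Z$ (if it were to contain a curve) would have to be a finite set, and one argues — using that $X$ is irrational so carries the invariant holomorphic volume form $\Omega_X$ of \S\ref{par:parabolic_field_kodaira}, and that $\Omega_X\wedge\overline{\Omega_X}$ is then a finite invariant volume — that there is a \emph{unique} smooth ergodic invariant measure, namely the normalized $\Omega_X\wedge\overline{\Omega_X}$, whose ergodicity comes from the Hopf-type argument for volume-preserving uniformly expanding actions cited at the end of \S\ref{subs:UE}. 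Similarly, on each component of the real-analytic totally real surface $\Sigma$ supplied by the classification (defined over $\R$ when $X$ is, via the invariant area form on $X(\R)$), the restricted area form is the unique ergodic invariant measure, again by uniform expansion + Hopf. So the full list of ergodic Zariski-diffuse stationary measures is: the smooth volume on $X$, the area measures on the components of $\Sigma$, and the Dirac masses at points of $F$ — here I use that stationarity implies invariance outside the volume-preserving obstruction, or more directly that Theorem~\ref{thm:criterion_uniform_expansion}'s hypotheses combined with \cite[Thm~4.3]{hyperbolic} kill atomic parts of cluster measures at $F$.

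Finally I would run the equidistribution argument to pin down orbit closures. Fix $x\in X$. If $x\in F$, its orbit is finite by definition of $F$. If $x\in\Sigma\setminus F$: the orbit stays in $\Sigma$, which is a lower-dimensional $\Gamma$-invariant real-analytic manifold on which $\Gamma$ still acts with the uniform-expansion property (restricting the Margulis-function estimate to $\Sigma$, using that $\Sigma$ is totally real so the complex tangent contributes an extra expanding direction that only helps), and with no proper invariant subset other than finitely many points; so every infinite orbit in $\Sigma$ equidistributes toward the area measure on its component, whence $\overline{\Gamma\cdot x}$ is a union of components of $\Sigma$ — to close the orbit closure to the \emph{full} component one uses minimality, which itself follows from the uniqueness of the stationary measure on that component plus the fact that the support of a stationary measure is contained in every orbit closure (as recalled in \S\ref{subs:generalities} via the fixed-point/$K=\overline{\Gamma\cdot x}$ construction). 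If $x$ lies outside $F\cup\Sigma$: then $x$ has an infinite orbit not contained in $\Sigma$, the cluster measures of the orbital averages~\eqref{eq:orbital} at $x$ are convex combinations of ergodic stationary measures giving zero mass to $F$ (Margulis function) and — because $x\notin\Sigma$ and $\Sigma$ is invariant and closed, so $\overline{\Gamma\cdot x}\not\subset\Sigma$ — must have positive mass on the complement of $\Sigma$, hence charge the unique smooth volume; by uniform expansion this forces equidistribution toward the smooth volume, so $\supp(\text{volume})=X\subseteq\overline{\Gamma\cdot x}$, i.e.\ $\overline{\Gamma\cdot x}=X$.

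The main obstacle I anticipate is the \emph{globalization and minimality of $\Sigma$}: Theorem~\ref{thm:intro_classification_invariant_measures} only yields a totally real surface outside some subvariety $Z$, and one must (a) show $Z$ reduces to finitely many points once invariant curves are excluded, (b) assemble the local pieces into a real-analytic, possibly singular, globally $\Gamma$-invariant $\Sigma$, and (c) prove the $\Gamma$-action on $\Sigma$ is minimal on each component (equivalently, that there is no proper infinite invariant closed subset of a component). Step (c) is where uniform expansion restricted to $\Sigma$ and the uniqueness of the stationary measure must be combined carefully; this is the technical heart, and it is handled in \cite[\S10]{hyperbolic}. Everything else is bookkeeping that threads together the quoted theorems.
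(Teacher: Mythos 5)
Your assembly goes wrong in the third case, where $x\notin F\cup\Sigma$. You argue that since $\Sigma$ is closed and invariant and $x\notin\Sigma$, the cluster values of the orbital averages at $x$ ``must have positive mass on the complement of $\Sigma$''. This does not follow: the support of a cluster measure of the averages \eqref{eq:orbital} or \eqref{eq:empiric} is contained in $\overline{\Gamma\cdot x}$ but need not equal it, so an orbit whose closure is strictly larger than $\Sigma$ can still equidistribute entirely towards measures carried by $\Sigma$ (or, a priori, towards some other exotic stationary measure supported on a proper closed set). Even if you could force the cluster measures to charge $X\setminus\Sigma$, your identification of ``the full list of ergodic Zariski-diffuse stationary measures'' silently upgrades the classification of \emph{invariant} measures (Theorem~\ref{thm:intro_classification_invariant_measures}) to a classification of \emph{stationary} measures on the complex surface $X$. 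That upgrade is exactly the stiffness statement that \S\ref{subs:intro_stiffness} flags as not yet available in the complex setting: only Theorem~\ref{thm:stiffness_for_real_surfaces} (stiffness on an invariant totally real surface) is at your disposal, and it does not apply to a $4$-real-dimensional orbit closure. So the measure-theoretic route you take cannot close the argument with the tools quoted in the paper. (A smaller circularity: you rule out the torus case by ``invoking \cite[\S10]{hyperbolic}'', which is the result being proved.)

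The intended proof is topological rather than measure-theoretic, and this is why it avoids the missing stiffness input. As indicated in the Comments following the Problem in \S\ref{subs:orbit_closures} (and as actually used in the proof of Theorem~\ref{thm:blanc_orbit_closures_real}), one starts from the classification in \cite[\S 8]{invariant} of \emph{closed} $\Gamma$-invariant subsets whose accumulation locus is not contained in the algebraic set $\mathrm{STang}_\Gamma$: such a set is finite, a union of components of an invariant real-analytic surface, or all of $X$. Condition (2) kills invariant curves, so $\mathrm{STang}_\Gamma$ is finite, and condition (1) plus the Margulis-function estimate of \cite[Thm 4.3]{hyperbolic} rules out the remaining bad case, namely an infinite orbit accumulating only on that finite set. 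Your first two cases ($x\in F$ and $x\in\Sigma\setminus F$, including the minimality-via-unique-stationary-measure argument on components of $\Sigma$, where Theorem~\ref{thm:stiffness_for_real_surfaces} genuinely applies) are essentially sound; it is only the dichotomy ``$\overline{\Gamma\cdot x}\subset\Sigma\cup F$ or $\overline{\Gamma\cdot x}=X$'' that must come from the classification of closed invariant sets, not from equidistribution of orbital averages.
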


Under less stringent hypotheses, but still assuming that $\Gamma$ contains parabolic elements, 
weaker  results on orbit closures are obtained in~\cite[\S 8]{invariant}, which deserve further study.

\begin{prob}
Complete the classification of orbit closures when $\Gamma$ is non-elementary and 
contains a parabolic element. 
\end{prob}

\noindent\textbf{Comments.--}   In~\cite[\S 8]{invariant}, we classified closed, $\Gamma$-invariant subsets $F$ 
whose accumulation locus $\mathrm{Acc(F)}$ is not contained in some explicit invariant 
algebraic subset $\mathrm{STang}_\Gamma$. Thus, when 
there is no proper invariant algebraic subset,  or 
more generally when uniform expansion holds, this leads to a complete classification. 
In the general case we must understand the situation where 
$\mathrm{Acc}(\Gamma\cdot x)\subset \mathrm{STang}_\Gamma$. It is easy to see that in this situation $x$ must be $g$-periodic for each parabolic $g\in \Gamma$. In this case one expects that $x\in \Per(\Gamma)$, but this result is not yet available (see~\cite{corvaja-tsimerman-zannier} for partial results).
See Theorem~\ref{thm:blanc_orbit_closures_real} below for a worked out example.

 \section{Appendix: classification of invariant measures}\label{sec:classification}
 
This section  is a complement to \S~\ref{subs:classification} and may be skipped on a first reading. 
We discuss what needs to be done to describe invariant probability measures for 
non-elementary groups that, a priori, do not contain parabolic elements. 

\subsection{Reducing the number of  cases} We fix a complex projective surface $X$ and a non-elementary subgroup $\Gamma$ of $\Aut(X)$. 
 Let  $\mu$ be a Zariski diffuse,  ergodic, $\Gamma$-invariant probability measure.  It is useful 
 to introduce a probability measure $\nu$ on $\Gamma$ satisfying Conditions~(S) and~(M) in order to speak about 
 the Lyapunov exponents of $\mu$; since they depend on $\nu$, we denote them by $\lambda^-(\nu)\leq \lambda^+(\nu)$.  
 
A preliminary observation is that the exponents cannot be simultaneously positive (resp. negative), because in such a case a classical Pesin-theoretic argument implies that  $\mu$ would be atomic. 
So $\lambda^-(\nu) \leq 0\leq \lambda^+(\nu)$ and if $\lambda^-(\nu) = \lambda^+(\nu)$, then both exponents vanish. 
This gives three distinct possibilities: either $\lambda^-(\nu) <0<\lambda^+(\nu)$, or exactly one of the exponents vanish, or both of them vanish.
We shall also distinguish three possibilities, depending on the number of invariant line fields: (1) no invariant line field, or pair of invariant line fields; 
(2)  there exists a measurable $\Gamma$-invariant line field $E\subset TX$; 
 (3) there exists an invariant  measurable pair of line fields  $E_1 , E_2\subset TX$.
 This leads to the following table of $12$ possibilities:

\medskip

\begin{small}
\newcolumntype{M}[1]{>{\raggedright}m{#1}}
\noindent
\begin{tabular}{| M{4cm} |M{3cm}|M{3cm}|M{3.2cm}|}
\hline
 & no invariant line field or pair of line fields & $\exists E\leq TX$ \\ invariant line field & $\exists E_1, E_2\leq TX$ pair of 
 invariant line fields \tabularnewline \hline
 $\exists \nu, \ \lambda^+(\nu) = \lambda^-(\nu) =0$ &\centering A1&\centering A2&\centering A3  \tabularnewline \hline
 $\forall \nu, \ \lambda^+(\nu) = \lambda^-(\nu) =0$ &\centering B1&\centering B2&\centering B3 \tabularnewline \hline
 $\exists \nu, \ \lambda^-   < \lambda^+ $  \& $\lambda^-   \lambda^+ =0 $ &\centering C1& \centering C2&\centering C3 \tabularnewline \hline
$\exists \nu, \ \lambda^+ (\nu) <0   < \lambda^- (\nu) $ &\centering D1& \centering D2& \centering D3 \tabularnewline \hline
 \end{tabular}
 \end{small}
 \medskip
 
Let us discuss  this table and the  relationship between its entries. 
Note that the third row of this table does not appear in the volume preserving case. 

\subsubsection{The first column} The case when both exponents vanish (the first line) is  analyzed in  
Section 7 of  \cite{hyperbolic}. In case A1,  Theorems 7.2 and 7.3 of \cite{hyperbolic} imply that the cocycle given by the  action of $\Gamma$ on the projectivization of the tangent space is measurably reducible to a compact group. In particular A1 implies B1.

The proof of Theorem 5.1 in \cite{br} (see \S 13.2.4) shows that case C1 
reduces to case D1.

\subsubsection{The second column}\label{par:second_column} When there is a measurable 
 invariant line field $x\mapsto E(x)$, the Lyapunov exponent in the direction of $E$, which is one of $\lambda^-(\nu)$ or $\lambda^+(\nu)$,
  is given by the explicit formula 
 \begin{equation}\label{eq:lambda_E}
 \lambda_E(\nu) = \int \log\norm{Df\rest{E(x)}} \d\mu(x) \d\nu(f) = \int \lambda_E(f, \mu) \d\nu(f),
 \end{equation}\label{eq:explicit_exponent_1}
where  $\lambda_E(f, \mu)$ is the integrated Lyapunov exponent of $\mu$ as an $f$-invariant measure (note that 
$\mu$ is not necessarily $f$-ergodic). The second exponent admits also a simple expression, because
it is equal to 
$\lambda^-(\nu) + \lambda^+(\nu) - \lambda_E(\nu)$, and
\begin{equation}
\lambda^-(\nu)+\lambda^+(\nu) = \int \log\abs{\mathrm{Jac} f(x)} \d\mu(x) \d\nu(f),
\end{equation}
where $\mathrm{Jac} $ is the Jacobian 
determinant with respect to any given smooth volume form on $X$.

Let us show that A2 reduces to B2, C2 or D2. There are two possibilities: either $\lambda_E(f, \mu) =0$ 
for every $f$, or not. In the first case we are in situation B2. In the second, we can change $\nu$ to put more weight on an element 
with $\lambda_E(f, \mu) \neq 0$ to impose $ \lambda_E(\nu)\neq 0$; doing so, we  fall in C2 or D2
(moreover, in C2 the non-vanishing exponent is along the invariant line field).

\subsubsection{The third column}
There are two possibilities: either every $f\in \Gamma$ 
preserves (resp. exchanges) the directions $E_1$ and $E_2$ almost everywhere, or not. 
In the first case, there is an index 2 subgroup preserving both directions, 
and we are in a special case of column 2. Otherwise,  the directions are intertwined by the dynamics, 
and we claim that we are in case B3 (this argument is a variation on~\cite[Lem. 5.2]{br}). 
Indeed, note that by definition there is a set of positive, hence full, $\mu$-measure where $E_1(x)\neq E_2(x)$, and let
\begin{equation}
\Lambda(\nu)  = \unsur{2} \int \lrpar{ \log\norm{Df\rest{E_1(x)}} + \log \norm{Df\rest{E_2(x)}} }\d\mu(x) \d\nu(f).
\end{equation}
By ergodicity, for $(\nu^\N\times \mu)$-almost every $(\omega, x)$ and every $v\in E_1(x)\cup E_2(x)\setminus\set{0}$, 
\begin{equation}
\lim_{n\to +\infty} \unsur{n}\log \norm {D f^n_\omega(x)} =  \Lambda(\nu).
\end{equation} 
If 
 $\lambda^+(\nu) = \lambda^-(\nu)$, then,  by the preliminary remark, both exponents vanish. If 
 $\lambda^-(\nu)<\lambda^+(\nu)$, then by Oseledets' theorem, for $\mu$-a.e. $x$ there is a line $E^s(x)$ such that if $v\notin E^s(x)$, then $\unsur{n}\log \norm {D f^n_\omega(x)}\to \lambda^+$. 
Thus $E^s(x)\notin \set{E_1(x), E_2(x)}$ and $\Lambda = \lambda^+$. But now there is a set of 3 invariant directions, so the projective tangent action recurs to a compact set (see the proof of Theorem 7.3 in \cite{hyperbolic})
and we conclude that
$\lambda^+(\nu) = \lambda^-(\nu)= 0$, a contradiction.

\subsubsection{Summary} After these reductions,  the situation is summarized in 
 the following table. Colored cells do not need to be treated because they reduce to other ones. 

\medskip

\begin{small}
\newcolumntype{M}[1]{>{\raggedright}m{#1}}
\noindent
\begin{tabular}{| M{4cm} |M{3.2cm}|M{3cm}|M{3.2cm}|}
\hline
 & no invariant line field or pair of line fields & $\exists E\leq TX$ \\ invariant line field & $\exists E_1, E_2\leq TX$ pair of 
 invariant line fields \tabularnewline \hline
 $\exists \nu, \ \lambda^+(\nu) = \lambda^-(\nu) =0$ &\cellcolor{gray}&\cellcolor{gray}&\cellcolor{gray}  \tabularnewline \hline
 $\forall \nu, \ \lambda^+(\nu) = \lambda^-(\nu) =0$ &B1: Fatou behavior? & \centering B2  & \centering B3 \tabularnewline \hline
 $\exists \nu, \ \lambda^+   < \lambda^- $  \& $\lambda^-   \lambda^+ =0 $ &\cellcolor{gray}& \centering  
C2 &\cellcolor{gray} \tabularnewline \hline
$\exists \nu, \ \lambda^+ (\nu) <0   < \lambda^- (\nu) $ & D1: $\mu$  homogeneous? & D2: $\mu$ not Z. diffuse?& \cellcolor{gray} \tabularnewline \hline
 \end{tabular}
 \end{small}
 \medskip
 
\subsection{Comments} Let us now analyze some of the  remaining cases.
We first note that we do not know any example, nor even a possible model for the dynamics, for the cases B2, B3, and C2 (in C2 it would make sense  to further distinguish the cases whether the Lyapunov exponent in the invariant direction vanishes or not).
 
$\bullet$ B1: in that case, \cite[Thm. 7.3]{hyperbolic} shows that the cocycle describing the action  of $\Gamma$ on the projectivized tangent bundle of $X$ is measurably reducible to a compact subgroup of $\GL_2(\C)$. 
This suggests a Fatou-type behavior for the dynamics. 
\emph{Must an invariant measure of type B1 
be contained in the Fatou set of the $\Gamma$-action?}
As far as we know, there is no known example of a 
Fatou domain for a  non-elementary group action by automorphisms on a projective  
surface. See Question~\ref{que:intro_henon} for a related problem.  

%
%

$\bullet$ D1: $\mu$ is hyperbolic and there is no invariant line  field. 
The techniques of~\cite{br} show that in this case  $\mu$ has some 
homogeneity properties along stable and unstable manifolds. 
This implies in particular that $\mu$ has positive fiber entropy (as a $\nu$-stationary measure). In a nutshell, recall that the fiber entropy   can be defined by 
\begin{equation}
h_\mu(X, \nu) = h_{\nu^\N\times \mu}(F_+)   - h_{\nu^\N}(\sigma)
\end{equation}
and it quantifies the relative entropy of $F_+$ in the fibers of the projection 
$\Omega\times X\to X$
(of course this definition needs to be adapted when $h_{\nu^\N}(\sigma) = \infty$, 
see \cite[\S 7.6]{stiffness} for a brief presentation). 
\emph{In the volume preserving (rational) case, it should expected that, under   assumption D1,
 $\mu$ is either absolutely continuous or 
absolutely continuous along some real analytic surface $\Sigma$.
In the general case, this has to be replaced by  a SRB property.}
Partial results in this spirit include~\cite[Thm 5.1]{br} and~\cite[Thm 11.1]{stiffness} for the real case.
In the general complex case, such a result seems to be 
 out of reach for the moment, even using~\cite{BEFR}. 
 The most delicate point would be to construct $\Sigma$ from an information on the 
 stable and unstable conditionals of $\mu$ only. 
 
$\bullet$ D2: $\mu$ is hyperbolic and there is an invariant line field. Since $\mu$ is $\Gamma$-invariant, the natural extension of $(F_+, \nu^\N\times \mu)$ is  $(F, \nu^\Z\times \mu)$, 
where $F$ is the skew product over the 2-sided shift associated to the random dynamical system 
$(X, \nu, \mu)$ (see \cite[\S 7.1]{stiffness}). 
For this invertible dynamical system, the invariant line field corresponds to either $E^s$ or $E^u$. 
Note that by the invariance of $\mu$, 
  $F\inv$ is the skew product map associated to the random dynamical system 
induced by the reversed measure $\check \nu$, defined by $\check \nu(f) = \nu(f\inv)$.  Therefore, if 
$E = E^u$,  $E$ is the stable Oseledets 
bundle associated to the random dynamical system $(X,\check \nu, \mu)$. From this discussion, we conclude that in either case we may assume that $E = E^s$, i.e. 
the field of Oseledets stable directions is non-random. Theorem 9.1 in~\cite{stiffness} then asserts that 
$h_\mu(X, \nu)=0$, and furthermore by~\cite[Thm. B.1]{hyperbolic}, $h_\mu(f) = 0$ for every $f\in\Gamma$. In other words,  $\mu$ is a common zero entropy measure for all group elements. We 
expect that such a situation does not happen, so the conclusion should be that 
\emph{a Zariski diffuse invariant measure cannot satisfy D2}.

\subsection{A dual point of view and an example}\label{subs:dual_example} Let us remark that 
another way of formulating this problem is to start with a loxodromic automorphism 
$f$ of $X$ and some $f$-invariant measure $\mu$
 and to ask for a description of the stabilizer of $\mu$ in $\Aut (X)$ 
 (cf.~\cite[Thm 5.1]{br} and~\cite[Thm 11.1]{stiffness}). Here is one instance of this problem:
 
 \begin{que}
 Let $f$ be a loxodromic automorphism of a complex projective surface $X$, and let $\mu_f$ be its unique invariant measure of maximal entropy. Is
 the stabilizer  of $\mu$ in $\Aut(X)$ virtually Abelian? \end{que}

To conclude, we answer a similar question in the specific case of polynomial automorphisms of the plane. One goal  here
 is to illustrate some dynamical similarities between affine and projective surfaces.
Before stating our result, recall that the Jacobian determinant $\jac(f)$ of such an automorphism $f$ is constant;  when $\jac(f)=\pm 1$, the Lebesgue measure $\leb_{\R^2}$ is invariant and $f$ is said to be conservative.

\begin{thm}\label{thm:rigidity_henon}
Let $f$ be a   polynomial  automorphism of $\R^2$. 
Let $\mu$ be an ergodic  $f$-invariant measure   with positive entropy supported on $\R^2$. 
If $g\in\Aut(\R^2)$  satisfies $g_\varstar \mu   = \mu$,  then: 
\begin{enumerate}[\em (a)]
\item either $f$ and $g$ are conservative and $\mu$ is the restriction of  $\leb_{\R^2}$ to a Borel set of positive measure invariant under $f$ and $g$;
\item or  the group generated by $f$ and $g$ is solvable and virtually cyclic; in particular, there exists $(n,m)\in \Z^2\setminus{\set{(0,0)}}$ such that $f^n = g^m$. 
\end{enumerate}
\end{thm}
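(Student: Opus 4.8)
The plan is to reduce the statement to known rigidity results for automorphism groups of projective surfaces by compactifying. First I would recall the structure theory of $\Aut(\R^2)$: by the classical amalgamated product structure of $\Aut(\C^2)=\Aff_2 \ast_{\cE} \cE$ (Jung--van der Kulk), every element is, up to conjugacy, either elementary (preserving a fibration, hence of dynamical degree $1$) or loxodromic, i.e.\ conjugate to a composition of generalized H\'enon maps, in which case it has positive topological entropy. Since $\mu$ has positive entropy and is $f$-invariant, the variational principle forces $f$ to be loxodromic; let $\lambda_f>1$ be its dynamical degree. The measure $\mu$ then has positive entropy bounded by $\log\lambda_f$, and by Pesin theory it is hyperbolic (both Lyapunov exponents are nonzero, of opposite signs up to the Jacobian constraint). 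The key dichotomy to establish is on the element $g$: either $g$ is also loxodromic, or $g$ is elementary.

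The heart of the argument is the loxodromic--loxodromic case, which should yield alternative (a) or the relation in (b). Here I would work on a smooth projective compactification $X$ of $\R^2\subset\C^2$ on which both $f$ and $g$ extend to automorphisms — such a common compactification exists when $\langle f,g\rangle$ is ``regularizable'', but in general one must argue more carefully, passing to the Picard--Manin / hyperbolic space $\Hyp_X$ on which $\Bir(\C^2)$ acts by isometries (as in Section~\ref{sec:projective}). On this infinite-dimensional hyperbolic space $f$ and $g$ act as loxodromic isometries with attracting/repelling fixed points given by their respective positive/negative eigenclasses. Now the Tits-alternative dichotomy for isometry groups of $\CAT(-1)$ spaces applies: either the axes of $f$ and $g$ have the same pair of endpoints — in which case $\langle f,g\rangle$ is elementary in the sense of isometry groups, which after unwinding gives that $f$ and $g$ share a common power $f^n=g^m$ and the group is solvable and virtually cyclic, i.e.\ case (b) — or $\langle f,g\rangle$ contains a non-abelian free group of loxodromic elements, hence is non-elementary. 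In the latter subcase I would invoke the measure-rigidity machinery: the common invariant measure $\mu$ has positive entropy for $f$, hence is hyperbolic, and one applies the rigidity statement of Cantat--Dupont (cited in the excerpt in the ``Notes on the proof'' of Theorem~\ref{thm:finite_orbits}) together with Theorem~\ref{thm:intro_classification_invariant_measures}-type arguments — but here, since there is no a priori parabolic element, I would instead argue directly via~\cite{br} / the non-random stable direction analysis: a non-elementary group preserving a hyperbolic measure $\mu$ that is not supported on a curve forces $\mu$ to be absolutely continuous (SRB on both sides), which forces the dynamics to be conservative ($\jac(f)=\jac(g)=\pm1$) and $\mu = \leb_{\R^2}|_A$ on an invariant Borel set $A$ — this is case (a). The case where $\mu$ charges an algebraic curve is handled separately: an $f$-invariant curve for a loxodromic $f$ can only be (part of) the line at infinity or a periodic curve, incompatible with $\supp\mu\subset\R^2$ and positive entropy.

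The remaining case is $g$ elementary, and the goal is to show it cannot happen (or collapses into (b)). If $g$ is elementary it preserves a rational fibration $\pi\colon\C^2\dashrightarrow \C$; its action has entropy zero. I would use that $g_\star\mu=\mu$ together with the fact that $f$ is loxodromic with the measure of maximal-entropy-like properties of $\mu$: the stable/unstable laminations of the hyperbolic measure $\mu$ are canonically attached to $\mu$, hence $g$ must permute them; but an elementary automorphism preserving a fibration cannot preserve a pair of transverse measurable laminations carrying a hyperbolic measure of positive entropy unless it is essentially a fiber-automorphism commuting with a power of $f$, again landing in (b). Alternatively, and more robustly, one shows $\langle f, g\rangle$ still acts on $\Hyp_X$: if $g$ is elliptic or parabolic as an isometry, then $\langle f,g\rangle$ is either elementary (giving $f^n=g^m$, case (b) — though for $g$ of infinite order in the elliptic case one must rule this out by an extra argument using that $g$ would then fix the axis of $f$ pointwise, forcing $g$ to commute with $f$ and hence, since the centralizer of a loxodromic H\'enon map in $\Aut(\C^2)$ is virtually cyclic, $f^n=g^m$), or non-elementary, and we are back to the previous paragraph.

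I expect the main obstacle to be the non-elementary loxodromic--loxodromic subcase: specifically, promoting ``$\mu$ hyperbolic, positive entropy, invariant under a non-elementary group, not supported on a curve'' to ``$\mu$ absolutely continuous and the dynamics conservative.'' Unlike Theorem~\ref{thm:intro_classification_invariant_measures}, there is no parabolic element available, so one cannot use the uniquely-ergodic-translation argument on genus-$1$ fibers; instead one must run the full Brown--Rodriguez-Hertz-style analysis (stable/unstable conditionals, the dichotomy between zero fiber entropy and SRB) adapted to the affine setting, and then use that an SRB measure which is also a $u$-SRB measure must be absolutely continuous, whence the Jacobian cocycle is cohomologically trivial and the maps are conservative. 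Handling the boundary divisor carefully — ensuring that the projective compactification and the extended maps do not introduce spurious invariant curves or points absorbing the measure — is the technical bookkeeping that makes this step delicate.
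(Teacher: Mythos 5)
Your overall strategy --- a dichotomy between ``$\mu$ absolutely continuous, hence conservative'' (case (a)) and ``$f$ and $g$ share enough dynamical structure to force a common power'' (case (b)) --- matches the paper's, but the central step of your non-elementary branch is precisely where the proof has to work hardest, and you invoke it as a black box that is not available in the form you need. You assert that a non-elementary group preserving a hyperbolic, Zariski diffuse, positive-entropy measure forces absolute continuity, citing~\cite{br}. What is actually available (the adaptation of~\cite[Thm 11.1]{stiffness}, applied to the pair $(f,h)$ with $h=wfw^{-1}$, $w\in\Gamma$) is a trichotomy: either $\mu$ is absolutely continuous, or $W^s(h,x)$ coincides with $W^s(f,x)$ or with $W^u(f,x)$ on a set of positive measure. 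Ruling out the second alternative --- equivalently, showing that it lands you in case (b) --- is the heart of the matter and requires a concrete chain you have not supplied: coinciding stable manifolds have coinciding closures $J^+$, hence the canonical currents satisfy $T^+_h=T^+_f$ or $T^+_h=T^-_f$ (by uniqueness of the closed positive current of mass $1$ carried by $J^+$), hence the Green functions coincide; by Lamy's theorem this forces $\Ax(h_*)=\Ax(f_*)$ in the Bass--Serre tree of $\Aut(\C^2)$, so $\Gamma$ preserves the axis of $f_*$, and Lamy's Proposition~4.10 then gives that $\Gamma$ is solvable and virtually cyclic. Without this mechanism, your implication ``non-elementary $\Rightarrow$ absolutely continuous'' is unsupported; the paper explicitly remarks that a general SRB-rigidity statement for non-elementary groups without parabolic elements is not currently available, and what replaces it here is exactly the pairwise stable-manifold comparison plus the Green-function/axis argument.

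A second, smaller divergence: the paper avoids your case analysis on $g$ (loxodromic versus elementary) altogether by running the argument on the conjugates $h=wfw^{-1}$, which are automatically of H\'enon type; since $w_*\Ax(f_*)=\Ax(h_*)$, the whole group $\Gamma$ is seen to preserve $\Ax(f_*)$, and Lamy's structure theorem handles all of $\Gamma$ at once, including its elementary elements. This sidesteps the delicate sub-arguments you sketch for elliptic or fibration-preserving $g$ (centralizers of H\'enon maps, laminations preserved by elementary automorphisms). It also replaces the Picard--Manin space by the Bass--Serre tree, where translation lengths are integers and the stabilizer of an axis is explicitly described; your claim that two loxodromic isometries sharing an axis satisfy $f^n=g^m$ needs exactly this discreteness, which is not automatic in the Picard--Manin setting.
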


\begin{rem}
With the techniques developed in \cite{Cantat:BHPS}, the same result should apply  to the dynamics of 
${\mathsf{Out}}({\mathbb{F}}_2)$ acting on the real 
part of the character surfaces of the once punctured torus. 
\end{rem}

\begin{proof}[Sketch of Proof] 
Since the proof is a direct adaptation of~\cite[Thm 11.1]{stiffness}, we briefly 
explain the argument and leave the details to the reader. 

Set $\Gamma=\langle f, g \rangle$. 
Since its entropy  is positive, $f$ is of H\'enon type in the sense of~\cite{lamy}: it is conjugate to a composition of 
generalized H\'enon maps, as in~\cite[Thm. 2.6]{friedland-milnor}. Thus, the support of $\mu$  is a compact subset of $\R^2$, because the 
basins of attraction of the line at infinity for $f$ and $f^{-1}$, respectively, cover the complement of a compact subset of $\C^2$. 
In addition $\mu$ must be Zariski diffuse, because its entropy is positive. 

Let $w$ be an element of $\Gamma$ and set $h=wfw^{-1}$; then $h$ is also  of H\'enon type. 
We follow the proof of~\cite[Thm 11.1]{stiffness}, which leads us to three cases. 

Case 3 is treated exactly  in the same way  and implies that $\mu$ is absolutely continuous. 
This  implies that $f$ is conservative and, $\mu$ being ergodic for $f$, 
it must be the restriction of $\leb_{\R^2}$ to some $\Gamma$-invariant subset. 

In Cases 1 and 2, arguing as in \cite[Thm 11.1]{stiffness} and keeping the 
same notation, we arrive at $W^s(h,x) = W^s(f,x)$ or $W^u(f,x)$ on a set of positive measure. 
For a H\'enon type automorphism of $\C^2$, the closure of any stable manifold is 
equal to the forward Julia set  $J^+$, and $J^+$ carries a unique positive closed current $T^+$ of mass 
$1$ relative to the Fubini Study form in $\P^2(\C)$ (see \cite{sibony}). So we 
infer that $T^+_h = T^+_f$ or $T^+_h =T^-_f$; as a consequence, the Green functions of $f$ and $h$ satisfy $G^+_h=G^+_f$
or $G^+_h=G^-_f$, respectively.

The group $\Aut(\C^2)$ is the amalgamated product the affine and the elementary subgroups along their intersection. Let $T$ be the associated Bass-Serre tree. Each $u\in \Aut(\C^2)$
gives rise to an isometry $u_*$ of $T$ and,  $u$ is of H\'enon type if and only if $u_*$ is loxodromic (its axis $\Ax(u_*)$ is the unique $u_*$-invariant geodesic, and $u_*$ acts as a translation along it). From~\cite[Thm.~5.4]{lamy}, $G^+_h = G^+_f$ implies  $\Ax(h_*)=\Ax(f_*)$; changing $f$ into
$f\inv$, $G^+_h =G^-_f$ gives $\Ax(h_*)=\Ax(f_*\inv)=\Ax(f_*)$ because $\Ax(f_*\inv)=\Ax(f_*)$. 
Since $w_* \Ax(f_*)=\Ax(h_*)$, we see that $\Gamma$ preserves $\Ax(f_*)$; so, all
$u\in \Gamma$ of H\'enon type satisfy $\Ax(u_*)=\Ax(f_*)$. From \cite[Prop. 4.10]{lamy}, we conclude that $\Gamma$ is solvable and virtually 
cyclic. \end{proof}

 \newpage

\part{Classification and first examples} 

\section{Surfaces admitting non-elementary groups of automorphisms} \label{sec:projective}

\subsection{More Kähler vocabulary} \label{par:compact-kahler-surfaces} 

Let $X$ be a compact Kähler surface. Recall from \S~\ref{subs:generalities} that a subgroup 
$\Gamma\subset \Aut(X)$ is non-elementary when its image $\Gamma^*\subset \GL(H^*(X, \Z))$ 
contains a  non-Abelian   free subgroup.  
By Hodge theory, $\Gamma$ acts on $H^{1,1}(X, \R)$ by preserving the intersection form, which is of Minkowski type; thus 
 $\Gamma$ acts by isometries on the associated hyperbolic space $\Hyp_X$, 
 which is the component of the hyperboloid 
 \begin{equation}
 \set{u\in  H^{1,1}(X, \R); \ \bra{u\vert u}=1}
 \end{equation} 
 containing the class of a Kähler form.  Then  $\Gamma$ is  non-elementary in the sense of~\S~\ref{subs:NE} if, and only if,
 the induced subgroup of 
 $\Isom(\Hyp_X)$ is non-elementary in the sense of hyperbolic geometry 
 (this is the original definition from~\cite[\S 2]{stiffness}). The classification of automorphisms in the elliptic, parabolic, and loxodromic types, as described in Section~\ref{par:parabolic_field_kodaira}, corresponds exactly to the three possible types of isometries of hyperbolics spaces. 
By theorems of Gromov and Yomdin, $f\in \Aut(X)$ has positive entropy if and only if $f$ is loxodromic: indeed, the translation length of $f^*$ on $\Hyp_X$ is equal to the topological entropy of $f\colon X\to X$ and to the logarithm of the spectral radius $\lambda(f)$ of $f^*$ on $H^{1,1}(X;\R)$ (see~\cite{Cantat:Milnor}). 
By a ping pong argument (see Lemma~\ref{lem:non-elementary_free_groups} below), 
we can add the following characterization to the equivalent conditions~(a)-(c) of~\S~\ref{subs:NE}. 

 \begin{enumerate}[(a)]
\item[(d)] $\Gamma$ contains a non-Abelian free group all of whose elements (distinct from $\id_X$) have positive entropy (i.e. are  loxodromic).
\end{enumerate}

 \subsection{Non-elementary groups of automorphisms on general surfaces} 
 
Let $M$ be a compact manifold. Exactly as in the complex case, let us 
 say that a  group $\Gamma$ of 
homeomorphisms of $M$ is {{non-elementary}} if its 
image $\Gamma^*$ in $\GL(H^*(M;\Z))$ contains a non-Abelian free subgroup.

\begin{lem}\label{lem:non-elementary_free_groups}
Let $M$ be a compact manifold, and   $\Gamma$ be a non-elementary 
subgroup of $\Diff^\infty(M)$. Then $\Gamma$ 
contains a non-Abelian free group $\Gamma_0$ 
such that the topological entropy of every $f\in \Gamma_0\setminus \{ \id\}$ is positive.
\end{lem}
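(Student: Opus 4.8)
The plan is to combine a dynamical input (positivity of entropy is detected by the action on cohomology, via Gromov--Yomdin) with a group-theoretic input (a ping-pong argument producing a free subgroup consisting, apart from the identity, of loxodromic elements). Let me sketch both halves and flag where the real work sits.

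\medskip

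\textbf{Step 1: Reduce to the action on cohomology.}
First I would set $V = H^*(M;\R)$ and consider the representation $\rho\colon \Gamma \to \GL(V)$ with image $\Gamma^* = \rho(\Gamma)$. By hypothesis $\Gamma^*$ contains a non-Abelian free group. I want to find a free subgroup $\Gamma_0^* \subset \Gamma^*$ all of whose nontrivial elements have spectral radius $>1$, because by the Gromov--Yomdin-type bound the topological entropy of $f\in\Diff^\infty(M)$ is at least $\log$ of the spectral radius of $f^*$ acting on $H^*(M;\R)$ (more precisely $h_{top}(f)\ge \log\mathrm{sp}(f^*|_{H^*})$; this is Yomdin's inequality, and it is what the excerpt invokes when it says positive entropy is detected cohomologically). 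So it suffices to produce a free subgroup $G \le \Gamma^*\subset \GL(V)$ such that every $g\in G\setminus\{\id\}$ has $\mathrm{sp}(g)>1$; then pulling back by $\rho$ any free lift $\Gamma_0$ of $G$ works, since $\rho$ restricted to $\Gamma_0$ is injective (a free group has no nontrivial finite normal subgroup, so the kernel, being central-ish — actually just: we pick $\Gamma_0$ inside $\rho^{-1}(G)$ as a section, see below).

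\medskip

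\textbf{Step 2: Ping-pong inside $\GL(V)$ to make every nontrivial element proximal/loxodromic.}
Starting from two elements $a^*, b^* \in \Gamma^*$ generating a free group, I would pass to a subgroup where the geometry is controlled. There are two routes. (i) Tits-alternative-style: a finitely generated linear group that is not virtually solvable contains a free subgroup that is \emph{strongly irreducible and proximal} after restricting to a suitable subquotient/subspace; then by the standard contraction-dynamics of proximal elements, one finds a ping-pong pair $(\phi,\psi)$ of proximal elements and, on a suitable invariant subspace, every nontrivial reduced word in a high power $\phi^N,\psi^N$ is again proximal with dominant eigenvalue of modulus $>1$. (ii) More concretely in our setting: the relevant subspace is $H^{1,1}$ (or its real analogue) carrying a form of Minkowski/indefinite signature — actually for general $M$ we don't have that, so I would use route (i). The cleanest statement to quote is: a non-elementary subgroup of $\GL(V)$, i.e.\ one containing a free group, contains a free subgroup $G$ such that each $g\in G\setminus\{\id\}$ is \emph{loxodromic} (has an eigenvalue off the unit circle), obtained by a ping-pong argument on projective space using proximal generators with disjoint attracting/repelling data. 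This is exactly ``(d)'' in the Kähler case; the point of the lemma is that this ping-pong argument only uses the linear action, not Kählerness.

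\medskip

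\textbf{Step 3: Lift and conclude.}
Let $G \le \Gamma^*$ be the free subgroup from Step 2, with every nontrivial element loxodromic in $\GL(V)$, hence of spectral radius $>1$. Choose a set-theoretic section of $\rho$ over a free basis of $G$ and let $\Gamma_0 \le \Gamma$ be the subgroup it generates; since the images of the basis elements freely generate $G$, the subgroup $\Gamma_0$ is free and $\rho|_{\Gamma_0}\colon \Gamma_0 \to G$ is an isomorphism. For any $f \in \Gamma_0\setminus\{\id\}$, $f^* = \rho(f)\in G\setminus\{\id\}$ has spectral radius $\lambda>1$ on $H^*(M;\R)$, so by Yomdin's inequality $h_{top}(f) \ge \log\lambda > 0$. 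This proves the lemma.

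\medskip

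\textbf{Main obstacle.}
The dynamical Step 1 (Gromov--Yomdin) is genuinely off-the-shelf and not a difficulty. The real content — and the step I'd expect to write most carefully — is Step 2: extracting from ``$\Gamma^*$ contains \emph{some} free group'' a free subgroup in which \emph{every} nontrivial element is loxodromic. The subtlety is that a random free subgroup of $\GL(V)$ can easily contain unipotent or finite-order elements; one must engineer the generators (by passing to high powers of proximal elements found via Tits-alternative machinery, and arranging ping-pong on $\P(V)$ with a loxodromic North--South dynamics) so that reduced words inherit proximality. The combinatorial heart is the usual ping-pong estimate showing that a reduced word $w(\phi^N,\psi^N)$ moves an open neighborhood of attracting points strictly inside itself, hence has an attracting fixed point in $\P(V)$ and an eigenvalue of modulus $>1$. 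I would isolate this as a standalone linear-algebra/ping-pong lemma (``a non-elementary subgroup of $\GL_d(\R)$ contains a free subgroup of loxodromic elements'') and then the proof of Lemma~\ref{lem:non-elementary_free_groups} is a two-line application combined with Yomdin.
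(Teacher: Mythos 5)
Your overall strategy coincides with the paper's: find a free subgroup of $\Gamma^*\subset\GL(H^*(M))$ all of whose nontrivial elements have spectral radius $>1$ (via proximal elements and ping-pong on projective space), lift it isomorphically to a free subgroup of $\Gamma$, and apply Yomdin's theorem. Steps 1 and 3 are fine as written.

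There is, however, a genuine gap in Step 2. The ``standalone lemma'' you propose to quote --- \emph{a subgroup of $\GL(V)$ containing a non-Abelian free group contains a free subgroup all of whose nontrivial elements are loxodromic} --- is false over $\R$: free subgroups of the compact groups $\SO(3)$ or $\SU(2)$ (the ones underlying Banach--Tarski) have every eigenvalue of every element on the unit circle, and no amount of passing to powers, subquotients, or conjugates will produce a proximal element there. The Tits alternative does yield proximality, but possibly only after embedding into a group over a $p$-adic field, which says nothing about complex spectral radii. So before any ping-pong can start, you must explain why $\Gamma^*$ contains even \emph{one} element with an eigenvalue of modulus $>1$. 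This is where the paper uses an ingredient entirely absent from your proposal: $\Gamma^*$ preserves the lattice $H^*_{t.f.}(M;\Z)$, hence sits in $\GL_N(\Z)$. If every eigenvalue of every element had modulus $\le 1$, Kronecker's theorem would force all eigenvalues to be roots of unity, and a subgroup of $\GL_N(\Z)$ with that property is virtually nilpotent (Benoist, Prop.~2.2 and Cor.~2.4 of \cite{Benoist:Grp_Disc}), contradicting the existence of a free subgroup. Only after this integrality argument does the paper pass to an exterior power $\bigwedge^m H^*(M;\C)$ to make the action proximal, extract an irreducible Jordan--H\"older quotient $V$ with $\dim V\ge 2$ (using a determinant-one argument on the derived subgroup), and run the ping-pong you describe. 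Once you insert the integrality/Kronecker step, the rest of your outline matches the paper's proof.
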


\begin{proof} We split the proof in two steps. The first one concerns groups of matrices,  
the second one is where topological entropy enters into place.

\smallskip

{\bf{Step 1.-}}  {\emph{The image $\Gamma^*$ of $\Gamma$ in $\GL(H^*(M;\Z))$ contains a free subgroup $\Gamma_1^*$, such that every element of $\Gamma_1^*\setminus \{ \id\}$
has spectral radius larger than $1$}}.

\smallskip

The proof uses basic ideas involved in Tits's alternative, here in the simple case of subgroups of $\GL_n(\Z)$.
Let $N$ be the rank of $H^*_{t.\!f\!.}(M;\Z)$, where $t.\!f\!.$ 
stands for ``torsion free''. Fix a basis of this free $\Z$-module. 
Then $\Gamma^*$ determines a subgroup of $\GL_N(\Z)$. Our assumption implies that the 
derived subgroup of $\Gamma^*$ contains a non-Abelian free group $\Gamma_0^*$ of rank~$2$.  

If all (complex) eigenvalues 
of all elements of $\Gamma_0^*$ have modulus $\leq 1$, then by Kronecker's lemma all of them are roots 
of unity. This implies that $\Gamma_0^*$ contains a finite index nilpotent subgroup (see 
Proposition 2.2 and Corollary 2.4 of~\cite{Benoist:Grp_Disc}), contradicting the existence of a non-Abelian free subgroup. 
Thus, there is an element $f^*$ in $\Gamma_0^*$ with a complex eigenvalue of modulus  $\alpha>1$. 
Let $m$ be the number
of eigenvalues of $f^*$ of modulus $\alpha$, counted with multiplicities. Consider the linear representation of $\Gamma_0^*$ 
on $\bigwedge^m H^*(M;\C)$; the action of $f^*$ on this space has a unique dominant eigenvalue, of 
modulus $\alpha^m$; the corresponding eigenline determines an attracting fixed point for $f^*$ in the
projective space $\P(\bigwedge^m H^*(M;\C))$; the action of $f^*$ on the topological space $\P(\bigwedge^m H^*(M;\C))$ is proximal. 
Let 
\begin{equation}
\{0\}=W_0 \subset W_1\subset \cdots \subset W_k \subset W_{k+1}=\bigwedge^m H^*(M;\C)
\end{equation}
be a Jordan-H\"older sequence for the representation of $\Gamma^*$: the subspaces $W_i$ are 
invariant and the induced representation of $\Gamma^*$ on $W_{i+1}/W_i$ is irreducible for all $0\leq i\leq k$. 
Let $V$ be the quotient space $W_{i+1}/W_{i}$ in which the eigenvalue of $f^*$ of modulus $\alpha^m$
appears. Since $\Gamma_0^*$ is contained in the derived subgroup of $\Gamma$, the linear 
transformation of $V$ induced by $f^*$ has determinant $1$; thus, $\dim(V)\geq 2$. Now, 
we can apply Lemma~3.9 of~\cite{Benoist:Grp_Disc} to (a finite index, Zariski connected subgroup of) $\Gamma^*_{0}\rest{V}$:
changing $f$ if necessary, both $f^*\rest{V}$ and $(f^{-1})^*\rest{ V}$ are proximal, and there is 
an element $g^*$ in $\Gamma^*$ that maps the attracting fixed points $a^+_f$ and $a^-_f\in \P(V)$ of $f^*\rest{ V}$ and $(f^*\rest{V})\inv$ to two distinct
points (i.e. $\{ a^+_f, a^-_f\}\cap \{ g^*(a^+_f), g^*(a^-_f)\} =\emptyset$). Then, by the ping-pong lemma, large powers of $f^*$ and $g^*\circ f^*\circ (g^*)^{-1}$ 
generate a non-Abelian free group $\Gamma_1^*\subset \Gamma^*$ such that each element 
$h^*\in \Gamma_1^*\setminus \{\id\}$ has an attracting fixed point in $\P(V)$. This implies that every element of $\Gamma_1^*\setminus \{\id\}$
has an eigenvalue of modulus $>1$ in $H^*(M;\C)$. 

\smallskip

{\bf{Step 2.-}}  Since $\Gamma_1^*$ is free, there is a free subgroup  $\Gamma_1\subset \Gamma$ such that the homomorphism
$\Gamma_1\to \Gamma_1^*$ is an isomorphism. By Yomdin's theorem~\cite{yomdin}, all elements of $\Gamma_1\setminus \{\id\}$ have positive entropy, and we are done. 
\end{proof}

\begin{thm}\label{thm:non_kahler}
Let $M$ be a compact complex surface such that there exists a non-elementary subgroup 
$\Gamma\subset \Aut(M)$. Then $M$ is Kähler. 
\end{thm}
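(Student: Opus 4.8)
The plan is to exploit the classification of compact complex surfaces together with the constraint that a non-elementary group of automorphisms imposes on the cohomology. By Lemma~\ref{lem:non-elementary_free_groups}, $\Gamma$ contains a non-Abelian free subgroup $\Gamma_0$ all of whose non-trivial elements have positive topological entropy; fix such an element $f\in\Gamma_0$. The strategy is to show that a compact complex surface carrying an automorphism of positive entropy (equivalently, a non-elementary group action) cannot be one of the non-Kähler surfaces in the Kodaira--Enriques classification, so that $M$ must be Kähler.

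First I would recall that if $M$ is not Kähler, then its first Betti number $b_1(M)$ is odd, and one runs through the list of classes of such surfaces: surfaces of class $\mathrm{VII}$ (including Hopf and Inoue surfaces), non-Kähler elliptic surfaces, non-Kähler properly elliptic surfaces, and the (non-existent in the minimal non-Kähler case) analogues. For each class, either $H^{1,1}$ or $H^2$ is too small, or carries a canonical structure preserved by all automorphisms, to allow a free group acting with loxodromic elements. Concretely: if $M$ has Kodaira dimension $\geq 0$ and is non-Kähler, then (possibly after passing to a finite étale cover and replacing by the minimal model, both operations compatible with the group action up to finite index by the Fujiki--Lieberman argument recalled in \S~\ref{subs:NE}) $M$ is a non-Kähler elliptic surface; the elliptic fibration is canonical, hence $\Aut(M)$-invariant, and the induced action on the base and on the cohomology factors through a virtually abelian group, contradicting non-elementariness. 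If $M$ is of class $\mathrm{VII}$ (Kodaira dimension $-\infty$, $b_1=1$), then $b_2(M)=\dim H^{1,1}(M;\R)$ and the intersection form on $H^{1,1}$ is negative definite (for minimal class $\mathrm{VII}$ surfaces with $b_2>0$, by a result of Bogomolov--Li--Teleman type, or more elementarily because $H^{2,0}=0$ forces all of $H^2$ into $H^{1,1}$ and the form has no positive part); a group of isometries of a negative definite lattice is finite, so $\Gamma^*$ is finite, again contradicting non-elementariness. The cases $b_2=0$ (Hopf and Inoue surfaces) are handled directly: $H^2(M;\R)$ is too small to contain any free group acting faithfully, and in fact $\Aut$ is known to be virtually solvable for Inoue surfaces and acts with at most a rank-one cohomological image for Hopf surfaces.

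The key structural point, which ties the cases together, is that for a non-Kähler surface the real Néron--Severi-type space $H^{1,1}(M;\R)$ (or the relevant $\Gamma$-invariant real subspace of $H^2(M;\R)$ on which the cup-product pairing lives) never has Minkowski signature $(1,\rho-1)$ with $\rho\geq 2$: it is either negative (semi)definite or of dimension too small. Hence $\Gamma^*$ cannot act as a non-elementary group of isometries of a hyperbolic space, and in particular cannot contain a non-Abelian free group acting with loxodromic elements — contradicting Step~1 of Lemma~\ref{lem:non-elementary_free_groups}, which produced exactly such an action on $H^*(M;\C)$. Running this dichotomy over the finite list of non-Kähler classes completes the proof.

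The main obstacle I anticipate is the class $\mathrm{VII}$ case with $b_2>0$, i.e.\ showing that the intersection form on $H^{1,1}$ is negative definite (or at least has no two-dimensional positive-definite subspace); for non-minimal such surfaces one can blow down, but the genuinely minimal ones (the global spherical shell surfaces and their conjectural exhaustion of class $\mathrm{VII}_0^+$) require invoking the structure results of Teleman and collaborators, or else a direct argument that any positive class would force, via Riemann--Roch and the algebraic index theorem on a hypothetical Kähler modification, a contradiction with $b_1$ being odd. A cleaner route — and the one I would actually pursue in the write-up — is to avoid the full classification of class $\mathrm{VII}$ surfaces and instead argue: a positive-entropy automorphism $f$ has a unique nef class $\theta^+$ with $f^*\theta^+=\lambda(f)\theta^+$, $\lambda(f)>1$, and $(\theta^+)^2=0$; the existence of two such independent classes (from two loxodromic elements generating a free group) forces $H^{1,1}$ to contain a two-dimensional subspace on which the form is non-negative with a one-dimensional kernel, i.e.\ of signature $(1,1)$ or $(1,0,1)$ — and this is incompatible with $b_1(M)$ odd, since by Hodge theory on the minimal model the signature of $H^{1,1}$ is $(1, h^{1,1}-1)$ exactly when $M$ is Kähler (equivalently $b_1$ even), whereas for $b_1$ odd one gets that $H^2(M;\R)$ carries an intersection form of signature $(0, b_2)$ or has $b^+ = 0$ by Kodaira's classification of the signature. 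I expect the bookkeeping of signatures across the Kodaira list, rather than any single hard theorem, to be the real content of the argument.
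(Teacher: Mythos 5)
Your proposal takes essentially the same route as the paper: both begin by applying Lemma~\ref{lem:non-elementary_free_groups} to produce an automorphism of positive topological entropy, and the paper then concludes by citing~\cite{Cantat:CRAS} for the fact that a compact complex surface admitting such an automorphism is K\"ahler. Your case analysis through the Enriques--Kodaira classification (elliptic fibrations for $\mathrm{kod}\geq 0$, negative definiteness of $H^2$ via $b^+=2h^{2,0}=0$ for class VII, and the small-$b_2$ cases) is in substance an unpacking of that citation and is sound in outline, so nothing further is required.
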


%
 
\begin{proof}
By Lemma~\ref{lem:non-elementary_free_groups}, $M$ admits an automorphism of 
positive topological entropy. It  was shown in~\cite{Cantat:CRAS} that  this property implies that $M$ is K\"ahler. 
\end{proof}

\subsection{Projectivity}\label{par:projectivity_of_the_surface}

\begin{thm}\label{thm:X-is-projective} 
Let $X$ be a compact complex  
surface and $\Gamma$ be a non-elementary 
subgroup of $\Aut(X)$.  Then $X$ is  projective, and is birationally 
equivalent to a rational surface, an Abelian surface, 
a K3 surface, or an Enriques surface. 
\end{thm}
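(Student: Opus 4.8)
The plan is to combine Theorem~\ref{thm:non_kahler} with the Enriques--Kodaira classification of compact complex surfaces. By Theorem~\ref{thm:non_kahler}, the existence of a non-elementary subgroup $\Gamma\subset\Aut(X)$ already forces $X$ to be Kähler, so we may work inside the classification of compact Kähler surfaces. The strategy is then to run through the Kodaira dimension $\kappa(X)\in\{-\infty,0,1,2\}$ and, in each case, to eliminate or identify the surface using the structural constraint that $\Gamma$ acts on $H^{1,1}(X;\R)$ with image containing a non-abelian free group of isometries of the hyperbolic space $\Hyp_X$ (equivalently, by condition (c) of \S\ref{subs:NE}, $X$ carries an automorphism of positive entropy, i.e. a loxodromic one).

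First I would dispense with the cases $\kappa(X)=1$ and $\kappa(X)=2$. If $X$ is of general type, then $\Aut(X)$ is finite, so it cannot contain a non-abelian free group; this kills $\kappa(X)=2$. If $\kappa(X)=1$, then $X$ admits a canonical elliptic fibration $\pi\colon X\to B$ which is preserved by all of $\Aut(X)$ (it is intrinsic), so $\Aut(X)$ maps to $\Aut(B)\times(\text{fiberwise automorphisms})$; one checks that $f^*$ then preserves the class of a fiber, a nonzero isotropic vector in $H^{1,1}$, hence $f^*$ is never loxodromic (its action on $\Hyp_X$ fixes the corresponding boundary point), so $\Gamma^*$ is elementary — contradiction. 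Next, for $\kappa(X)=0$, the minimal model $X_0$ of $X$ is a torus, a K3, an Enriques surface, or a bielliptic surface; the bielliptic case again has an intrinsic elliptic fibration and is excluded exactly as in the $\kappa=1$ case (alternatively, $\Aut$ of a bielliptic surface is virtually abelian). Since $X$ is birational to $X_0$ and any non-elementary group lifts/descends along the birational morphism $X\to X_0$ (blowups being equivariant for the finite-index subgroup fixing the exceptional set), we get that $X$ is birational to a torus, a K3, or an Enriques surface. Finally, for $\kappa(X)=-\infty$, a Kähler surface of negative Kodaira dimension is either rational or ruled over a curve of genus $\geq 1$; in the latter (irrational ruled) case the ruling is again essentially unique, $\Aut$ preserves the fiber class, and no loxodromic element exists — so $X$ must be rational.

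Once the birational type is pinned down, I would invoke projectivity: tori with a non-elementary automorphism group are abelian (this is argued in the excerpt via Theorem~\ref{thm:X-is-projective_intro}'s discussion — a non-projective torus has $\Aut$ acting through a group of matrices preserving the lattice, which is too small to be non-elementary), while rational, K3, and Enriques surfaces are automatically projective (rational surfaces and Enriques surfaces always are; a K3 carrying a loxodromic automorphism is projective because the automorphism forces the Picard number to be large and, more to the point, a non-projective K3 has Picard lattice of signature $(1,\rho-1)$ with no element of infinite order acting loxodromically — the transcendental part is too big). This yields the full statement: $X$ is projective and birational to a rational, abelian, K3, or Enriques surface.

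\textbf{The hard part} is handling the ``no loxodromic element'' arguments uniformly and correctly: in each of the excluded cases (general type, properly elliptic, bielliptic, irrational ruled) one must verify that the canonical fibration or the finiteness of $\Aut$ genuinely obstructs a non-abelian free group acting loxodromically on $\Hyp_X$, and in the torus and K3 subcases one must correctly rule out the non-projective members. All of these are known and essentially bookkeeping once the classification and the dictionary ``loxodromic $\Leftrightarrow$ positive entropy'' (Gromov--Yomdin, already recalled in \S\ref{par:compact-kahler-surfaces}) are in hand; the cleanest route is to cite the already-established Theorem~\ref{thm:non_kahler} to reduce to the Kähler setting and then quote the Kodaira classification together with the standard structure of $\Aut(X)$ in each class (as in~\cite{Cantat:Milnor}).
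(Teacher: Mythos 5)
Your reduction to the Kähler case via Theorem~\ref{thm:non_kahler} and your Kodaira-dimension case analysis for the birational type match the paper, which simply cites the classification in~\cite{Cantat:Milnor} for that step. The genuine problem is in your projectivity argument for K3 surfaces and tori.

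You claim that a non-projective K3 surface carries no loxodromic automorphism (``the transcendental part is too big''). This is false: McMullen constructed non-projective K3 surfaces admitting automorphisms of positive entropy (the Siegel disk examples), and the analogous phenomenon occurs for non-algebraic tori. A single loxodromic element does \emph{not} force projectivity; what does is the non-elementary hypothesis, and the mechanism is entirely different from what you describe. The paper's argument (Lemmas~\ref{lem:proj_root} and~\ref{lem:virtually_cyclic_non_projective}) runs as follows: if $X$ is Kähler, non-projective, and carries a loxodromic automorphism, then $h^{2,0}(X)=1$ and one gets a unitary character $J\colon \Aut(X)\to \mathbb{U}_1$ from the action on the holomorphic $2$-form. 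Lemma~\ref{lem:proj_root} shows, via the Salem-polynomial factorization of the characteristic polynomial of $f^*$, that $X$ is projective if and only if $J(f)$ is a root of unity for some (hence any) loxodromic $f$. So on a non-projective $X$, the character $J$ is injective on any free subgroup all of whose nontrivial elements are loxodromic; since $\mathbb{U}_1$ is abelian, such a subgroup has rank at most $1$, contradicting condition~(d) of \S\ref{par:compact-kahler-surfaces}. Your sketch contains no substitute for this character argument, and your appeal to ``Theorem~\ref{thm:X-is-projective_intro}'s discussion'' for the torus case is circular, since that theorem \emph{is} the statement being proved. The rational and Enriques cases are indeed automatic, but the K3 and torus cases are exactly where the real content lies, and your proposed reasons for them would fail.
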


The next sections will provide examples of non-elementary groups of automorphisms for each of these four classes of surfaces. 

It follows from this classification that when $X$ is not rational, there is a canonical volume form preserved by $\Gamma$; moreover, such a form
induces an invariant volume form on $X(\R)$ when  the action is by real automorphisms (see~\cite[Rmk 2.3]{invariant}). 
This constraint has deep consequences on the dynamics of $\Gamma$.

Let us prove Theorem~\ref{thm:X-is-projective}.
If $\Gamma$ is a non-elementary group of automorphisms on $X$, 
Theorem~\ref{thm:non_kahler}  asserts  that $X$ must be Kähler. Then 
the last assertion of the theorem  readily follows from 
the classification given in~\cite[Thm 10.1]{Cantat:Milnor}.  
What remains to show is that a compact Kähler surface admitting a 
non-elementary automorphism group is projective. The proof follows closely the arguments given in~\cite{Cantat:Milnor} and \cite{Reschke:Osaka}.  

\begin{lem}[see also \cite{Reschke:Osaka}, Thm. 2.2]\label{lem:proj_root} 
Let $f$ be a loxodromic automorphism of a compact K\"ahler surface~$X$. 
The following properties are equivalent:
\begin{enumerate}
\item on $H^{2,0}(X;\C)$, $f^*$ acts by multiplication by a root of unity;
\item $X$ is projective.
\end{enumerate}
\end{lem}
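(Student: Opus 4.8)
The statement to prove is Lemma~\ref{lem:proj_root}: for a loxodromic automorphism $f$ of a compact Kähler surface $X$, the action of $f^*$ on $H^{2,0}(X;\C)$ is by a root of unity if and only if $X$ is projective. I need to recall that $H^{2,0}(X;\C)$ is the space of holomorphic $2$-forms, and since $f^*$ is a linear automorphism of this finite-dimensional space commuting with the Hodge structure, the hard content is really about producing a polarization, i.e.\ a rational (or integral) Kähler class.

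**The plan for $(1)\Rightarrow(2)$.** This is the substantive direction. First I would decompose $H^{1,1}(X;\R)$ using the loxodromic isometry $f^*$: on $H^{1,1}(X;\R)$, which carries an intersection form of Minkowski type, $f^*$ has a unique eigenvalue $\lambda(f)>1$ with a nef eigenvector $\theta^+$ (isotropic), a reciprocal eigenvalue $\lambda(f)^{-1}$ with eigenvector $\theta^-$, and acts with eigenvalues of modulus $1$ on the orthogonal complement $\Pi=(\theta^+)^\perp\cap(\theta^-)^\perp$, which is negative definite. Now I would use the hypothesis: since $f^*$ acts on $H^{2,0}\oplus H^{0,2}$ by a root of unity (together with its conjugate), some power $f^N$ acts trivially on $H^{2,0}\oplus H^{0,2}$; replacing $f$ by $f^N$ (this does not change $\lambda(f)$ being $>1$ up to powers, and does not affect projectivity), $f^*$ fixes every holomorphic $2$-form. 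The key point is then that $H^2(X;\Q)\cap H^{1,1}(X)$ — the Néron--Severi group tensored with $\Q$ — can be detected: a class in $H^2(X;\R)$ is of type $(1,1)$ iff it is orthogonal to $H^{2,0}\oplus H^{0,2}$, and since $f^*$ acts on the rational vector space $H^2(X;\Q)$ and (after the power) fixes $H^{2,0}\oplus H^{0,2}$, the subspace $\ker(f^*-\lambda)\subset H^2(X;\C)$ and its companions interact with the rational structure. Concretely: the characteristic polynomial of $f^*$ on $H^2(X;\Z)$ is a Salem-type polynomial, $\lambda=\lambda(f)$ is a Salem (or Salem times cyclotomic) number, and the $2$-plane $\R\theta^+\oplus\R\theta^-$ is defined over $\Q$ because it is the sum of the eigenspaces for the two Galois-conjugate real eigenvalues of modulus $\neq 1$. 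This rational plane lies in $H^{1,1}$ (as $\theta^\pm$ are limits of Kähler classes, hence $(1,1)$), so it meets the positive cone; a rational class $\alpha$ in it with $\alpha^2>0$ lies in $\mathrm{NS}(X)\otimes\Q$, and by the projectivity criterion for surfaces (a Kähler surface with a rational class of positive self-intersection is projective — this is the standard consequence of Kodaira's theorem, e.g.\ via $H^{2,0}\cap \overline{H^{2,0}}$ having no rational class issues), $X$ is projective.

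**The plan for $(2)\Rightarrow(1)$.** This is the easier direction and essentially classical. If $X$ is projective, it carries an ample class $h\in \mathrm{NS}(X)$; the transcendental lattice $T(X)\subset H^2(X;\Z)$ (the orthogonal complement of $\mathrm{NS}(X)$) is a Hodge substructure containing $H^{2,0}(X)$, and $f^*$ preserves it, acting on $H^{2,0}$, which is a line, by a scalar $\delta$. Since $f^*$ preserves the integral lattice $T(X)$ and the positive definite Hermitian form on $T(X)\otimes\C$ restricted appropriately (the form $\omega\mapsto \int \omega\wedge\bar\omega$ is positive on $H^{2,0}$ and negative/definite of the right sign on the rest of $T(X)\otimes\R$ after accounting for the signature), the eigenvalues of $f^*$ on $T(X)\otimes\C$ all have modulus $1$; being eigenvalues of an integer matrix they are algebraic integers all of whose conjugates have modulus $1$, so by Kronecker's theorem they are roots of unity. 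In particular $\delta$ is a root of unity. (One must check the modulus-$1$ claim carefully: it follows because $f^*$ acts on $T(X)$ preserving a form of signature $(2, \rho')$ with $\rho'=\dim T(X)-2$, but the subtlety that $\lambda(f)>1$ lives entirely in $\mathrm{NS}(X)\otimes\R$, not in $T(X)$ — this is precisely why $X$ projective forces the transcendental eigenvalues, hence $\delta$, onto the unit circle.)

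**Main obstacle.** The delicate step is the first direction: extracting a \emph{rational} class of positive self-intersection from the dynamical data. The cleanest route is the Galois/Kronecker argument — the $2$-plane spanned by $\theta^+,\theta^-$ is rational because the minimal polynomial of $\lambda(f)$ over $\Q$ contributes exactly the two real conjugates of modulus $\neq 1$ (the other roots, on the unit circle, span a complementary rational subspace which, restricted to $H^{1,1}$, is negative definite). I would need to be careful that $\lambda(f)$ might equal a conjugate's reciprocal only in the expected way (Salem number behavior) and that the "trivial on $H^{2,0}$ after a power" reduction is harmless. Invoking the projectivity criterion for Kähler surfaces (rational class with positive square $\Rightarrow$ projective) then finishes it; this criterion itself rests on Kodaira's embedding theorem and the surface classification, both of which I may assume.
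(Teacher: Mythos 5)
Your overall architecture is the same as the paper's: factor the characteristic polynomial of $f^*$ on $H^2(X;\Z)$ into a Salem (or reciprocal quadratic) factor $S_f$ times cyclotomic factors, produce a rational $f^*$-invariant subspace of $H^{1,1}(X;\R)$ of Minkowski type, and invoke Kodaira's criterion; for the converse, locate the eigenvalues of modulus $\neq 1$ inside $\NS(X)\otimes\R$ and apply Kronecker to what is left. The converse direction as you phrase it (via the transcendental lattice) is correct and essentially dual to the paper's formulation, which shows $\Ker(S_f(f^*))\subset \NS(X;\Q)$ and concludes that the eigenvalues on $H^{2,0}$ are roots of the cyclotomic part; the one point you should make explicit is that $\theta^\pm_f$ lie in $\NS(X)\otimes\R$ because they are limits of $\lambda(f)^{\mp n}(f^{\pm n})^*[H]$ for an ample class $H$.

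There is, however, a genuine error in the forward direction, precisely at the step you single out as the crux: the $2$-plane $\R\theta^+_f\oplus\R\theta^-_f$ is \emph{not} defined over $\Q$ in general. The minimal polynomial of $\lambda(f)$ is the irreducible factor $S_f$, whose roots are $\lambda(f)$, $\lambda(f)^{-1}$, \emph{and} (whenever $\deg S_f\geq 4$, which is the typical case) further roots on the unit circle. Since $S_f$ is irreducible, all of its roots are Galois conjugate, so a Galois automorphism can carry $\lambda(f)$ to a unit-circle root of $S_f$; the pair $\{\lambda(f),\lambda(f)^{-1}\}$ is therefore not Galois-stable, and the unit-circle roots of $S_f$ do not ``span a complementary rational subspace'' — you are conflating them with the roots of the cyclotomic factors. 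The smallest rational $f^*$-invariant subspace containing $\theta^+_f$ is all of $\Ker(S_f(f^*))$, of dimension $\deg S_f$. The repair is exactly what the paper does: $\Ker(S_f(f^*))$ is defined over $\Q$, has signature $(1,\deg S_f-1)$ (it contains the isotropic pair $\theta^\pm_f$ with $\theta^+_f\cdot\theta^-_f>0$ and otherwise sits in their negative definite orthogonal complement), and — this is where hypothesis (1) enters — it lies inside $H^{1,1}(X;\R)$ because the eigenvalues of $f^*$ on $H^{2,0}\oplus H^{0,2}$ are roots of unity, hence roots of the cyclotomic part and not of $S_f$. A rational class of positive square in this subspace then yields projectivity by Kodaira. (Your preliminary reduction ``replace $f$ by $f^N$ so that $f^*$ is trivial on $H^{2,0}$'' is harmless but not needed.)
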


\begin{rem}\label{rem:jacobian}
If $X$ supports a loxodromic automorphism, then $\dim H^{2,0}(X, \C)\leq 1$. When this dimension equals 1, that is when  
$X$ is a torus or K3 surface, 
$H^{2,0}(X;\C)$ is generated by   a holomorphic $2$-form $\Omega_X$ 
that does not vanish and satisfies $\int_X\Omega_X\wedge {\overline{\Omega_X}} = 1$. It is unique up
to multiplication by a complex number of modulus $1$. 
So for every $f$, we can write  $f^*\Omega_X=J(f)\Omega_X$, and 
 the Jacobian determinant
\begin{equation} f\in \Aut(X)\mapsto J(f)\in {\mathbb{U}}_1\end{equation} 
defines 
 a unitary character on the group $\Aut(X)$. 
 It follows that the first condition of Lemma~\ref{lem:proj_root} can be reformulated as:
\begin{enumerate}
\item[{\em (1')}] {\emph{either $H^{2,0}(X;\C)=0$ or $J(f)$ is a root of unity}}. 
\end{enumerate}
Moreover, by Kodaira's embedding theorem, $X$ is projective when $H^{2,0}(X;\C)=0$.
\end{rem}

\begin{proof}[Proof of Lemma~\ref{lem:proj_root}]
The characteristic polynomial $\chi_f$ of $f^*\colon H^2_{t.\!f\!.}(X;\Z)\to H^2_{t.\!f\!.}(X;\Z)$ is  a monic polynomial with 
integer coefficients. 

 The subspace $H^{2,0}(X;\C)\subset H^2(X;\C)$ is $\Aut(X)$-invariant. By
the Hodge index theorem, the hermitian form $\Omega\in H^{2,0}(X;\C)\mapsto \int_X\Omega\wedge\overline{\Omega}$ is positive definite and $\Aut(X)$-invariant. 
Thus,  all eigenvalues of $f^*$ on $H^{2,0}(X;\C)$ (resp.\ on $H^{0,2}(X;\C)$) have modulus $1$. 
By the Hodge index theorem, the intersection form on $H^{1,1}(X;\R)$ is non-degenerate, of signature $(1, h^{1,1}(X)-1)$ and $f^*$ acts by isometries upon it; $f$ being loxodromic, $f^*$ has a real eigenvalue $\lambda(f)>1$. It follows  (see \S~2.4 of~\cite{Cantat:Milnor} for instance) that (a) the eigenspace of $f^*$ for the eigenvalue $\lambda(f)$ is an isotropic line $\R\theta^+_f$ in $H^{1,1}(X;\R)$, (b) $\lambda(f)\inv$ is also an eigenvalue of $f^*$ whose eigenspace is an isotropic line $\R\theta^-_f\subset H^{1,1}(X;\R)$, and (c) the intersection form is negative definite on the orthogonal complement of $(\R\theta^+_f+ \R\theta^-_f)^\perp\subset H^{1,1}(X;\R)$. Consequently, besides $\lambda(f)$ and $1/\lambda(f)$, all other roots 
 of  $\chi_f$ have modulus $1$. So 
$\lambda(f)$ is a reciprocal quadratic integer or a Salem number
 (see \S~2.4.3 of~\cite{Cantat:Milnor} for more details) and the decomposition of $\chi_f$ into irreducible factors can be written as
\begin{equation}
\chi_f(t)=S_f(t) \times R_f(t) = S_f(t) \times \prod_{i=1}^m C_{f,i}(t), 
\end{equation}
where $S_f$ is a Salem polynomial or a reciprocal quadratic polynomial, and 
the $C_{f,i}$ are cyclotomic polynomials. 
In particular, if $\xi$ is an eigenvalue  of $f^*$ and a root of unity, we see that 
 $\xi$ is a root of $R_f(t)$ but not of $S_f(t)$.
Note, conversely, that 
 if an eigenvalue of $f^*\rest{H^{2,0}(X;\C)}$ is not a root of unity, then it is a root of $S_f$.

Assume that all eigenvalues of $f^*$ on $H^{2,0}(X;\C)$ are roots of unity. 
Then $\Ker(S_f(f^*))\subset H^2(X;\R)$ is a $f^*$-invariant 
subspace of $H^{1,1}(X;\R)$. This subspace is defined over $\Q$ and
is of Minkowski type; in particular, it contains integral classes of
positive self-intersection. Thus, by the  Kodaira embedding
theorem, $X$ is projective.  

Conversely, assume that $X$ is projective. 
The N\'eron-Severi group $\NS(X;\Q)\subset H^{1,1}(X;\R)$ is $f^*$-invariant and 
contains vectors of positive self-intersection; so, by the description of the linear action of $\Gamma$ given in  \cite[Prop. 2.8]{stiffness}, 
$\NS(X;\R)$ contains all 
isotropic lines associated to loxodromic automorphisms.  
Now, any $f^*$-invariant subspace defined over $\Q$ and containing the eigenspace associated to 
$\lambda(f)$  contains $\Ker(S_f(f^*))$, hence $\Ker(S_f(f^*))\subset \NS(X;\Q)$.  In particular, $\Ker(S_f(f^*))$ does not intersect $H^{2,0}(X;\C)$, which is invariant, 
 and we conclude that all eigenvalues 
 of $f^*$ on  $H^{2,0}(X;\C)$ are roots of unity. 
\end{proof}

\begin{lem}\label{lem:virtually_cyclic_non_projective}
Let $X$ be a  compact K\"ahler surface.  
If $X$ is not projective, then $\Aut(X)^*$ is virtually Abelian and if it contains a loxodromic element it is virtually cyclic.
\end{lem}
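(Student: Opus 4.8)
The plan is to prove the first assertion by contradiction and then to read off the second one.

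Suppose $\Aut(X)^*$ is \emph{not} virtually Abelian. Being a subgroup of $\GL(H^2(X;\Z))$ it is discrete, and it acts on the hyperbolic space $\Hyp_X\subset H^{1,1}(X;\R)$ by isometries; since a discrete isometry group of a finite-dimensional hyperbolic space is either virtually Abelian or non-elementary (see~\cite{Cantat:Milnor}), it must here contain a non-Abelian free subgroup, i.e.\ $\Aut(X)$ is non-elementary. The equivalence of conditions~(a)--(d) of \S\S~\ref{subs:NE} and~\ref{par:compact-kahler-surfaces}, together with Lemma~\ref{lem:non-elementary_free_groups}, then produces a non-Abelian free subgroup $\Gamma_0\subset\Aut(X)$ all of whose non-trivial elements are loxodromic. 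In particular $X$ carries a loxodromic automorphism, so by Remark~\ref{rem:jacobian} either $H^{2,0}(X;\C)=0$ --- whence $X$ is projective by Kodaira's theorem and we are done --- or $H^{2,0}(X;\C)=\C\,\Omega_X$ and the Jacobian character $J\colon\Aut(X)\to\mathbb{U}_1$, $f^*\Omega_X=J(f)\,\Omega_X$, is defined. Choosing free generators $a,b$ of $\Gamma_0$ and setting $h=[a,b]$, the element $h$ is a non-trivial element of $\Gamma_0$, hence loxodromic, while $J(h)=[J(a),J(b)]=1$ because $\mathbb{U}_1$ is Abelian. Thus $h^*$ acts trivially --- a fortiori by a root of unity --- on $H^{2,0}(X;\C)$, and Lemma~\ref{lem:proj_root} forces $X$ to be projective, a contradiction. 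Hence, if $X$ is not projective, $\Aut(X)^*$ is virtually Abelian.

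Assume moreover that $\Aut(X)^*$ contains a loxodromic element $f^*$, with isotropic eigenlines $\R\theta^+_f$, $\R\theta^-_f\subset H^{1,1}(X;\R)$, equivalently fixed points $p^\pm_f$ of $f^*$ on $\partial\Hyp_X$. First I would show that every $g\in\Aut(X)$ preserves the pair $\{p^+_f,p^-_f\}$: otherwise $g^*f^*(g^*)\inv$ is loxodromic with a fixed pair distinct from $\{p^+_f,p^-_f\}$, and $\langle f^*,\,g^*f^*(g^*)\inv\rangle$ is then either non-elementary (if the two pairs are disjoint, by a ping-pong argument on $\partial\Hyp_X$) or non-discrete (if they share exactly one point, by conjugating one loxodromic by high powers of the other) --- both impossible, since $\Aut(X)^*$ is virtually Abelian, hence elementary, and discrete. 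So $\Aut(X)^*$ fixes the pair $\{p^+_f,p^-_f\}$; an index-at-most-$2$ subgroup $G^+$ fixes each of $p^+_f$, $p^-_f$, hence preserves the geodesic $\gamma$ joining them and acts on it by translations. The kernel of the homomorphism $G^+\to\R$ recording the translation length fixes $\gamma$ pointwise, so it lies in a compact group and, being discrete, is finite; its image is a discrete subgroup of $\R$ --- using again that $\Aut(X)^*$ is discrete and acts by bounded operators on $H^{2,0}(X;\C)$ and $H^{0,2}(X;\C)$, so that translation lengths cannot accumulate at $0$ --- hence trivial or infinite cyclic. Therefore $G^+$, and with it $\Aut(X)^*$, is virtually cyclic.

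The two delicate points are the passage from ``non-elementary'' to ``a free group of loxodromic automorphisms'' (this is exactly where Lemma~\ref{lem:non-elementary_free_groups} and the characterizations of \S~\ref{subs:NE} are indispensable, ensuring that the commutator $h$ is again loxodromic so that Lemma~\ref{lem:proj_root} applies), and the final upgrade from ``elementary of loxodromic type'' to ``virtually \emph{cyclic}'', which relies essentially on the discreteness of $\Aut(X)^*$, both to exclude pathological configurations of two loxodromics and to ensure that the group of translation lengths is discrete in $\R$.
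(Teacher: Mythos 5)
Your proof is correct. For the first (and main) assertion it is essentially the paper's argument: the paper notes that a free group of loxodromic automorphisms must inject into $\GL(H^{2,0}(X;\C))\simeq\C^\times$ via the character $J$ of Remark~\ref{rem:jacobian}, because any loxodromic element of the kernel would act by a root of unity on $H^{2,0}(X;\C)$ and force projectivity through Lemma~\ref{lem:proj_root}; since a free group embedding in an Abelian group has rank at most one, there is no free pair of loxodromic elements and $\Aut(X)^*$ is elementary. Your commutator $h=[a,b]$ is exactly a non-trivial element of that kernel, so the two arguments coincide. The genuine divergence is in the second assertion: the paper simply cites Theorem~3.2 of \cite{Cantat:Milnor} for the dichotomy governing elementary automorphism groups (virtually cyclic of loxodromic type versus virtually Abelian of parabolic type), whereas you re-derive the loxodromic case by hand. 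Your derivation is sound --- the facts that two loxodromic isometries in a discrete group share either both fixed points or none, and that the translation-length homomorphism has finite kernel and discrete image, are the standard ingredients --- but the step ``translation lengths cannot accumulate at $0$'' deserves one more line: an element of $G^+$ of translation length $t$ is the product of the translation by $t$ along the axis with an element of a fixed compact group (the pointwise stabilizer of the axis acts orthogonally on the negative-definite complement in $H^{1,1}(X;\R)$ and unitarily on $H^{2,0}\oplus H^{0,2}$), so a sequence $t_n\to 0$ of distinct lengths would place infinitely many distinct elements of the discrete group $\Aut(X)^*$ in a compact set. What your self-contained route buys is independence from the cited structure theorem; what the citation buys is brevity.
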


\begin{proof}  Let $\Gamma\subset \Aut(X)$ be a free subgroup (possibly reduced to a cyclic group or to $\set{\id}$)
such that every $f\in \Gamma\setminus\set{\id}$ is loxodromic. Since $X$ is not projective, Lemma~\ref{lem:proj_root} and Remark~\ref{rem:jacobian} show that $h^{2,0}(X)=1$ and that $\Gamma$ acts faithfully by scalar multiplication  on $H^{2,0}(X;\C)$. Indeed otherwise the kernel 
of $\Gamma\to \GL(H^{2,0}(X;\C))$ would contain loxodromic elements and $X$ would be projective. 
From this, we deduce that $\Gamma$ has rank at most $1$, hence $\Aut(X)$ is elementary.
To conclude, we refer to Theorem~3.2 of \cite{Cantat:Milnor}, which says that there are two possibilities: either $\Aut(X)^*$ contains a finite index, cyclic subgroup generated by a loxodromic automorphism; or $\Aut(X)^*$ contains a finite index, Abelian subgroup, all of whose non-trivial elements are parabolic (permuting the fibers of a genus $1$ fibration on~$X$). 
\end{proof}

We can now conclude the proof of Theorem~\ref{thm:X-is-projective}: 
indeed we already know that  $X$ is Kähler by Theorem~\ref{thm:non_kahler}, and then it is projective by Lemma~\ref{lem:virtually_cyclic_non_projective}.   
 
\subsection{Remarks on the non-Kähler case}  \label{subs:remarks_non_kahler}
Consider the Hopf surface $X_\alpha$ obtained by taking the 
 quotient of $\C^2\setminus\set{(0,0)}$ by the group of homotheties 
 $(x,y)\mapsto (\alpha^m x, \alpha^my)$, for some $\alpha \in \C^\times$ with $\vert \alpha\vert <1$.
Taking the quotient of $\C^2\setminus\{(0,0)\}$ 
by all homotheties, we get the projective line $\P^1(\C)$: 
this yields a fibration $\eta\colon X_\alpha\to \P^1(\C)$ with fibers isomorphic to the genus $1$ curve $E_\alpha=\C^\times/\bra{\alpha^\Z}$. 

 Recall that a subgroup  $\Gamma\subset\SL_2(\C)$ is 
  {\bf{non-elementary}} if it   contains a non-Abelian free group and is not relatively compact; 
  equivalently, $\Gamma$ induces a non-elementary group of isometries of 
    the hyperbolic space $\Hyp^3$, 
 whose boundary is ${\mathbb{S}}^2\simeq \P^1(\C)$.  By definition the {\bf{limit set}}
 $\Lim(\Gamma)$ is the closure of the set of fixed points of loxodromic elements of $\Gamma$.

Let us fix such a non-elementary group $\Gamma$.  
Let $\nu$ be a probability measure on $\SL_2(\C)$, 
whose support generates $\Gamma$  as a closed semigroup.  
It follows  from Furstenberg's theory of random products of matrices 
that  {$\Lim(\Gamma)$ is the  unique  closed, minimal $\Gamma$-invariant subset of $\P^1(\C)$, 
and there is a unique $\nu$-stationary measure $\mu_{\P^1(\C)}$ on $\P^1(\C)$; moreover, the support of   $\mu_{\P^1(\C)}$
coincides with  $\Lim(\Gamma)$.}  
 
Now,  
consider the action of $\Gamma$ on $X_\alpha$ induced by the natural action of $\Gamma$ on $\C^2$. 
The fibration $\eta\colon X_\alpha\to \P^1(\C)$ is $\Gamma$-equivariant.  Any  $f\in \Gamma$
 acts by scalar multiplication along the fibers of $\C^2\setminus\{(0,0)\}\to \P^1(\C)$. 
 Since the  multiplication $z\mapsto \beta z$ induces a translation on the elliptic curve $E_\alpha$, the action of $\Gamma$ on $X_\alpha$ is an isometric extension of the action on $\P^1(\C)$, so we are in the setting of~\cite{Guivarch-Raugi:2007}. From this  we obtain: \textit{if $\Gamma$ is Zariski dense in $\SL_2(\C)$, viewed as a real Lie group, then $X_\alpha$ supports a unique minimal invariant subset $\Lambda_X$ and a unique $\nu$-stationary measure $\mu_X$; this measure is not $\Gamma$-invariant; both $\Lambda_X$ and $\mu_X$ are invariant under the action of $\C^\times$ on $X_\alpha$ by homotheties; and the marginal $\eta_*\mu_X$ of $\mu_X$ is equal to $\mu_{\P^1(\C)}$}(\footnote{With the notation of~\cite{Guivarch-Raugi:2007}, the $KAN^+$ decomposition of $\SL_2(\C)$ can be chosen in such a way that $A=\R_+^\times$ is the group of diagonal matrices with real positive coefficients, and its centralizer $M$ in the maximal compact subgroup $K$ is ${\mathbb{S}}^1$, the group of diagonal matrices with eigenvalues of modulus $1$. Then, the product $MA$ is just $\C^\times$, the group of complex diagonal matrices  in $\SL_2(\C)$. In particular, this group is connected (it corresponds to the group $C$ in~\cite{Guivarch-Raugi:2007}). Thus, our assertions follows from the main theorems stated on the second page of~\cite{Guivarch-Raugi:2007}.}).

Now, take $\alpha\in \R_+^\times$ and assume that $\Gamma_\nu$ is a non-elementary subgroup of $\SL_2(\R)$ (in particular, $\Gamma_\nu$ is not Zariski dense in the real Lie group $\SL_2(\C)$). The limit set $\Lim(\Gamma)$ is contained in $\P^1(\R)$; thus, 
the $\nu$-stationary measures on $X_\alpha$ are supported in the preimage of $\P^1(\R)$ by the fibration $\eta$. There is, in that case, a one parameter family of such measures, parametrized by an angle $\theta\in \R/\Z$. Indeed, the plane $\R^2\subset \C^2$ is $\Gamma$-invariant, and the projection of $\R^2\setminus\{(0,0)\}$ in $X_\alpha$ supports a unique $\nu$-stationary measure $\mu_{X(\R)}$. 
Then, all the   stationary measures are obtained  by ``rotating''  $\mu_{X(\R)}$ by 
$(x,y)\mapsto \exp(2i \pi \theta)\cdot (x, y)$.

\begin{rem} Let $X$ be a Hopf or a Inoue surface. According to the description of $\Aut(X)$ by 
 Namba, Matumoto and Nakagawa (see~\cite{Cantat-PR-Xie, Namba:Tohoku}),    either $\Aut(X)$ is virtually solvable, or $X$ is a Hopf surface obtained as a quotient of its universal cover 
 $\C^2\setminus\{(0,0)\}$ by a group of homotheties 
\begin{equation}
\Phi_{m,n}(x,y)=(\alpha^m\xi^nx,\alpha^m\xi^ny)
\end{equation}
where $\alpha\in \C^\times$ has modulus $<1$, $\xi$ is a root of unity (of order $q$ for some $q\geq 1$), and $(m,n)\in \Z\times \Z/q\Z$. 
Taking a finite cover brings us back to the case $\xi=1$, which we just described. 
\end{rem}

\begin{rem}
If $X$ is a Kodaira fibration, then $X$ comes with an $\Aut(X)$-invariant fibration on an elliptic curve with elliptic fibers. In this case every stationary measure is invariant. This follows for instance from the fact that such actions are distal (see~\cite[Thm. 3.5]{furstenberg_stiffness}). 
\end{rem}

These remarks do not exhaust all possible non-Kähler compact complex surfaces. Indeed, there are other examples of surfaces in class VII. Moreover, the classification of VII$_0$ surfaces is not complete yet (even, as far as we know, if we assume that $\Aut(X)$ is not virtually nilpotent) .  
 
\begin{que} Let $X$ be a non-Kähler compact complex surface. Suppose there is a probability measure $\nu$ on $\Aut(X)$ such that $X$ supports a $\nu$-stationary measure which is not $\Gamma_\nu$-invariant and has a Zariski dense support. Then must  $X$ be a Hopf surface?
\end{que}

\section{From Enriques to rational surfaces}

In this Section  we start  by describing 
two families of surfaces with non-elementary automorphism groups, obtained by taking quotients of K3 surfaces by an involution. For Enriques surfaces, the involution is fixed point free; 
this is not the case for Coble surfaces. We briefly mention the  examples of Blanc, 
whose detailed study is the purpose of Part~\ref{part:ergodic_blanc}, and conclude 
 with an example of
  Lesieutre of a non-elementary group on a rational surface without invariant curve. 
Coble, Blanc, and Lesieutre surfaces are all rational, but their dynamical features 
happen to be  quite different: this stems from the existence or 
non-existence of a global invariant measure. 

\subsection{Enriques (see~\cite{Cossec-Dolgachev:book, Dolgachev:Kyoto})}\label{par:Enriques}
Recall that Enriques surfaces are quotients of K3 surfaces by fixed point free involutions. 
According to Horikawa and Kond\={o} (\cite{Horikawa:Enriques1,Horikawa:Enriques2, Kondo:1994}),
the moduli space ${\mathcal{M}}_E$ of complex Enriques surfaces is a rational quasi-projective variety of dimension 10.
An Enriques surface $X$ is nodal if it contains a smooth rational curve; such rational curves have self-intersection
$-2$, and are called nodal-curves or $(-2)$-curves. Nodal Enriques surfaces form a set of codimension $1$ in ${\mathcal{M}}_E$.

For any Enriques surface $X$, the lattice $(\NS(X;\Z), q_X)$ is isomorphic to the orthogonal direct sum $E_{10}=U \operp  E_8(-1)$ 
(\footnote{Here, $U$ is the standard $2$-dimensional Minkowski lattice, $(\Z^2, x_1x_2)$, and $E_8$ is the root lattice given by the 
corresponding Dynkin diagram; so $E_8(-1)$ is negative definite, and $E_{10}$ has signature $(1,9)$ (see \cite[Chap. II]{Cossec-Dolgachev:book}). Also, in this  paper
$\NS(X;\Z)$ denotes the 
 torsion free part of the N\'eron-Severi group, which is sometimes  denoted by ${\mathsf{Num}}(X;\Z)$ in the literature on 
Enriques surfaces.}). 
Let $W_X\subset \O(\NS(X;\Z))$ be the subgroup generated by reflexions about classes $u$ such that  $u^2=-2$, 
 and   $W_X(2)$ be the subgroup of $W_X$  acting trivially on $\NS(X;\Z)$ modulo $2$. 
Both $W_X$ and $W_X(2)$ have  finite index in $\O(\NS(X;\Z))$. 
The following result is due independently to Nikulin and to Barth and Peters (see \cite{Dolgachev:Kyoto} for details and references).

\begin{thm} 
If $X$ is an  Enriques surface which is not nodal, 
\begin{enumerate}[(1)]
\item the homomorphism 
$ \Aut(X)\ni f \mapsto f^*\in \GL(H^2(X, \Z))$ is injective, 
\item its image  satisfies
$W_X(2) \subset \Aut(X)^* \subset W_X$.
\end{enumerate}
In particular, $\Aut(X)^*$ is a finite index subgroup in $\O(\NS(X;\Z))$, thus $\Aut(X)^*$ is a lattice in the rank $1$ Lie group $\O(\NS(X;\R))\simeq \O_{1,9}(\R)$ and it acts irreducibly on 
$\NS(X;\R)$. \end{thm}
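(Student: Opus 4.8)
The theorem has two layers: parts~(1)--(2) constitute the Nikulin / Barth--Peters theorem proper, while the ``in particular'' clause is a formal consequence of them together with the facts recalled just before the statement. The plan is to indicate the mechanism behind (1)--(2) and then carry out the deduction in detail. For (1), one first checks that the kernel $K$ of $f\mapsto f^*$ on $H^2(X;\Z)$ is finite: by the Fujiki--Lieberman theorem recalled in~\S\ref{subs:NE}, $\Aut(X)^0$ has finite index in $K$, and $\Aut(X)^0$ is trivial because an Enriques surface carries no nonzero holomorphic vector field (on the K3 double cover $Y$ one has $\pi^*T_X=T_Y$ and $H^0(Y,T_Y)=0$). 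It then remains to show that a non-nodal Enriques surface has no nontrivial \emph{numerically trivial} automorphism --- note that the torsion subgroup of $H^2(X;\Z)$ is $\Z K_X$, which is fixed by every automorphism, so ``acting trivially on $H^2(X;\Z)$'' and ``acting trivially on $\NS(X;\Z)$'' coincide here. I would invoke the classification of numerically trivial automorphisms of Enriques surfaces (Barth--Peters, refined by Mukai--Namikawa), according to which this group is nontrivial only on a proper subvariety contained in the nodal locus of $\mathcal{M}_E$.

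For (2), the inclusion $\Aut(X)^*\subseteq W_X$ is the easier half: an automorphism preserves the nef cone of $X$, and this --- via the chamber structure of the positive cone of $\NS(X;\R)$ under $W_X$ and the Torelli description of which Hodge isometries descend from the K3 double cover --- forces $f^*\in W_X$. The reverse inclusion $W_X(2)\subset\Aut(X)^*$ is the core of the argument, and I expect it to be the main obstacle. The strategy is to pass to the K3 double cover $\pi\colon Y\to X$ with its fixed-point-free Enriques involution $\sigma$: an isometry $g$ of $\NS(X;\Z)$ congruent to the identity modulo $2$ lifts, via $\pi^*$ and the eigenlattice decomposition of $H^2(Y;\Z)$ under $\sigma^*$, to a Hodge isometry $\widetilde g$ of $H^2(Y;\Z)$ that commutes with $\sigma^*$ and --- here the congruence condition together with the absence of $(-2)$-curves is essential --- preserves the set of effective classes. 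The strong Torelli theorem for K3 surfaces then yields an automorphism $h$ of $Y$ with $h^*=\widetilde g$ commuting with $\sigma$, and $h$ descends to an automorphism of $X$ inducing $g$; the delicate point throughout is the effectivity bookkeeping, which is exactly what non-nodality makes manageable.

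Granting (1)--(2), the ``in particular'' clause follows formally. From the sandwich $W_X(2)\subset\Aut(X)^*\subset W_X\subset\O(\NS(X;\Z))$ and the fact recalled just before the theorem that $W_X(2)$ has finite index in $\O(\NS(X;\Z))$, one gets that $\Aut(X)^*$ has finite index in $\O(\NS(X;\Z))$. Since $\NS(X;\Z)\cong E_{10}$ has signature $(1,9)$, we have $\O(\NS(X;\R))\cong\O_{1,9}(\R)$, whose identity component is a simple Lie group of real rank~$1$ with no compact factor. The arithmetic group $\O(\NS(X;\Z))$ is a lattice in $\O_{1,9}(\R)$ by the Borel--Harish-Chandra theorem, and a finite-index subgroup of a lattice is a lattice, so $\Aut(X)^*$ is a lattice. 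Finally, by the Borel density theorem $\Aut(X)^*$ is Zariski dense in $\O_{1,9}$, and $\O_{1,9}$ acts irreducibly on $\R^{10}$ --- a proper nonzero invariant subspace would be either non-degenerate, forcing the stabilizer to be $\O_{1,k}\times\O_{9-k}\subsetneq\O_{1,9}$, or would contain an invariant isotropic line with proper parabolic stabilizer. Hence $\Aut(X)^*$ acts irreducibly on $\NS(X;\R)$, which completes the argument.
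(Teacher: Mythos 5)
Your proposal matches the paper's treatment: the paper does not prove parts (1)--(2) but attributes them to Nikulin and Barth--Peters (citing Dolgachev for details), exactly the result you sketch via the K3 double cover and the classification of numerically trivial automorphisms, and the ``in particular'' clause is deduced formally from the finite-index facts about $W_X$ and $W_X(2)$ recalled just before the statement. Your explicit derivation of the lattice property (Borel--Harish-Chandra plus stability of lattices under finite index) and of irreducibility (Borel density) is a correct and standard way to fill in what the paper leaves implicit.
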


This implies that $\Aut(X)$ is non-elementary, contains parabolic automorphisms, and  does not preserve any curve (because $\Aut(X)$ does not have a fixed point in $H^2(X;\R)$). From these properties, and from~\cite{invariant}, we obtain a classification of $\Aut(X)$-invariant probabilitity measures on unnodal complex Enriques surfaces; from~\cite{finite_orbits}, we know that $\Aut(X)$ has only finitely many finite orbits. On the other hand, we expect that generically 
all orbits are   infinite:

\begin{que}\label{que:Enriques-finite-orbits} Is it true that if $X\in {\mathcal{M}}_E$ is a general (resp. very general) Enriques surface, then $\Aut(X)$ does not have any finite orbit?
\end{que}

If the answer is positive, then one could also apply the main results of~\cite{hyperbolic} to describe the distribution of random orbits on (real, unnodal) Enriques surfaces.

\subsection{Coble (see~\cite{Cantat-Dolgachev})}
In this article, a {\bf{Coble surface}} is, by definition, obtained by blowing up
the ten nodes of a general rational sextic curve $C_0\subset \P^2$. The result is a rational surface 
$X$ with a large group of automorphisms. To be precise, denote by $K_X$ 
 the  {{canonical line bundle}} $K_X=\det(TX^\vee)$ and consider the 
 corresponding  {\emph{canonical class}} 
$k_X\in \NS(X;\Z)$; since $K_{\P^2}$ is $O_{\P^2}(-3)$ and 
since we blow up the nodes of $C_0$, we get 
\begin{equation}
-2k_X=[C] \quad {\text{and}} \quad k_X=-3\bfe_0+\sum_{i=1}^{10}\bfe(p_i)
\end{equation}
where $C$ is the strict transform of $C_0$ in $X$, and $(\bfe_0,\bfe(p_1), \ldots, \bfe(p_{10}))$ is the basis of $\NS(X)$ given by the classes of the total transform of a line and the exceptional divisors $E_i$ obtained by blowing up the ten double points $p_i$ of $C_0$.
The orthogonal complement $k_X^\perp$ is a lattice of dimension $10$, isomorphic
to $E_{10}$, and we define $W_X(2)$ exactly in the same way as for Enriques surfaces. 
Then, $\Aut(X)^*$ preserves the decomposition $k_X\oplus k_X^\perp$. 
As before we say that $X$ is unnodal when it does not contain any smooth rational curve of self-intersection $-2$. If  $X$ is unnodal, the 
 $\Aut(X)^*$ contains $W_X(2)$, in particular it is non-elementary
     (see~\cite{Cantat-Dolgachev}, Theorem~3.5). 

Let us explain this result in a more explicit way (see~\cite{Cantat-Dolgachev}). Let $C_0$ be 
a general rational sextic as before. 
Choose one of the double points of $C_0$, say $p_i$, and consider the cubic curve $C_i$ containing the remaining $9$ points. Then, $C_0$ and $2C_i$ 
generate a pencil of sextic curves, with base points at the $p_j$, $j\neq i$. Blowing up these $9$ points, we obtain a surface $X_i$ with a genus $1$ fibration $\pi_i\colon X_i\to \P^1_\C$; the sextic $C_0$ lifts
to  a nodal fiber of $\pi_i$ and $2C_i$ gives a smooth multiple fiber. The $9$ exceptional divisors $E_j$, $j\neq i$, determine $9$ multisections of $\pi_i$, each of degree $2$. Each pair $(j,k)$, $j,k\neq i$, defines an automorphism $g_{jk}$ of $X_i$, acting by translation along the fibers of $\pi_i$: the translation on the fiber $X_{i,b}:=\pi_i^{-1}(b)$ is by the divisor of degree $0$ defined by 
\begin{equation}
\tau_{jk}:=(E_k-E_j)_{X_{i,b}}\in \Pic^0(X_{i,b}).
\end{equation} 
For a general choice of $C_0$, the $g_{jk}$ generate a free Abelian group $A_i$ of rank $8$; its elements, except the identity, are parabolic automorphisms of $X_i$ preserving the fibration $\pi_i$ (hence also the singular point $p_i$ of the strict transform of $C_0$ in  $X_i$). Thus, blowing-up $p_i$, the group $A_i$ lifts to a subgroup of $\Aut(X)$. In this way we obtain 
 $10$ copies $A_i$ of $\Z^8$ in $\Aut(X)$, acting as parabolic groups with respect to distinct genus $1$ fibrations. In particular, $\Aut(X)$ is non-elementary. Note that the strict transform of $C_0$ is $\Aut(X)$-invariant and is contained in a fiber of $\pi_i$ for each index $i$.  

Also, Coble surfaces are degenerations of Enriques surfaces; thus, Coble surfaces share many features of Enriques surfaces, but there are also interesting differences. For instance, $\Aut(X)$ preserves the class $k_X$, and this class is non-trivial. Moreover, there is a holomorphic  section of $-2K_X$
vanishing exactly along the strict transform $C\subset X$ of the rational sextic curve $C_0$; this means that there is
a meromorphic section $\Omega_X=\xi(x,y) (dx\wedge dy)^2$ of $2K_X$ that does not vanish and has
a simple pole along $C$. Thus, the formula
\begin{equation}
\vol_X(U)=\int_U
 \abs{\xi(x,y)} dx\wedge dy\wedge d{\overline{x}}\wedge d{\overline{y}} =\int_U  \abs{\xi(x,y)} ({\mathsf{i}}dx\wedge  d{\overline{x}}) \wedge ({\mathsf{i}} dy\wedge d{\overline{y}})
\end{equation}
determines a measure  $\vol_X$ $={\text{``}}\,\Omega_X^{1/2}\wedge {\overline{\Omega_X^{1/2}}}\,{\text{''}}$. The total mass of this measure is finite.  Indeed, if locally $C=\{ x=0\}$  then $\xi(x,y)=\eta(x,y)/x$ where $\eta$ is regular; thus, $\abs{\xi} =\abs{\eta} \abs{x}^{-1}$ is locally integrable because $\frac{1}{r^\alpha}$ is integrable with respect to $rdrd\theta$ when $\alpha<2$. 
We may assume  that it is   a probability after multiplying $\Omega_X$ by some
adequate constant, and 
this measure is $\Aut(X)$-invariant, because $\vol_X$ is uniquely determined by the complex structure (see also Remark~\ref{rem:pm1}  below). In particular the ergodic theory of 
Coble examples can be studied with the techniques of~\cite{stiffness}.


\subsection{Blanc} \label{subs:blanc} 
Another family of examples was introduced by Blanc in~\cite{Blanc:Michigan}. Here we 
  describe them  informally, a more detailed presentation will be  given in Part~\ref{part:ergodic_blanc}. 

Start with a smooth cubic curve $C\subset \P^2$. 
If $q$ is a point of $C$, the Jonquières involution associated to $(C, q)$ is 
the birational involution $\sigma_q$ of $\P^2$ characterized by the following properties: it fixes $C$ pointwise
and it preserves the pencil of lines through $q$. The indeterminacy points of $\sigma_q$ are $q$ and the four tangency 
points of $C$ with this pencil, which may be ``infinitely near'' $q$.
Thus, the indeterminacies of $\sigma_q$ are resolved by blowing-up points of $C$, possibly several times. 
After such a sequence of blow-ups $\sigma_q$ 
becomes  an automorphism of a rational surface   that fixes pointwise 
the strict transform of $C$. In particular,  if we blow-up further points of this
curve, $\sigma_q$ lifts to an automorphism of the new surface. 

Pick a finite number of points $q_i\in C_0$, $i=1, \ldots, k$, and 
resolve simultaneously the indeterminacies of the Jonquières 
involutions $\sigma_i:=\sigma_{q_i}$ determined by the $q_i$. The result is a rational surface $X$, together
with a subgroup $\Gamma:=\langle s_1, \ldots, s_k\rangle$ of $\Aut(X)$.  
Blanc proves in~\cite{Blanc:Michigan, Blanc:Indiana} that: 
\emph{
\begin{enumerate}
\item there are no relations between these involutions, 
that is,  $\Gamma$ is a free product 
$$\langle s_1, \ldots, s_k\rangle \simeq \bigast_{i=1}^{k} \Z/2\Z;$$
\item  the composition of two distinct involutions $s_i\circ s_j$ is parabolic;
\item  the composition of three distinct involutions is loxodromic. 
\end{enumerate}}
In addition, there is a meromorphic section 
$\Omega_X$ of $K_X$ with a simple pole along the strict transform of $C_0$, however, 
contrary to Coble surfaces, 
the form $\vol_X:=\Omega_X\wedge{\overline{\Omega_X}}$ is  not integrable (its total mass is infinite). This observation will play a crucial role in Part~\ref{part:ergodic_blanc}.

\begin{rem}\label{rem:pm1}
If $\Gamma\subset \Aut(X)$ is generated by involutions and there is a meromorphic form $\Omega$ such that $f^*\Omega=\xi(f)\Omega$ 
for every $f\in \Gamma$, then $\xi(f)=\pm 1$: 
this is the case for Blanc's examples or general Coble surfaces, since $W_X(2)$ is also 
generated by involutions (see \cite{Dolgachev:Kyoto}).  
\end{rem}

\subsection{Lesieutre}\label{subs:lesieutre}
 In~\cite{lesieutre:tricoble}, Lesieutre constructs rational surfaces $X$ with the following properties: {\emph{$\Aut(X)$ contains three involutions $\tau_i$, $i=1,2,3$, such that the group $\Gamma:=\langle \tau_1, \tau_2, \tau_3\rangle\subset \Aut(X)$ and $f:=\tau_1\circ\tau_2\circ\tau_3$ satisfy 
\begin{enumerate}
\item $X$ and $\Gamma$ are defined over $\Q$;
\item $\Gamma$ is non-elementary, and isomorphic to $(\Z/2\Z)\star(\Z/2\Z)\star(\Z/2\Z)$; 
\item $\Gamma$ does not contain any parabolic element;
\item $f$ is loxodromic and does not preserve any curve.
\end{enumerate} 
In particular, $\Aut(X)$ does not preserve any rational section of $K_X$ (moreover, $-K_X$ is not pseudo-effective).}}

Thus, the situation is quite different from Enriques, Coble and Blanc surfaces, since there is neither parabolic automorphism nor invariant algebraic volume form. Our results are so far not powerful enough to describe the finite orbits or stationary measures for such an example.

\newpage
\part{Pentagon folding and dynamics on K3 surfaces} \label{part:pentagons}

\section{The space of pentagons and the folding groups} \label{sec:pentagons_geometry}

The automorphism groups  of Wehler surfaces were discussed at length in our previous papers. 
Here we describe another family of K3 surfaces with a 
non-elementary group action, coming from the geometry of pentagons in the euclidean plane.

\begin{rem}  
The ``folding'' terminology is borrowed from \cite{benoist-hulin, benoist-hulin2}, which was a source of motivation for these results. There is a vast amount of literature on length-preserving transformations 
in spaces of polygons, notably motivated by algorithmic questions: see for instance Section 5.3.2 in the monograph \cite{Demaine-ORourke}  where our folding transformations are referred to as ``flips''. 
\end{rem}

\subsection{Spaces of plane pentagons}\label{par:pentagons}
Let $\ell = (\ell_0, \ldots, \ell_4)\in \R_{>0}^5$ be a  5-tuple of  positive
 real numbers such that there exists a pentagon with side lengths $\ell_i$; 
 this imposes a  a condition on the $\ell_i$, defined by explicit inequalities, and we say 
 that $\ell$ is {\bf{admissible}} if this condition is satisfied.
 Here a pentagon 
is just an ordered set of points $(a_i)_{i=0, \ldots, 4}$ in the Euclidean plane $\R^2$,
such that $\dist(a_i,a_{i+1})=\ell_i$ for $i=0,\ldots, 4$ (with $a_5 = a_0$ by definition, in other words we consider indices modulo 5); pentagons are not assumed to be convex, and 
two distincts sides $[a_i, a_{i+1}]$ and $[a_j, a_{j+1}]$ may 
intersect at  a point which is not one of the $a_i$'s.

Let  $\Pent(\ell)$ be the set of pentagons with side lengths $(\ell_i)_{i=0}^4$. Note that 
$\Pent(\ell)$ is naturally a real  algebraic variety, defined by polynomial equations of the form 
$\dist(a_i, a_{i+1})^2 = \ell_i^2$. 

For every  $i$,  $a_i$ is one of the two intersection points $\set{a_i, a'_i}$ of the circles
of respective  centers $a_{i-1}$ and $a_{i+1}$ and   radii $\ell_{i-1}$ and $\ell_i$. 
The transformation exchanging  these two points $a_i$ and $a_i’$,  while keeping 
 the other vertices fixed, defines an involution   of $\Pent(\ell)$, that we denote by 
 $s_{i-1}$ (this choice for the index  will be  convenient later). 
 Geometrically, it corresponds to folding (or reflecting) 
 the pentagon along the diagonal $(a_{i-1} a_{i+1})$. 
 It commutes with the action of the group $\SO_2(\R)\ltimes \R^2$ of positive isometries of the plane, 
  hence, it induces an involution $\sigma_{i-1}$ on the quotient space 
\begin{equation}
\Pent^0(\ell)=\Pent(\ell)/(\SO_2(\R)\ltimes \R^2).
\end{equation}
Each element of $\Pent^0(\ell)$ admits a unique representative with 
$a_0=(0,0)$ and $a_1=(\ell_0, 0)$, so as before  
  $\Pent^0(\ell)$ is a real 
algebraic variety, which is easily seen to be of dimension 2 
(see \cite{Curtis-Steiner, Shimamoto-Vanderwaart}). 
We will see below that, when  it is smooth,  it is a real K3 surface. 
The five involutions $\sigma_i$ act by algebraic diffeomorphisms on this surface,  and for a general choice of lengths, the group generated by these involutions 
 is non-elementary.

\begin{rem}
If we consider quadrilateral instead of pentagons,
 the corresponding space 
 \begin{equation} 
 {\text{Quad}}^0(\ell_0, \ell_1,\ell_2, \ell_3)
 \end{equation}
 is a curve of genus $1$ and the involutions typically 
 generate an infinite dihedral group.  The corresponding dynamical system, both on the space of 
quadrilaterals and the space of quadrilaterals modulo isometries, 
was studied in depth in~\cite{esch-rogers, benoist-hulin, benoist-hulin2}. With $n$-gons, $n\geq 4$, one would get Calabi-Yau manifolds of dimension $n-3$
({\footnote{This follows from computations which are similar to the ones used to prove Lemma~\ref{lem:pentagon_smoothness} and Lemma~\ref{lem:pentagon-volume-form} below, the difference being that the singularities of $X$ are not isolated when $n\geq 6$.}}).
\end{rem}
 
\subsection{Algebraic geometry of $\Pent^0(\ell)$} To analyze the algebraic structure and geometry of  $\Pent^0(\ell)$,
we view a plane pentagon with side lengths $\ell_0, \ldots, \ell_4$ modulo translations  
 as the data of a 5-tuple of vectors $(v_i)_{i=0, \ldots , 4}$ in $\R^2$ (identified with $\C$)
of respective length $\ell_i$   such that $\sum_i v_i = 0$. 
Write $v_i   = \ell_i t_i$ with $\abs{t_i} = 1$. Then the action of $\SO_2(\R)$ can be identified to the diagonal multiplicative action of $\mathbb{U}=\{\alpha \in \C\; ; \; \abs{\alpha}=1\}$ on the $t_i$:
\begin{equation}
 \alpha\cdot (t_0, \ldots, t_4)=(\alpha t_0, \ldots \alpha t_4).
\end{equation} 
Now, following Darboux \cite{darboux}, we consider the surface $X$ in $\P^4_\C$  defined by the equations
\begin{equation}\label{eq:equation_pentagons}
\begin{cases} 
\ell_0z_0+ \ell_1 z_1+\ell_2z_2+\ell_3z_3+\ell_4 z_4 = 0\\
\ell_0/z_0+ \ell_1/ z_1+\ell_2/z_2+\ell_3/z_3+\ell_4/ z_4 = 0
\end{cases}
\end{equation}
where $[z_0:\ldots :z_4]$ is some fixed choice of homogeneous coordinates, and the second 
equation must be multiplied by $z_0z_1z_2z_3z_4$ to obtain a homogeneous equation of degree $4$.

\begin{rem}\label{rem:hessian} This surface is isomorphic to the Hessian of a cubic surface (see~\cite[\S 9]{Dolgachev:CAG}). More precisely, consider a cubic surface
$S\subset \P^3_\C$ whose equation  $F$ can be written in Sylvester's pentahedral form, that is, 
as a sum $F=\sum_{i=0}^4 \lambda_i F_i^3$ for some complex numbers $\lambda_i$ and linear forms $F_i$ with $\sum_{i=0}^4 F_i=0$.
By definition, its Hessian surface $H_F$ is defined by $\det(\partial_i\partial_j F)=0$. Then, using the linear forms
$F_i$ to embed $H_F$ in $\P^4_\C$,  we obtain the surface defined by the pair of
equations $\sum_{i=0}^4 z_i=0$ and $\sum_{i=0}^4\frac{1}{\lambda_iz_i}=0$. Thus, $H_F$ is our surface $X$, 
for $\ell_i^2=\lambda_i^{-1}$. 
We refer to \cite{Dolgachev-Keum, Dardanelli-vanGeemen, Dolgachev:Salem, Rosenberg} for an introduction to 
these surfaces and their birational transformations.\end{rem}

For completeness, let us directly prove some of its basic properties.  First, we introduce the ten points $q_{ij}$
determined by the system of equations 
\begin{equation}
\ell_i z_i+\ell_j z_j=0, \quad z_k=z_l=z_m=0
\end{equation}
with $i<j$ and $\{i,j,k,l,m\}=\{0,1,2,3,4\}$. 

\begin{lem}\label{lem:pentagon_smoothness}
Let $\ell=(\ell_0, \ldots, \ell_4)$ be an element of $(\C^*)^5$.
The surface $X_\ell \subset \P^4_\C$ defined by the system~\eqref{eq:equation_pentagons} has $10$ singularities 
at the points $q_{ij}$. 
In the complement of these ten isolated singularities, $X_\ell$ is smooth if and only if 
\begin{equation}\label{eq:pentagon_smoothness}
\sum_{i=0}^4 \e_i \ell_i \neq 0\quad \forall \e_i \in \set{\pm 1}.
\end{equation} 
\end{lem}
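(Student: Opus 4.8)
The plan is to exhibit $X_\ell$ as a complete intersection and run the Jacobian criterion, organising the computation according to how many homogeneous coordinates vanish at a point. Write $L=\sum_{i=0}^{4}\ell_i z_i$ for the linear equation and $Q=\sum_{i=0}^{4}\ell_i\prod_{k\neq i}z_k$ for the degree $4$ equation obtained by clearing denominators, so that $X_\ell=\set{L=0}\cap\set{Q=0}$. Since $\ell\in(\C^*)^5$, the gradient $\nabla L=(\ell_0,\ldots,\ell_4)$ is nowhere zero, so the hyperplane $H=\set{L=0}\cong\P^3$ is smooth, and $X_\ell$ --- a quartic surface in $H$ --- is singular at a point $p$ precisely when the two rows $\nabla L(p)$ and $\nabla Q(p)$ of the Jacobian matrix are linearly dependent, i.e.\ when $\nabla Q(p)$ is a scalar multiple (possibly $0$) of $(\ell_0,\ldots,\ell_4)$. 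Differentiation gives $\partial_i Q=\sum_{j\neq i}\ell_j\prod_{k\neq i,j}z_k$, which on the locus where all coordinates are nonzero can be rewritten as $\partial_i Q=z_i^{-1}(\prod_k z_k)\sum_{j\neq i}\ell_j/z_j$.

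I would first dispose of points with exactly one vanishing coordinate: if $z_m=0$ and the others are nonzero, then $Q(p)=\ell_m\prod_{k\neq m}z_k\neq 0$, so $p\notin X_\ell$; this also shows $Q$ does not vanish identically on $H$, hence $X_\ell$ really is a surface. Next, if $p\in X_\ell$ has no vanishing coordinate, the relation $\sum_j\ell_j/z_j=0$ turns the gradient formula into $\partial_i Q=-\ell_i z_i^{-2}\prod_k z_k$; requiring $\nabla Q(p)$ to be proportional to $(\ell_0,\ldots,\ell_4)$ and cancelling the nonzero $\ell_i$ forces $z_i^{2}$ to be independent of $i$, so $z_i=\e_i z_0$ with $\e_i\in\set{\pm1}$, and then $L(p)=0$ reads $z_0\sum_i\e_i\ell_i=0$, i.e.\ $\sum_i\e_i\ell_i=0$. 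Conversely, every sign vector $\e$ with $\sum_i\e_i\ell_i=0$ gives the point $[\e_0:\cdots:\e_4]$, which one checks lies on $X_\ell$ (the second equation reads $\sum_i\e_i\ell_i=0$ as well) and is singular there, the proportionality constant being $-\prod_k\e_k\neq 0$. Thus the singular points of $X_\ell$ with all coordinates nonzero are exactly these points, and there are none precisely when $\sum_i\e_i\ell_i\neq 0$ for all $\e\in\set{\pm1}^5$.

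It remains to treat points with at least two vanishing coordinates. Every such point of $H$ automatically satisfies $Q=0$, because each monomial $\prod_{k\neq i}z_k$ of $Q$ omits a single index and hence still contains a vanishing factor. The key combinatorial point is that $\prod_{k\neq i,j}z_k\neq 0$ holds only when $\set{i,j}$ is exactly the set of vanishing indices. Consequently, if exactly two coordinates $z_l,z_m$ vanish, then $\nabla Q(p)$ has its only nonzero entries in positions $l$ and $m$, so it cannot be proportional to $(\ell_0,\ldots,\ell_4)$, and $p$ is a smooth point; whereas if three coordinates vanish --- which on $H$ forces exactly the point $q_{ij}$, with $\set{i,j}$ the two non-vanishing indices, since four vanishing coordinates are incompatible with $L=0$ --- then every product $\prod_{k\neq a,b}z_k$ of three coordinates meets the three vanishing ones (two subsets of size $3$ of a set of size $5$ must intersect), so $\nabla Q(q_{ij})=0$ and $q_{ij}\in\mathrm{Sing}(X_\ell)$. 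Assembling the three cases, $\mathrm{Sing}(X_\ell)$ is the union of the ten points $q_{ij}$ with the finite set of points $[\e_0:\cdots:\e_4]$, $\e\in\set{\pm1}^5$, for which $\sum_i\e_i\ell_i=0$; in particular it is finite, the ten $q_{ij}$ are isolated singularities, and $X_\ell$ is smooth in their complement exactly when condition~\eqref{eq:pentagon_smoothness} holds.

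I do not expect a serious obstacle: the proof is the projective Jacobian criterion together with careful bookkeeping. The two points needing a little attention are the verification that a single vanishing coordinate cannot occur on $X_\ell$ (so that clearing denominators produces no spurious locus or singularity), and the pigeonhole observation on $3$-element subsets of a $5$-element set, which is precisely what makes $\nabla Q$ vanish --- hence $X_\ell$ singular --- at the ten coordinate-type points $q_{ij}$.
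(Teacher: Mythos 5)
Your proof is correct and follows essentially the same route as the paper's: both are applications of the Jacobian criterion with a case analysis on which homogeneous coordinates vanish. The only difference is presentational --- you run the criterion globally on the complete intersection $\{L=Q=0\}\subset\P^4$, whereas the paper works in the affine chart $z_0=1$ after eliminating $z_4$; your symmetric treatment of the coordinate strata (in particular the pigeonhole argument showing $\nabla Q(q_{ij})=0$) is arguably the cleaner bookkeeping.
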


 Note that for positive $\ell_i$'s, violation of condition~\eqref{eq:pentagon_smoothness} 
 means that there exist a degenerate pentagon with lengths $\ell_i$. 
 
 \noindent {\bf{Notation.-- }} {\emph{We shall use the notation $X$ instead of $X_\ell$ when the dependence on $\ell$ is not crucial.}} 
 
\begin{proof}
We first look for singularities in the complement of the hyperplanes $z_i=0$, and work in the chart $z_0 = 1$. Then, if we substitute $z_4 = - (\ell_0+ \ell_1z_1+ \ell_2z_2 + \ell_3z_4)/\ell_4$  in the
second equation of \eqref{eq:equation_pentagons}, we obtain an affine equation of $X$ in the chart $z_0 = 1$, namely:
\begin{equation}\label{eq:equation_pentagons_affine}
\frac{\ell_1}{z_1} + \frac{\ell_2}{z_2} + \frac{\ell_3}{z_3} - \frac{\ell_4^2}{\ell_0+ \ell_1z_1+ \ell_2z_2+ \ell_3z_3} + \ell_0 = 0. 
\end{equation}
The singularities are determined by  the system of equations
$z_1^2 =z_2^2=z_3^2 = \ell_4^{-2}(\ell_0+ \ell_1z_1+ \ell_2z_2 + \ell_3z_3)^2$. So, by symmetry, at a singularity where none of the coordinates vanishes we must have 
$z_i=\varepsilon _i z$ for some $\varepsilon _i=\pm 1$ and a common factor $z\neq 0$; this is precisely Condition~\eqref{eq:pentagon_smoothness}.

Looking for singularities with one coordinate equal to $0$, say $z_1=0$ in the chart $z_0=1$, we obtain the system 
of equations 
\begin{equation}\label{eq:equation_pentagons_singularities}
\begin{cases} 
0= (\ell_0 z_2z_3+\ell_3 z_2+\ell_2z_3)(\ell_0 +\ell_2 z_2+\ell_3 z_3)+(\ell_1^2-\ell_4^2)z_2z_3\\
0=\ell_1 z_3(\ell_0+2\ell_2 z_2+\ell_3z_3)\\
0= \ell_1 z_2 (\ell_0+\ell_2z_2+2\ell_3 z_3)
\end{cases}
\end{equation}
together with $\ell_0+\ell_2 z_2+\ell_3z_3+\ell_4z_4=0$ and $\ell_1 z_2z_3z_4=0$ (in particular, $z_2$, $z_3$ or $z_4$
must vanish).
The solutions of this system are given by $z_1=z_2=z_3=0$, which gives the point $q_{04}=[\ell_4:0:0:0:-\ell_0]$,
or $z_1=z_2=0$ and $\ell_0+\ell_3z_3=0$, which corresponds to $q_{03}=[\ell_3:0:0:-\ell_0:0]$, or $z_1=z_3=0$
which gives $q_{02}$, or $z_1=z_4=0$ but then either $z_2=0$ or $z_3=0$ and we end up again with $q_{02}$ and $q_{03}$.
The result follows by symmetry.
\end{proof}
 
\begin{lem}\label{lem:pentagon-volume-form}
If $\ell\in (\C^*)^5$ satisfies Condition~\eqref{eq:pentagon_smoothness}, then the ten singularities are simple nodes (Morse singularities) and 
the surface $X$ is a (singular) K3 surface: a minimal resolution $\hat X$  of $X$ is a K3 surface, which is  
 obtained by blowing-up its ten nodes, thereby creating ten rational $(-2)$-curves.
\end{lem}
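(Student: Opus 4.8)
The plan is to recognise $X$ as a quartic surface in $\P^3_\C$ whose only singularities are ordinary double points, and then to resolve them using the standard theory of Du Val (rational double) points. First, $X$ is the complete intersection in $\P^4_\C$ of the hyperplane $H=\{\ell_0z_0+\cdots+\ell_4z_4=0\}$ with the quartic hypersurface $\{\sum_{i}\ell_i\prod_{j\neq i}z_j=0\}$; identifying $H$ with $\P^3_\C$ exhibits $X$ as a quartic surface, and by Lemma~\ref{lem:pentagon_smoothness} it is a reduced irreducible surface (a multiple component, or two distinct surface components, would create a one-dimensional singular locus). Adjunction then gives $\omega_X=\mathcal O_X$, so $X$ is Gorenstein with trivial dualizing sheaf.

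Next I would check that each $q_{ij}$ is a node. Relabelling the coordinates merely permutes the entries of $\ell$ (leaving the hypotheses unchanged) and permutes the ten points $q_{ij}$ transitively, so it suffices to treat $q_{04}=[\ell_4:0:0:0:-\ell_0]$. Working in the chart $z_0=1$ and using the equation of $H$ to eliminate $z_4=-(\ell_0+\ell_1z_1+\ell_2z_2+\ell_3z_3)/\ell_4$, the surface $X$ is cut out of the smooth threefold $H$ by
\[
F(z_1,z_2,z_3)=\ell_0z_1z_2z_3z_4+\ell_1z_2z_3z_4+\ell_2z_1z_3z_4+\ell_3z_1z_2z_4+\ell_4z_1z_2z_3 .
\]
At $q_{04}$ one has $z_1=z_2=z_3=0$ and $z_4=-\ell_0/\ell_4$, and expanding $F$ to second order (the $i=0$ and $i=4$ monomials are already of order three) gives the quadratic part $-\tfrac{\ell_0}{\ell_4}\bigl(\ell_1z_2z_3+\ell_2z_1z_3+\ell_3z_1z_2\bigr)$, whose Gram matrix has determinant a nonzero multiple of $\ell_1\ell_2\ell_3$; since all $\ell_i\neq0$ this quadratic form has rank $3$. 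By the holomorphic Morse lemma $X$ is analytically isomorphic near $q_{04}$ to $\{x^2+y^2+z^2=0\}\subset\C^3$, so $q_{04}$ — hence each $q_{ij}$ — is an ordinary double point (Morse singularity).

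Finally, let $\pi\colon\hat X\to X$ be the blow-up of the ten nodes. Blowing up the origin of $\{x^2+y^2+z^2=0\}$ produces a smooth surface whose exceptional locus is a single smooth rational curve of self-intersection $-2$, and a $(-2)$-curve is not contractible, so $\pi$ is the minimal resolution: $\hat X$ is a smooth projective surface carrying ten pairwise disjoint $(-2)$-curves. Since an ordinary double point is a canonical (Du Val) singularity, its minimal resolution is crepant, whence $K_{\hat X}=\pi^{*}\omega_X=\mathcal O_{\hat X}$ (alternatively, write the generator of $\omega_X$ as a Poincaré residue and check on the local model that it extends to a nowhere-vanishing holomorphic $2$-form on $\hat X$). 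Then $\hat X$ is minimal (adjunction rules out $(-1)$-curves once $K_{\hat X}=\mathcal O$), has trivial canonical bundle, and contains rational curves; by the Enriques–Kodaira classification, the only surfaces with trivial canonical bundle containing a rational curve are K3 surfaces, so $\hat X$ is a K3 surface.

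I expect the main obstacle to be the nodality computation: one must select an affine chart in which $q_{ij}$ is visible (the ten points do not all lie in a single coordinate chart), use the linear equation to reduce to a function of three variables, and carry out the expansion correctly — although, once set up this way, the resulting quadratic form is transparently of maximal rank, so no genuinely delicate estimate is needed.
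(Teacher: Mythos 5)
Your proposal is correct and follows essentially the same route as the paper: the key step in both is the computation, in the chart $z_0=1$ after eliminating $z_4$, of the quadratic part $-\tfrac{\ell_0}{\ell_4}(\ell_1z_2z_3+\ell_2z_1z_3+\ell_3z_1z_2)$ at a node and the observation that it is non-degenerate, followed by blowing up the ten nodes to get $(-2)$-curves and concluding K3 from trivial canonical bundle plus the presence of rational curves. The only (inessential) difference is how the non-vanishing $2$-form is transported to $\hat X$ — you invoke adjunction for the quartic in $\P^3\simeq H$ and crepancy of Du Val resolutions, whereas the paper extends the form explicitly through the local quotient model $(\C^2,0)/(\pm1)$ via Hartogs.
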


\begin{proof}
Working in the chart $z_0=1$ and replacing $z_4$ by $-(\ell_0+\ell_1z_1+\ell_2z_2+\ell_3z_3)/\ell_4$, 
the quadratic term 
of the equation of $X$ at the singularity $(z_1,z_2,z_3)=(0,0,0)$ is $(-\ell_0/\ell_4)Q$, where 
\begin{equation}\label{eq:pentagons_quadratic_Q}
Q(z_1,z_2,z_3)=\ell_1z_2z_3+\ell_2z_1z_3+\ell_3z_1z_2
\end{equation}
is a non-degenerate quadratic form (its determinant is $2\ell_1\ell_2\ell_3\neq 0$). So locally $X$ is holomorphically 
equivalent to the quadratic cone $\{Q=0\}$, hence to a quotient singularity $(\C^2,0)/\eta$ with $\eta(x,y)=(-x,-y)$. 
The minimal resolution of such a singularity is obtained by a simple blow-up of the ambient space, the exceptional 
divisor being a $(-2)$-curve in the smooth surface~$\hat{X}$.
The adjunction formula shows that there is a holomorphic $2$-form $\Omega_X$ on the regular part of $X$; locally, $\Omega_X$
lifts to an $\eta$-invariant form $\Omega_X'$ on $\C^2\setminus \{ 0\}$, which by 
the Hartogs theorem extends across  the origin
to a non-vanishing $2$-form. To recover $\hat{X}$, one can first blow-up $\C^2$ at the origin and then take the quotient 
by (the lift of) $\eta$: a simple calculation shows that $\Omega_X'$ determines a non-vanishing $2$-form on $\hat{X}$.
After such a surgery is done at the ten nodes, $\hat{X}$ is a smooth surface with a non-vanishing section of $K_{\hat{X}}$; 
since it contains at least ten rational curves, it cannot be an Abelian surface, so it must be a K3 surface. 
\end{proof}

\begin{rem}\label{rem:line}
Let $L_{ij}$ be the line defined by the  equations 
$z_i=0$, $z_j=0$, $\ell_0z_0+\cdots +\ell_4z_4=0$; each of these ten lines is contained in $X$, each of them contains
$3$ singularities of $X$ (namely $q_{kl}$, $q_{lm}$, $q_{km}$ with obvious notations), and each singularity is contained in 
three of these lines. If one projects them on a plane, the ten lines $L_{ij}$ form a Desargues configuration (see~\cite{Dolgachev:Salem, Dolgachev-Keum}).
\end{rem}

\subsection{The real part} All this works for any choice of complex numbers $\ell_i\neq 0$. When the $\ell_i$ are real, $X$ is endowed with two real structures.
First, one can consider the complex conjugation 
\begin{equation}
c\colon [z_i]\mapsto [\overline{z_i}]
\end{equation} 
on $\P^4(\C)$ and restrict it to $X$: this gives a first
antiholomorphic involution $c_X$. Another one is 
\begin{equation}\label{eq:sX}
s_X\colon [z_i]\mapsto [1/\overline{z_i}].
\end{equation}
To be more precise, consider first the quartic birational involution $J\in \Bir(\P^4_\C)$ defined by $J ([z_i]) =[1/z_i]$; 
$J$ preserves $X$, it determines a birational transformation $J_X\in \Bir(X)$, and on ${\hat{X}}$ it becomes an automorphism because every birational transformation of a K3 surface is regular. Moreover, $J$ commutes with $c$. 
Thus, $s_X=J_X\circ c_X$ determines a second antiholomorphic involution $s_{{\hat{X}}}$ of ${\hat{X}}$. In what follows, we denote by $(X,s_X)$ this real structure (even if it would be better to study it on ${\hat{X}}$); 
its real part is the fixed point set of $s_X$, i.e. the set of points in $X(\C)$ with coordinates of modulus $1$: the real part does not contain any of the singularities of $X$, this is why we prefer to stay in $X$ rather than lift everything to ${\hat{X}}$. In conclusion,  if $(\ell_i)\in (\R_+^*)^5$, then
{\emph{with the 
real structure defined by $s_X$, the real part of $X$  coincides with $\mathrm{Pent}^0(\ell_0, \ldots , \ell_4)$}}.

\begin{rem}\label{rem:topology}
When $\ell_i>0$ for every $i$,
a complete description of the possible homeomorphism types for  the real locus (in the smooth and singular cases) is given in~\cite{Curtis-Steiner}: 
{\emph{in the smooth case, it is an  orientable surface of genus $g = 0, \ldots, 4$ or  
the disjoint union of two tori; 
if one includes singular surfaces, one gets a total of $19$ topological types.}} In particular if $\mathrm{Pent}^0(\ell)$ is disconnected, it is the disjoint union of two tori. 
The space of possible side lengths can be tesselated in cells corresponding to smooth surfaces $\mathrm{Pent}^0(\ell)$, with walls corresponding to singular surfaces. Cells are encoded by a $2\times 2$  ``code-matrix'' in~\cite[Table 4]{Curtis-Steiner}. With this viewpoint, the disconnected surfaces correspond to exactly three cells (see Figure~\ref{fig:pentagons_disconnected} below).
\end{rem} 

\begin{rem}\label{rem:pentagon_enriques}
The involution $J$ preserves $X$ and the two real structures $(X,c_X)$ and $(X,s_X)$. 
It lifts to a fixed point free involution ${\hat{J}}_X$ on $\hat{X}$, and $\hat{X}/{\hat{J}}_X$ is an Enriques surface. 
On pentagons, $J$ corresponds to the symmetry $(x,y)\in \R^2\mapsto (x,-y)$ that reverses orientation. Thus we see that 
the space of pentagons modulo affine isometries is an Enriques surface. 
When $X$  acquires an eleventh singularity which is fixed by $J_X$, then $\hat{X}/{\hat{J}}_X$ becomes a Coble surface: 
see~\cite[\S 5]{Dolgachev:Salem}
for nice explicit examples. This happens 
 for instance when all lengths are $1$, except one which is equal to  $2$ 
 (this corresponds to $t=1/4$ in~\cite[\S 5.2]{Dolgachev:Salem}).
 \end{rem}

\subsection{Involution and the folding groups}\label{subs:involutions}
Let us express the folding transformations  
 in the coordinates $(\ell_it_i)$, where $t_i$ is a complex number with  $\vert t_i\vert=1$.
Given $i\neq j$ in $\set{0, \ldots, 4}$ (consecutive or not) we define an involution $(t_i, t_j)\mapsto (t_i', t_j')$ preserving the vector $\ell_i t_i+ \ell_j t_j$
by taking the  symmetric of  $t_i$ and $t_j$ 
with respect to the line directed by $\ell_i t_i+ \ell_j t_j$. In coordinates, 
$t'_k  = u / t_k$ for some $u$ of modulus 1, and equating   $\ell_i t_i+ \ell_j t_j =  \ell_i t'_i+ \ell_j t'_j$ we obtain  
\begin{equation}\label{eq:sigmaij}
(t_i',t_j') = \lrpar{\frac{u}{t_i}, \frac{u}{t_j}} \text{, with }  u  = \frac{\ell_i t_i + \ell_j t_j}{\ell_i t_i\inv + \ell_j t_j\inv}.
\end{equation}
Observe  that these computations also make sense when the $\ell_i$ are complex numbers, or when we
replace the $t_i$ by the complex numbers $z_i$.
This defines a birational involution $\sigma_{ij}: X\dasharrow X$, 
\begin{equation}
\sigma_{ij}[z_0: \ldots :z_4]=[z'_0:\ldots :z_4']
\end{equation}
with $z'_k=z_k$ if $k\neq i,j$, $z'_i=vz_j$, and $z_j'=vz_i$ with $v= (\ell_i z_i+\ell_jz_j)/(\ell_i z_j+\ell_j z_i)$. Again, since every birational self-map of a K3 surface is an automorphism, these involutions $\sigma_{ij}$ are elements of $\Aut({\hat{X}})$
that commute with the antiholomorphic involution $s_{{\hat{X}}}$; hence, they generate a subgroup of $\Aut({\hat{X}}; s_{{\hat{X}}})$.
Thus we have constructed a family of projective surfaces ${\hat{X}}$, depending on a parameter $\ell \in \P^4(\C)$, endowed with a 
group of automorphisms generated by involutions. 

 For consecutive sides, i.e.\   when $j=i+1$ modulo $5$ (in the next few lines, all indices are considered modulo 5) $\sigma_{i, i+1}$ 
corresponds to the folding transformation described in 
\S~\ref{par:pentagons} and denoted by  $\sigma_{i}$ there. We define the \textbf{folding group} 
$\Gamma$ (resp. the \textbf{extended folding group} $\Gamma^{\mathrm{ext}}$)
 to be the group generated by the 5 folding involutions $\sigma_{i, i+1}$ (resp. the group generated by 
 all 10 involutions $\sigma_{ij}$). Likewise, for given $m\in \set{0, \ldots , 4}$ we introduce the subgroup 
 $\Gamma_m$ (resp. $\Gamma_m^\mathrm{ext}$) stabilizing the side $m$, that is the group generated 
 by the 3 folding involutions    $\sigma_{i, i+1}$     such that  $m\notin\set{i,i+1}$ 
 (resp. by  the 6 involutions $\sigma_{ij}$ with   $m\notin\set{i,j}$). 

\begin{rem}\label{rem:sigma_geometric_pentagon}
Pick a singular point $q_{ij}$, and project $X$ from that point onto a plane, say the plane $\{z_i=0\}$ in the
hyperplane $P=\{\ell_0 z_0+ \cdots +\ell_4z_4=0\}$. One gets a $2$-to-$1$ cover $X\to \P^2_\C$, ramified along a sextic curve (this curve is the union of two cubics, 
see~\cite{Rosenberg}). 
The involution $\sigma_{ij}$ permutes the points in the fibers of this $2$ to $1$ cover: if $x$ is 
a point of $X$, the line joining $q_{ij}$ and $x$ intersects $X$ in the third point $\sigma_{ij}(x)$.
The singularity $q_{ij}$ is an indeterminacy point, mapped by $\sigma_{ij}$ to the opposite line $L_{ij}$.
\end{rem}

\begin{pro}\label{pro:pentagons}
For a  general parameter  $\ell \in (\C^*)^5$: 
\begin{enumerate}[\em (1)]
\item $X$ is a K3 surface with ten nodes, which admits  two real structures $c_X$ and $s_X$ when $\ell\in \P^4(\R)$;
\item if $i,j,k$ are  three distinct  indices (modulo $5$), then $\sigma_{ij}\circ\sigma_{jk}$ is a parabolic transformation on ${\hat{X}}$;
its invariant fibration is induced by $\pi_{lm}\colon [z_0:\ldots : z_4]\mapsto [z_l:z_m]$ where $l$ and $m$ are the complementary indices ({i.e.} $\set{i,j,k,l,m}=\set{0,1,2,3,4}$);  
\item if $i$, $j$, $k$, and $l$ are four distinct indices (modulo $5$), then $\sigma_{ij}$ commutes to $\sigma_{kl}$.
\item the folding group $\Gamma$  (resp. the extended folding group $\Gamma^{\mathrm{ext}}$) 
is a non-elementary 
subgroup of $\Aut({\hat{X}}; s_{\hat{X}})$ that does not preserve any algebraic curve;
\item likewise, the subgroup $\Gamma_m$  stabilizing the side $m$
 is non-elementary, and its invariant curves in 
$\hat X$ are contained in the total transform of
  the lines  $L_{ml}$ for $l\neq m$ (see Remark~\ref{rem:line}). 
\end{enumerate}
\end{pro}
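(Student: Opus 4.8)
The plan is to dispatch the five assertions roughly in order, items (1) and (3) being essentially bookkeeping and items (4)--(5) the substantive part. Assertion (1) is immediate from Lemmas~\ref{lem:pentagon_smoothness} and~\ref{lem:pentagon-volume-form}: Condition~\eqref{eq:pentagon_smoothness} cuts out a Zariski‑open subset of $(\C^*)^5$, on which $X$ has exactly ten nodes and its minimal resolution $\hat X$ is a K3 surface, while $c_X$ and $s_X=J_X\circ c_X$ are the two antiholomorphic involutions already exhibited in~\S\ref{par:pentagons}, defined whenever $\ell\in\P^4(\R)$. For assertion (3) I would simply read it off formula~\eqref{eq:sigmaij}: when $\{i,j\}\cap\{k,l\}=\emptyset$, the transformation $\sigma_{ij}$ alters only the coordinates $z_i,z_j$ through a factor $v$ depending only on $(z_i,z_j)$, whereas $\sigma_{kl}$ alters only $z_k,z_l$ through a factor depending only on $(z_k,z_l)$; two such maps commute as birational transformations of $X$, hence — every birational self-map of the K3 surface $\hat X$ being an automorphism — as automorphisms of $\hat X$.

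For assertion (2), first note that $\sigma_{ij}\circ\sigma_{jk}$ leaves $z_l$ and $z_m$ untouched, so it commutes with $\pi_{lm}=[z_l:z_m]$ and preserves every fibre of $\pi_{lm}$. I would then check that the generic fibre $E_t$ of $\pi_{lm}$ is a smooth plane cubic, hence a genus~$1$ curve: in the chart $z_l=1$ with $z_m=t$ generic, the first equation of~\eqref{eq:equation_pentagons} becomes the affine plane $\ell_iz_i+\ell_jz_j+\ell_kz_k=-\ell_l-\ell_mt$, and clearing denominators in the second equation and substituting gives an affine plane cubic in $(z_i,z_j)$, generically smooth. On $E_t$, both $\sigma_{ij}$ and $\sigma_{jk}$ restrict to involutions; since the fixed locus of $\sigma_{ij}$ on $X$ is $\{z_i=z_j\}\cup\{z_i=-z_j\}$, which meets $E_t$, the involution $\sigma_{ij}|_{E_t}$ has fixed points, so in a group law on $E_t$ it is of the form $z\mapsto -z+c$, and likewise for $\sigma_{jk}$; therefore $\sigma_{ij}\circ\sigma_{jk}$ acts on $E_t$ as a translation $z\mapsto z+\tau_t$. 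Next I would argue that, since $\sigma_{ij}\circ\sigma_{jk}$ preserves the genus~$1$ fibration $\pi_{lm}$, acts trivially on its base, and acts by isometries (translations) on the fibres, it has zero topological entropy, hence is not loxodromic; and for a general parameter $\ell$ the section $t\mapsto\tau_t$ is non-torsion in the Mordell--Weil group of $\pi_{lm}$ (the torsion case is a proper Zariski-closed condition on $\ell$), so $\sigma_{ij}\circ\sigma_{jk}$ has infinite order and is therefore parabolic, with $\pi_{lm}$ as its (necessarily unique) invariant fibration.

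Assertions (4) and (5) now combine the previous points. Non-elementarity is cheap: item (2) produces, among the generators of $\Gamma$, two parabolic elements with distinct invariant fibrations — e.g. $\sigma_{01}\sigma_{12}$ (fibration $\pi_{34}$) and $\sigma_{12}\sigma_{23}$ (fibration $\pi_{04}$) lie in $\Gamma$, while for the side stabilizer $\Gamma_m$, say $m=0$ with generators $\sigma_{12},\sigma_{23},\sigma_{34}$, one uses $\sigma_{12}\sigma_{23}$ ($\pi_{04}$) and $\sigma_{23}\sigma_{34}$ ($\pi_{01}$) — and a group containing two parabolics with distinct invariant fibrations is non-elementary by the results recalled in~\S\ref{par:parabolic_field_kodaira}. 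For the statements on invariant curves, suppose $D\subset\hat X$ is invariant and let $D'$ be an irreducible component; the $\Gamma$-orbit of $D'$ is finite, so $\Stab_\Gamma(D')$ has finite index, hence is again non-elementary and still contains a parabolic $h$ (a suitable power of a parabolic), and therefore two parabolics $h,h'$ for distinct fibrations $\pi,\pi'$. Replacing $h,h'$ by powers that are Halphen twists, the standard translation argument applies: $D'$ cannot dominate the base of $\pi$ (an infinite-order translation has no finite orbit on the generic fibre), so $D'$ is contained in a fibre of $\pi$; the same holds for $\pi'$; a genus~$1$ fibre is excluded because it would then be a common fibre of $\pi$ and $\pi'$, forcing $\pi=\pi'$; hence $D'$ is a component of a reducible fibre, i.e. a $(-2)$-curve of $\hat X$. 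For a general $\ell$ these are precisely the ten lines $L_{ij}$ of the Desargues configuration (Remark~\ref{rem:line}) and the ten exceptional curves $E_{ij}$ over the nodes.

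The main obstacle is the final combinatorial step. One must first confirm that for a general parameter $\ell$ the surface $\hat X$ carries no $(-2)$-curve outside this explicit configuration (and that no Halphen twist in $\Gamma$ acts trivially on a smooth fibre), so that the list of candidate components is exactly the ten $L_{ij}$ together with the ten $E_{ij}$; this is a Picard-rank/lattice computation on the Hessian surface, for which one can lean on~\cite{Dolgachev-Keum, Dolgachev:Salem, Rosenberg}. Then one has to run the bookkeeping of how $\Gamma$ (resp. $\Gamma_m$) permutes this finite set of negative curves and conclude that $\Gamma$ has no invariant subset at all, whereas $\Gamma_m$ — which preserves the hyperplane $\{z_m=0\}$, hence the curve $\{z_m=0\}\cap X=\bigcup_{l\neq m}L_{ml}$ (a degenerate fibre common to the two fibrations used above) — has no invariant subset beyond the total transform of $\bigcup_{l\neq m}L_{ml}$. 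Carrying out this permutation analysis carefully, using the explicit action of the $\sigma_{ij}$ on $\NS(\hat X)$ and on the Desargues configuration, is where the real work lies; once it is done, assertions (4) and (5) follow.
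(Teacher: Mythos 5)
Parts (1) and (3), and the non-elementarity claims in (4)--(5), are handled exactly as in the paper. The first genuine gap is in (2). After showing that $\sigma_{ij}\circ\sigma_{jk}$ translates along the fibers of $\pi_{lm}$ (which is also the paper's route), you dismiss the finite-order alternative by asserting that ``the torsion case is a proper Zariski-closed condition on $\ell$''; neither adjective comes for free. For closedness: ``torsion of some order'' is a priori a countable union of closed conditions, which would only yield the statement for \emph{very general} $\ell$; to get a single algebraic condition you need a uniform bound on the order of an elliptic automorphism of a K3 surface, which is exactly what Lemma~\ref{lem:charac_parabolic_on_K3} supplies (here the order is at most $12$, since the fibration is preserved fiberwise). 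For properness: you must exhibit at least one $\ell$ for which $\sigma_{ij}\circ\sigma_{jk}$ has infinite order --- this is not automatic, since for the equilateral parameter (Example~\ref{eg:equilateral_pentagons}) the whole folding group is finite, so the torsion locus is certainly non-empty and could conceivably be everything. The paper obtains the witness by recognizing the restriction of $\sigma_{ij}\circ\sigma_{jk}$ to a fiber as the quadrilateral folding map of Darboux and Benoist--Hulin (Example~\ref{ex:quadrilateral}), for which infinite order is known.

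The second, more serious, gap is the invariant-curve analysis in (4)--(5), which you leave unfinished by your own admission. Your reduction (invariant component $\Rightarrow$ component of reducible fibers of two parabolic fibrations $\Rightarrow$ $(-2)$-curve) does not by itself produce a \emph{finite} candidate list, because the parabolic elements $h,h'$ extracted from the finite-index stabilizer need not be among the standard $\sigma_{ij}\circ\sigma_{jk}$, so their invariant fibrations need not be among the $\pi_{lm}$; and the claim that for general $\ell$ the only $(-2)$-curves on $\hat X$ are the ten $L_{ij}$ and the ten nodal exceptional curves is unproven and in fact cannot hold: automorphisms permute $(-2)$-curves, so a finite set of them would have $\Gamma$-invariant union, contradicting the very assertion (4) you are proving (the $\Gamma$-orbit of $L_{01}$ is infinite). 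The paper's argument sidesteps all of this. For (4): the five products $\sigma_{l,l+1}\circ\sigma_{l+1,l+2}$ are parabolic with five pairwise distinct invariant fibrations $\pi_{m,m+1}$; an invariant irreducible curve $F\subset X$ must lie in a fiber of each, i.e.\ every consecutive ratio $[z_m:z_{m+1}]$ is constant along $F$, which is impossible for a curve (and $F=L_{m,m+1}$ is excluded since that line is not contained in a fiber of the other projections). Curves of $\hat X$ contracted to nodes are treated first, using the transitivity of $\Gamma$ on the ten nodes and the fact that $\sigma_{ij}$ maps $q_{ij}$ to $L_{ij}$. For (5) with $m=0$, the same reasoning with the fibrations $\pi_{0l}$ forces $F\subset\set{z_0=0}$, and the equations of $X$ then force a second coordinate to vanish, so $F$ is one of the $L_{0l}$. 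Replacing your endgame with this direct argument closes the gap.
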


\begin{rem} In~\cite{Dolgachev:Salem}, Dolgachev computes the action of $\sigma_{ij}$ on $\NS(\hat{X})$. This contains a proof of this 
proposition.  
The automorphism groups of 
$\hat{X}$ and of the Enriques surface ${\hat{X}}/{\hat{J}}_X$ are described in~\cite{Dolgachev-Keum} and \cite{Shimada}. 
\end{rem}

The next example shows that the folding groups can be elementary for certain parameters. 
\begin{eg}\label{eg:equilateral_pentagons}
Say that a pentagon is \emph{equilateral} if $\ell_0=\ell_1=\ell_2=\ell_3=\ell_4$. Let  $X_{\mathrm{eq}}(\R)$ be the surface of all equilateral pentagons, modulo 
rotations, translations, and dilatations. It is connected and of genus $4$. 
On $X_{\mathrm{eq}}$, the group generated by the involutions is finite and isomorphic to 
$\mathfrak S_5$,  because 
 $\sigma_{ij}(t_i, t_j) = (t_j, t_i)$ (see Equation~\ref{eq:sigmaij}). 
So, this highly symmetric  case is  also highly degenerate. 
\end{eg}

 \begin{proof}[Proof of Proposition~\ref{pro:pentagons}]
We already established   Assertion~(1) in the previous lemmas. For Assertion~(2), 
denote by $l,m$ the indices for which $\set{i,j,k,l,m} = \set{0, \ldots, 4}$, and consider the linear projection 
$\pi_{lm}\colon \P^4(\C)\dasharrow\P^1(\C)$ defined by 
$[z_0:\ldots :z_4]\mapsto [z_l:z_m]$. The fibers of $\pi_{lm}$  are the hyperplanes containing the plane
$\{z_l=z_m=0\}$, which intersects $X$ on the line $L_{l m}$. This line is a common component of the
pencil of curves cut out by the fibers of $\pi_{l m}$ on $X$, and the mobile part of this pencil determines 
a fibration $\pi_{l m}\rest{X}\colon X\to \P^1$ whose fibers are the plane cubics
\begin{equation}\label{eq:cubic_pilm}
(\ell_l z_l+\ell_m z_m)(\ell_m z_l+\ell_l z_m)z_i z_j z_k=z_l z_m (\ell_i z_jz_k+\ell_j z_iz_k+\ell_k z_iz_j)(\ell_i z_i+\ell_j z_j+\ell_k z_k),
\end{equation}
with $[z_l:z_m]$ fixed. The general member of this fibration is a smooth cubic, hence a curve of genus $1$. 

Then $\sigma_{ij}$ and $\sigma_{jk}$ preserve $\pi_{l m}\rest{X}$, and along the general fiber of 
$\pi_{l m}\rest{X}$ each of them is described by Remark~\ref{rem:sigma_geometric_pentagon}; for instance, $\sigma_{ij}(x)$
is the third  point of intersection of the cubic with the line $(q_{ij}, x)$. Thus, writing such a  cubic as $\C/\Lambda_{[z_l:z_m]}$, 
$\sigma_{ij}$ acts as $z\mapsto -z+b_{ij}$, for some $b_{ij}\in \C/\Lambda_{[z_l:z_m]}$ that depends on $[z_l:z_m]$ and the parameter $\ell$; it has four fixed points on the
cubic curve, which are the points of intersection of the cubic~\eqref{eq:cubic_pilm} with the hyperplanes $z_i=z_j$ and $z_i=-z_j$; equivalently, the
line $(q_{ij},x)$ is tangent to the cubic at these four points.

By Lemma~\ref{lem:charac_parabolic_on_K3} below, either $\sigma_{ij}\circ\sigma_{jk}$
is of order $\leq 66$ (in fact of order $\leq 12$ because it preserves $\pi_{l m}\rest{X}$ fiber-wise), or it is parabolic. In other words, 
 the locus in the parameter space 
 where $\sigma_{ij}\circ\sigma_{jk}$ is not parabolic is defined by the 
equation $(\sigma_{ij}\circ\sigma_{jk})^{12} = \id$. 
Since there 
 do exist pentagons for which $\sigma_{ij}\circ\sigma_{jk}$ is of infinite order
(indeed, this reduces to the corresponding fact for quadrilaterals, see   Example~\ref{ex:quadrilateral} below), we conclude that 
  $\sigma_{ij}\circ\sigma_{jk}$ is parabolic for general~$\ell$.   
  
  Assertion~(3) follows directly from the fact that $\sigma_{ij}$ changes the coordinates $z_i$ and $z_j$ but keeps the other three fixed. 

To prove Assertion~(4), we see that 
for a general parameter $\ell$, $\Gamma$ contains two such parabolic automorphisms 
associated to distinct fibrations 
$\pi_{lm}$ and $\pi_{l'm'}$ so it is non-elementary (this follows from Theorem 3.2 in~\cite{Cantat:Milnor}). To show that  $\Gamma$ does not preserve any curve 
in $\hat{X}$, assume by way of contradiction that 
 $E\subset {\hat{X}}$ be a $\Gamma$-periodic irreducible curve, and denote by $F$ its image in 
$\P^4_\C$ under the projection ${\hat{X}}\to X$. If $F$ is  a point, it is one of the singularities $q_{ij}$.  Note that $\Gamma$ acts transitively on the singularities of $X$: given any pair of singularities $(q,q')$, there is an element of $\Gamma$ which is well defined at $q$ and $q'$ and maps $q$ to $q'$. 
Thus, we can assume that $j=i+1$, and changing $E$ into its image
under (the lift of) $\sigma_{ij}\in \Gamma$ the curve $F$ becomes the line $L_{ij}$. So, we may assume that $F$ is an irreducible curve. 
Now, the orbit of $F$ is periodic under the action of the parabolic automorphisms 
$g_i=\sigma_{ij}\circ\sigma_{jk}$, with $j=i+1$ and $k = i+2$ modulo $5$.
Since the invariant curves of a parabolic automorphism
 are contained in the fibers of its invariant fibration, we deduce that $F$ is 
contained in the fibers of each of the projections $\pi_{lm}$ with $m=l+1$, which 
is   impossible. So there is no invariant curve.

The corresponding statement for $\Gamma^{\rm ext}$ follows immediately. 

The reasoning for~(5) is similar. Without loss of generality, assume $m=0$. Then again 
$\Gamma_m$ is non-elementary since it contains the parabolic elements $\sigma_{12}\circ\sigma_{23}$ and
$\sigma_{23}\circ\sigma_{34}$ (with distinct associated fibrations 
$\pi_{04}$ and $\pi_{01}$. 
 Reasoning as above shows that if $E\subset \hat X$ is any $\Gamma$-periodic irreducible curve projecting to a curve in $X$, then 
 its image $F$ in $X$ is contained in a fiber  of each of the  projections 
$\pi_{01}$, $\pi_{02}$, $\pi_{03}$ and $\pi_{04}$.  So we conclude that $F\subset \set{z_0 = 0}$, but then the equation of $X$ forces another coordinate to vanish, and we conclude that 
$F$ is one of the $L_{0l}$. 
\end{proof}

\begin{eg}\label{ex:quadrilateral}
 Let us give some  geometric explanations for 
  Assertion~(2) of Proposition~\ref{pro:pentagons}. Choose   
 $(l,m) = (1,2)$, and normalize the   pentagons so 
 that $a_0=(0,0)$ and $t_0=1$, which means that $a_1=(\ell_0,0)$. 
In homogeneous coordinates, this corresponds to the normalization 
$[1:z_1:z_2:z_3:z_4]$ with $z_i= t_i$. The pentagons contained 
 in a fiber of $\pi_{12}\rest{X}$ have three 
fixed vertices, namely $a_0$, $a_1$ and $a_2$. The remaining free vertices $a_3$ and $a_4$ 
move along  the circles centered at $a_2$ and $a_0$, of respective radii $\ell_2$ and $\ell_4$, with the 
constraint $a_3a_4=\ell_3$. 
These  circles are two conics,  the fiber is an elliptic curve which is a
$2$-to-$1$ cover of each of these two conics,
 the involutions  $\sigma_{23}$ and $\sigma_{34}$ preserve these fibers, and 
 $\sigma_{23}\circ \sigma_{34}$ is a translation on the elliptic curve. 
Forgetting the vertex $a_1$, 
we obtain a quadrilateral $(a_0, a_2, a_3, a_4)$,  and  
one recovers the transformations described in \cite{benoist-hulin}. 
The side lengths of this quadrilateral are
$\ell_2$, $\ell_3$, $\ell_4$ and $\norm{\overrightarrow{a_0a_2}}$, hence the translation vector 
(which only depends on these lengths) varies 
non-trivially when deforming the pentagon, which corresponds to the twisting property of parabolic transformations. 
\end{eg}

\begin{lem}\label{lem:charac_parabolic_on_K3}
 Let $X$ be a K3 or Enriques surface, and $\pi\colon X\to B$ be a genus $1$ fibration.
 If $g\in\Aut(X)$ maps some fiber $F$ of $\pi$ to a fiber of $\pi$, then $g$ preserves the fibration and
either $g$ is parabolic or it is periodic of order $\leq 66$. 
\end{lem}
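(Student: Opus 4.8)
The plan is to establish the statement in three stages: that $g$ preserves the fibration, that $g$ cannot be loxodromic, and that an elliptic $g$ is periodic of order at most $66$. The first two are formal consequences of lattice theory and the hyperbolic geometry of $H^{1,1}(X;\R)$; the third relies on the Global Torelli theorem together with the classical bound on the order of a finite automorphism of a K3 surface.

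First I would fix the primitive, nef, isotropic class $f\in\NS(X;\Z)$ (replaced by $\mathrm{Num}(X;\Z)$ if $X$ is Enriques) for which, up to an automorphism of the base, $\pi$ is the morphism $\Phi_{|f|}\colon X\to\P^1$ when $X$ is a K3 surface, and $\Phi_{|2f|}$ when $X$ is Enriques, $f$ then denoting the class of a half-fiber; here $B\simeq\P^1$ because $q(X)=0$. Every fiber of $\pi$ -- general, singular, or a multiple half-fiber -- has class on the ray $\R_{\geq0}f$. By hypothesis $g$ sends a fiber $F$ to a fiber $F'$, so $g^*[F']=[F]$; since $g^*$ is an integral lattice isometry that carries the nonzero class $[F']\in\R_{\geq0}f$ to the nonzero class $[F]\in\R_{\geq0}f$, it fixes the line $\R f$, and by primitivity $g^*f=f$. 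Consequently $g$ maps the complete linear system $|f|$ (resp. $|2f|$) to itself, hence descends to an automorphism $g_B$ of the base $\P^1=B$ with $\pi\circ g=g_B\circ\pi$; this already gives the first conclusion, and it holds regardless of the dynamical type of $g$.

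Next I would exclude the loxodromic case. The intersection form on $H^{1,1}(X;\R)$ is of Minkowski type, and $f$, being algebraic, is a nonzero isotropic vector there, so $\R f$ is a point at infinity of the associated hyperbolic space. If $g$ were loxodromic, then -- as recalled in the proof of Lemma~\ref{lem:proj_root} -- $g^*$ would fix exactly two points at infinity, namely the isotropic eigenlines for the eigenvalues $\lambda(g)>1$ and $\lambda(g)^{-1}$, on which it acts by $\lambda(g)^{\pm1}\neq1$; since $g^*$ fixes $\R f$ pointwise, $\R f$ would have to be one of those two lines, a contradiction. Hence $g$ is elliptic or parabolic. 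If $g$ is elliptic, i.e. $g^*$ has finite order on $H^2(X;\Z)$, then for a K3 surface the Global Torelli theorem makes $\Aut(X)\to\GL(H^2(X;\Z))$ injective, so $g$ itself has finite order; for an Enriques surface the kernel of $\Aut(X)\to\O(\mathrm{Num}(X;\Z))$ is finite, and again $g$ is periodic.

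It remains to bound the order, and here I would simply invoke the classical fact -- a consequence of Nikulin's classification of finite automorphism groups of complex K3 surfaces -- that such an automorphism has order at most $66$ (the value being sharp); for an Enriques surface $g$ lifts to an automorphism of order $\mathrm{ord}(g)$ or $2\,\mathrm{ord}(g)$ of the K3 double cover, so $\mathrm{ord}(g)\leq66$ a fortiori. For the application it is worth adding the refinement that when $g_B=\mathrm{id}$ the restriction of $g$ to a general fiber is a translation by a torsion section of the associated Jacobian fibration, possibly composed with an automorphism fixing the origin; using that a very general genus $1$ curve has automorphism group $\Z/2\Z$ and that the Mordell--Weil torsion of K3 and rational elliptic surfaces is uniformly bounded, one gets the stronger bound $\leq12$. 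The only genuinely non-formal ingredient in this plan is the numerical bound $66$ (and, for the refinement, the list of possible torsion sections); everything else just recycles facts already recalled in the excerpt, so I expect the write-up to be short.
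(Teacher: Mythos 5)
Your proposal is correct and follows essentially the same route as the paper: show $g$ preserves the fibration via the (numerical class of the) fiber and the associated linear system, rule out the loxodromic case because $g^*$ fixes the isotropic fiber class, deduce finiteness of an elliptic $g$ from the finiteness of the kernel of $\Aut(X)\to\GL(H^2(X;\Z))$, and quote the literature (the paper cites Keum) for the bound $66$. Your added refinement that the order drops to $12$ when $g$ preserves the fibration fiberwise is exactly the remark the paper makes when applying the lemma in the proof of Proposition~\ref{pro:pentagons}.
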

\begin{proof}  
Since $g$ maps $F$ to some fiber $F'$,  it maps the complete linear system $\vert F\vert$ to $\vert F'\vert$, 
but both linear systems are made of the fibers of $\pi$. So $g$ preserves the fibration and is not loxodromic.  
If $g$ is not parabolic it is elliptic, and its action on cohomology has finite order since it 
preserves $H^2(X, \Z)$.
 On a K3 or Enriques 
surface every holomorphic vector field vanishes identically, so $\Aut(X)^0$ is trivial and the kernel of the homomorphism $ \Aut(X)\ni f\mapsto f^*$ is finite (see~\cite[Theorem 2.6]{Cantat:Milnor}); as a consequence, any elliptic automorphism has finite order.   The upper
bound on the order of $g$ was obtained in~\cite{Keum:2016}. \end{proof}

\section{Random   foldings and ergodic theory} \label{sec:pentagons_dynamics}

We have now gathered enough geometric information to draw some dynamical 
consequences on the dynamics of pentagon folding. 

\subsection{Dynamics on $\Pent^0(\ell)$}
Recall  that the folding groups $\Gamma$, $\Gamma^{\rm ext}$ and 
$\Gamma_m$ were defined in  \S~\ref{subs:involutions}. 
Recall also that a parameter $\ell \in \R^5_{>0}$ is admissible when it corresponds to at least one pentagon.

\begin{figure}[h]
\includegraphics[width=12cm]{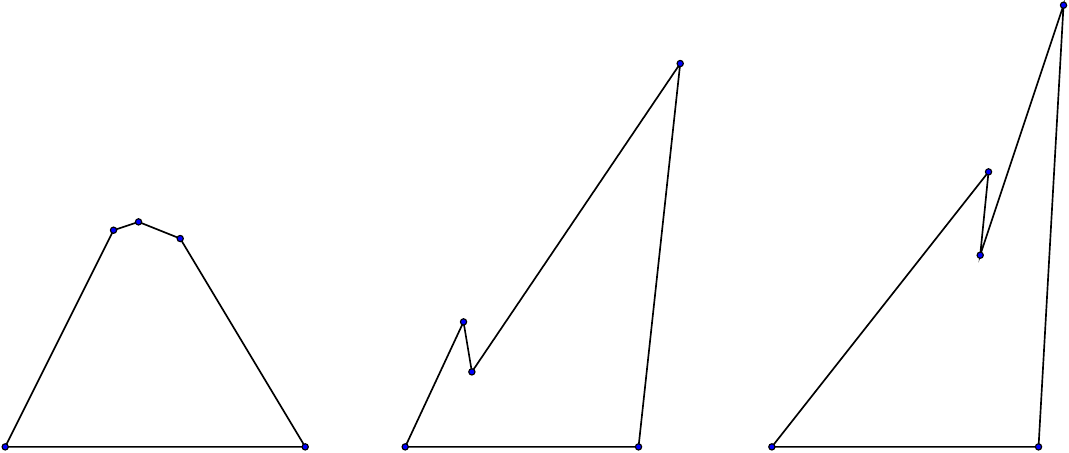}
\caption{{\small Pentagons with disconnected configuration spaces  
(respectively corresponding  to the codes $\bigl({\begin{smallmatrix} \cap &\cap   \\ \cap &\cap \end{smallmatrix}}\bigl)$, 
$\bigl({\begin{smallmatrix} \subset  &\emptyset   \\  \subset  &\emptyset  \end{smallmatrix}}\bigl)$
and $\bigl({\begin{smallmatrix} \subset  &\supset   \\  \subset  &\supset  \end{smallmatrix}}\bigl)$ in~\cite[Table 4]{Curtis-Steiner})
  }}\label{fig:pentagons_disconnected}
\end{figure}

Recall  from Remark~\ref{rem:topology} that $X(\R)\simeq \Pent^0(\ell)$ can be disconnected, in which case it is the disjoint union of two tori. 
This happens in $3$ of the $19$ possible configurations listed in~\cite{Curtis-Steiner}, the shapes of which are  sketched in Figure~\ref{fig:pentagons_disconnected}.
Such a pentagon cannot be deformed continuously to its reflection along the horizontal side, hence the configuration space is disconnected. On the other hand, 
\begin{enumerate}
\item folding it along its longest diagonal maps it into the other component, 
 so $\Gamma$ and $\Gamma^{\rm ext}$ act transitively on the set of components of $\Pent^0(\ell)$;
 
\item the involutions preserving the horizontal side preserve each component of $\Pent^0(\ell)$, so $\Gamma_m$ preserves each component of $\Pent^0(\ell)$ for $m=0$. 
\end{enumerate}

In the next two statements, ``general'' 
means that the conclusions of Proposition~\ref{pro:pentagons} are satisfied.

\begin{pro}\label{pro:ergodicity_pentagons}
For a general admissible  
parameter $\ell\in  \R_{>0}^5$, the action of $\Gamma$  (resp. of $\Gamma^{\rm ext}$) on $\Pent^0(\ell)$ 
is ergodic with respect to its natural  volume form.  
Likewise, for $m\in \set{0, \ldots, 4}$
 the action of $\Gamma_m$ is ergodic on each $\Gamma_m$-orbit of connected components of $\Pent^0(\ell)$.  
\end{pro}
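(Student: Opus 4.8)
The plan is to deduce ergodicity of the natural volume form from the classification of invariant measures (Theorem~\ref{thm:intro_classification_invariant_measures}) together with the geometry of the folding group supplied by Proposition~\ref{pro:pentagons}. I will argue for $\Gamma$; the cases of $\Gamma^{\mathrm{ext}}$ and of the $\Gamma_m$ are identical, with the bookkeeping on connected components indicated at the end. For a general admissible $\ell$, Lemma~\ref{lem:pentagon-volume-form} gives that a minimal resolution $\hat X$ of $X_\ell$ is a K3 surface, and Proposition~\ref{pro:pentagons} gives that $\Gamma\subset\Aut(\hat X;s_{\hat X})$ is non-elementary, contains parabolic elements, and preserves no algebraic curve in $\hat X$. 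The real locus $\Pent^0(\ell)=X(\R)$ avoids the ten nodes, hence is a smooth compact totally real surface in $\hat X$; it carries the natural area form $\lambda$ obtained by restricting the holomorphic $2$-form $\Omega_{\hat X}$, and since $\Gamma$ is generated by involutions one has $f^*\Omega_{\hat X}=\pm\Omega_{\hat X}$ (Remark~\ref{rem:pm1}), so $\lambda$ is $\Gamma$-invariant. Being totally real, $\Pent^0(\ell)$ is moreover Zariski dense in the irreducible surface $\hat X$ and meets every proper subvariety in a $\lambda$-null set, so that any measure absolutely continuous with respect to $\lambda$ is automatically Zariski diffuse.

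To show that $\lambda$ is ergodic on a given union of components it is enough to show that its ergodic decomposition there is trivial, so I would take an ergodic component $\mu$ of $\lambda$: it is an ergodic, $\Gamma$-invariant, Zariski diffuse probability measure, absolutely continuous with respect to $\lambda$. Since $\hat X$ is projective and $\Gamma$ is non-elementary with a parabolic element, Theorem~\ref{thm:intro_classification_invariant_measures} applies. The alternative in which $\mu$ has a smooth positive density on the complement of a proper subvariety of $\hat X$ is impossible for dimension reasons, $\mu$ being carried by the $2$-dimensional set $\Pent^0(\ell)$. Hence $\mu$ has a smooth positive density on a $\Gamma$-invariant totally real surface $\Sigma$ embedded in $\hat X\setminus Z$, for some proper $\Gamma$-invariant subvariety $Z$; and since $\mathrm{supp}(\mu)\subseteq\Pent^0(\ell)$ and both surfaces have the same dimension, $\Sigma$ is an open $\Gamma$-invariant subset of $\Pent^0(\ell)$.

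The heart of the matter, and the only place where one must work with pentagons rather than with the general theory, is to promote $\Sigma$ to a union of connected components of $\Pent^0(\ell)$, equivalently to rule out proper $\Gamma$-invariant totally real surfaces strictly inside $\Pent^0(\ell)$. For this I would use the two transverse parabolic fibrations appearing in the proof of Proposition~\ref{pro:pentagons}: choose parabolic elements $h,h'\in\Gamma$ of the form $\sigma_{ij}\sigma_{jk}$ with distinct invariant fibrations $\pi_{lm}$ and $\pi_{l'm'}$, each fixing its base $\P^1$ pointwise. Restricted to $\Pent^0(\ell)$, $\pi_{lm}$ fibers the surface over a circle with fibers finite unions of circles, and $h$ acts on each fiber as a translation of the underlying real elliptic curve, i.e. as a rotation whose rotation number is a non-constant real-analytic function of the fiber (this is the twisting property, cf. Example~\ref{ex:quadrilateral}); hence this rotation is irrational, so uniquely ergodic, on Lebesgue-almost every fiber. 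Disintegrating the $h$-invariant measure $\mu$ along this fibration then forces the conditional measure on almost every fiber to be the normalized length measure, so that $\mathrm{supp}(\mu)$ contains $\pi_{lm}$-almost every fiber entirely, and likewise $\pi_{l'm'}$-almost every fiber. Since distinct fibrations have transverse fibers off a finite set, the local homogeneity argument underlying the proof of Theorem~\ref{thm:intro_classification_invariant_measures} (see~\cite[Thm A]{invariant}) then shows that $\mathrm{supp}(\mu)$ is open, hence a union of connected components of $\Pent^0(\ell)$, on which $\mu$ is proportional to $\lambda$.

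Finally I would conclude. For $\Gamma$ and $\Gamma^{\mathrm{ext}}$, folding along the longest diagonal permutes transitively the at most two components of $\Pent^0(\ell)$, so there is a single orbit of components and $\lambda$ is ergodic on all of $\Pent^0(\ell)$. For $\Gamma_m$, the same argument applied to each $\Gamma_m$-orbit of components — reducing along the way to a finite-index subgroup stabilizing one component, which is still non-elementary, has parabolic elements, and preserves no curve — yields ergodicity of $\lambda$ on each orbit. I expect the third paragraph to be the main obstacle: everything preceding it is a direct appeal to the machinery of~\cite{stiffness, invariant}, whereas excluding $\Gamma$-invariant totally real surfaces properly contained in $\Pent^0(\ell)$ genuinely uses the uniquely ergodic twists $h,h'$, and requires checking that their rotation numbers really do vary with the fiber — so that irrationality holds on a full-measure set — for a general choice of side lengths.
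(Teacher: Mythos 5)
Your argument is correct in substance but takes a longer route than the paper's. The proof in the text is a one-line citation: since $\Gamma$ preserves no curve and permutes the components of $\Pent^0(\ell)$ transitively, case~(c) of \cite[Thm A]{invariant} already delivers the ergodicity of the volume on each orbit of components. You instead invoke only the weaker form of the classification recorded as Theorem~\ref{thm:intro_classification_invariant_measures}, which leaves you with an ergodic component $\mu$ carried by some invariant totally real surface $\Sigma\subset\Pent^0(\ell)$, and you then re-derive the needed strengthening (that $\Sigma$ must be a union of connected components) by re-running the disintegration argument along two transverse parabolic fibrations with non-constant rotation number. That third paragraph is essentially a reproof of the local-homogeneity step inside \cite[Thm A]{invariant}, so it is redundant given the reference, but it is the right mechanism and the twisting you need is automatic for Halphen twists (a parabolic element with constant translation along the fibers would be elliptic). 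Note also that since $\Gamma$ has no invariant curve, the exceptional set $Z$ is finite, so a softer conclusion is available: $\Sigma$ is open in $\Pent^0(\ell)$ and closed in $X\setminus Z$, hence open and closed in $\Pent^0(\ell)$ minus finitely many points, which does not disconnect any component.

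One genuine slip: for $\Gamma_m$ you assert that the finite-index subgroup stabilizing a component ``preserves no curve.'' This contradicts Proposition~\ref{pro:pentagons}(5): $\Gamma_m$ does preserve curves, namely the total transforms of the lines $L_{ml}$, $l\neq m$, and so does any finite-index subgroup. What saves the argument — and what the paper explicitly records — is that these invariant curves lie in the coordinate planes $\set{z_m=z_l=0}$ and therefore miss $\Pent^0(\ell)$ entirely (real points have all coordinates of modulus $1$), so the invariant subvariety $Z$ still meets the real locus in at most finitely many points and your reasoning goes through unchanged once this observation is substituted.
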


\begin{proof}
Since $\Gamma$ has no invariant curve and acts transitively on the set of components of $\Pent^0(\ell)$, 
the result follows from case (c) of~\cite[Thm A]{invariant}. For $\Gamma_m$, the same argument applies, upon adding the observation 
 that the $\Gamma_m$-invariant curves do not intersect $\Pent^0(\ell)$.
\end{proof}

\begin{thm}\label{thm:dynamics_pentagons}
Let $\ell\in  \R_{>0}^5$ be a general admissible parameter.  Fix a   probability measure $\nu$ on $\Gamma$ (resp. $\Gamma^{\rm ext}$) satisfying the moment condition (M), 
and whose support generates $\Gamma$ (resp. $\Gamma^{\rm ext}$). Then all $\nu$-stationary 
measures on $\Pent^0(\ell)$ are invariant, and the  
ergodic invariant measures are given by:
\begin{itemize}
\item  finitely many  periodic orbits;
\item $\vol_{\Pent^0(\ell)}$. 
\end{itemize}
In particular, the set of $\nu$-stationary 
measures  is a finite dimensional simplex. 
\end{thm}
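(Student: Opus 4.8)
The plan is to deduce Theorem~\ref{thm:dynamics_pentagons} from the general machinery already assembled in the excerpt, applied to the K3 surface $\hat X$ attached to a general admissible parameter $\ell$. First I would recall from Proposition~\ref{pro:pentagons} that for general $\ell$ the folding group $\Gamma$ (resp. $\Gamma^{\rm ext}$) is a non-elementary subgroup of $\Aut(\hat X; s_{\hat X})$ that contains parabolic elements (the maps $\sigma_{ij}\circ\sigma_{jk}$) and preserves no algebraic curve. Since $\hat X$ is a K3 surface, it carries a canonical holomorphic $2$-form, hence $\Gamma$ preserves a volume form on $\hat X(\C)$; moreover, because $X$, $\hat X$, $\Gamma$ and the real structure $s_{\hat X}$ are defined over $\R$, there is an induced $\Gamma$-invariant area form on the totally real surface $\Sigma = X(\R)\simeq \Pent^0(\ell)$ (see the discussion in \S~\ref{par:parabolic_field_kodaira} and~\cite[Rmk 2.3]{invariant}). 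This places us exactly in the setting of Theorem~\ref{thm:stiffness_for_real_surfaces}.

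\textbf{Step 1: Stiffness.} Apply Theorem~\ref{thm:stiffness_for_real_surfaces} with $X$ replaced by $\hat X$ and $\Sigma = \Pent^0(\ell)$: any probability measure $\nu$ on $\Gamma$ satisfying Conditions~(S) and~(M) has the property that every Zariski diffuse $\nu$-stationary measure on $\Sigma$ is $\Gamma$-invariant. For the stationary measures that are \emph{not} Zariski diffuse, one invokes the standard reduction (see~\cite[Prop. 10.6]{stiffness}): an ergodic stationary measure which charges a proper Zariski closed subset is supported on a finite union of curves or points, and since $\Gamma$ preserves no curve its ergodic components reduce to finite orbits, which are automatically invariant. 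Hence \emph{every} $\nu$-stationary measure on $\Pent^0(\ell)$ is $\Gamma$-invariant; this proves the first assertion.

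\textbf{Step 2: Classification of the invariant measures.} Now apply Theorem~\ref{thm:intro_classification_invariant_measures} (the classification of invariant measures for non-elementary groups containing a parabolic element, from~\cite{cantat_groupes, invariant}) to each ergodic $\Gamma$-invariant measure $\mu$ on $\Pent^0(\ell)$. If $\mu$ is not Zariski diffuse it is a finite orbit, as above; by Theorem~\ref{thm:finite_orbits} (or rather its consequence recorded after that theorem, using that $\Gamma$ has no invariant curve) there are only finitely many such ergodic finite-orbit measures. If $\mu$ is Zariski diffuse, the classification says that, up to a proper invariant subvariety $Z$, either $\mu$ has smooth positive density on $X\setminus Z$ or it has smooth positive density on an invariant totally real surface in $X\setminus Z$. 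Since we are working on the already totally real surface $\Pent^0(\ell)$ and $\Gamma$ has no invariant curve (so $Z$ contributes nothing to the support), both alternatives force $\mu$ to be a smooth positive density measure on $\Pent^0(\ell)$; by ergodicity of $\vol_{\Pent^0(\ell)}$ established in Proposition~\ref{pro:ergodicity_pentagons} (together with the fact, from Remark~\ref{rem:topology} and the transitivity of $\Gamma$ on connected components, that there is a single ergodic volume class for $\Gamma$ and $\Gamma^{\rm ext}$) such a $\mu$ must equal $\vol_{\Pent^0(\ell)}$ (on a disconnected configuration space one uses that $\Gamma$ and $\Gamma^{\rm ext}$ permute the two tori transitively, so the normalized sum is the unique ergodic one). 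This yields the stated list of ergodic invariant measures.

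\textbf{Step 3: The simplex.} Finally, the set of $\nu$-stationary measures equals the set of $\Gamma$-invariant measures, which is the closed convex hull of its (finitely many) ergodic elements, hence a finite-dimensional simplex. The main obstacle in this argument is not any new analytic input but the careful verification that all hypotheses of Theorems~\ref{thm:stiffness_for_real_surfaces}, \ref{thm:intro_classification_invariant_measures} and~\ref{thm:finite_orbits} genuinely hold for \emph{general} $\ell$ --- in particular the non-existence of an invariant curve and the presence of two parabolic elements with distinct fibrations, which is where Proposition~\ref{pro:pentagons} and the genericity of $\ell$ are essential; the case of disconnected $\Pent^0(\ell)$ requires the extra bookkeeping about components indicated above.
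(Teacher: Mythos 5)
Your proof is correct and follows essentially the same route as the paper's, which simply cites Proposition~\ref{pro:ergodicity_pentagons}, the stiffness theorem of~\cite{stiffness} (i.e.\ Theorem~\ref{thm:stiffness_for_real_surfaces} applied to $\Sigma=\Pent^0(\ell)$), and the finiteness of finite orbits from~\cite[Thm C]{finite_orbits}; you merely expand these citations into explicit steps. The one point worth keeping straight is that a general real parameter $\ell$ is not defined over $\overline\Q$, so the finiteness of the set of finite orbits must indeed come from the complex version of the finite-orbit theorem (which requires that $\Gamma$ preserve no curve, supplied by Proposition~\ref{pro:pentagons}), as you correctly note in passing.
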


\begin{proof}
This  follows from Proposition~\ref{pro:ergodicity_pentagons}, 
the stiffness theorem of \cite{stiffness}, and the 
finiteness of the set of finite orbits established in~\cite[Thm C]{finite_orbits}. 
\end{proof}

\begin{rem}
The classification of stationary measures also holds for $\Gamma_m$, except that in this case we cannot   apply~\cite[Thm C]{finite_orbits} to infer
 the finiteness of the set of periodic orbits, because of the existence of 
 invariant curves.
\end{rem}

The random ergodic theorem then implies that for $\vol_{\Pent^0(\ell)}$-almost every pentagon $x\in \Pent^0(\ell)$, 
the sequence of  empirical measures 
$\unsur{n}\sum_{k=1}^n \delta_{f^k_\omega(x)}$ is  
almost surely equidistributed on $\Pent^0(\ell)$.  As explained in \S~\ref{subs:UE}, 
to deduce the more precise result that this random equidistribution holds for 
{every} pentagon  with infinite orbit
we need some information about  periodic orbits. This is where it is useful to work 
with the extended folding group. 

\begin{thm}\label{thm:finite_orbits_pentagons} 
For a very general admissible parameter $\ell\in \R_{>0}^5$, every orbit of $\Gamma^{\rm ext}$  in $\Pent^0(\ell)$ is infinite. Hence,  for such a parameter, the only ergodic $\Gamma^{\rm ext}$-invariant probability measure on $\Pent^0(\ell)$ is the natural volume.  
\end{thm}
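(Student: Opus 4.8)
The second assertion is immediate once the first is known: granting it, Theorem~\ref{thm:dynamics_pentagons} lists the ergodic $\Gamma^{\rm ext}$-invariant probability measures on $\Pent^0(\ell)$ as the finitely many finite orbits together with $\vol_{\Pent^0(\ell)}$, and with no finite orbit only $\vol_{\Pent^0(\ell)}$ survives. So the content is the first assertion, which I would prove by working in family. Let $\Lambda\subset\P^4$ be the ($\Q$-defined, Zariski open) set of parameters for which Proposition~\ref{pro:pentagons} holds, and let $p\colon\mathcal X\to\Lambda$ be the total space of the corresponding family of (minimally resolved) K3 surfaces $\hat X_\ell$, equipped with the fibrewise automorphisms $\sigma_{ij}$, which depend algebraically on $\ell$. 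For $N\in\N$ let $\mathcal Z_N\subset\mathcal X$ be the constructible locus of pairs $(x,\ell)$ such that $x$ lies on a $\Gamma^{\rm ext}$-orbit of cardinality at most $N$. The parameters carrying a finite $\Gamma^{\rm ext}$-orbit form $\bigcup_N p(\mathcal Z_N)$; since a countable union of proper Zariski closed subsets of $\Lambda$ does not exhaust $\Lambda$, it is enough to prove that, for every $N$, no component of the Zariski closure $\overline{\mathcal Z_N}$ dominates $\Lambda$.

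First, for a single general $\ell$ the fibre $\mathcal Z_N\cap\hat X_\ell$ is already finite: $\Gamma^{\rm ext}$ is non-elementary, contains parabolic elements, preserves no algebraic curve (Proposition~\ref{pro:pentagons}(4)), and $\hat X_\ell$ is not a torus, so by~\cite[Thm C]{finite_orbits} it has only finitely many finite orbits, whence $\Per(\Gamma^{\rm ext})\cap\hat X_\ell$ is a finite set. (This is also contained in the measure classification of Theorem~\ref{thm:dynamics_pentagons}.) Hence every component of $\overline{\mathcal Z_N}$ has dimension at most $\dim\Lambda+1$, and those dominating $\Lambda$ are either generically finite over $\Lambda$ — i.e. \emph{persistent periodic orbits} $\ell\mapsto x(\ell)$ of bounded period — or of relative dimension one over $\Lambda$. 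Both must be excluded.

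The geometric input is this: a point $x$ with finite $\Gamma^{\rm ext}$-orbit is periodic for every element of $\Gamma^{\rm ext}$, hence for a suitable positive power of each of the ten parabolic automorphisms $g_{lm}=\sigma_{ij}\circ\sigma_{jk}$ (with $\{i,j,k\}$ the complement of $\{l,m\}$); by Proposition~\ref{pro:pentagons}(2) such a power is parabolic for $\pi_{lm}$, and a parabolic automorphism with a fixed point on a smooth fibre of $\pi_{lm}$ must act trivially on that fibre. Thus $x$ lies, for each of the ten fibrations $\pi_{lm}$, on a \emph{torsion fibre}. For general $\ell$ the $g_{lm}$ are genuinely parabolic, so the translation section of $g_{lm}$ along $\pi_{lm}$ is non-torsion and its torsion fibres of bounded order form a finite subset of the base $\P^1$; over $\Lambda$ these assemble into relative curves $\mathcal S^{lm}_N\subset\mathcal X$, finite over $\P^1\times\Lambda$ via $\pi_{lm}$, and $\mathcal Z_N\subset\bigcap_{\{l,m\}}\mathcal S^{lm}_N$. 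Everything therefore reduces to showing that this intersection of the ten torsion-fibre loci does not dominate $\Lambda$.

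This last point is the main obstacle. A naive dimension count fails: the torsion-fibre curves of two distinct fibrations necessarily meet (fibres of two distinct elliptic fibrations on a K3 surface have positive intersection), so $\mathcal S^{lm}_N\cap\mathcal S^{l'm'}_N$ dominates $\Lambda$. One must instead use several — and, to also handle the relative-dimension-one case, all ten — fibrations at once, exploiting (i) the \emph{twisting property} of the $g_{lm}$, already visible for quadrilaterals (Example~\ref{ex:quadrilateral}): their translation sections vary non-trivially with $\ell$, so a torsion fibre of $\pi_{lm}$ cannot persist as $\ell$ varies; and (ii) the joint non-degeneracy of the ten fibrations, encoded in the Desargues configuration of the lines $L_{ij}$ (Remark~\ref{rem:line}), which recovers a point of $\hat X_\ell$ from its ten images $\pi_{lm}(x)$. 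Turning this into a quantitative \emph{no persistent torsion fibre} statement — as opposed to the mere finiteness provided by the measure classification — is the new ingredient required here; granting it, for $\ell$ outside a countable union of proper Zariski closed subsets no point of $\hat X_\ell$ lies on torsion fibres of all ten fibrations, so $\Per(\Gamma^{\rm ext})=\emptyset$ and, a fortiori, every $\Gamma^{\rm ext}$-orbit in $\Pent^0(\ell)\subset\hat X_\ell(\C)$ is infinite.
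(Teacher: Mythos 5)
Your reduction to the first assertion, the family setup with the locus $\mathcal Z_N$ of bounded orbits, the observation that a $\Gamma^{\rm ext}$-periodic point must sit on a torsion fibre of each parabolic $g_{lm}$, and your remark that a naive dimension count fails because torsion loci of two distinct fibrations always meet — all of this is correct and matches the general strategy. But your proof stops exactly where the real work begins: you write that the quantitative ``no persistent torsion fibre'' statement is ``the new ingredient required here'' and then proceed ``granting it.'' That statement \emph{is} the theorem; without it you have only re-derived the finiteness of $\Per(\Gamma^{\rm ext})$ for each fixed $\ell$ (which already follows from \cite[Thm C]{finite_orbits}), not its emptiness for very general $\ell$. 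So the proposal has a genuine gap at its central step.

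For comparison, the paper closes this gap with a concrete Euclidean argument rather than an abstract intersection-theoretic one. Assuming a persistent periodic orbit, one takes a local algebraic section $\ell'\mapsto x'$ over a simply connected real cell, normalizes the pentagon, and deforms only $\ell_0$. The Darboux alternative applied to the two sub-quadrilaterals $a_0a_2a_3a_4$ and $a_1a_2a_3a_4$ (periodic under $\sigma_{23}\circ\sigma_{34}$ and $\sigma_{12}\circ\sigma_{23}$) forces the diagonal lengths $\Vert\overrightarrow{a_0a_2}\Vert$ and $\Vert\overrightarrow{a_1a_4}\Vert$ to lie in countable sets, hence to be locally constant; together with non-flatness of the relevant triangles this pins down the continuation $x'$ by elementary triangle constructions. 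Then periodicity under the \emph{extended} folding $\sigma_{13}\circ\sigma_{34}$ forces the length $\Vert v_1+v_3+v_4\Vert$ of a ``virtual'' quadrilateral diagonal to be independent of $\ell_0$, which is visibly false (this is the content of Figure~\ref{fig:pentagon_flex}), giving the contradiction. Note that this is precisely where the non-consecutive involutions enter, which is why the theorem is stated for $\Gamma^{\rm ext}$ and remains open for $\Gamma$; your proposal, by invoking all ten fibrations indiscriminately, does not isolate this point either. If you want to complete your approach, you need an argument of exactly this type: an explicit geometric quantity that a persistent periodic orbit would force to be constant along a one-parameter deformation, together with a verification that it is not.
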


We thus  obtain the following equidistribution result (the moment condition ($\mathrm M_+$) was defined in \S~\ref{subs:UE}): 
 
\begin{cor}\label{cor:pentagons_UE}
Fix a   probability measure $\nu$ on $\Gamma^{\rm ext}$ satisfying the moment condition 
($\mathrm M_+$)
and  generating $\Gamma^{\rm ext}$.
There is an open and  dense   subset of full measure in the set of admissible parameters $\ell$
such that  for any $x\in \Pent^0(\ell)$:
 either $\Gamma^{\rm ext}\cdot x$ is finite or for $\nu^\N$-almost every $\omega$, 
$$\unsur{n}\sum_{k=1}^n \delta_{f^k_\omega(x)} \to \vol_{\Pent^0(\ell)}$$ as $n\to +\infty$.
\end{cor}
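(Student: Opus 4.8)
The plan is to reduce everything to one fact: the fixed measure $\nu$ is uniformly expanding on $\Pent^0(\ell)$ for all $\ell$ in a suitable open dense full-measure set of admissible parameters. Once that is known, combining the stiffness statement of Theorem~\ref{thm:dynamics_pentagons} with the Margulis-function argument recalled in \S\ref{subs:UE} (i.e. \cite[Thm 4.3]{hyperbolic}) forces the empirical measures of an infinite orbit to give zero mass to the finitely many periodic orbits, leaving only $\vol_{\Pent^0(\ell)}$ as a possible limit.

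First I would pin down the parameter set. Inside the admissible parameters, intersect the ``general'' locus where the conclusions of Proposition~\ref{pro:pentagons} hold (so that $\hat X_\ell$ is a K3 surface, $\Gamma^{\rm ext}$ is non-elementary, contains parabolic elements, and preserves no algebraic curve, and in particular $\Pent^0(\ell)\subset\hat X_\ell$ is a smooth compact real surface carrying the invariant area form $\vol_{\Pent^0(\ell)}$) with the very general locus $G$ of Theorem~\ref{thm:finite_orbits_pentagons}; the first is Zariski-open and dense of full measure, the second a countable intersection of complements of proper Zariski-closed sets, hence dense and of full Lebesgue measure. For $\ell\in G$ the group $\Gamma^{\rm ext}$ has no finite orbit on $\Pent^0(\ell)$ and no invariant curve, so the necessary and sufficient criterion for uniform expansion of \cite[Thm 1.5]{hyperbolic} (Theorem~\ref{thm:criterion_uniform_expansion}, in the form valid for the invariant real surface $\Pent^0(\ell)$) applies: its condition (1) is vacuous and condition (2) holds, so $\nu$ is uniformly expanding on $\Pent^0(\ell)$. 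Since condition~\eqref{eq:UE} is stable under $C^1$ perturbations, and the surfaces $\Pent^0(\ell)$ together with the generating involutions $\sigma_{ij}$ vary real-analytically with $\ell$ inside each smoothness cell, the set $U$ of parameters for which $\nu$ is uniformly expanding is open; containing the dense full-measure set $G$, it is itself open, dense and of full measure. Put $\Lambda_0=U\cap\{\text{general}\}$, which is again open, dense and of full measure.

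Now fix $\ell\in\Lambda_0$ and $x\in\Pent^0(\ell)$ with $\Gamma^{\rm ext}\cdot x$ infinite. Since $\Pent^0(\ell)$ is compact, the space of probability measures on it is compact, so it suffices to show every cluster point of $\frac1n\sum_{k=1}^n\delta_{f^k_\omega(x)}$ equals $\vol_{\Pent^0(\ell)}$. By Breiman's ergodic theorem, for $\nu^\N$-almost every $\omega$ each cluster point $\mu$ is $\nu$-stationary; by Theorem~\ref{thm:dynamics_pentagons}, $\mu$ is then $\Gamma^{\rm ext}$-invariant, hence a convex combination of $\vol_{\Pent^0(\ell)}$ and of the finitely many probability measures $\mu_{F_1},\dots,\mu_{F_r}$ carried by the periodic orbits. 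Since $\nu$ is uniformly expanding on $\Pent^0(\ell)$ and satisfies $(\mathrm M_+)$, \cite[Thm 4.3]{hyperbolic} shows that every cluster point of the empirical measures of the infinite orbit of $x$ gives mass zero to each $F_i$; hence $\mu(F_i)=0$ for all $i$ and $\mu=\vol_{\Pent^0(\ell)}$, proving the equidistribution. The remaining alternative ($\Gamma^{\rm ext}\cdot x$ finite) is part of the statement, so we are done.

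The point I expect to require the most care is the use of the uniform-expansion criterion on the real locus $\Pent^0(\ell)$ rather than on the complex K3 $\hat X_\ell$: Theorem~\ref{thm:criterion_uniform_expansion} is stated for compact complex non-rational surfaces, while Theorem~\ref{thm:finite_orbits_pentagons} only excludes finite orbits inside $\Pent^0(\ell)$. One must therefore invoke the real-surface version of the machinery of \cite{hyperbolic} (the same underlying Theorem~\ref{thm:stiffness_for_real_surfaces} and the discussion of the two natural measures in \S\ref{subs:UE}), whose relevant inputs here are precisely the absence of finite orbits and of invariant curves in $\Pent^0(\ell)$ and the invariant area form there; alternatively one checks that the family argument behind Theorem~\ref{thm:finite_orbits_pentagons} also rules out complex finite orbits of $\Gamma^{\rm ext}$ on $\hat X_\ell$ for very general $\ell$, so that Theorem~\ref{thm:criterion_uniform_expansion} applies on $\hat X_\ell$ and restricts to $\Pent^0(\ell)$. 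A minor technical point is the $C^1$-continuity of the random walk in $\ell$, and the persistence of $(\mathrm M_+)$ under small deformations, needed for the openness of $U$ when $\nu$ has infinite support.
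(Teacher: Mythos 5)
Your proposal is correct and follows essentially the same route as the paper: uniform expansion at very general parameters via the criterion of \cite[Thm 1.5]{hyperbolic} (no finite orbits by Theorem~\ref{thm:finite_orbits_pentagons}, no invariant curve by Proposition~\ref{pro:pentagons}), promotion to an open dense full-measure set by $C^1$-robustness of \eqref{eq:UE}, and then equidistribution of infinite orbits. The only difference is cosmetic: the paper cites \cite[Thm 10.4]{hyperbolic} for the last step, whereas you unpack its proof (Breiman, stiffness, finiteness of ergodic invariant measures, and the Margulis-function exclusion of mass on finite orbits), and you make explicit the real-versus-complex issue in applying the criterion, which the paper leaves implicit.
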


\begin{proof}
By~\cite[Thm 1.5]{hyperbolic}, at a very general parameter the action of $\Gamma^{\rm ext}$ is uniformly expanding. 
Since uniform expansion is an open property, it holds on a dense open set.
Then the equidistribution result 
follows 
from~\cite[Thm 10.4]{hyperbolic}.  Since the complement of this dense open set is contained in a countable union of proper Zariski closed subsets,  
 it has zero  Lebesgue measure.
\end{proof}

\begin{proof}[Proof of Theorem~\ref{thm:finite_orbits_pentagons}]  
By Theorem~\ref{thm:dynamics_pentagons}, it suffices to prove the first assertion. If this assertion were not correct, then, arguing exactly as 
in~\cite[Thm A]{finite_orbits}, we would  find a finite index subgroup $\Gamma'\subset\Gamma^{\rm ext}$ such that the algebraic set 
\begin{equation}
\mathcal Z = \set{(\ell, x)\in \C^5\times X_\ell(\C), \ x\in X_\ell(\C), \forall f\in \Gamma', f(x)  = x}
\end{equation}
has a Zariski dense projection to $\C^5$. Since $\Gamma^{\rm ext}$ does not preserve any curve in $X_\ell$ (for a general $\ell$), then so does $\Gamma'$, and as a consequence,   the natural projection $\pi\colon \mathcal Z \to \C^5$ is generically  finite. Then, there exists a Zariski dense open subset $W\subset \C^5$ such that above $W$, $\pi$ is a finite unramified cover and the surfaces $\hat{X}_{\ell}$ are all smooth. 

Since $\R_{>0}^5$ is Zariski dense in $\C^5$, the intersection $W_{>0}(\R):=W\cap \R_{>0}^5$ is the complement of a proper Zariski closed subset of $\R_{>0}^5$. Reducing this open set slightly (by cutting out additional hypersurfaces if necessary), we may assume that every connected component of $W_{>0}(\R)$ is simply connected. If $W_i$ is such a connected component (for the euclidean topology), and if $(\ell, x)$ is a point of $\mathcal Z$ with projection $\ell\in W_i$, then there is a unique continuous (algebraic) section  
\begin{equation}\label{eq:section}
\ell'\in W_i\mapsto (\ell',x')\in \mathcal Z
\end{equation} 
of $\pi$ defined on $W_i$ that maps $\ell$ to $(\ell,x)$. This will be referred to as 
 the continuous continuation of the $\Gamma'$-fixed point $x$.
Moreover, under our contradiction hypothesis, we may choose $W_i$ and $(\ell, x)$ such that the section defined in~\eqref{eq:section} takes its values $x'$ in $\Pent^0(\ell')$ 
({i.e.} in the real part of the complex surface $X_{\ell'}$ for the real structure defined in~\eqref{eq:sX}). We fix such a pair $(W_i, (\ell,x))$, where $x$ corresponds to a normalized pentagon $(a_i)$ ({i.e.} $a_0=(0,0)$, $a_1=(\ell_0,0)$). 

Recall that a planar polygon is said to be degenerate, or flat, if it is contained in a line (ie. its vertices are collinear). 
Since we can choose $\ell$ as we wish in the open set $W_i$, 
we may assume that 
\begin{equation}
{\text{no line contains three of the vertices}}\;  a_i.
\end{equation}
In particular, $x$ itself is not flat. 

The triangle $(a_2a_3a_4)$ and the quadrilaterals $a_0a_2a_3a_4$
(with side lengths $\ell_5 := \norm{\overrightarrow{a_0 a_2}}$,
$\ell_2$, $\ell_3$, $\ell_4$) and $a_1a_2a_3a_4$ 
(with side lengths  $\ell_1$, $\ell_2$, $\ell_3$, and $\ell_6 := \norm{\overrightarrow{a_1 a_4}}$) 
 are non-degenerate. These quadrilaterals are periodic for the respective 
 transformations $\sigma_{23}\circ\sigma_{34}$ and $\sigma_{12}\circ\sigma_{23}$. Therefore, by the Darboux alternative for quadrilaterals (see~\cite{darboux, benoist-hulin, izmestiev2015} and also Example~\ref{ex:quadrilateral}), their side lengths satisfy a non-trivial relation. 
 More precisely, given the lengths $\ell_1, \ldots , \ell_4$,
  there exists a countable set $D_5 = D_5(\ell_2, \ell_3, \ell_4)$  and 
  $D_6 = D_6(\ell_1, \ell_2, \ell_3)$   such that 
  \begin{equation}\label{eq:lengths-conditions}
  \ell_5\in D_5(\ell_2, \ell_3, \ell_4) \quad \text{and} \quad \ell_6\in D_6(\ell_1, \ell_2, \ell_3).
  \end{equation} 
 
We  now deform the pentagon by varying $\ell_0$ while keeping the other side lengths fixed; this gives a variation $\ell'=(\ell'_0, \ell_1, \ldots , \ell_4)$ of $\ell$ in $W_i$, and a continuation $x'=(a'_i)\in \mathcal Z$ of $x$ (with the normalization $a'_0 = (0, 0)$ and $a'_1 = (\ell_0, 0)$). 
Let us show that for $\ell'$ near $\ell$, the continuation $x'$ of $x$ is geometrically determined, in a unique way, by the  length conditions~\eqref{eq:lengths-conditions}. 
Since  
 $\big\Vert{\overrightarrow{a'_0 a'_2}}\big\Vert$ varies continuously and must stay in $D_5$, it is constant, 
hence  the triangle $a'_0a'_1a'_2$  has side lengths $\ell'_0$, $\ell_1$ and $\ell_5$ which defines it 
 uniquely up to isometry. Since the initial triangle $a_0a_1a_2$ is not flat, then the point $a'_2$ is 
uniquely determined as a continuation of $a_2$. 
Similarly, the triangle 
$a'_0a'_1a'_4$ determines $a'_4$. 
Finally, since the lengths $\ell_2$ and $\ell_3$ are fixed and the triangle $a_2a_3a_4$ is not flat, the continuation 
$a'_3$ is uniquely determined. In conclusion, the periodicity of $x'$ under the action of $\sigma_{23}\circ\sigma_{34}$ and of $\sigma_{12}\circ\sigma_{23}$ determines $x'$  as 
$\ell_0$ varies; thus, in what follows we can forget about $\mathcal Z$: the section $(\ell',x')$ is, in fact, given by the geometric construction we have just described. 

To reach the desired contradiction, we now argue that the periodicity of $x'$ 
under the parabolic automorphism 
$\sigma_{13}\circ \sigma_{34}$  
creates an additional  rigidity that cannot be satisfied  (we might use 
$\sigma_{12}\circ \sigma_{24}$ instead). Indeed, $\sigma_{13}\circ \sigma_{34}$ 
acts on  the vectors $v_1 = \overrightarrow{a_1 a_2}$, $v_3 = \overrightarrow{a_3 a_4}$
and $v_4 = \overrightarrow{a_4 a_0}$, so it can be seen as 
a   transformation of the ``virtual'' quadrilateral $a_0b_1a_3a_4$, where $b_1$ is such that 
$\overrightarrow{a_3 b_1} = \overrightarrow{a_2 a_1}$. Since 
$x'$ is a $\Gamma^{\rm ext}$-periodic point of $\Pent^0(\ell')$ for all $\ell'$, the quantity
$\norm{v_1+v_3+v_4} = \big\Vert{\overrightarrow{a_0b_1}}\big\Vert$ must be constant; more precisely, it does not vary with $\ell'_0$ on a neighborhood of $\ell$. But this function depends algebraically on the parameters, and any neighborhood of $\ell$ in $W_i$ is Zariski dense in $\C^5$, so we conclude that 
   this function actually does not depend on $\ell'_0$.  
Thus, to reach the desired contradiction, one just needs to contemplate Figure~\ref{fig:pentagon_flex}. 
\end{proof}

\begin{figure}[h]
\begin{minipage}{6cm}\includegraphics[width=6cm]{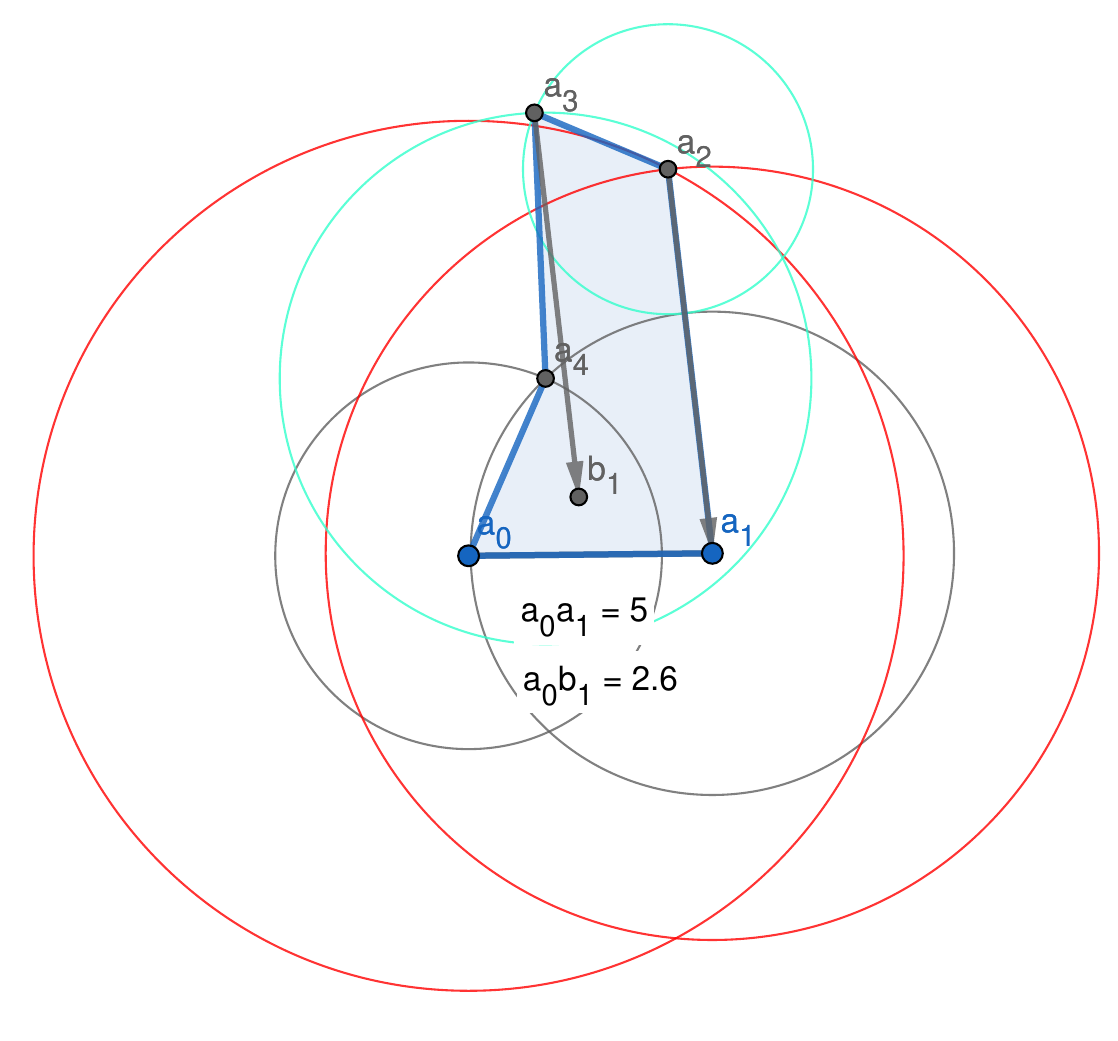}\end{minipage}  
\begin{minipage}{6cm}\includegraphics[width=7.4cm]{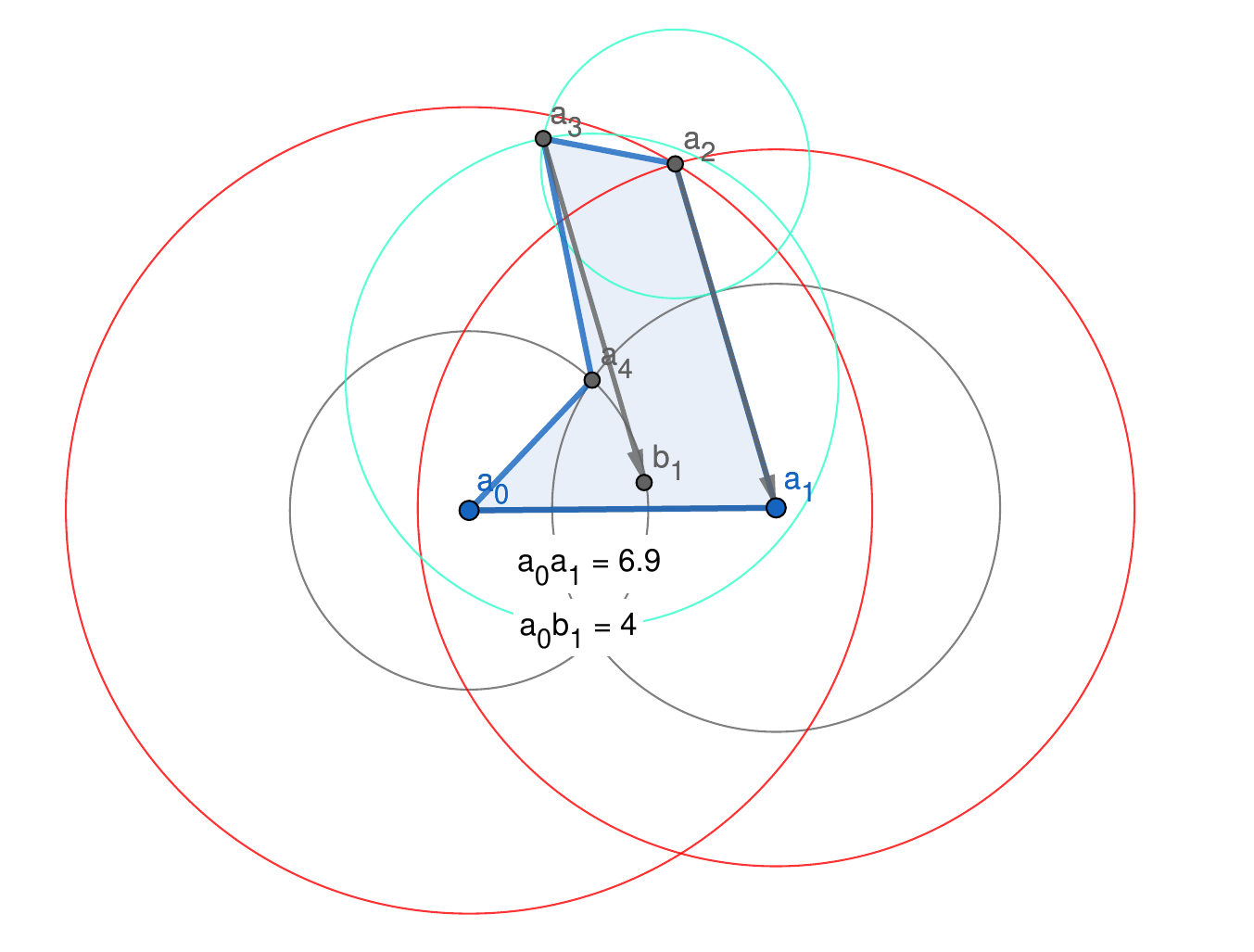}\end{minipage}
\caption{{\small Proof of Theorem~\ref{thm:finite_orbits_pentagons}. The  corresponding
circles have the same radius on the two figures, and serve for the construction of $a_2$, $a_3$ and $a_4$ (for instance on both pictures 
the   red circle on the left is centered at $a_0$ and has radius $\ell_5$ and the red circle on the right is centered at $a_1$ with radius $\ell_2$).  
When $\ell_0$ varies the distance $a_0b_1$ varies as well. (Realized with GeoGebra and available at 
\url{https://www.geogebra.org/m/edydwafb}). }}
\label{fig:pentagon_flex}
\end{figure}

Since we use transformations like 
$\sigma_{13}\circ\sigma_{34}$ in the proof, it is necessary to work with $\Gamma^{\rm ext}$. The analogous result for $\Gamma$ remains elusive (\footnote{We actually do not know any example of a ``general'' periodic pentagon, that is a pentagon $x\in \Pent^0(\ell)$ with a finite $\Gamma$-orbit, 
  for which $\ell$ satisfies the 
conditions of Proposition~\ref{pro:pentagons}.}).

\begin{que}\label{que:Gamma-vs-Gammaext}~
\begin{enumerate}
\item Is it true that for a very general set of lengths $\ell\in \P^4(\C)$, every orbit of $\Gamma$ 
(resp. $\Gamma_m$) in $\Pent^0(\ell)$ ({resp.} in $X_\ell$) is infinite? 
\item Is there a dense open set of parameters $\ell\in \P^4(\R)$ for which 
every finite orbit is uniformly expanding? 
\item Can we replace ``very general'' by ``general'' in Theorem~\ref{thm:finite_orbits_pentagons} and in 
Question (1)? 
\end{enumerate}
\end{que}
 
Let us expand a little bit on this. 
As seen above, a positive answer to the first question implies a positive answer to the second one. 
On the other hand, even  without excluding the existence of a persistent 
periodic orbit for $\Gamma$, it might still be possible to show that the action of $\Gamma$ on $\Pent^0(\ell)$ is 
uniformly expanding generically  (this point of view is 
developed in~\cite[\S 9]{hyperbolic}). Indeed, by~\cite{hyperbolic}, it is enough to show that 
at such a persistent periodic orbit, for a general parameter $\ell$, we can find  
parabolic elements in $\Gamma$ that exhibit a non-trivial twisting in different directions. 
Geometrically, this would require to understand how the rotation numbers 
associated to sub-quadrilaterals vary when deforming the pentagon (an explicit formula 
for this rotation number is given in~\cite{benoist-hulin, izmestiev2015}).

\subsection{Dynamics on $\Pent^1(\ell)$} Recall that $\mathbb{U}$ denotes the unit circle in $\R^2\simeq \C$. To an element $u\in \mathbb{U}$ 
corresponds a unique rotation $R_u$ (centered at the origin); in complex coordinates, it   just corresponds to  multiplication by $u$, and its inverse is $R_u^{-1}=R_{\overline{u}}$.

For $\ell = (\ell_0, \ldots , \ell_4) \in \R_{>0}^5$, let us introduce the space of pentagons modulo translations  
\begin{equation}
\Pent^1(\ell) = \Pent(\ell)/\R^2.
\end{equation}
With notation as in \S~\ref{par:pentagons}, 
every element $\tilde{x}$ of $\Pent^1(\ell)$ admits a unique representative with $a_0 = (0,0)$. 
Then,  $
\overrightarrow{a_0a_1} = \ell_0 t_0$, for some unit vector $t_0\in \mathbb{U}$ and if we apply the rotation $R_{\overline{t_0}}$, we get  
a normalized pentagon $x$ (with $a_0=(0,0)$ and $a_1=(\ell_0,0)$). This shows that
 $\Pent^1(\ell)$ is a trivial circle bundle over  $\Pent^0(\ell)$: 
 \begin{equation}
 \Pent^1(\ell) \simeq \Pent^0(\ell) \times \mathbb{U}.
 \end{equation}
The reciprocal diffeomorphism is obtained as follows. 
 Let $(x,u)$ be an element of $\Pent^0(\ell) \times \mathbb{U}$.
 \begin{itemize}
\item To  $x$, one associates its normalized pentagon, that is, the unique pentagon $(a_i)$  in its $\SO(2)\ltimes \R^2$ orbit corresponding to $x$ such that $a_0=(0,0)$ and $a_1=(\ell_0,0)$. 
 \item then one rotates it by $R_u$ to get an element of $\Pent^1(\ell)$.
 \end{itemize}
 As a real algebraic variety, $\Pent^1(\ell)$ is 
 the real locus, with respect to  the involution 
 \begin{equation}
 s\colon (z_i)\mapsto (1/\overline{z_i}),
 \end{equation}
 of the variety defined  in $\C^5$ by the system of Equations~\eqref{eq:equation_pentagons}. 
 
 The involutions 
 $\sigma_{ij}$ and the groups $\Gamma$ and $\Gamma_m$ are defined exactly as before, with the 
 same formulas; to avoid confusion with the action on $\Pent^0$, we may denote the involutions by   
 $\tilde{\sigma}_{ij}$ and the corresponding pentagons by $\tilde x$). 
 In the coordinates $(x,u)\in \Pent^0(\ell)\times {\mathbb{U}}$, 
 $\tilde{\sigma}_{ij}$ is given by  
  \begin{equation}\label{eq:skew}
\begin{cases}
\tilde{\sigma}_{ij}(x,u) = (\sigma_{ij}(x), u), \text{ if } 0\notin\set{i,j}\\
\tilde{\sigma}_{0j}(x,u)  = (\sigma_{0j}(x), h_{0j}(x)u)
\end{cases}
 \end{equation}
 where $x\mapsto h_{0j}(x)$ takes values in ${\mathbb{U}}$.
To justify the second line, simply observe that $\sigma_{ij}$ commutes with the rotation $R_v$, {i.e.} with $(x,u)\mapsto (x, uv)$; thus, $\tilde{\sigma}_{0j}(x,u)  = R_u \tilde{\sigma}_{0j}(x,1)$. 

Instead of choosing $a_0$ as a base point (which is translated to the origin), we could choose any of the five vertices $a_m$, $m=0, \ldots, 4$. 
This provides five  different 
 identifications   $\Pent^1(\ell) \simeq \Pent^0(\ell) \times \mathbb{U}$; for each of them, we denote by 
 $\vartheta_m\colon \Pent^1(\ell)\to \mathbb{U}$ 
  the projection onto the second factor (in other words, for a pentagon $\tilde{x}$, 
 $\vartheta_m(\tilde{x})$ is the angle $\angle(\overrightarrow{a_ma_{m+1}}, (1, 0))$).
Changing from the  basepoint $a_i$ to the basepoint $a_j$ yields a change of coordinates of the form $(x,u)\mapsto (x, R_{\alpha_{ij}(x)}u)$, where $\alpha_{i,j}(x)$ is the unit vector with  angle
 $\angle(\overrightarrow{a_ia_{i+1}}, \overrightarrow{a_ja_{j+1}})$. So, 
 \begin{equation}
 \vol_{\Pent^1(\ell)}  := \vol_{\Pent^0(\ell)} \times \leb_{\mathbb{U}}
 \end{equation} 
 defines a  $\Gamma$-invariant volume. 
 
\begin{pro}\label{pro:ergodic_pent1}
The action of $\Gamma$ is ergodic on  $\Pent^1(\ell)$ with respect to the natural volume $\vol_{\Pent^1(\ell)}$. 
\end{pro}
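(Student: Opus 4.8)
The plan is to regard the circle bundle $\Pent^1(\ell)\simeq \Pent^0(\ell)\times\mathbb{U}$ as an isometric extension of the $\Gamma$-action on $\Pent^0(\ell)$, which is ergodic by Proposition~\ref{pro:ergodicity_pentagons}, and to rule out non-constant $\Gamma$-invariant $L^2$ functions by a Fourier analysis along the fibres. I work in the $a_0$-based trivialisation of \S\ref{subs:involutions}. By~\eqref{eq:skew}, the three ``interior'' generators $\sigma_{12},\sigma_{23},\sigma_{34}$ act by $(x,u)\mapsto(\sigma_{ij}(x),u)$, hence generate a copy of $\Gamma_0=\langle\sigma_{12},\sigma_{23},\sigma_{34}\rangle$ acting through the first coordinate alone, while the two remaining generators act as skew products $\tilde\sigma_{0j}(x,u)=(\sigma_{0j}(x),h_{0j}(x)\,u)$ for $j\in\{1,4\}$, where $h_{0j}\colon\Pent^0(\ell)\to\mathbb{U}$ is real-analytic. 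Let $\Phi\in L^2\bigl(\Pent^1(\ell),\vol_{\Pent^1(\ell)}\bigr)$ be $\Gamma$-invariant and expand it along the fibres as $\Phi(x,u)=\sum_{n\in\Z}\phi_n(x)\,u^n$ with $\phi_n\in L^2(\Pent^0(\ell))$. Comparing Fourier coefficients, the $\Gamma$-invariance of $\Phi$ amounts to the relations $\phi_n\circ\sigma=\phi_n$ for $\sigma\in\{\sigma_{12},\sigma_{23},\sigma_{34}\}$ and $(\phi_n\circ\sigma_{0j})\,h_{0j}^{\,n}=\phi_n$ for $j\in\{1,4\}$ (all equalities $\vol_{\Pent^0(\ell)}$-a.e.), for every $n\in\Z$.

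Fix $n$. Since $|h_{0j}|\equiv 1$, the function $|\phi_n|$ is invariant under all generators of $\Gamma$, hence $\Gamma$-invariant on $\Pent^0(\ell)$, hence constant equal to some $c_n\ge 0$ by Proposition~\ref{pro:ergodicity_pentagons}. If $c_n=0$ then $\phi_n\equiv 0$; otherwise $\psi_n:=c_n^{-1}\phi_n$ is a measurable map $\Pent^0(\ell)\to\mathbb{U}$ which is $\Gamma_0$-invariant. By Proposition~\ref{pro:ergodicity_pentagons}, $\Gamma_0$ — which preserves every connected component of $\Pent^0(\ell)$ (obvious if $\Pent^0(\ell)$ is connected, and part of the discussion above otherwise) — acts ergodically on each of them, so $\psi_n$ is constant on each connected component. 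Feeding this back into $(\psi_n\circ\sigma_{0j})\,h_{0j}^{\,n}=\psi_n$ forces $h_{0j}^{\,n}$ to be a.e.\ constant on some connected component of $\Pent^0(\ell)$: when $\Pent^0(\ell)$ is connected one gets $h_{0j}^{\,n}\equiv 1$ directly, and when $\Pent^0(\ell)$ is a disjoint union of two tori one uses that at least one of $\sigma_{01},\sigma_{04}$ exchanges the two components — the interior generators preserving them and $\Gamma$ being transitive on them — so that the corresponding $h_{0j}^{\,n}$ is constant on one torus. In either case this contradicts the non-constancy of $h_{0j}$ (see below) as soon as $n\ne 0$, since a non-constant real-analytic function has Lebesgue-negligible level sets and a connected component of $\Pent^0(\ell)$ is Zariski dense in $X_\ell$ for general $\ell$. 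Hence $\phi_n\equiv 0$ for all $n\ne 0$, so $\Phi(x,u)=\phi_0(x)$ is independent of $u$; then $\phi_0$ is $\Gamma$-invariant on $\Pent^0(\ell)$, hence constant by Proposition~\ref{pro:ergodicity_pentagons}, and $\Phi$ is constant. This proves ergodicity.

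The only genuinely computational point — and the main obstacle — is the non-constancy of the cocycles $h_{0j}$. One checks it directly from the explicit formula~\eqref{eq:sigmaij} for $\sigma_{0j}$ together with the renormalisation $a_0=(0,0)$, $a_1=(\ell_0,0)$: after folding along the diagonal $(a_0a_2)$ the direction of $\overrightarrow{a_0a_1}$ is replaced by an explicit non-constant rational function of $t_1,\dots,t_4$ (and $\ell$), and $h_{0j}$ is (the conjugate of) that function, so it is a non-constant real-analytic function on $\Pent^0(\ell)$, and a fortiori on each connected component. Everything else is soft: the fibrewise Fourier decomposition, the two ergodicity inputs of Proposition~\ref{pro:ergodicity_pentagons} (for $\Gamma$ on $\Pent^0(\ell)$ and for $\Gamma_0$ on its components), and the elementary fact about level sets of non-constant real-analytic functions.
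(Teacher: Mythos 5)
Your proof is correct, but it takes a genuinely different route from the one in the paper. The paper's argument (borrowed from Chivet's thesis) is purely set-theoretic: it takes an almost invariant set $B$, shows that $B$ is saturated for the circle fibrations $\vartheta_m$ (using the ergodicity of $\Gamma_m$ on the base), and then exploits the transversality of two of these fibrations, $\vartheta_0$ and $\vartheta_1$, via a Lebesgue density point and Fubini argument; no formula for the group elements is ever used. You instead fix a single trivialisation, expand an invariant $L^2$ function in Fourier series along the fibres, and run the classical Anzai--Furstenberg criterion for ergodicity of a compact abelian extension: ergodicity of $\Gamma$ on $\Pent^0(\ell)$ kills $|\phi_n|$, ergodicity of $\Gamma_0$ (which is the paper's $\Gamma_m$ for $m=0$, and does preserve each component and act ergodically on it by Proposition~\ref{pro:ergodicity_pentagons} and the discussion preceding it) forces the transfer function $\psi_n$ to be locally constant, and the cocycle relation then forces $h_{0j}^{\,n}$ to be constant on a component, which is excluded by computation. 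What your approach buys is a transparent identification of the only possible obstruction, namely cohomological triviality of the cocycles $h_{0j}$; what it costs is precisely the explicit computation of $h_{0j}$, which the paper's transversality argument avoids entirely. Two small remarks. First, in the disconnected case your case distinction is unnecessary: writing $\zeta_T$ for the value of $\psi_n$ on a component $T$, the relation $\zeta_{\sigma_{0j}(T)}\, h_{0j}(x)^{n}=\zeta_T$ for $x\in T$ forces $h_{0j}^{\,n}$ to be constant on $T$ whether $\sigma_{0j}$ preserves or exchanges the two tori. Second, the non-constancy of $h_{0j}$ deserves to be recorded explicitly rather than deferred: on the normalised representative ($t_0=1$) Equation~\eqref{eq:sigmaij} gives $h_{0j}(x)=t_j(\ell_0+\ell_j t_j)/(\ell_0 t_j+\ell_j)$, which is a non-constant function of $t_j\in\mathbb{U}$ for any $\ell_0,\ell_j>0$ (e.g.\ it takes the value $1$ at $t_j=1$ and a non-real value at $t_j=i$), and $t_j$ is non-constant on each two-dimensional component of $\Pent^0(\ell)$ because the fibres of $\pi_{0j}$ are curves; with that in place the argument is complete.
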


\begin{proof}
The argument is borrowed from Chivet's master's thesis~\cite{chivet}.  
Let $B$ be a Borel set of positive volume such that for every $\gamma\in \Gamma$, 
\begin{equation}
\vol_{\Pent^1(\ell)}(B\Delta\gamma\inv(B))=0.
\end{equation} 

Pick an index $m\in \set{0, \ldots, 4}$, and note that $\Gamma_m$ preserves the fibers of $\vartheta_m$.
Since $B$ is almost $\Gamma_m$-invariant 
 and the $\Gamma_m$-action on $\Pent^0(\ell)$ is ergodic, we get that $B$ is $\vartheta_m$-saturated, which means that every fiber of 
 $\vartheta_m$ intersects $B$ on a set of  zero or full measure for $\vol_{\Pent^0(\ell)}$. 
 
 Let us fix a value  of $m$, say $m=0$ and work in the system of 
 coordinates $(x,u)$ associated to this choice; in these coordinates,  $R_v$ is of the form 
 $(x, u)\mapsto (x,uv)$. 
 Set 
 \begin{equation}
 B_0 = \set{u  \in \mathbb{U};\; \vol_{\Pent^0(\ell)}(B\cap \vartheta_0\inv(u)) = 1}. 
 \end{equation}
 If we push $\vol_{\Pent^1(\ell)}$ onto $ \mathbb{U}$ by $\vartheta_0$, one gets the Lebesgue measure, and $B_0$ has a positive Lebesgue measure;
below, we shall consider Lebesgue density points $u_0$ of $B_0$.
 
 Now fix another value of $m$, say $m=1$. Let $F$ be any fiber of $\vartheta_1$. Since 
 $\vartheta_0\rest{F}: F\to \mathbb{U}$ admits a regular value, there exists $\delta>0$  
 and an open set $W\subset F$ such that $\vartheta_0\rest{W}$ is a submersion onto an interval of length $2\delta$. In addition, since any other fiber  of $\vartheta_1$ 
 is of the form $R_v(F)$, we deduce that 
 there exists an open set $V\in \Pent^0(\ell)$  such that, 
 for any $u_0\in  \mathbb{U}$, there exists a neighborhood $N(u_0)\subset \mathbb {U}$  such that,
  for every $u\in   N(u_0)$
and every $x\in V$, there exists 
 $W_x\in \vartheta_1\inv(\vartheta_1(x, u))$ such that $\vartheta_0\rest{W_x}$ 
 realizes a submersion $W_x\to ]u-\delta, u+\delta[$. 
 
 Fix  any density point $u_0$ of  $B_0$, so that  there exists 
  $A\subset B_0\cap N(u_0)$ of positive Lebesgue measure. 
  For every $u\in A$, $B\cap (V\times\set{u})$ 
  is of full $\vol_{\Pent^0(\ell)}$ mass in $V\times\set{u}$. By Fubini's theorem, and the fact that the $B$ is $\vartheta_1$-saturated,  there is a set of positive measure $B'\subset A\times V$ such that for 
 any $(x,u)\in B'$, the $\vartheta_1$-fiber of $(x,u)$ is contained in $B$ (modulo a set of measure zero). 
 Projecting back by $\vartheta_0$ and    using   the fact that 
 $B$ is $\vartheta_0$-saturated, 
 we conclude that $]u_0 - \delta, u_0+\delta[\subset B_0$  (modulo a set of measure zero). From this we easily conclude that 
 $B_0  = \mathbb{U}$ (modulo a set of measure zero) and,  applying the    Fubini theorem again
 completes the proof. 
\end{proof}

\begin{thm}\label{thm:dynamics_pentagons2}
Let $\ell\in \R^5_{>0}$ be a general parameter. 
Fix a   probability measure $\nu$ on $\Gamma$   satisfying the moment condition (M) and 
 whose support generates $\Gamma$. 
 Then, the ergodic, $\nu$-stationary 
measures  are  
\begin{itemize}
\item atomic measures on $\Gamma$-periodic orbits(\footnote{
Such periodic orbits are even rarer than periodic orbits on $\Pent^0(\ell)$ (we do not know any example). So 
we strongly believe that there exists a dense, Zariski open subset $W\subset \R^5_{>0}$  
such that for $\ell\in W$,  $\Gamma$ does not have any finite orbit in $\Pent^1(\ell)$  (see Question~\ref{que:Gamma-vs-Gammaext}).});
\item measures of the form $$\left(\sum_{x\in F} \delta_x\right)\times \leb_{\mathbb{U}}$$ where $F$ is a finite orbit for the action of $\Gamma$ on $\Pent^0(\ell)$;
\item the measure $\vol_{\Pent^1(\ell)}$. 
\end{itemize}
In particular, every  $\nu$-stationary 
measure on $\Pent^1(\ell)$ is invariant.

If $\nu$ is a probability measure on $\Gamma^{\rm ext}$ satisfying condition (M) and generating $\Gamma^{\rm ext}$, and if $\ell$ is very general, then the only ergodic 
$\nu$-stationary measure on $\Pent^1(\ell)$ is $\vol_{\Pent^1(\ell)}$. 
\end{thm}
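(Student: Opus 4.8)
The plan is to exploit the product description $\Pent^1(\ell)\simeq\Pent^0(\ell)\times\mathbb{U}$ from \S\ref{subs:involutions}, together with the fact, visible in~\eqref{eq:skew}, that $\Gamma$ (and likewise $\Gamma^{\rm ext}$) acts on this circle bundle through a cocycle with values in the rotation group $\mathbb{U}$: in other words, the projection $p\colon\Pent^1(\ell)\to\Pent^0(\ell)$ is a homogeneous (in particular isometric) extension of the action on $\Pent^0(\ell)$, and $p$ is $\Gamma$-equivariant because the $\sigma_{ij}$ commute with the rotations $R_v$. I would start from an ergodic $\nu$-stationary measure $\mu$ on $\Pent^1(\ell)$ and push it down to $\bar\mu=p_*\mu$, an ergodic $\nu$-stationary measure on $\Pent^0(\ell)$. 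By Theorem~\ref{thm:dynamics_pentagons} this $\bar\mu$ is $\Gamma$-invariant, and it is either (a) the uniform measure on a finite $\Gamma$-orbit $F\subset\Pent^0(\ell)$, or (b) equal to $\vol_{\Pent^0(\ell)}$; I would then treat these two cases separately.

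In case (a) the measure $\mu$ is carried by the finite union of circles $\bigcup_{x\in F}\{x\}\times\mathbb{U}$. Fixing $x_0\in F$ and passing to the finite-index stabilizer $\mathrm{Stab}_\Gamma(x_0)$, which acts on the fibre $\{x_0\}\times\mathbb{U}$ through a character into $\mathbb{U}$ read off from the cocycle, the conditional of $\mu$ on that fibre is a stationary measure for the first-return random walk, a random walk on the compact abelian group $\mathbb{U}$ by rotations. Its ergodic stationary measures are the normalized Haar measures on cosets of the closure $H$ of the image subgroup; since the closed subgroups of $\mathbb{U}$ are the finite cyclic groups and $\mathbb{U}$ itself, this produces either a finite $\Gamma$-orbit in $\Pent^1(\ell)$ (with its uniform measure) or, after transporting Haar around $F$ via the cocycle, the measure $\bigl(\sum_{x\in F}\delta_x\bigr)\times\leb_{\mathbb{U}}$. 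In both subcases $\mu$ is $\Gamma$-invariant, and these are exactly the first two families in the statement.

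Case (b) is the heart of the proof, and the step I expect to be the main obstacle. Here $\bar\mu=\vol_{\Pent^0(\ell)}$ is an ergodic $\Gamma$-invariant measure, and $\Pent^1(\ell)$ is a compact-abelian-group extension of $(\Pent^0(\ell),\vol_{\Pent^0(\ell)},\Gamma)$; I would invoke the classical description of stationary measures for such extensions over a fixed ergodic base (Furstenberg's structure theory of isometric extensions): every ergodic $\nu$-stationary measure lying over $\bar\mu$ is $\Gamma$-invariant and arises, through a measurable trivialization $b\colon\Pent^0(\ell)\to\mathbb{U}$ of the cocycle modulo a closed subgroup $H\le\mathbb{U}$, as the Haar extension along $H$-cosets; for $H=\mathbb{U}$ this is precisely $\vol_{\Pent^1(\ell)}$. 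To rule out a proper reduction $H=\mathbb{Z}/d\mathbb{Z}$, I would check that $\phi\colon\Pent^1(\ell)\to\mathbb{U}$ defined by $\phi(x,u)=b(x)^{d}u^{-d}$ is then a $\Gamma$-invariant measurable function (a one-line computation using $h^{-d}=1$ for $h\in H$ and commutativity of $\mathbb{U}$); since $\phi$ is onto $\mathbb{U}$ on every fibre $\{x\}\times\mathbb{U}$ it is non-constant, so $\vol_{\Pent^1(\ell)}$ would fail to be $\Gamma$-ergodic, contradicting Proposition~\ref{pro:ergodic_pent1}. Hence $H=\mathbb{U}$ and $\mu=\vol_{\Pent^1(\ell)}$. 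This completes the classification, and since all the listed ergodic measures are $\Gamma$-invariant, so is every $\nu$-stationary measure.

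For the last assertion, concerning $\Gamma^{\rm ext}$ at a very general parameter, I would first observe that $\vol_{\Pent^1(\ell)}=\vol_{\Pent^0(\ell)}\times\leb_{\mathbb{U}}$ is $\Gamma^{\rm ext}$-invariant (by the change-of-basepoint argument of \S\ref{subs:involutions}, every involution $\sigma_{ij}$ acts trivially on the $\mathbb{U}$-coordinate in the coordinates attached to a suitable base vertex) and, being already $\Gamma$-ergodic by Proposition~\ref{pro:ergodic_pent1}, is a fortiori $\Gamma^{\rm ext}$-ergodic; the dichotomy above then runs verbatim with $\Gamma^{\rm ext}$ in place of $\Gamma$, using the $\Gamma^{\rm ext}$ form of Theorem~\ref{thm:dynamics_pentagons}. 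By Theorem~\ref{thm:finite_orbits_pentagons}, for a very general admissible $\ell$ the group $\Gamma^{\rm ext}$ has no finite orbit in $\Pent^0(\ell)$; since $p$ is $\Gamma^{\rm ext}$-equivariant, the image of any finite orbit upstairs would be a finite $\Gamma^{\rm ext}$-invariant subset of $\Pent^0(\ell)$, hence contain a finite $\Gamma^{\rm ext}$-orbit, so there is no finite $\Gamma^{\rm ext}$-orbit in $\Pent^1(\ell)$ and no finite orbit $F$ downstairs to feed the second family. Thus only $\vol_{\Pent^1(\ell)}$ survives. Apart from case (b), the points requiring care are the passage to a genuine random walk on $\mathbb{U}$ in case (a) and the measurability of the trivialization $b$ in case (b); both are standard, so I expect the substantive work to be confined to organizing case (b) exactly as above.
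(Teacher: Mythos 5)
Your proof is correct and follows essentially the same route as the paper: project to $\Pent^0(\ell)$, invoke Theorem~\ref{thm:dynamics_pentagons} for the marginal, and split into the finite-orbit and full-volume cases, handling the first by the induced rotation random walk on the fibre and the second via the ergodicity of $\vol_{\Pent^1(\ell)}$ from Proposition~\ref{pro:ergodic_pent1}. The only (cosmetic) difference is in case (b), where the paper replaces your appeal to the structure theory of isometric extensions by a short self-contained genericity argument (Lemma~\ref{lem:furstenberg}): once $\bar\mu\times\leb_{\mathbb{U}}$ is known to be ergodic, commutation of the skew products with the right translations $R_h$ already forces it to be the unique stationary lift of $\bar\mu$, with no need to classify all lifts by closed subgroups of $\mathbb{U}$.
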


In the latter case, we could also state an equidistribution result in the spirit of Corollary~\ref{cor:pentagons_UE}.
The core of the proof is the following 
stationary version of a celebrated argument due to Furstenberg \cite{furstenberg:1961}, 
which we state here in a general form.  

\begin{lem}\label{lem:furstenberg}
Consider a random dynamical system $(X, (f_\omega), \nu, \mu)$, where $X$ is a compact metric space, $\nu$ is a Borel probability measure on $\mathrm{Homeo}(X)$, and $\mu$ is an ergodic $\nu$-stationary measure on $X$.
Let $G$ be a compact group, with Haar measure $\lambda$. 
Consider a $G$-extension of this random dynamical system, by transformations of $X\times G$ of the form 
$$F_\omega:(x,g)\mapsto (f_\omega(x), h_\omega(x)\cdot g)$$
where $h_\omega$ is a $G$-valued continuous cocycle that is, a continuous $G$-valued map depending measurably on $\omega$.  Then  for this extension,   $\mu\times \lambda$ is $\nu$-stationary, and if it is  ergodic, it is the unique 
 stationary measure projecting down to $\mu$. 
\end{lem}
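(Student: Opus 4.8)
The plan is to run the classical argument of Furstenberg \cite{furstenberg:1961}, exploiting that the right translations on the $G$-fibres commute with the random dynamics. Write $\pi\colon X\times G\to X$ for the first projection and, for $g\in G$, let $R_g\colon(x,g')\mapsto(x,g'g)$ be the corresponding right translation. That $\mu\times\lambda$ is $\nu$-stationary is a direct verification: for a continuous $\varphi$ on $X\times G$ and fixed $\omega$, $x$, the left-invariance of $\lambda$ gives $\int_G\varphi\bigl(f_\omega(x),h_\omega(x)g'\bigr)\,\d\lambda(g')=\int_G\varphi\bigl(f_\omega(x),g'\bigr)\,\d\lambda(g')$; setting $\psi(y):=\int_G\varphi(y,g')\,\d\lambda(g')$ and integrating $\varphi\circ F_\omega$ against $\mu\times\lambda$ and then against $\nu$, Fubini reduces everything to $\int\!\!\int\psi\bigl(f_\omega(x)\bigr)\,\d\mu(x)\,\d\nu(\omega)=\int\psi\,\d\mu$, which is exactly the $\nu$-stationarity of $\mu$.

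For uniqueness, let $m$ be any $\nu$-stationary measure on $X\times G$ with $\pi_*m=\mu$. The key point is the identity $R_g\circ F_\omega=F_\omega\circ R_g$, valid for every realization $\omega$, which follows at once from the cocycle formula; it implies that $(R_g)_*$ preserves the set of $\nu$-stationary measures, so each $(R_g)_*m$ is again $\nu$-stationary and still projects to $\mu$. I would then average over $G$ and set $\bar m:=\int_G(R_g)_*m\,\d\lambda(g)$. Using the right-invariance of $\lambda$, $\bar m$ is invariant under all $R_{g_0}$ and projects to $\mu$; and a one-line computation with the left-invariance of $\lambda$ shows that any right-$G$-invariant probability measure on $X\times G$ projecting to $\mu$ equals $\mu\times\lambda$. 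Hence $\mu\times\lambda=\int_G(R_g)_*m\,\d\lambda(g)$ realizes $\mu\times\lambda$ as the barycenter of a weak-$*$ continuous family of $\nu$-stationary measures. If now $\mu\times\lambda$ is ergodic, i.e.\ an extreme point of the compact convex set of $\nu$-stationary probability measures, then its unique barycentric representing measure is the Dirac mass at $\mu\times\lambda$, so $(R_g)_*m=\mu\times\lambda$ for $\lambda$-almost every $g$; since $g\mapsto (R_g)_*m$ is continuous and Haar measure has full support in the compact group $G$, this holds for every $g$, and taking $g$ to be the identity gives $m=\mu\times\lambda$.

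The argument is entirely soft. The three points that need a little care are the commutation $R_g\circ F_\omega=F_\omega\circ R_g$ for each realization (this is what forces $(R_g)_*$ to preserve stationarity, and is precisely the reason one extends by left multiplication $h_\omega(x)\cdot g$), the identification of a right-$G$-invariant measure projecting to $\mu$ with $\mu\times\lambda$, and the classical fact that an extreme point of a compact convex set is its own unique barycentric representing measure. None of these is a genuine obstacle, which is why the statement is recorded as a lemma rather than a theorem.
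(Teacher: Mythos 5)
Your proof is correct, but it follows a genuinely different route from the paper's. The paper argues via genericity: by Breiman's (random) ergodic theorem, $\tilde\mu$-almost every point of an ergodic stationary measure $\tilde\mu$ over $\mu$ is $\tilde\mu$-generic; the commutation of $F_\omega$ with the right translations $R_h$ together with the $R_h$-invariance of $\mu\times\lambda$ shows that the set of $\mu\times\lambda$-generic points is a union of entire fibers $\set{x}\times G$, and Fubini then forces $\tilde\mu$-almost every point to be simultaneously $\tilde\mu$-generic and $\mu\times\lambda$-generic, whence $\tilde\mu=\mu\times\lambda$. You instead exploit the same commutation $R_g\circ F_\omega=F_\omega\circ R_g$ to average an arbitrary stationary measure $m$ over $G$, identify the average with $\mu\times\lambda$ (your one-line computation with the invariance of Haar measure is indeed sufficient, no disintegration needed), and then invoke the Bauer/Choquet fact that an extreme point of the compact convex set of stationary measures admits only the Dirac mass as representing measure, upgrading the almost-everywhere conclusion to every $g$ by weak-$*$ continuity of $g\mapsto (R_g)_*m$ and full support of $\lambda$. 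What each approach buys: yours treats non-ergodic $m$ directly, whereas the paper's argument as written handles ergodic $\tilde\mu$ and needs the (immediate) remark that every ergodic component of a stationary measure over the ergodic $\mu$ still projects to $\mu$; conversely the paper's argument uses only Breiman's theorem, already a standing tool in this context, and avoids the extreme-point machinery. One small point worth making explicit in your write-up is that the weak-$*$ continuity of $g\mapsto(R_g)_*m$ follows from the uniform continuity of $\varphi\circ R_g$ in $g$ for $\varphi$ continuous on the compact space $X\times G$; with that noted, the argument is complete.
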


\begin{proof}
The stationarity of $\mu\times \lambda$ is an easy exercise. Let $\tilde \mu$ be an  ergodic, 
$\nu$-stationary probability measure on $X\times G$ with marginal $(\pi_X)_*\tilde{\mu}=\mu$. By the random ergodic theorem,  
$\tilde \mu$-almost every $(x,g)$  is $\tilde \mu$-generic: this means that for
 $\nu^\N$-almost every $\omega$, 
\begin{equation}
\unsur{n}\sum_{k=0}^{n-1} \delta_{F_\omega^k(x,g)}\underset{n\to\infty}\longrightarrow\tilde \mu.
\end{equation}
Let $R_h: (x,g)\mapsto (x, g\cdot h)$ denote the right translation induced by $h\in G$. Note that $F_\omega$ commutes with $R_h$ and $\mu\times \lambda$ is $R_h$-invariant. From this we infer that  if 
$(x,g)$ is $\mu\times \lambda$-generic, then all points in the fiber $\set{x}\times G$ are 
$\mu\times \lambda$-generic. By Fubini's theorem, it follows 
 that $\tilde\mu$-almost every point is in fact $\mu\times \lambda$-generic, hence 
 $\tilde \mu = \mu\times \lambda$, as asserted. 
\end{proof}

We shall also need the following well known lemma.

\begin{lem}\label{lem:circle-dynamics-obvious}
Let $\Gamma$ be a  group of   rotations of the unit circle and $\nu$ be 
a measure such that 
$\bra{\supp(\nu)} = \Gamma$. 
\begin{itemize}
\item If $\Gamma$ is infinite,  
the Lebesgue measure is the only $\nu$-stationary measure.  
\item If $\Gamma$ is finite, the  ergodic stationary measures are supported by its   finite orbits.
\end{itemize} \end{lem}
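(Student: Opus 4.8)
The plan is to run the argument through Fourier analysis on the circle. Identify the group of rotations of $\mathbb{U}=\R/\Z$ with $\mathbb{U}$ itself, so that $\Gamma$ becomes a subgroup of $\mathbb{U}$ and $R_\alpha$ acts by $x\mapsto x+\alpha$; for a probability measure $\mu$ on $\mathbb{U}$ write $\hat{\mu}(n)=\int e^{2\pi i n x}\,d\mu(x)$, and set $\hat{\nu}(n)=\int e^{2\pi i n\alpha}\,d\nu(\alpha)$, viewing $\nu$ as a measure on rotation angles. Since $\widehat{(R_\alpha)_*\mu}(n)=e^{2\pi i n\alpha}\,\hat{\mu}(n)$, the stationarity identity $\mu=\int (R_\alpha)_*\mu\,d\nu(\alpha)$ translates, coefficient by coefficient, into
\begin{equation}
\hat{\mu}(n)=\hat{\nu}(n)\,\hat{\mu}(n)\qquad(n\in\Z).
\end{equation}
So for each $n$ either $\hat{\mu}(n)=0$ or $\hat{\nu}(n)=1$. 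The first step is the elementary observation that $\hat{\nu}(n)=1$ forces $e^{2\pi i n\alpha}=1$ for $\nu$-almost every $\alpha$ (an average of points of the unit circle equals $1$ only if they all do), hence $n\alpha\in\Z$ for every $\alpha\in\supp(\nu)$, and then for every $\alpha\in\Gamma$ because $\{\alpha:n\alpha\in\Z\}$ is a subgroup of $\mathbb{U}$; in particular, for $n\neq0$, the identity $\hat{\nu}(n)=1$ can hold only if $\Gamma$ is contained in the finite group $\frac{1}{|n|}\Z/\Z$.

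Now I would split into the two cases. If $\Gamma$ is infinite, the previous line shows $\hat{\nu}(n)\neq1$ for every $n\neq0$, whence $\hat{\mu}(n)=0$ for $n\neq0$ and, by Fourier uniqueness for measures on the circle, $\mu$ is the Lebesgue measure. If $\Gamma$ is finite, it is cyclic, say $\Gamma=\frac1N\Z/\Z$ with $N=|\Gamma|$; then $\hat{\nu}(n)=1$ precisely when $N\mid n$, so $\hat{\mu}(n)=0$ whenever $N\nmid n$, which is exactly the statement $(R_{1/N})_*\mu=\mu$, i.e.\ $\mu$ is $\Gamma$-invariant. Consequently every $\nu$-stationary measure is $\Gamma$-invariant, so the ergodic $\nu$-stationary measures are the ergodic $\Gamma$-invariant measures (see \cite[Prop.~2.8]{benoist-quint_book}); since the finite group $\Gamma$ acts freely by translations, these are exactly the normalized counting measures on the $N$-point orbits $\Gamma\cdot x$.

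There is no real obstacle here: once the Fourier dictionary is in place the lemma is essentially immediate. The only points deserving a line each are the implication $\hat{\nu}(n)=1\Rightarrow e^{2\pi i n\alpha}\equiv1$ on $\supp(\nu)$, and the standard identification of ergodic invariant measures of a finite group action with orbit measures. As an alternative to the Fourier computation in the finite case, one can note that the Ces\`aro averages $\frac1n\sum_{k=1}^n\nu^{\ast k}\ast\mu$ converge to $\lambda_\Gamma\ast\mu$, where $\lambda_\Gamma$ is the uniform probability on $\Gamma$, which exhibits directly every stationary $\mu$ as $\Gamma$-invariant.
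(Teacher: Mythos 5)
Your proof is correct, but it takes a different route from the paper's. The paper handles the infinite case by quoting the Choquet--Deny theorem (for the Abelian group $\SO(2)$, every $\nu$-stationary measure is $\langle\supp(\nu)\rangle$-invariant), then upgrades $\Gamma$-invariance to $\SO(2)$-invariance by density of the infinite subgroup and dominated convergence, and concludes by uniqueness of Haar measure; the finite case is dismissed as obvious. Your Fourier computation bypasses Choquet--Deny entirely: the relation $\hat{\mu}(n)=\hat{\nu}(n)\hat{\mu}(n)$, together with the observation that $\hat{\nu}(n)=1$ forces $\Gamma\subset \frac{1}{|n|}\Z/\Z$, kills all nonzero coefficients at once in the infinite case, and in the finite case yields $\Gamma$-invariance directly (your Ces\`aro-average remark is an equally valid substitute there). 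What your approach buys is a self-contained, elementary argument that treats both cases uniformly and in effect reproves the relevant instance of Choquet--Deny by hand; what the paper's approach buys is brevity, at the cost of citing a black box. Both are sound; the only steps you rightly flag as needing a line — that an average of unimodular numbers equals $1$ only if they all do $\nu$-a.e., and the identification of ergodic invariant measures of a finite group action with normalized orbit measures — are indeed routine.
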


\begin{proof} Let us prove the first assertion (the second one is obvious). Since $\Gamma$ is infinite,  it is a dense subgroup of $\SO(2)$.
Since $\SO(2)$ is Abelian, by the Choquet-Deny theorem, every stationary measure is invariant.
Now, if $\mu$ is $\Gamma$-invariant, it  is $\SO(2)$-invariant too, by density and dominated convergence. Thus, 
$\mu$ is the Lebesgue measure. 
\end{proof}

\begin{proof}[Proof of Theorem~\ref{thm:dynamics_pentagons2}]
By Equation \eqref{eq:skew}, the  dynamics on $\Pent^1(\ell)$ is a $\mathbb{U}$-extension of the one on
$\Pent^0(\ell)$. 
 Theorem~\ref{thm:dynamics_pentagons} implies that if $\nu$ generates $\Gamma$, 
  any ergodic $\nu$-stationary measure 
 $\tilde \mu$ on 
$\Pent^1(\ell)$ either projects to a finite orbit or to $\vol_{\Pent^0(\ell)}$. 
In the latter case, Proposition~\ref{pro:ergodic_pent1} and Lemma~\ref{lem:furstenberg} imply that 
$\tilde\mu = \vol_{\Pent^1(\ell)}$. 
Assume now that   
\begin{equation}
(\pi_{\Pent^0(\ell)})_*\tilde \mu= \frac{1}{\vert F\vert}\sum_{x\in F}\delta_x
\end{equation} 
for some  finite orbit $F$. In this situation we can also use 
   Lemma~\ref{lem:furstenberg}   to classify stationary measures. 
 Restricting to a finite index subgroup, endowed with the induced measure 
(see~\cite[Chap. 5]{benoist-quint_book}) reduces  the problem to the case  
where $F = \set{x_0}$. Then 
the classification follows from Lemma~\ref{lem:circle-dynamics-obvious}.
 
The second statement of the theorem follows similarly, using 
Theorem~\ref{thm:finite_orbits_pentagons} instead of~\ref{thm:dynamics_pentagons}.   
\end{proof}

\subsection{Dynamics on $\Pent(\ell)$}
We can finally derive some information about the dynamics of random foldings on $\Pent(\ell)$, which 
is a $\R^2$-extension of the dynamics on $\Pent^1(\ell)$. 
Indeed $\Pent(\ell)$ can be identified to $\Pent^1(\ell)\times \R^2$ 
by choosing a preferred vertex, say $a_0$, and translating it to the origin.
Doing so, we obtain a diffeomorphism 
\begin{equation}\label{eq:pent}
\Pent(\ell)\ni (a_i) \longmapsto (\tilde{x}, a_0)=((v_i), a_0)\in \Pent^1(\ell)\times \R^2, 
\text{ where } v_i  = \overrightarrow{a_ia_{i+1}}.
\end{equation}

We already introduced the involution  $s_i$ in \S~\ref{par:pentagons}, which descends to 
${\tilde{\sigma}}_{i, i+1}$ (indices modulo $5$) on $\Pent^1(\ell)$ and to $\sigma_{i, i+1}$ on $\Pent^0(\ell)$. 
With the identification given by Equation~\eqref{eq:pent}, we obtain:
\begin{equation}\label{eq:si}
\begin{cases}
s_i((a_i)) = ({\tilde{\sigma}}_{i, i+1}({\tilde{x}}), a_0)  \text{ for } i \neq 4\\
s_4((a_i)) = \lrpar{{\tilde{\sigma}}_{40}({\tilde{x}}), a_0 + {\tilde{\sigma}}_{40}(v_4) - v_4)} 
\end{cases}
\end{equation}
By definition the group $\Gamma$ is the group generated by the involutions $s_i$ (we are slightly abusing notation here). 

To find an involution descending to ${\tilde{\sigma}}_{ij}$ when $j\neq i+1$, different options can be chosen, depending on the vertices that remain  fixed under the involution. 
First, observe that if $j\neq i+1$ we can always choose $i,j$ so that $j = i+2$ modulo $5$ 
(so the vectors $v_i$ and $v_{i+2}$ will change while the others remain fixed). 
In analogy with the case of $s_i$, we define   $r_i$ such that $r_i(a_i) = a_i$ and 
$[r_i(P)] = \sigma_{i, i+2}([P])$, so that $r_i$ fixes $a_i$, $a_{i+3}$ and $a_{i+4}$ and moves 
$a_{i+1}$ and $a_{i+2}$ (another option would have been to leave   $a_{i+2}$ fixed). It can be expressed  in  coordinates in $\Pent(\ell)$ as in~\eqref{eq:si}. We define $\Gamma^{\rm ext}$ acting 
on $\Pent(\ell)$ by
$\Gamma^{\rm ext} := \bra{s_i, \ r_i, \ i=0\ldots 5}$. 
 
\begin{figure}[h] 
\includegraphics[width=10cm]{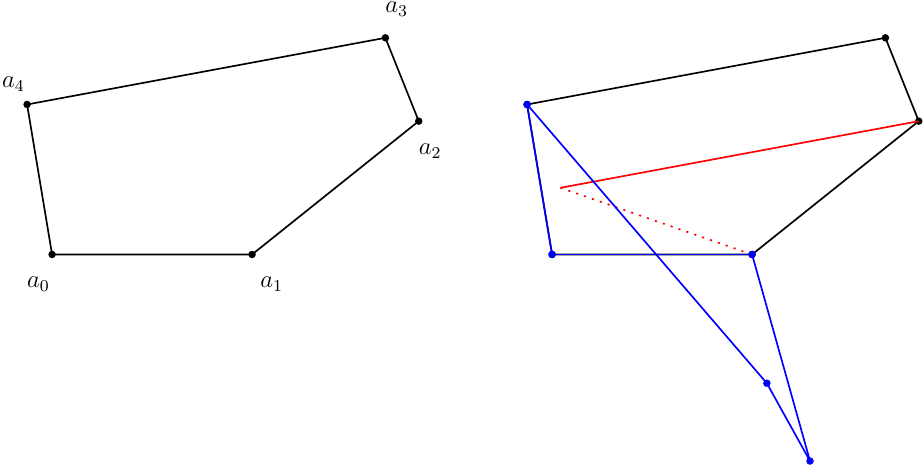}
\caption{
{\small{The black pentagon  
on the left  is folded to the blue pentagon   
on the right by the lift $r_1$ of $\sigma_{1,3}$. The red segment is parallel to $[a_3,a_4]$ and the red dotted segment gives the direction along which the vectors $v_1$ and $v_3$ are reflected.}}}
\end{figure}
 
In \cite{benoist-hulin, benoist-hulin2}, the authors study in detail how a quadrilateral drifts in the plane
under successive foldings. 
Fix a   probability measure $\nu$ on $\Gamma^{\rm ext}$. From a pentagon 
$P_0=(a_i)$ with $a_0 = (0,0)$ and a sequence $\omega = (f_n) \in (\Gamma^{\rm ext})^\N$ of folding instructions, we obtain the following random sequence 
of pentagons $P_n\in \Pent(\ell)$:
\begin{equation}
P_n(\omega) = f_{n-1}\cdots f_0(P_0).
\end{equation} 
Note that the parameter space for the starting point is $\Pent^1(\ell)$;  
 in other words, we identify the set of pentagons in $\Pent (\ell)$
 whose first vertex is at the origin $(0,0)$ with $\Pent^1(\ell)$. 
 In the next lemma, we also use $(0,0)$ as a reference point to study the drift of $P_n(\omega)$ in $\R^2$.

\begin{pro}~
\begin{enumerate}
\item Let $\ell\in \R^5$ be a general parameter. Assume that $\nu$ satisfies the moment condition (M) and generates $\Gamma$. 
Then, for $\vol_{\Pent^1(\ell)}$-almost every $P_0$, the linear drift of the sequence $(P_n)$ vanishes, that is
$$\lim_{n\to\infty} \unsur{n} \dist( P_n(\omega), (0,0))   = 0$$
for $\nu^\N$-almost every $\omega$.
\item Assume now that $\ell$ is very general, $\nu$ satisfies the moment condition ($\mathrm{M}_+$), and $\nu$ generates $\Gamma^{\rm ext}$. Then the same conclusion holds for 
every $P_0\in \Pent^1(\ell)$.
\end{enumerate}
\end{pro}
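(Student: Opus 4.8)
The plan is to realize the random dynamics on $\Pent(\ell)$ as an $\R^2$-extension of the dynamics on $\Pent^1(\ell)$ and to reduce the statement to the vanishing of the average of the associated additive cocycle. By Equations~\eqref{eq:pent}--\eqref{eq:si}, with the identification $\Pent(\ell)\simeq\Pent^1(\ell)\times\R^2$ obtained by translating the vertex $a_0$ to the origin, each generating involution of $\Gamma$ (resp.\ $\Gamma^{\rm ext}$) acts by a skew transformation $(\tilde x,a)\mapsto(\bar f\tilde x,\,a+\varphi_f(\tilde x))$, where $\bar f$ is the induced map on $\Pent^1(\ell)$ and $\varphi_f\colon\Pent^1(\ell)\to\R^2$ records the displacement of $a_0$; this extends to all of $\Gamma$ (resp.\ $\Gamma^{\rm ext}$) with $\varphi$ satisfying the cocycle identity $\varphi_{fg}=\varphi_f\circ\bar g+\varphi_g$. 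Writing $\omega=(f_n)_{n\geq0}$ and $\tilde x_k=\bar f_{k-1}\circ\cdots\circ\bar f_0(\tilde x_0)$ for the random trajectory in $\Pent^1(\ell)$, and using that $P_0$ has $a_0=(0,0)$, one gets $\dist(P_n(\omega),(0,0))=\big|\sum_{k=0}^{n-1}\varphi_{f_k}(\tilde x_k)\big|+O(1)$, the error being bounded by the diameter $\sum_i\ell_i$. Thus the proposition is a law of large numbers for the Birkhoff sums of the $\R^2$-valued cocycle $\Phi(\omega,\tilde x):=\varphi_{f_0}(\tilde x)$ over the skew product on $\Pent^1(\ell)$.

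The crux is that these sums have zero average against the relevant stationary measure. I claim that for every ergodic $\nu$-stationary probability measure $\tilde\mu$ on $\Pent^1(\ell)$ and every $f$ one has $\int\varphi_f\,d\tilde\mu=0$. By Theorem~\ref{thm:dynamics_pentagons2} such a $\tilde\mu$ is $\Gamma$-invariant; writing $f=g_1\circ\cdots\circ g_m$ as a word in the generating involutions and iterating the cocycle identity gives $\varphi_f=\sum_{j=1}^m\varphi_{g_j}\circ(\bar g_{j+1}\circ\cdots\circ\bar g_m)$, so by invariance of $\tilde\mu$ under each map $\bar g_{j+1}\circ\cdots\circ\bar g_m$ it suffices to check $\int\varphi_g\,d\tilde\mu=0$ for each generating involution $g$. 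For an involution, applying the cocycle identity to $g\circ g=\id$ yields $\varphi_g\circ\bar g=-\varphi_g$, whence $\int\varphi_g\,d\tilde\mu=-\int\varphi_g\,d\tilde\mu$ by $\bar g$-invariance of $\tilde\mu$, and the integral vanishes. Therefore $\int\Phi\,d(\nu\times\tilde\mu)=\sum_f\nu(f)\int\varphi_f\,d\tilde\mu=0$.

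For part~(1), I would then apply Breiman's random ergodic theorem to the skew product over $\Pent^1(\ell)$ with $\tilde\mu=\vol_{\Pent^1(\ell)}$, which is ergodic by Proposition~\ref{pro:ergodic_pent1}: for $\vol_{\Pent^1(\ell)}$-almost every $P_0$ and $\nu^\N$-almost every $\omega$, the sums $\frac1n\sum_{k=0}^{n-1}\Phi(\sigma^k\omega,\tilde x_k)$ converge to $\int\Phi\,d(\nu\times\vol_{\Pent^1(\ell)})=0$, which is the vanishing of the linear drift. For part~(2), Theorem~\ref{thm:finite_orbits_pentagons} ensures that at a very general parameter $\Gamma^{\rm ext}$ has no finite orbit in $\Pent^0(\ell)$, hence (by the $\mathbb U$-extension structure~\eqref{eq:skew}) none in $\Pent^1(\ell)$, so every $P_0$ has an infinite orbit; the uniform expansion input behind Corollary~\ref{cor:pentagons_UE} then gives $\frac1n\sum_{k<n}\delta_{x_k}\to\vol_{\Pent^0(\ell)}$ for \emph{every} starting point and $\nu^\N$-a.e.\ $\omega$, and since $\vol_{\Pent^1(\ell)}$ is the unique $\nu$-stationary measure lying above $\vol_{\Pent^0(\ell)}$ (Proposition~\ref{pro:ergodic_pent1} and Lemma~\ref{lem:furstenberg}), a standard argument for compact group extensions upgrades this to $\frac1n\sum_{k<n}\delta_{(f_k,\tilde x_k)}\to\nu\times\vol_{\Pent^1(\ell)}$; combined with the zero-average identity this yields $\frac1n\dist(P_n(\omega),(0,0))\to0$ for every $P_0$.

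The main obstacle is integrability of the cocycle. Each single folding moves exactly one vertex by a bounded amount, so $\varphi_g$ is bounded for each generator; but $\varphi_f$ is \emph{not} bounded uniformly over $f$, since its sup-norm may grow linearly with the word length of $f$ — the groups $\Gamma$ and $\Gamma^{\rm ext}$ contain parabolic elements, along which word length is not controlled by $\log\|f^*\|$. One must therefore extract from the moment condition~(M) (resp.~($\mathrm M_+$)) enough control on $\int\|\varphi_f\|_\infty\,d\nu(f)$ to legitimately apply the random ergodic theorem, and, for part~(2), promote the equidistribution on $\Pent^1(\ell)$ to equidistribution of the Birkhoff sums of $\Phi$ (a Margulis-function argument in the spirit of~\cite{hyperbolic}). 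This is where essentially all of the work lies; the geometric and algebraic ingredients — the skew-product structure and the zero-average identity — are elementary.
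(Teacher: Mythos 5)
Your proposal has the same architecture as the paper's proof: identify $\Pent(\ell)\simeq\Pent^1(\ell)\times\R^2$, record the displacement of $a_0$ as an $\R^2$-valued cocycle $w(f,\cdot)$ over the random dynamics on $\Pent^1(\ell)$, show this cocycle has zero average against $\vol_{\Pent^1(\ell)}$, and conclude via the Birkhoff/Breiman theorem for (1) and via unique ergodicity (Theorem~\ref{thm:dynamics_pentagons2}) for (2). Two sub-arguments differ. For the zero-average identity, the paper uses rotation-equivariance: since $w(\gamma,R_vP)=R_v\,w(\gamma,P)$ and $\vol_{\Pent^1(\ell)}=\vol_{\Pent^0(\ell)}\times\leb_{\mathbb{U}}$, integrating over the circle factor kills $\int w(\gamma,\cdot)\,\d\vol$ in one line. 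Your argument --- $\varphi_g\circ\bar g=-\varphi_g$ for each generating involution, hence $\int\varphi_g\,\d\tilde\mu=0$ for any $\Gamma$-invariant $\tilde\mu$, then the cocycle identity for general words --- is correct and applies to an arbitrary invariant measure, not just a rotation-invariant one; on the other hand it genuinely requires invariance of $\tilde\mu$ under each $\bar g$, whereas the paper's argument only uses the $\mathbb{U}$-invariance of the volume. For part (2), the paper invokes the Law of Large Cocycles of Benoist--Quint directly (the cocycle has a unique average because, by Theorem~\ref{thm:dynamics_pentagons2}, $\vol_{\Pent^1(\ell)}$ is the unique ergodic stationary measure at a very general parameter); this is precisely the packaged form of the ``equidistribute the empirical measures for every starting point, then integrate the cocycle'' route you sketch, and it spares you the continuity and uniform-integrability issues created by the dependence of $\Phi$ on $\omega$. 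Finally, the integrability question you single out as the heart of the matter is handled by the paper in one line, via the bound $w_{\sup}(\gamma)\leq C(\ell)\length(\gamma)$ and the claim that the moment condition makes $\length$ integrable against $\nu$; your observation that $\log\norm{f}_{C^1}$ does not control word length along parabolic subgroups is a fair criticism of how quickly that step is dispatched, but the paper does not regard it as requiring any new idea, and it is automatic for finitely supported $\nu$.
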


\begin{proof}
To study the drift it is enough to study the location of the point $a_0$. 
For this we use the coordinates given by the identification 
$\Pent(\ell) \simeq\Pent^1(\ell)\times \R^2$    and 
write 
$P_n(\omega) = ([P_n(\omega)], a_0(P_n(\omega)))$. From Equation~\eqref{eq:si} and its analogue for the 
$r_i$,  we see that there exists a function 
$w:\Gamma^{\rm ext}\times \Pent^1(\ell)\to \R^2$   such that 
$a_0(f(P)) = a_0(P) + w(f, [P])$. This function is continuous and is a cocycle: $w(fg, [P])=w(f, g[P])+w(g,[P])$. We then obtain 
\begin{equation}\label{eq:cocycle1}
a_0(P_n(\omega)) = \sum_{k=0}^{n-1} w(f_k, [P_k(\omega)]). 
\end{equation}
For Assertion~(1), we apply the Birkhoff ergodic theorem to the skew product 
\begin{equation}
F:(\omega, x)\mapsto (\sigma(\omega), f_\omega(x))
\end{equation}
(where $\sigma$ is the shift on $(\Gamma^{\rm ext})^\N$) and to  
 the ergodic measure $\nu^\N\times \vol_{\Pent^1(\ell)} $. This shows that for 
 $\vol_{\Pent^1(\ell)}$-almost every $P_0$ and  $\nu^\N$-almost every $\omega$, 
 \begin{equation}\label{eq:cocycle2}
 \unsur{n} \sum_{k=0}^{n-1} w(f_k, [P_k(\omega)]) \underset{n\to\infty}{\longrightarrow} 
 \int w(\gamma,P) \; \d\nu(\gamma)  \d\vol_{\Pent^1(\ell)} (P).
 \end{equation}
Now for every $\gamma\in \Gamma^{\rm ext}$, the   invariance of $\vol_{\Pent^1(\ell)}$ under rotations implies that 
 \begin{equation}
\int w(\gamma,P)   \d\vol_{\Pent^1(\ell)} (P) = 0,
\end{equation}
 and the result follows. 

For Assertion~(2), $\ell$ being very general, we can assume that $\vol_{\Pent^1(\ell)}$ is the unique stationary measure on $\Pent^1(\ell)$ (see Theorem~\ref{thm:dynamics_pentagons2}). The argument will be the same as for Assertion~(1), except that we shall need a more sophisticated limit theorem that makes use of this unique ergodicity.
As already observed,  $w$ is a cocycle and, by unique ergodicity, it has a unique 
average  in the sense of \cite[\S 3.3.2]{benoist-quint_book}. The analogue of 
 formula~\eqref{eq:si} for extended foldings  implies that there exists $C(\ell)\leq 2 \sum \ell_i$ such that 
  that for any $\gamma\in \Gamma^{\rm ext}$
\begin{equation}
w_{\sup} (\gamma):= \sup_{[P]\in \Pent^1(\ell)} \norm {w(\gamma, [P])} \leq C(\ell) \length(\gamma),
\end{equation}
where the length $\length(\gamma)$ is relative to the given generators of $\Gamma^{\rm ext}$. The moment condition  implies that  
\begin{equation}
\int_{\Gamma^{\rm ext}} w_{\sup} (\gamma) \d\nu(\gamma) <\infty, 
\end{equation}
thus we can apply the Law of Large Cocycles \cite[Thm 3.9]{benoist-quint_book}, which 
 completes the proof. 
\end{proof}

It is natural to ask for a better understanding of the asymptotic behavior of $\dist( P_n(\omega), (0,0))$. 
Intuitively, by Equation~\eqref{eq:cocycle1}, the first vertex $a_0(P_n(\omega))$ 
should behave like a random walk in the plane, the steps of which are random vectors distributed according to some explicit measure supported by a bounded disk. 
Under appropriate non-degeneracy properties of this measure,  
such a random walk escapes   
 with speed $O(\sqrt{n})$ and, after rescaling, converges to a Brownian motion (for some positive definite covariance form). 
Numerical experiments indicate that $\dist(P_n(\omega),(0,0))$ indeed behaves like  $\sqrt{n}$.

\begin{que}
Does $a_0(P_n(\omega))$ satisfy a central limit theorem?  Does $\unsur{\sqrt{n}} a_0(P_n(\omega))$ converge to a Brownian motion? 
\end{que}

As far as we know, this question is already open for random foldings of quadrilaterals.  
An even simpler variant is to take a triangle and 
reflect it randomly along one of its sides. This last problem 
falls into the setting of random iteration of Euclidean isometries 
in which case the result is known (see e.g. \cite{adahl-melbourne-nicol} and references therein).  
To establish such a result, it is likely that some estimates would be needed for the speed 
of the ergodicity  of the base dynamics in $\Pent^0$, as discussed in \S~\ref{subs:UE} (cf. \cite[Chap. 11 and 12]{benoist-quint_book}).

\newpage
\part{Ergodic theory of Blanc's examples}\label{part:ergodic_blanc}

\section{Action on cohomology in the complex surface}\label{sec:invariant_curves_parabolic}

\subsection{Setting and volume form}\label{par:setting_and_omega} Consider a smooth cubic curve $C\subset \P^2_\C$ and a point $q\in C$. 
Denote by  $\sigma_q\colon \P^2_\C\dasharrow \P^2_\C$ the birational involution defined by the following properties:  it fixes $C$ pointwise
and it preserves the pencil of lines through $q$. If $L$ is a general line through $q$, the restriction of $\sigma_q$ to $L$ is defined as follows: besides $q$, $L$  intersects $C$ in two other points $p$ and $p'$; identifying $L\setminus \set{p, p'}$ to $\P^1\setminus \set{0, \infty}$, with affine coordinate $z$, $\sigma_{q\, \vert L}$ corresponds to $z\mapsto -z$. 
By definition, $\sigma_q$ is the {\bf{Jonquières involution}} associated to the pencil of lines through $q$ and fixing $C$ pointwise. 

Now, fix  a positive integer $k$, and $k$ distinct points $q_i$ on $C$. Each  $q_i$ gives rise to a 
Jonquières involution $\sigma_i:=\sigma_{q_i}$. It admits five base points, namely $q_i$ and the four 
points $p_{i,j}\in C$ such that the line $(q_ip_{i,j})$ is tangent to $C$ at $p_{i,j}$. One of the $p_{i,j}$ 
may be infinitely near (above $q_i$) if $q_i$ is an inflexion point, but for simplicity we shall assume that: 
\begin{itemize}
\item[(Hyp1)] none of the $q_i$ is an inflexion point of $C$;
\item[(Hyp2)] the points $q_i$ and $p_{i,j}$ for $1\leq i \leq k$ and $1\leq j\leq 4$ are pairwise distinct.
\end{itemize}
In fact, (Hyp2) implies (Hyp1) since otherwise one of the $p_{i,j}$ would coincide with $q_i$ (as a point of the plane, {i.e.}\  $p_{i,j}$ would be infinitely near $q_i$). We shall denote by $X$ the rational surface obtained by blowing up the $5k$ points $q_i$ and $p_{i,j}$ and by $\pi\colon X\to \P^2_\C$ the natural projection.   

We also consider the surface $X_i$ obtained by fixing $i$ and blowing 
 up the base points $q_i$ and $p_{i,j}$ of $\sigma_i$, creating five exceptional divisors $E(q_i)$, and $E(p_{i,j})$ for $1\leq j\leq 4$. Doing so, we get a birational morphism $\pi_i\colon X_i\to \P^2_\C$, together with a distinguished basis for its N\'eron-Severi group: 
\begin{itemize}
\item $\bfe_0$ will denote the class of the total transform of a line $L \subset \P^2_\C$;
\item $\bfe_{q_i}$ the class of $E(q_i)$;
\item and $\bfe_{p_{i,j}}$ the class of $E(p_{i,j})$. 
\end{itemize}
This basis is orthogonal for the intersection form,  $\bfe_0^2 =1$, and 
  $\bfe_{q_i}^2  = \bfe_{p_{i,j}}^2 = -1$. 
Abusing slightly,  the same notation will be used for 
 the classes of (the total transform of) these curves in the surface $X$. 
 
The involution $\sigma_i$ lifts to an involutive automorphism $\tilde{\sigma_i}$ of $X_i$, and then all the 
  $\tilde{\sigma_i}$    lift to automorphisms of $X$, which we still denote by  $\tilde{\sigma_i}$. 
Indeed, since ${\tilde{\sigma_i}}$ fixes the strict transform of $C$ in $X_i$ pointwise, it preserves the exceptional divisors obtained by blowing up the $q_\ell$ and $p_{\ell,j}$ for $\ell\neq i$.  Acording to~\cite{Blanc:Michigan}, the subgroup $\Gamma\subset \Aut(X)$ generated by the $\tilde{\sigma_i}$ is a free product of $k$ copies of $\Z/2\Z$, {i.e.}\  
\begin{equation}
\Gamma\simeq \Z/2\Z\ast \cdots \ast \Z/2\Z,  \end{equation}
acting faithfully on the Néron-Severi group $\NS(X)$. 
Since the canonical bundle of $\P^2(\C)$ is ${\mathcal{O}}(-3)$ and we only blow-up points of $C$, we obtain the following properties.

\smallskip

\noindent{(1)} The strict transform $C_X$ of $C$ in $X$ is fixed pointwise by the group $\Gamma$. Its self-intersection is equal to 
\begin{equation}
C_X^2=9-5k\, ,
\end{equation}
so it is negative when $k\geq 2$. The strict transform $C_{X_i}$ of $C$ in $X_i$ has self-intersection $C_{X_i}^2=4$.

\smallskip

\noindent{(2)}  Assume $k\geq 2$. The curve $C_X$ is the unique member of the linear system $\vert C_X\vert$, 
and its cohomology class is equal to the anticanonical class $-k_X$.   In particular, $C_X$ is invariant under the action of $\Aut(X)$.
More precisely, $C$ being a cubic curve, there is a meromorphic $2$-form  
$\Omega_0$ on $\P^2_\C$ that does not vanish 
and has a simple pole along $C$. Now,  since we only blow up smooth points of $C$, an easy local computation shows that
 the pull back 
\begin{equation}
\Omega_X:=\pi^*\Omega_0
\end{equation} is a meromorphic $2$-form that does not vanish and has a simple pole along $C_X$. 
We shall fix such a form $\Omega_X$; it is almost unique: on $X$, any non-vanishing meromorphic $2$-form is proportional to $\Omega_X$.

\smallskip

\noindent{(3)}  The total volume of the singular volume form $\Omega_X\wedge\overline{\Omega_X}$ is infinite: 
\begin{equation}
\int_X \Omega_X\wedge\overline{\Omega_X} = +\infty. 
\end{equation}
We set 
\begin{equation}
\vol_X^\infty=\Omega_X\wedge \overline{\Omega_X}, 
\end{equation}
which we identify to a positive measure on $X$. 

\smallskip

\noindent{(4)}  Let $\Jac_\Omega \colon \Aut(X)\to \C^*$ denote the homomorphism such that  
\begin{equation}
\forall f\in \Gamma, \ 
f^*\Omega_X=\Jac_\Omega(f)\Omega_X 
\end{equation} 
(cf. Remark~\ref{rem:pm1}). 
We have 
\begin{equation}\label{eq:jacobian-is-1}
\Jac_\Omega(\Gamma)=\set{1}.
\end{equation} 
To see this, consider the involution $\sigma_1$ and the pencil of lines through $q_1$. Let $r$ be a point of $C$ such that $(q_1r)$ intersects $C$ transversely; then, $\sigma_1$ determines a germ of diffeomorphism fixing $r$. Since this germ has order $2$ and fixes $C$ pointwise, we can linearize it(\footnote{Indeed, its differential $L$ at the origin
 in the system of coordinates mentioned just below is $L(x,y) = (x,-y)$, and 
 $\psi \circ \sigma_1 = L\circ\psi$, where $\psi = \unsur{2}(\id + L\circ \sigma_1)$}). 
 Since $C$ is  pointwise fixed, there is a small euclidean neighborhood of $r$ and holomorphic coordinates $(x,y)$ on it, in which $r=(0,0)$, 
$\sigma_1(x,y)=(x,-y)$ and $C=\set{y=0}$. 
In these coordinates, the form $\Omega_X$ is equal to 
$$
\varphi(x,y)\frac{dx\wedge dy}{y}
$$
for some non-vanishing holomorphic function $\varphi$. Then, $\Jac_\Omega(\sigma_1)=\varphi(x,-y)/\varphi(x,y)$. Evaluating this ratio at $r$, we get $\Jac_\Omega(\sigma_1)=1$. Thus, $\Jac_\Omega(\sigma_i)=1$ for $i=1,\ldots k$, and $\Jac_\Omega(\Gamma)=\set{1}$.

\smallskip

\noindent{(5)}  If $C$ and the $q_i$ are defined over $\overline{\Q}$, then $X$ and $\Gamma$ are also defined over $\overline{\Q}$. Indeed, if $K$ is the number field over which $C$ and the $q_i$ are defined then, for each $i$, the $p_{i,j}$ are defined over some quartic extension of $K$; thus, $X$ and $\Gamma$ are defined over some extension $L$ of $K$ of degree $[L: K]\leq 4 k$.

\subsection{Finer description of the involution $\sigma_i$} \label{par:description_of_sigma_i}
\subsubsection{Invariant fibration}\label{par:rational fibration}
For each $i=1, \ldots, k$, the pencil of lines through $q_i$ gives rise to a rational fibration 
\begin{equation}
\eta_{q_i}\colon X_i\to \P^1_\C.
\end{equation}
The singular fibers of $\eta_{q_i}$ correspond to the total transform of the lines $(q_ip_{i,j})$ (each of them is made of two rational curves).
In $X$, we obtain $k$ fibrations $\eta_{q_i}\colon X\to \P^1_\C$, each of them with 
$5k-1 = 4+5(k-1)$ 
singular fibers.

Let $L_{i,j}\subset X_i$ be the strict transform of the line $(q_ip_{i,j})$. 
Since $(q_ip_{i,j})$ is tangent to $C$ at $p_{i,j}$, the three curves $C_{X_i}$, 
$E({p_{i,j}})$, and $L_{i,j}$ have a unique common point 
\begin{equation}
{\tilde p_{i,j}}:=C_{X_i}\cap E({p_{i,j}}) =L_{i,j}\cap E({p_{i,j}})
\end{equation} 
in $X_i$. 
The involution  $\tilde\sigma_i$ permutes $E({p_{i,j}})$ and $L_{i,j}$ and fixes this intersection point ${\tilde p_{i,j}}$.  
This can be seen as follows: if $L$ is the strict transform of a general line though $q$, then $\tilde \sigma_i^*([L]) = [L]$. It follows that  $\tilde \sigma_i^*( [L_{i,j}]+ [E(p_{i,j})]) = 
[L_{i,j}]+ [E(p_{i,j})]$. Now,  $\sigma_i$ contracts $\pi_i(L_{i,j})$ onto $p_{i,j}$, so 
$ {\tilde \sigma_i}( L_{i,j}) = c  E(p_{i,j})$ for some integer $c\geq 1$;  and we infer that $c=1$ because $\tilde \sigma_i^*$ is an involution. 
Thus, $ {\tilde \sigma_i}( L_{i,j}) = E(p_{i,j})$ and $ {\tilde \sigma_i}( E(p_{i,j}))=L_{i,j}$

\subsubsection{Action on $E_{p_{i,j}}$}\label{par:action_of_sigma_I} Let $x$ be a point of $E({p_{i,j}})$. A first possibility is that  ${\tilde \sigma_i}(x)$ belongs to 
$ \Exc(\pi_i)$ (more precisely  ${\tilde \sigma_i}(x)\in E(q_i)\cup E(p_{i,j})$). This happens precisely when
\begin{itemize}
\item  $\tilde\sigma_i (x)=x$, and in that case $x={\tilde p_{i,j}}=L_{i,j}\cap E({p_{i,j}})$;
\item or $\tilde\sigma_i (x)$ is the intersection point of $L_{i,j}$ and $E(q_i)$, and in that case $x$ is the intersection of $E(p_{i,j})$ with the conic $D_i$ (see below \S~\ref{par:action_of_sigma_on_NS}).
\end{itemize}
Otherwise,  $\tilde\sigma_i (x)$ is a point of $L_{i,j}$ that does not belong to 
 the exceptional set $ \Exc(\pi_i)$.
  
\subsubsection{Blow-ups of points of $C$}\label{par:action_of_sigma_II} Now, let $r\in C$ be distinct from the base points of $\sigma_i$. The line $(qr)$ is ${\sigma_i}$-invariant, and ${\sigma_i}$ acts as $z\mapsto -z$ on it, by fixing $0 \simeq r$ and $\infty \simeq r'$, where $r'$ is the third point of intersection of $(qr)$ with $C$. Thus, when   $r$ is blown up, 
 the action of $\tilde\sigma_i$ on the exceptional divisor $E(r)$ is an involution with exactly two fixed points: a fixed point corresponding to the tangent line $T_rC$, and a fixed point corresponding to the line $(qr)$ (or more precisely to $T_r(qr)$). 

\begin{rem} By (Hyp2), $q_i$ does not coïncide with $q_l$ or a $p_{l,j}$ when $l\neq i$. Thus, 
the intersection point between $E({q_l})$ and 
the strict transform of $(q_lp_{l,j})$ is never fixed by $\tilde\sigma_i$: it is mapped to another point of $E({q_l})$. \end{rem}

 \subsubsection{Action on Néron-Severi, and the conic $D_i$}\label{par:action_of_sigma_on_NS} Acording to~\cite{Blanc:Michigan}, 
the action of $\tilde{\sigma}_i$ on the Néron-Severi group of $X_i$ is given, in the basis $(\bfe_0, \bfe_{q}, \bfe_{p_{1}}, \ldots, \bfe_{p_{4}})$, by the matrix 
\begin{equation}\label{eq:matrix_involution}
\left( \begin{array}{rrrrrr} 3 & 2 & 1 & 1 & 1 & 1  \\ 
-2 & -1 & -1  & -1  & -1  & -1  \\ 
-1  & -1 & -1 & 0 & 0 & 0 \\
-1  & -1 & 0 & -1 & 0 & 0 \\ 
-1  & -1 & 0 & 0 & -1 & 0 \\ 
-1  & -1 & 0 & 0 & 0 & -1   \end{array} \right).
\end{equation}
For instance, the second column means that the exceptional divisor $E(q_i)$ is mapped to the strict transform ${\tilde D_i}$ of a plane conic $D_i$ which goes through the points $q_i$ and $p_{i,j}$ with multiplicity $1$; its class in $\NS(X_i;\Z)$ is  $2\bfe_0-\bfe_{q}-\bfe_{p_1}-\ldots - \bfe_{p_4}$. This conic intersects $C$ in $6$ points, counted with multiplicity, and ${\tilde D_i}$ intersects $C_{X_i}$ in exactly one point, which must be 
 fixed under the action of~$\tilde\sigma_i$. On the other hand, since $\tilde \sigma_i$ fixes 
 $C_{X_i}$ pointwise, the point 
\begin{equation}
{\tilde q_i}:= E(q_i)\cap C_{X_i} 
\end{equation} must be  fixed under the action of $\tilde\sigma_i$ 
(\footnote{We abuse notation for convenience and 
   use similar notations (like $L_{i,j}$, $\tilde p_{i,j}$, $\tilde q_i$) for objects defined in $X$ and  $X_i$. For instance, $\tilde q_i$ also stands for ${\tilde q_i}:= E(q_i)\cap C_{X}$.}). Since $\tilde\sigma_i(E(q_i))={\tilde D_i}$, we conclude that ${\tilde D_i}\cap C_{X_i}={\tilde q_i}$, thus $D_i$ is tangent to $C$ at $q_i$. 
This argument  also shows that the only points of $E(q_i)$ which are 
mapped into the exceptional set of $\pi_i$ (resp.\ $\pi$) 
are  ${\tilde q_i}$ and the intersection points of   $E(q_i)$ with the  $L_{i,j}$: the point $E(q_i)\cap L_{i,j}$ is mapped to $L_{i,j}\cap E(p_{i,j})$, a point which corresponds to the tangent direction of $D_i$ at $p_{i,j}$ (cf. \S~\ref{par:action_of_sigma_I}).  

We already stated without proof the fact 
that $\Gamma$ is isomorphic to the free product of the $\tilde \sigma_i$.  Blanc obtains this result by 
using~\eqref{eq:matrix_involution} and proving that  the action of $\Gamma$ on $\NS(X;\Z)$ is faithful and its image is such a free product $\Z/2\Z\ast \cdots \ast \Z/2\Z$. Thus we get:

\begin{lem}
When $k\geq 3$, the group $\Gamma\subset \Aut(X)$ is non-elementary.
\end{lem}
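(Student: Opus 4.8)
The plan is to deduce non-elementariness from the structural facts already recorded, namely that $\Gamma$ is a free product $\bigast_{i=1}^k \Z/2\Z$ acting faithfully on $\NS(X;\Z)$ and that, by Proposition-type statements in Blanc's work (restated just above), the composition of two distinct involutions $\sigma_i\circ\sigma_j$ is parabolic while the composition of three distinct involutions is loxodromic. Recall that, by definition, $\Gamma$ is non-elementary if $\Gamma^*\subset \GL(H^2(X;\Z))$ contains a non-Abelian free subgroup, which (by the discussion in \S\ref{par:compact-kahler-surfaces} and \cite{Cantat:Milnor}) is equivalent to the image of $\Gamma$ in $\Isom(\Hyp_X)$ being non-elementary in the hyperbolic sense, and also equivalent to $\Gamma$ containing two loxodromic elements generating a free group.

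The cleanest route is the following. When $k\geq 3$, pick three distinct indices, say $1,2,3$, and set $f=\tilde\sigma_1\tilde\sigma_2\tilde\sigma_3$ and $g=\tilde\sigma_2\tilde\sigma_3\tilde\sigma_1$ (or $g=\tilde\sigma_1\tilde\sigma_3\tilde\sigma_2$). By the cited result each of these is loxodromic. Since $\Gamma$ is a free product $\Z/2\Z\ast\cdots\ast\Z/2\Z$, the subgroup $\langle f,g\rangle$ is easily seen to be a non-Abelian free group: in the free product, reduced words in the $\tilde\sigma_i$ of the specified forms cannot satisfy any relation, so $\langle f,g\rangle\cong F_2$ abstractly, and since $\Gamma$ acts faithfully on $\NS(X;\Z)$ the image $\langle f^*,g^*\rangle\subset \GL(\NS(X;\Z))\subset \GL(H^2(X;\Z))$ is again free of rank $2$. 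Hence $\Gamma^*$ contains a non-Abelian free group, so $\Gamma$ is non-elementary.

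Alternatively, and perhaps more robustly, one can argue purely on the level of the action on the hyperbolic space $\Hyp_X\subset H^{1,1}(X;\R)$. The image $\Gamma^*\subset \Isom(\Hyp_X)$ contains a loxodromic isometry $f^*$ (any product of three distinct involutions). If $\Gamma^*$ were elementary it would fix a point, a point at infinity, or a pair of points at infinity of $\Hyp_X$; since it contains a loxodromic element it would have to fix the pair $\{\theta^+_{f},\theta^-_{f}\}$ of endpoints of the axis of $f^*$, i.e. every element of $\Gamma^*$ would preserve $\R\theta^+_f\oplus\R\theta^-_f$. But one then exhibits a second loxodromic element $h^*$ (the product of a different triple, or a conjugate $g^*f^*g^{-1*}$ for a suitable $g\in\Gamma$) whose axis has endpoints distinct from $\{\theta^+_f,\theta^-_f\}$ — this is visible from the explicit matrix~\eqref{eq:matrix_involution} and the free-product structure, which force the fixed-point sets on $\partial\Hyp_X$ to move. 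This contradiction shows $\Gamma^*$ is non-elementary, and then the ping-pong lemma (as in Lemma~\ref{lem:non-elementary_free_groups}, or rather its converse direction recorded among the equivalences (a)--(d) of \S\ref{subs:NE}) gives a non-Abelian free subgroup directly.

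The main obstacle is verifying that the two chosen loxodromic elements do \emph{not} share a common axis (equivalently, that $\langle f,g\rangle$ is genuinely non-Abelian and not virtually cyclic); with only one loxodromic element one gets an elementary group, so this transversality is the whole content. I expect this to follow with little pain from the free-product description: in $\bigast_{i=1}^k\Z/2\Z$ with $k\geq 3$, two words such as $\tilde\sigma_1\tilde\sigma_2\tilde\sigma_3$ and $\tilde\sigma_1\tilde\sigma_3\tilde\sigma_2$ do not commute and generate $F_2$, and faithfulness of the cohomological action transports this to $\Gamma^*$. If one prefers to keep the argument intrinsic to $H^{1,1}$, the non-commutation can instead be read off from a short computation with the matrices~\eqref{eq:matrix_involution} for two different indices, checking that the corresponding isometries of $\Hyp_X$ do not preserve a common geodesic — but invoking the free-product structure is shorter and is already available.
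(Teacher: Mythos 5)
Your proof rests on exactly the same input as the paper's: Blanc's theorem that $\Gamma$ acts faithfully on $\NS(X;\Z)$ with image the free product $\bigast_{1}^{k}\Z/2\Z$. The paper stops essentially there, because the definition of non-elementary only asks that $\Gamma^*$ contain \emph{some} non-Abelian free subgroup, and $\bigast_{1}^{k}\Z/2\Z$ with $k\geq 3$ visibly contains one: the kernel of the homomorphism to $\Z/2\Z$ sending each $\tilde\sigma_i$ to the non-trivial element is an index-two subgroup, free of rank $k-1\geq 2$ (Kurosh, or an Euler-characteristic count), freely generated for instance by $\tilde\sigma_1\tilde\sigma_2,\dots,\tilde\sigma_1\tilde\sigma_k$. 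Note that these generators are \emph{parabolic}, not loxodromic -- that is perfectly fine for condition (a)/(b) of \S 1.2, and the equivalence (a)$\Leftrightarrow$(c) then supplies a free group of loxodromics for free. So your insistence on producing loxodromic generators, and the appeal to Blanc's loxodromy of triple products, is extra machinery you do not need.

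The one step I would push back on is the claim that $\langle \tilde\sigma_1\tilde\sigma_2\tilde\sigma_3,\ \tilde\sigma_2\tilde\sigma_3\tilde\sigma_1\rangle\cong F_2$ is "easily seen" because "reduced words of the specified forms cannot satisfy any relation." That heuristic is false in general for free products: $ab$ and $ba$ in $\Z/2\Z\ast\Z/2\Z$ are both reduced of length two yet generate a cyclic group. For your specific pair $f=\tilde\sigma_1\tilde\sigma_2\tilde\sigma_3$ and $g=\tilde\sigma_2\tilde\sigma_3\tilde\sigma_1=\tilde\sigma_1 f\tilde\sigma_1$, one has $gf=(\tilde\sigma_2\tilde\sigma_3)^2$, so substantial cancellation does occur and the "no relation" assertion is not immediate from the normal form; it would need a genuine ping-pong or Bass--Serre argument. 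Since the lemma only needs the existence of a free subgroup, the clean fix is to replace your pair by the even subgroup above (or, if you want loxodromic generators, to invoke the equivalence (a)$\Leftrightarrow$(c) rather than verifying freeness of a hand-picked pair). With that substitution your argument coincides with the paper's.
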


\subsection{Invariant curves ($k\geq 3$)} 
With the matrix from Equation~\eqref{eq:matrix_involution} at hand, we deduce that all
 fixed points of
 ${\tilde{\sigma_i}}^*$ acting on $\NS(X_i)$ (resp.  $\NS(X_i;\R)$) are of the form 
\begin{equation}\label{eq:u_fixed}
u=d\bfe_0 -(d-2m) \bfe_{q_i} - m \sum_{j=1}^4 \bfe_{p_{i,j}}
\end{equation}
for some pair of integers $(d,m)$ (resp. pair of real numbers $(d,m)$). From now on 
we set
\begin{equation}
 \Sigma_i  := \sum_{j=1}^4 \bfe_{p_{i,j}} \in \NS(X_i;\Z) \quad ({\text{resp.}}\;  \in\NS(X;\Z)).
\end{equation}

\begin{lem}\label{lem:no_invariant_curve}
 If $k\geq 3$ and (Hyp1) and (Hyp2) are satisfied, the only  (reduced) effective curve $U\subset X$ which is invariant under the action of $\Gamma$ is the curve $C_X$.
\end{lem}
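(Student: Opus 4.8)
The plan is to argue by contradiction: suppose $U\subset X$ is a reduced effective curve, $\Gamma$-invariant, and $U\neq C_X$. The first step is to reduce to the case where $U$ is irreducible. Since $\Gamma$ acts on the finite set of irreducible components of $U$, some finite-index subgroup $\Gamma_0\subset\Gamma$ fixes each component; and since $\Gamma$ is a free product of $k\geq 3$ copies of $\Z/2\Z$, the subgroup $\Gamma_0$ still contains elements $\tilde\sigma_i\tilde\sigma_j$ for many pairs, and more importantly still contains (conjugates of) the parabolic elements $\tilde\sigma_i\tilde\sigma_j$ and loxodromic elements $\tilde\sigma_i\tilde\sigma_j\tilde\sigma_\ell$; so $\Gamma_0$ is still non-elementary and still contains parabolic elements preserving the fibrations $\eta_{q_i}$. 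Thus it is enough to show: an irreducible curve $V$ invariant under such a $\Gamma_0$ must be $C_X$.

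The second step is the cohomological argument. An irreducible $\Gamma_0$-invariant curve $V$ has a $\Gamma_0^*$-invariant class $[V]\in\NS(X;\R)$ (up to the permutation ambiguity already killed by passing to $\Gamma_0$, so $[V]$ is genuinely fixed). But $\Gamma_0^*$ is non-elementary acting on $\NS(X;\R)$ with its Minkowski form; as recalled in \S\ref{subs:NE} and \S\ref{par:compact-kahler-surfaces}, a non-elementary group of isometries of hyperbolic space has no invariant vector except, possibly, the span of a common isotropic direction fixed by all of $\Gamma_0^*$ — which cannot happen for a non-elementary group (it would force the group into a parabolic or loxodromic elementary subgroup). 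Hence the only $\Gamma_0^*$-fixed line in $H^{1,1}(X;\R)$ is the one spanned by the anticanonical class $-k_X$, since $\Gamma$ preserves the form $\Omega_X$ (property (4), $\Jac_\Omega(\Gamma)=\{1\}$) and hence preserves $[C_X]=-k_X$. Therefore $[V]$ is proportional to $-k_X=[C_X]$. Because $k\geq 2$, the curve $C_X$ is the unique member of $|C_X|$ and $C_X^2=9-5k<0$; an irreducible curve with class a positive multiple of a negative-self-intersection class that supports a unique effective representative must be $C_X$ itself (two distinct irreducible curves cannot both have negative self-intersection inside the same numerical ray unless they coincide, by the usual argument: $V\cdot C_X = \lambda C_X^2 <0$ forces $V$ to be a component of $C_X$, hence $V=C_X$ as $C_X$ is irreducible — indeed $C$ is smooth so $C_X$ is smooth irreducible of genus $1$).

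The main obstacle I expect is not the cohomological step — that is essentially a citation of the structure theory recalled in the paper — but rather making the reduction to the irreducible case fully rigorous, and in particular checking that after passing to the finite-index subgroup $\Gamma_0$ that fixes every component of $U$, the class of an individual component is actually fixed (not merely fixed up to scalar) by $\Gamma_0^*$; here one uses that $\Gamma_0^*\subset\GL(\NS(X;\Z))$ preserves the integral lattice, so it acts on the countable set of effective classes of bounded degree, and an invariant irreducible curve has a genuinely fixed integral class. A secondary subtlety is to confirm that $\Gamma_0$ remains non-elementary: since $\Gamma\cong\bigast_1^k\Z/2\Z$ with $k\geq 3$ is virtually a non-abelian free group and non-elementarity is a property of finite-index subgroups (a finite-index subgroup of a non-elementary group of isometries of $\Hyp_X$ is non-elementary), this is automatic, but it should be stated. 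Once these points are in place, the contradiction with $U\neq C_X$ is immediate, completing the proof.
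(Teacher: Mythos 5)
There is a genuine gap at the cohomological step, and it is fatal to the argument as written. You claim that because $\Gamma_0^*$ is non-elementary, its only fixed line in $\NS(X;\R)$ is $\R\cdot k_X$. That is not what non-elementarity gives you: it only guarantees that every $\Gamma_0^*$-invariant class has \emph{negative} self-intersection (no invariant class in the closure of the positive cone), and the fixed subspace can be a large negative-definite subspace. In the present example it is: by Equation~\eqref{eq:u_fixed}, the common fixed subspace of the $\tilde\sigma_i^*$ consists of all classes $d\bfe_0-\sum_i\bigl((d-2m_i)\bfe_{q_i}+m_i\Sigma_i\bigr)$ and has dimension $k+1\geq 4$. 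For instance $2\bfe_{q_1}-\Sigma_1$ is fixed by every $\tilde\sigma_i^*$ (check it directly on the matrix~\eqref{eq:matrix_involution}) and is not proportional to $k_X$. So you cannot conclude that $[V]$ is proportional to $[C_X]$, and the final step collapses.

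What is actually needed, and what the paper does, is to combine the explicit description of the fixed subspace with \emph{effectivity} constraints: if $u=[U]$ is the class of an invariant effective curve, intersecting $u$ with the nef classes $\bfe_0$ and $\bfe_0-\bfe_{q_i}$ forces $d\geq 0$ and $m_i\geq 0$; then $U\cdot C_X=-(k-3)d-2\sum_i m_i\leq 0$, so either $C_X$ is a component of $U$ (and one iterates on $\overline{U\setminus C_X}$), or $k=3$ and all $m_i=0$, in which case $u=d(\bfe_0-\sum_i\bfe_{q_i})$ and $U$ would be contracted by every fibration $\eta_{q_i}$, which is absurd. Your reduction to irreducible components via a finite-index subgroup, and your closing remark that an irreducible curve whose class is a positive multiple of $[C_X]$ must equal $C_X$, are both fine; but without the positivity of the $m_i$ and $d$ coming from effectivity there is no way to single out $[C_X]$ inside the $(k+1)$-dimensional fixed subspace.
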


\begin{proof} 
Let  $u\in \NS(X)$ be the class of an invariant curve $U$. Then by Equation~\eqref{eq:u_fixed}
  its expression in the basis $(\bfe_0, \bfe_{q_1},  \ldots,  \bfe_{q_k}, \bfe_{p_{1,1}}, \ldots, \bfe_{p_{1,4}}, \bfe_{p_{2,1}}, \ldots \bfe_{p_{k,4}})$ is of the form 
\begin{equation}
u= d\bfe_0- (d-2m_1) \bfe_{q_1} - \cdots - (d-2m_k) \bfe_{q_k} - m_1 \Sigma_1 - \cdots -m_k\Sigma_k
\end{equation}
for some integers $d$ and $m_i$.
The strict transform of a general line through $q_i$, whose class is 
$\bfe_0 - \bfe_{q_i}$,  must intersect $U$ non-negatively. This implies  that 
$m_i \geq 0$. 
Similarly, $d\geq 0$ because a general line intersects $U$ non-negatively.

The equality $[C_X] = 3\bfe_0 - \bfe_{q_1} - \Sigma_1-\cdots - \bfe_{q_k} - \Sigma_k$ implies 
\begin{equation}
U\cdot C_X= -(k-3)d - 2(m_1+\cdots +m_k).
\end{equation} 
Now, assuming that $C_X$ is not an irreducible component of 
$U$, we obtain 
\begin{equation}
0\leq  -(k-3)d - 2(m_1+\cdots +m_k).
\end{equation}
Since $k\geq 3$, we infer that $k=3$ and $ m_i=0$ for all $i$, therefore 
 $u=d\bfe_0-d(\bfe_{q_1}+\cdots+\bfe_{q_k})$. From this it follows that  the intersection of $U$ 
 with the strict transform of a general line through $q_i$ is 
  $0$ for all $i$. This means that $U$ is the strict transform of  
  a plane curve of degree $d$ and is 
  mapped to a point by each of the fibrations $\eta_{q_i}$ (see \S \ref{par:description_of_sigma_i}). 
  This is a contradiction, so $C_X$ must be a component of $U$; in particular, $\overline{U\setminus C_X}$ is also invariant.  
  Repeating this argument with $U\setminus C_X$ finishes the proof. 
\end{proof}

\subsection{Parabolic automorphisms}\label{subs:parabolic_blanc}
 From now on, let us add the following hypothesis
\begin{itemize}
\item[(Hyp3)] For any $i\neq j$, the line $(q_iq_j)$ does not contain any of the points $p_{l,m}$  for $1\leq l\leq k$, $1\leq m\leq 4$, nor   $q_n$ for $1\leq n \leq k$ and $n\neq i,j$.
\end{itemize}
The strict transform $M_{i,j}$ of the line $(q_iq_j)$ is invariant under $\Gamma_{i,j}:=\langle \sigma_i,\sigma_j\rangle$. Furthermore 
 this line $(q_iq_j)$ intersects $C$ in a third point and (Hyp3) assures that   this point is none of the points that we blow-up. 

Assume for a few lines that $k=2$, so that $X$ is obtained by blowing-up the points $q_r$ and $p_{r,j}$ for $r=1,2$. Then 
$[M_{1,2}]=\bfe_0-\bfe_{q_1} - \bfe_{q_2}$ and $M_{1,2}\cdot C_{X}=1$. The class $u_{1,2}$ of $C_X+ M_{1,2}$ satisfies 
\begin{equation}
u_{1,2}=4\bfe_0-2\bfe_{q_1}-2\bfe_{q_2}-\Sigma_1-\Sigma_2;
\end{equation}
 it is invariant under $\Gamma_{1,2}$, {i.e.}\ under $\tilde{\sigma_1}^*$ and $\tilde{\sigma_2}^*$, and it is 
isotropic. 

\begin{lem}\label{lem:parabolic_gij}
Assume $k=2$ and the three hypotheses (Hyp1-Hyp3) are satisfied.
\begin{enumerate}
\item  If $v$ is the class of an $\R$-divisor with non-negative self intersection and $v$ is invariant under $\tilde{\sigma_1}^*$ and $\tilde{\sigma_2}^*$ then $v$ is a multiple of $u_{1,2}$. 
\item The composition $g_{1,2}=\sigma_1\circ \sigma_2$ is a parabolic automorphism of $X$ that preserves the isotropic class $u_{1,2}$.
\item The invariant genus $1$ pencil of $g_{1,2}$ is given by the pencil of plane quartic curves going 
through the $p_{i,j}$ with multiplicity $1$ and through $q_i$ with multiplicity $2$, for $i=1,2$. 
\end{enumerate}
\end{lem}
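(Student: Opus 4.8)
\textbf{Proof plan for Lemma~\ref{lem:parabolic_gij}.}

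The plan is to organize the three assertions around the linear algebra of the $\Gamma_{1,2}$-action on $\NS(X;\R)$ together with the general theory of parabolic automorphisms of surfaces recalled in \S~\ref{par:parabolic_field_kodaira}. First I would prove (1). Both $\tilde{\sigma_1}^*$ and $\tilde{\sigma_2}^*$ are isometries of the lattice $(\NS(X;\R),\cdot)$, which has signature $(1,n)$ with $n = 5k - 1 = 9$ when $k=2$; an $\R$-divisor class $v$ with $v\cdot v\geq 0$ that is fixed by both lies in the (closed) positive cone up to sign. The key point is that $\langle \tilde{\sigma_1}^*,\tilde{\sigma_2}^*\rangle$ is an infinite group: indeed from the matrix~\eqref{eq:matrix_involution} one checks that $g_{1,2}^* = \tilde{\sigma_1}^*\tilde{\sigma_2}^*$ has infinite order (it has a Jordan block — see below), so the subgroup it generates in $\O(\NS(X;\R))$ is an infinite group of isometries of hyperbolic $9$-space fixing a common vector $v$ in the closed positive cone. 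An infinite group of isometries of $\Hyp^n$ cannot fix a point of $\Hyp^n$ (that would force it into the compact group $\O(n)$ intersected with an arithmetic lattice, hence finite), so $v$ must be isotropic and fixed by the whole group. The space of common isotropic fixed vectors of a parabolic isometry is one-dimensional (the isotropic line in the kernel of $(g^*-\id)$), so $v$ is proportional to $u_{1,2}$ once we check $u_{1,2}$ is indeed fixed by both involutions and isotropic — but that is exactly the computation already displayed just before the lemma, namely $u_{1,2}=[C_X]+[M_{1,2}]$, $C_X$ is pointwise fixed hence $[C_X]$ is fixed, $[M_{1,2}]$ is $\Gamma_{1,2}$-invariant by (Hyp3) since $M_{1,2}$ is the strict transform of the $\Gamma_{1,2}$-invariant line $(q_1q_2)$, and $u_{1,2}\cdot u_{1,2}=0$ is a direct intersection-number check in the orthogonal basis.

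For (2), the element $g_{1,2}=\sigma_1\circ\sigma_2$ is an automorphism of $X$ (composition of the two lifted automorphisms), so its type is elliptic, parabolic or loxodromic according to the dynamics of $g_{1,2}^*$ on $\Hyp_X\subset H^{1,1}(X;\R)$, equivalently on $\NS(X;\R)$ since $X$ is rational. By part (1) it fixes a nonzero isotropic class $u_{1,2}$; a loxodromic isometry has exactly two fixed isotropic lines (the attracting and repelling ones) and no other fixed vectors in the positive cone, while it has spectral radius $>1$. To rule out loxodromic and elliptic it suffices to compute the characteristic polynomial of $g_{1,2}^* = A_1 A_2$ where $A_i$ is the block-extension of the matrix~\eqref{eq:matrix_involution} by the identity on the part of $\NS(X)$ coming from the other point; this is a $10\times 10$ integer matrix and one checks directly that its spectral radius equals $1$ (so $g_{1,2}$ is not loxodromic) but that it is not of finite order — concretely $(g_{1,2}^*-\id)$ is nilpotent but nonzero on the rank-$2$ isotropic-orthogonal plane $\langle u_{1,2}, w\rangle$, exhibiting a genuine unipotent Jordan block. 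Hence $g_{1,2}$ is parabolic and, by the uniqueness in \S~\ref{par:parabolic_field_kodaira}, its invariant fibration corresponds to the isotropic fixed class, which is a positive multiple of $u_{1,2}$.

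For (3), since $g_{1,2}$ is parabolic on the rational surface $X$ (not a torus), by the structure theory of parabolic automorphisms it preserves a unique genus $1$ fibration $\eta\colon X\to B$ whose fiber class is the primitive isotropic class in $\R_{\geq 0}u_{1,2}$; since $u_{1,2}=4\bfe_0-2\bfe_{q_1}-2\bfe_{q_2}-\Sigma_1-\Sigma_2$ is already primitive, the fiber class is $u_{1,2}$ itself, and $|u_{1,2}|$ is exactly the linear system of strict transforms of plane quartics passing through each $q_i$ with multiplicity $2$ and through each $p_{i,j}$ with multiplicity $1$. So I would finish by identifying $\dim H^0(X,\mathcal{O}_X(u_{1,2})) = 2$: a plane quartic has $15$ coefficients, a point of multiplicity $2$ imposes $3$ conditions and a simple point imposes $1$, giving $15-2\cdot 3-8\cdot 1 = 1$, i.e.\ a pencil, provided the conditions are independent — and the tangency constraints ensuring independence come precisely from the fact that the $p_{i,j}$ are the tangency points of lines through $q_i$ together with (Hyp1)--(Hyp3). (One clean way to see the pencil contains $C_X+M_{1,2}$ and at least one other member, namely $2D_1$ or $2D_2$ with $D_i$ the conic of \S~\ref{par:action_of_sigma_on_NS}, since $2D_i$ has class $4\bfe_0-2\bfe_{q_i}-2\Sigma_i$ — one should double-check this matches $u_{1,2}$, adjusting the representative; this gives two distinct members hence $h^0\geq 2$, and $u_{1,2}^2=0$ with the genus formula forces $h^0=2$.)

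\textbf{Main obstacle.} The delicate point is part (1): turning ``$g_{1,2}^*$ has infinite order and fixes $v$ with $v^2\geq 0$'' into ``$v\in\R u_{1,2}$'' cleanly. The honest way is to diagonalize/triangularize $g_{1,2}^* = A_1A_2$ explicitly enough to see that its fixed subspace in the positive cone is exactly the line $\R u_{1,2}$ — i.e.\ to verify that the generalized eigenspace for eigenvalue $1$ is a single $2\times 2$ Jordan block spanned by $u_{1,2}$ and one more vector $w$ with $w\cdot u_{1,2}\neq 0$, plus a complement on which $g_{1,2}^*-\id$ is invertible (or acts with eigenvalues $\neq 1$ of modulus $1$). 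That is a concrete but slightly tedious $10\times 10$ computation; alternatively one invokes the abstract classification of isometries of $\Hyp^n$ (elliptic/parabolic/hyperbolic) to say a parabolic isometry has a $1$-dimensional space of fixed isotropic vectors and no fixed vector of positive norm, reducing the whole thing to checking $g_{1,2}$ is parabolic — which is part (2). So in practice I would prove (2) first (the characteristic-polynomial/Jordan-block computation), and then deduce (1) from the general hyperbolic-geometry dichotomy, rather than the other way around.
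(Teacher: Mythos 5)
Your treatment of (1) and (2) reverses the paper's logical order: you propose to establish (2) first by an explicit $10\times 10$ matrix computation (showing $g_{1,2}^*$ is unipotent-but-not-identity on a Jordan block, hence parabolic) and then deduce (1) from the classification of isometries of $\Hyp_X$, whereas the paper does (1) first by a one-line computation -- the common fixed subspace of $\tilde\sigma_1^*$ and $\tilde\sigma_2^*$ is exactly the $3$-dimensional space $\set{d\bfe_0-(d-2m_1)\bfe_{q_1}-m_1\Sigma_1-(d-2m_2)\bfe_{q_2}-m_2\Sigma_2}$, on which the intersection form is negative semi-definite with kernel $\R u_{1,2}$ -- and then deduces (2) by a soft convexity argument (an elliptic $g_{1,2}$ would force a common fixed point of $\tilde\sigma_1^*,\tilde\sigma_2^*$ in $\Hyp_X$, contradicting (1)). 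Your route is viable (the paper even mentions the matrix computation in a footnote as an alternative), but note that you correctly identified the circularity in your first sketch of (1) and that the paper's direct parametrization of the fixed subspace avoids both the circularity and the tedious computation. Either way, the burden you leave unexecuted -- verifying the Jordan block -- is the whole content of (2) in your ordering.

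Part (3), however, contains a genuine error. Your dimension count $15-2\cdot 3-8\cdot 1=1$ with the conditions \emph{independent} gives $h^0=1$, i.e.\ a single quartic up to scalar -- a point, not a pencil. A pencil requires $h^0=2$, which happens precisely because the $14$ linear conditions \emph{fail} to be independent; so the tangency constraints and (Hyp1)--(Hyp3) cannot ``ensure independence'' as you claim -- they must be used to exhibit the dependence. Your fallback also fails: $2D_1$ has class $4\bfe_0-2\bfe_{q_1}-2\Sigma_1$, which passes through the $p_{1,j}$ with multiplicity $2$ and misses $q_2$ and the $p_{2,j}$ entirely, so it is not a member of $\vert u_{1,2}\vert$ and no ``adjustment of representative'' fixes this; the only easily exhibited member is $C_X+M_{1,2}$ itself. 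The paper's proof supplies the missing step: applying Riemann--Roch to $\mathcal O(M_{1,2})$ to get $h^1=0$, then restricting $\mathcal O(C_X+M_{1,2})$ to $C_X$ and using the divisor relation $2q_i+p_{i,1}+\cdots+p_{i,4}\sim 6o$ on $C$ (furnished by the tangent conic $D_i$ of \S~\ref{par:action_of_sigma_on_NS}) to show the restriction is the trivial bundle, whence $h^0(X,\mathcal O(C_X+M_{1,2}))=2$. That divisor relation is exactly the one unit of dependence among your $14$ conditions; without identifying it, the claim that the system is a pencil is unproven.
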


Before proving this lemma, let us state the following immediate corollary. 

\begin{cor}
If $k\geq 2$ and the hypotheses (Hyp1-Hyp3) are satisfied, then
\begin{enumerate}
\item  each of the automorphisms $g_{i,j}=\tilde\sigma_i\circ \tilde\sigma_j$, $i\neq j$, is parabolic;
\item  the invariant genus $1$ fibration of $g_{i,j}$ corresponds to the linear system of plane quartics going through $q_i$ and $q_j$ with multiplicity $2$ and through the $p_{i,l}$ and $p_{j,l}$ with multiplicity $1$ (for $l=1, \ldots, 4)$. 
\end{enumerate}
\end{cor}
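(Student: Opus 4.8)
The plan is to bootstrap from the case $k=2$, which is Lemma~\ref{lem:parabolic_gij}. Assume $k\geq 3$ and fix $i\neq j$. I would first factor the blow-down $\pi\colon X\to\P^2_\C$ through the rational surface $X_{i,j}$ obtained by blowing up only the ten base points $q_i,q_j$ and $p_{i,l},p_{j,l}$ ($l=1,\dots,4$) of $\sigma_i$ and $\sigma_j$. Up to renaming $i,j$ as $1,2$, this $X_{i,j}$ is exactly the surface denoted ``$X$'' in Lemma~\ref{lem:parabolic_gij}; hence $g_{i,j}=\tilde\sigma_i\circ\tilde\sigma_j$ is a parabolic automorphism of $X_{i,j}$ whose invariant genus $1$ pencil is the linear system $P$ of plane quartics through $q_i,q_j$ with multiplicity $2$ and through the $p_{i,l},p_{j,l}$ with multiplicity $1$, with isotropic fibre class $4\bfe_0-2\bfe_{q_i}-2\bfe_{q_j}-\Sigma_i-\Sigma_j$.

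Next I would argue that the remaining $5k-10$ blow-ups do not affect any of this. The surface $X$ is obtained from $X_{i,j}$ by blowing up the points $q_l$ and $p_{l,m}$ with $l\neq i,j$; by (Hyp1) none of these is infinitely near, and by (Hyp2) they are pairwise distinct and distinct from $q_i,q_j,p_{i,l},p_{j,l}$, so on $X_{i,j}$ they are $5k-10$ distinct reduced points of the strict transform $C_{X_{i,j}}$ of $C$, lying off the exceptional locus of $X_{i,j}\to\P^2_\C$. Since $g_{i,j}$ fixes $C_{X_{i,j}}$ pointwise (each $\tilde\sigma_\bullet$ does), it fixes each of these points; therefore $g_{i,j}$ lifts to $X$ --- as already recorded in \S\ref{par:setting_and_omega} --- and, writing $\NS(X)=H^2(X;\Z)=\NS(X_{i,j})\oplus\bigoplus_{l\neq i,j}\big(\Z\bfe_{q_l}\oplus\bigoplus_m\Z\bfe_{p_{l,m}}\big)$, the operator $g_{i,j}^{*}$ preserves this decomposition and acts as the identity on the second summand (blowing up a fixed point keeps the new exceptional class invariant).

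Part (1) is then immediate: by Lemma~\ref{lem:parabolic_gij} some iterate $(g_{i,j}^{*})^{N}$ is unipotent and $\neq\id$ on $\NS(X_{i,j})$, so on $\NS(X)$ it is block-diagonal with a nontrivial unipotent block and an identity block, hence unipotent and $\neq\id$; thus $g_{i,j}$ is parabolic on $X$. For part (2), a general member $Q$ of $P$ is an irreducible quartic which --- by a dimension count for the linear system, using (Hyp2) and (Hyp3) --- passes through none of the remaining centres $q_l,p_{l,m}$ ($l\neq i,j$); so the strict transform of $Q$ in $X$ equals its total transform, which forces the strict transform $P_X$ of $P$ in $X$ to coincide with the pull-back under $X\to X_{i,j}$ of the invariant fibration $P_{X_{i,j}}$. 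This pull-back is a genus $1$ fibration with general fibre class $4\bfe_0-2\bfe_{q_i}-2\bfe_{q_j}-\Sigma_i-\Sigma_j$ in $\NS(X)$, it is $g_{i,j}$-invariant because $X\to X_{i,j}$ is $g_{i,j}$-equivariant, and by the uniqueness of the invariant genus $1$ fibration of a parabolic automorphism (\S\ref{par:parabolic_field_kodaira}) it must be the invariant fibration of $g_{i,j}$.

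I expect the only delicate point to be the second and the last steps, namely extracting from (Hyp1)--(Hyp3) that the $5k-10$ extra centres are distinct reduced points of the fixed curve $C_{X_{i,j}}$ lying off the exceptional locus, and that a general quartic in $P$ avoids them. Once this genericity is secured, appending these blow-ups only adjoins $g_{i,j}$-trivial summands to $H^2$ and leaves the class of a general fibre of $P$ untouched, and everything else is a formal consequence of Lemma~\ref{lem:parabolic_gij} together with the basic dictionary between parabolic automorphisms and genus $1$ fibrations recalled in \S\ref{par:parabolic_field_kodaira}.
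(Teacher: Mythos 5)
Your proof is correct and is essentially the argument the paper has in mind: the corollary is stated there as an immediate consequence of Lemma~\ref{lem:parabolic_gij}, and your reduction to the ten-point surface $X_{i,j}$, followed by lifting $g_{i,j}$ through the remaining blow-ups (which are at distinct points of the pointwise-fixed curve $C$, hence only adjoin $g_{i,j}^*$-invariant exceptional classes and leave the general fibre of the pencil untouched), is exactly the intended justification. Nothing is missing.
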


\begin{proof}[Proof of Lemma~\ref{lem:parabolic_gij}]
Every invariant class can be written 	as 
$v= d\bfe_0-(d-2m_1)\bfe_{q_1}-m_1\Sigma_1-(d-2m_2)\bfe_{q_2}- m_2\Sigma_2$. These classes form 
a $3$-dimensional
subspace of $\NS(X;\R)$ on which the intersection form is non-positive and degenerate: its kernel is 
generated by $u_{1,2}$. So, if $v^2\geq 0$, $v$ is proportional to $u_{1,2}$; more precisely, $m_1=m_2$ and $v=m_1u_{1,2}$.

In the group $\langle \tilde{\sigma}_1, \tilde{\sigma}_2\rangle$, $g_{1,2}$ generates a cyclic, normal subgroup of index $2$. 
Thus, the fixed point set of $g_{1,2}^*$ in $\NS(X;\R)$ is invariant under $\langle \tilde{\sigma}_1, \tilde{\sigma}_2\rangle$. 
If $g_{1,2}$ were elliptic, this set of fixed points would intersect the set of  classes $v\in \Hyp_X\subset \NS(X;\R)$ on a non-empty convex subset $F$ of  the hyperbolic space $\Hyp_X$. 
This convex set $F$ would be invariant under the action of $\tilde\sigma_1^*$, and the involution $\tilde\sigma_1^*$ would have a fixed point in $F$; 
such a fixed point would also be fixed by ${\tilde \sigma_2}^*$ because ${\tilde \sigma_2}={\tilde \sigma_1}\circ g_{1,2}$; thus, it is fixed by $\langle \tilde{\sigma}_1^*, \tilde{\sigma}_2^*\rangle$, in contradiction with the first assertion. We conclude that $g_{1,2}$ is not elliptic. Since it preserves the isotropic class $u_{1,2}$, it is parabolic  \footnote{Alternatively, one can compute the product of the matrices for $\tilde\sigma_1^*$ and  $\tilde\sigma_2^*$, and check that some power of it is unipotent, but not the identity.}. 

Let us now prove the third property.
Set $\mathcal{M}_{1,2}=\mathcal{O}(C_X+M_{1,2})$ and $\mathcal{L}_{1,2}={\mathcal{O}}(M_{1,2})$ (viewed as line bundles or as sheaves on $X$). Since $X$ is rational we have $\chi (\mathcal{O}_X)=1$. 
Since $M_{1,2}$ is a smooth rational curve with $M_{1,2}^2=-1$, the Riemann-Roch theorem gives
\begin{equation}
h^0(X,\mathcal{L}_{1,2})-h^1(X,\mathcal{L}_{1,2})+h^2(X,\mathcal{L}_{1,2})=1.
\end{equation}
We have $h^0(X,\mathcal{L}_{1,2})=1$ because $M_{1,2}$ is irreducible and has negative self-intersection, and $h^2(X,\mathcal{L}_{1,2})=0$ by Serre duality; thus, $h^1(X,\mathcal{L}_{1,2})=0$.
Now, looking at the restriction of $\mathcal{M}_{1,2}$  to $C_X$, we get the long exact sequence 
\begin{equation}
H^0(X,\mathcal{L}_{1,2})\to H^0(X,\mathcal{M}_{1,2})\to H^0(C_X, \mathcal{M}_{1,2\vert C_X})\to H^1(X,\mathcal{L}_{1,2})\to \cdots
\end{equation}
By what we know of the $H^i(X,\mathcal{L}_{1,2})$ this gives 
\begin{equation}\label{eq:long_exact_quartic}
\C \to H^0(X,\mathcal{M}_{1,2})\to H^0(C_X, \mathcal{M}_{1,2\vert C_X})\to 0.
\end{equation}
Let us identify  $C_X$ to  $C$ via the projection $X\to \P^2_\C$ and fix  an inflexion point $o$ of $C$; the divisor obtained by intersecting a line with $C$ is equivalent to $3o$. 
In restriction to $C_X\simeq C$, $\mathcal{M}_{1,2}$ is given by the divisor $12o - 2q_1-2q_2-p_{1,1}- \cdots -p_{2,4}$, of degree~$0$. 
According to \S~\ref{par:action_of_sigma_on_NS}, there is a conic  $D_1$ tangent to $C$ at $q_1$ and passing through the $p_{1,j}$; thus, $2q_1+p_{1,1}+ \cdots +p_{1,4}=6o$ on $C$; similarly, 
$2q_2+p_{2,1}+ \cdots +p_{2,4}=6o$. Hence $12o - 2q_1-2q_2-p_{1,1}- \cdots -p_{2,4}=0$, which means that $\mathcal{M}_{1,2\vert C_X}$ is the trivial line bundle ${\mathcal{O}}(C_X)$. Thus,  $h^0(C_X, \mathcal{M}_{1,2\vert C_X})=1$ and Equation~\eqref{eq:long_exact_quartic} gives $h^0(X, \mathcal{M}_{1,2})=2$. 

In other words, the linear system $\vert C_X+M_{1,2}\vert$ is a pencil of curves. The general member $D$ of this linear system is irreducible, because 
otherwise $C_X$ or $M_{1,2}$ would be a fixed component, then we could write $D=D_0+C_X$ (resp. $D_0+M_{1,2}$) for some movable curve $D_0$, but since $C_X^2 = M_{1,2}^2=-1$,
this curve $D_0$ would simultaneously satisfy $D_0^2=-1$, a contradiction. Now, since the self-intersection of $C_X+M_{1,2}$ is $0$, the elements of $\vert C_X+M_{1,2}\vert$ are disjoint and form a fibration. Finally, 
  since they intersect trivially $C_X$ and $C_X  = -k_X$, the genus formula shows that they have genus $1$. \end{proof}
 
\section{Finite orbits and invariant measures}

We keep notation as in the previous sections. 
In \S~\ref{subs:finite_orbits},  we  prove that when $k\geq 4$, under general assumptions on the the $q_i$,
every orbit of $\Gamma$ outside $C_X$ is infinite. 
Then we discard the possibility of  $\Gamma$-invariant measures  in \S~\ref{subs:invariant_measures}. 
This relies heavily on the results of \cite{finite_orbits, invariant}.

\subsection{Finite orbits outside $C_X$: a finiteness result}  
By a {\bf{generalized Kummer surface}}, we mean a desingularization of a quotient $A/G$ where $A$ is an Abelian surface, $G$ is a subgroup of $\Aut(A)$, and the set of points $x\in A$ with a non-trivial stabilizer in $G$ is a finite subset $F_G$ of $A$ (see~\cite{invariant, cantat-dupont}).

 \begin{pro}
 The surface $X$ is not a generalized Kummer surface.  \end{pro}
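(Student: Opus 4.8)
The surface $X$ is rational, so $X(\C)$ is simply connected; in particular $X$ admits no non-trivial finite \'etale covers. A generalized Kummer surface, on the other hand, is obtained as a minimal desingularization $\widetilde{A/G}$ of a quotient of an Abelian surface $A$ by a finite group $G\subset\Aut(A)$ with only finitely many points having non-trivial stabilizer. The strategy is to exhibit a numerical invariant of $X$, read off from the N\'eron--Severi lattice and the position of the invariant curve $C_X$, which is incompatible with the corresponding invariant of any generalized Kummer surface. The most robust such invariant is the self-intersection of the canonical (or anticanonical) class together with the structure of $\Gamma$-invariant curves.

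The first step is to recall from \S\ref{par:setting_and_omega} that $-k_X=[C_X]$ and $C_X^2=9-5k$, and that $\Gamma$ acts on $X$ preserving the meromorphic $2$-form $\Omega_X$ with a simple pole along $C_X$ and no zero. Now suppose for contradiction that $\varphi\colon \widetilde{A/G}\to X$ is an isomorphism onto $X$; pull back the dynamics. On a generalized Kummer surface the canonical bundle is effective: more precisely, the holomorphic $2$-form $dz_1\wedge dz_2$ on $A$ descends (being $G$-invariant up to a character, but after passing to the relevant index-two situation one may assume invariance) to a meromorphic, indeed to a holomorphic or at worst log-canonical, form on the resolution, and in all cases $\pm 2 k_{\widetilde{A/G}}$ is represented by an \emph{effective} divisor supported on the exceptional locus of $A/G\to$ its minimal model. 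The key point is to compare this with $X$: on $X$ the class $-k_X$ is represented by the \emph{irreducible} curve $C_X$ of arithmetic genus $1$ and self-intersection $9-5k<0$ for $k\ge 2$. I would then argue that on $\widetilde{A/G}$ any anticanonical (or half-anticanonical) effective divisor is a sum of $(-2)$-curves coming from the resolution of the quotient singularities, hence cannot be an irreducible curve of arithmetic genus $1$; and if $\widetilde{A/G}$ has no quotient singularities at all then it is itself an Abelian surface (or a bielliptic surface, etc.), whose canonical class is numerically trivial, again contradicting $C_X^2=9-5k\neq 0$.

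A cleaner route, which I would actually carry out, uses the dynamical input directly: by Lemma~\ref{lem:no_invariant_curve} (valid for $k\ge 3$, hence for our $k\ge 4$), the \emph{only} $\Gamma$-invariant reduced curve on $X$ is $C_X$, which has genus $1$; moreover $C_X$ is pointwise fixed by $\Gamma$. If $X$ were a generalized Kummer surface $\widetilde{A/G}$ semi-conjugate to $(A,G')$ via $\psi\colon A\dashrightarrow X$, then the exceptional locus $E$ of the resolution $\widetilde{A/G}\to A/G$ is a $\Gamma$-invariant curve (the group of automorphisms lifted from $A/G$ preserves the singular locus). So $E$ would have to be contained in $C_X$; but $E$ is a union of smooth rational $(-2)$-curves, whereas $C_X$ is irreducible of genus $1$ — contradiction — \emph{unless} $\widetilde{A/G}$ has empty exceptional locus, i.e.\ $A/G$ is already smooth. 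In that remaining case $A/G$ is a smooth quotient of an Abelian surface by a finite group acting freely in codimension $1$ with finite fixed locus; by the classification of surfaces such a quotient is again Abelian or hyperelliptic (bielliptic), in particular it carries a nowhere-vanishing holomorphic $2$-form, so $k_X$ is numerically trivial. This contradicts $k_X^2=(9-5k)\ne 0$ for $k\ge 2$, hence certainly for $k\ge 4$.

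\textbf{Main obstacle.} The delicate point is making precise the claim that, on a generalized Kummer surface, the exceptional divisor of the resolution is invariant under the whole automorphism group induced from the quotient, and that it is genuinely a union of rational curves disjoint in the relevant sense from any genus-$1$ invariant curve; one must be careful about the case where $G$ contains elements acting with isolated fixed points producing $A_n$-configurations, and about whether $\psi$ being only \emph{rational} (a semi-conjugacy, not necessarily an isomorphism) changes the picture — one should invoke that every birational self-map of these surfaces is biregular once we are on the right minimal model, and track $C_X$ through $\psi$. I expect the bulk of the work to be this bookkeeping about the exceptional locus and the behaviour of $C_X$ under the (bi)rational identification, rather than any hard new idea; the numerical contradiction $k_X^2\ne 0$ then closes the argument immediately.
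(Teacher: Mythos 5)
Your preferred route (the ``cleaner route, which I would actually carry out'') has a gap at its very first step. The proposition is a statement about the surface $X$ alone: a generalized Kummer surface here means a desingularization $\e\colon X\to A/G$ with $G$ having finite fixed locus, and this definition makes no reference to $\Gamma$. There is therefore no semi-conjugacy $\psi\colon A\dashrightarrow X$ available to you, and no reason for the exceptional locus of $\e$ to be $\Gamma$-invariant; that compatibility is exactly the extra datum that distinguishes a \emph{Kummer group} from a \emph{generalized Kummer surface}, and the whole point of proving the surface statement is to then deduce (in Proposition~\ref{pro:finitely_many_finite_orbits_Blanc}) that $(X,\Gamma)$ is not a Kummer group. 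So Lemma~\ref{lem:no_invariant_curve} cannot be applied to $\Exc(\e)$, and the inclusion $\Exc(\e)\subset C_X$ never gets off the ground. What you would prove this way is (at best) the group statement, not the proposition.

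Your first, ``numerical'' sketch is the viable direction, but it is incomplete as written. The exceptional curves of a quotient singularity need not be $(-2)$-curves (only for $\SL_2$-stabilizers); all that matters is that quotient singularities are rational, so every $\e$-exceptional curve is rational. More importantly, knowing that $-k_X$ is represented by the irreducible genus-$1$ curve $C_X$ and that, on a generalized Kummer surface, some anticanonical representative is supported on $\Exc(\e)$ is not yet a contradiction: you must show $C_X$ itself is $\e$-exceptional. The clean way is to push forward: $\e_*(-k_X)=-k_{A/G}$ is numerically trivial (the quotient map $A\to A/G$ is \'etale in codimension one), so the effective divisor $\e_*C_X$ is zero, i.e.\ $C_X$ is contracted by $\e$ --- impossible since it has genus $1$. (Also, bielliptic surfaces carry no nowhere-vanishing holomorphic $2$-form; only the numerical triviality of $k$ survives there, which is what you actually use.) For comparison, the paper bypasses all of this bookkeeping with the form $\Omega_X$: since $C_X$ cannot be contracted by $\e$, the pullback $\eta^*\Omega_{X_0}$ to $A\setminus F_G$ has a pole along a curve $C_A$; writing $\Omega_A=\varphi\,\eta^*\Omega_{X_0}$ yields a holomorphic function $\varphi$ on $A\setminus F_G$ vanishing along $C_A$, which extends by Hartogs and must vanish identically on the compact torus $A$ --- a contradiction.
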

  
 \begin{proof} Assume, by contradiction, that $X$ is a generalized Kummer surface. Then,  there exists a birational morphism $\e\colon X\to X_0$ onto a singular surface $X_0$, an Abelian surface $A$, and a finite subgroup $G$ of $\Aut(A)$ as above, such that $X_0$ is isomorphic to $A/G$; the singularities of $X_0$ correspond to the finite set $F_G\subset A$ (see~\cite[\S 4]{finite_orbits}). 
 In particular, the singularities of $X_0$ are of quotient type, and the genus~$1$ curve $C_X$ cannot be contracted by $\e$; set $C_{X_0}=\e(C_X)$ and let $C_A$ be the preimage of $C_{X_0}$ in $A$ under the quotient map $\eta\colon A\to A/G=X_0$. As a consequence, the meromorphic $2$-form $\Omega_X$ induces a meromorphic $2$-form $\Omega_{X_0}$ on the regular part of $X_0$, hence a meromorphic $2$-form $\eta^*\Omega_{X_0}$ on $A\setminus F_G$; this $2$-form has poles along $C_A$ because the quotient map $\eta$ is a local isomorphism on $C_A\setminus F_G$.
 Let $\Omega_A$ be a holomorphic $2$-form on $A$ with $\int_A \Omega_A\wedge \overline{\Omega_A} = 1$. Then, there is a holomorphic function $\varphi\colon  A\setminus F_G\to \C$ such that $\Omega_A=\varphi \eta^* \Omega_{X_0}$. By the Hartogs theorem, this function $\varphi$ extends holomorphically to $A$; and because it vanishes along $C_A$, it is identically zero. This contradiction shows that $X$ is not a Kummer surface.\end{proof}
 
 \begin{pro}\label{pro:finitely_many_finite_orbits_Blanc}
Suppose $k\geq 3$. If $C$ and the $q_i$ are defined over $\overline{\Q}$, then $\Gamma$ has at most finitely many finite orbits in $X(\C)\setminus C_X$. 
 \end{pro}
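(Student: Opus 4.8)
The plan is to combine the arithmetic machinery from~\cite{finite_orbits} with the two structural facts just established: that $X$ is not a generalized Kummer surface, and (from Lemma~\ref{lem:no_invariant_curve}) that $C_X$ is the only $\Gamma$-invariant curve. The strategy mirrors the proof of Theorem~\ref{thm:finite_orbits}: assume for contradiction that $\Gamma$ has infinitely many finite orbits in $X(\C)\setminus C_X$, produce from them a nontrivial $\Gamma$-invariant measure with special structure, and then invoke the classification of invariant measures to force $(X,\Gamma)$ to be Kummer, contradicting the previous proposition.

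First I would fix a probability measure $\nu$ on $\Gamma$ with finite support generating $\Gamma$ (possible since $\Gamma=\bigast^k\Z/2\Z$ is finitely generated); since $C$ and the $q_i$ — hence, by property~(5) of \S\ref{par:setting_and_omega}, $X$ and $\Gamma$ — are defined over $\overline\Q$, the height-function construction of~\cite[\S 6]{finite_orbits} applies: there is a height $h_\nu$ on $X(\overline\Q)$ with $\sum_f \nu(f)\,h_\nu\circ f = \lambda_\nu h_\nu$ for some $\lambda_\nu>1$, so every $\Gamma$-periodic point has $h_\nu=0$. If there were infinitely many finite orbits outside $C_X$, their union would be an infinite set of small points; Yuan's equidistribution theorem then yields a nonzero $\Gamma$-invariant probability measure $\mu$ supported on $\overline{\Per(\Gamma)\setminus C_X}$ (Zariski closure). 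Two cases arise. If this Zariski closure is a proper subvariety, it is a $\Gamma$-invariant curve plus finitely many points; by Lemma~\ref{lem:no_invariant_curve} the only invariant curve is $C_X$, which is excluded, so the periodic points outside $C_X$ lie in a finite set — contradiction. If instead $\Per(\Gamma)\setminus C_X$ is Zariski dense, then $\mu$ is a Zariski diffuse $\Gamma$-invariant measure, and — just as in~\cite{finite_orbits} — one uses Theorem~\ref{thm:intro_classification_invariant_measures} (applicable because $k\geq 3$ guarantees $\Gamma$ is non-elementary by the Lemma at the end of \S\ref{par:action_of_sigma_on_NS}, and by the Corollary to Lemma~\ref{lem:parabolic_gij} it contains parabolic elements) together with the rigidity result of~\cite{cantat-dupont} to conclude that $(X,\Gamma)$ is a (generalized) Kummer example. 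This contradicts the Proposition asserting $X$ is not a generalized Kummer surface, and the result follows.

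Several routine points need checking but should go through as in~\cite{finite_orbits}: that the parabolic element $g_{i,j}$ lets one run the argument of~\cite[\S 6.3]{finite_orbits} (the parabolic hypothesis is exactly what is used to apply Theorem~\ref{thm:intro_classification_invariant_measures} and to get the required special properties of $\mu$ along the invariant genus~$1$ fibration); that the excluded invariant curve $C_X$ does not interfere with equidistribution since we work with orbits in its complement; and that $h_\nu\geq 0$ with equality precisely on the bounded-height locus. The genuine content is entirely imported: the height/equidistribution argument of~\cite{finite_orbits} and the Kummer rigidity of~\cite{cantat-dupont}.

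\textbf{Main obstacle.} The delicate step is the passage from ``infinitely many finite orbits'' to ``a $\Gamma$-invariant measure to which the classification applies'': one must ensure the equidistribution measure is genuinely Zariski diffuse (not concentrated on $C_X$ or on the finitely many other periodic points) and that it inherits enough structure — via the parabolic element and its uniquely ergodic action on the fibers of the genus~$1$ fibration — for Theorem~\ref{thm:intro_classification_invariant_measures} and the Kummer rigidity theorem to bite. This is precisely the technical heart of~\cite[\S 6]{finite_orbits}, and adapting it here requires only that $\Gamma$ be non-elementary with a parabolic element and that $C_X$ be the unique invariant curve — all of which we have. The contradiction with non-Kummerness then closes the argument.
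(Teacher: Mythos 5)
Your proposal is correct and follows essentially the same route as the paper: the paper simply cites Theorem~B of~\cite{finite_orbits} (Theorem~\ref{thm:finite_orbits} here) as a black box — since $X$ is not a generalized Kummer surface, $\Per(\Gamma)$ is not Zariski dense, and its one‑dimensional part must equal $C_X$ by Lemma~\ref{lem:no_invariant_curve} — whereas you unpack that theorem's height/Yuan‑equidistribution/Kummer‑rigidity proof instead of invoking its statement. The logical content and the hypotheses you verify (non‑elementarity for $k\geq 3$, parabolic elements from Lemma~\ref{lem:parabolic_gij}, definability over $\overline{\Q}$, uniqueness of the invariant curve) are exactly those used in the paper.
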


 \begin{proof}
By Lemma~\ref{lem:parabolic_gij}, the group $\Gamma$ contains parabolic elements. If the curve $C$ and the points $q_i$ are defined over $\overline{\Q}$, then $X$ and the ${\tilde{\sigma_i}}$ are all defined over some number field $K$. Since $X$ is not a Kummer surface, $(X,\Gamma)$ is not a Kummer group and Theorem~B of~\cite{finite_orbits} implies that the union $\Per(\Gamma)$ of all finite orbits of $\Gamma$ is not Zariski dense. Let $\overline{\Per(\Gamma)}$ be its Zariski closure: it is made of a one-dimensional part $\Per^1(\Gamma)$ that contains $C_X$, together with a sporadic finite set $\Per^0(\Gamma)$. Since $\Per^1(\Gamma)$ is $\Gamma$-invariant, Lemma~\ref{lem:no_invariant_curve} shows that $\Per^1(\Gamma)= C_X$. This concludes the proof.  \end{proof}

\subsection{Finite orbits outside $C_X$: non-existence} \label{subs:finite_orbits}

By (Hyp2), $q_i$ does not coincide with $q_l$ or a $p_{l,j}$ with $l\neq i$. Thus, the points of intersection between $E({q_l})$ and 
the strict transform $L_{l,j}$ of $(q_lp_{l,j})$ is never fixed by $\tilde\sigma_i$: it is mapped to another point of $E({q_l})$ (see \S~\ref{par:action_of_sigma_II}). To get more rigidity, let us add a stronger  hypothesis 
\begin{itemize}
\item[(Hyp4)] For $i\neq l$, the involution $\tilde \sigma_i$ acts on 
  $E({q_{l}})$, by mapping 
   the four points corresponding to the directions $(q_lp_{l,j})$ to four other points (i.e. $(\tilde \sigma_i)_* T_{q_{l}}(q_lp_{l,j})\neq T_{q_{l}}(q_lp_{l,j'})$ for any $i$, $l\neq i$, and $1\leq j,j'\leq 4$). 
\end{itemize}

To check that this condition is satisfied for a general choice of points, note that for fixed $q_l$ and varying $q_i$, $\tilde \sigma_i$ acts upon $E(q_l)$ as an involution fixing a fixed point (corresponding to the line $T_{q_l}C$) and a mobile one (corresponding to the line $(q_lq_i)$). Fix a coordinate on $E(q_l)$ in which $T_{q_l}C = \infty$. Then the induced  involution is of the 
form $z\mapsto -z+ 2c(q_i)$, where $c(q_i)$ is the coordinate of the mobile point. 
Given $a$ and $b$ in $E(q_l)$, taken among the points $L_{l,j}\cap E(q_l)$, the relation  $\tilde\sigma_i(a)=b$ reads $ a+b=2c(q_i)$; such a relation is not satisfied for a general choice of $q_i$. 

\begin{pro}\label{pro:no_finite_orbit_strong_version}
Let $C\subset\P^2$ be a smooth cubic curve defined over ${\overline{\Q}}$ and let $k\geq 4$ be an integer. Fix a $(k-1)$-tuple  $(q_1, q_2, \ldots, q_{k-1})$ of points in $C(\overline{\Q})$ satisfying hypotheses (Hyp1) to (Hyp4). Then for $q_k\in C(\overline{\Q})$ outside a finite set, the pair $(X,\Gamma)$ determined by $(q_1, q_2, \ldots, q_{k-1}, q_k)$ 
satisfies  (Hyp1) to (Hyp4)
and 
does not have any finite orbit, except for  the points of $C_X$, which are fixed.
\end{pro}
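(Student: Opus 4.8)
The proof follows the same two-step strategy used for Wehler examples in \cite[Thm A]{finite_orbits} and for pentagons in Theorem~\ref{thm:finite_orbits_pentagons}: first use an arithmetic finiteness statement to reduce to a persistent periodic point, then use geometric rigidity coming from several parabolic transformations to derive a contradiction. The hypotheses (Hyp1)--(Hyp4) are Zariski-open (or open away from finitely many points) in the position of $q_k$, so the first sentence of the conclusion is immediate: it suffices to prove the non-existence of finite orbits outside $C_X$ for $q_k$ outside a finite set, \emph{given} that (Hyp1)--(Hyp4) hold. By Proposition~\ref{pro:finitely_many_finite_orbits_Blanc}, for each admissible $q_k$ the group $\Gamma$ has only finitely many finite orbits outside $C_X$; the task is to show that, for all but finitely many $q_k$, there are none.

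\textbf{Step 1: setting up a persistent periodic point.} I would argue by contradiction. If the conclusion fails, there is an infinite set $S\subset C(\overline{\Q})$ of parameters $q_k$ for which $\Gamma$ has a finite orbit in $X\setminus C_X$. As in \cite[Thm A]{finite_orbits}, after passing to a finite-index subgroup $\Gamma'\subset\Gamma$ (which one may take independent of $q_k$, using that $\Gamma$ has bounded index of subgroups of bounded index since it is a free product of finitely many $\Z/2\Z$'s) and shrinking $S$, one obtains that the ``periodic point variety''
\begin{equation}
\mathcal Z=\set{(q_k,x)\ ;\ q_k\in C,\ x\in X_{q_k}\setminus C_{X_{q_k}},\ \forall f\in\Gamma',\ f(x)=x}
\end{equation}
has Zariski-dense projection to the parameter curve $C$ (in the variable $q_k$). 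Since $\Gamma'$ preserves no curve other than $C_X$ (Lemma~\ref{lem:no_invariant_curve}, applied with $k\geq3$), each fiber of $\mathcal Z\to C$ is finite, so $\mathcal Z$ is a curve dominating $C$; hence there is a Zariski-open $W\subset C$ over which the projection is a finite unramified cover, the surfaces $X_{q_k}$ are smooth away from the blown-up configuration, and there is a continuous algebraic branch $q_k\mapsto (q_k,x(q_k))$ of $\mathcal Z$ defined on a (euclidean) simply connected piece of $W\cap C(\R)$, or more simply on a small complex disk in $W$. The point is that we now have a $\Gamma'$-fixed point $x(q_k)$ depending algebraically on $q_k$ as $q_k$ varies along $C$.

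\textbf{Step 2: geometric rigidity from the parabolic elements.} This is the heart of the argument and the main obstacle. Since $k\geq4$, the group $\Gamma$ contains several parabolic elements $g_{i,j}=\tilde\sigma_i\circ\tilde\sigma_j$ with distinct invariant genus~$1$ fibrations (Lemma~\ref{lem:parabolic_gij} and its corollary), and after passing to $\Gamma'$ a suitable power $g_{i,j}^{N}$ lies in $\Gamma'$; thus the branch $x(q_k)$ must be periodic for each $g_{i,j}$, i.e.\ it lies on a \emph{torsion} section of each of the elliptic fibrations $\eta_{q_i,q_j}$ (the fibration by plane quartics through $q_i,q_j$ doubly and the $p_{i,\cdot},p_{j,\cdot}$ simply). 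Now fix all of $q_1,\dots,q_{k-1}$ and vary only $q_k$; this varies the fibrations $\eta_{q_i,q_k}$ for $i<k$ while keeping $\eta_{q_i,q_j}$ ($i,j<k$) fixed. The strategy, in the spirit of the pentagon proof, is: the constraint of being on a torsion section of the \emph{fixed} fibrations $\eta_{q_i,q_j}$, $i,j\leq k-1$, together with $\Gamma'$-invariance pins down $x(q_k)$ rigidly in terms of finitely many algebraic data as $q_k$ moves; then the additional constraint of being on a torsion section of one of the \emph{moving} fibrations $\eta_{q_i,q_k}$ forces a nontrivial algebraic relation among $q_k$ and the tangency points $p_{i,j}$, and this relation degenerates when one analyzes the translation number / period as a function of $q_k$. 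Concretely I expect to use (Hyp4): near $E(q_i)$, the involution $\tilde\sigma_k$ acts as $z\mapsto -z+2c(q_k)$ where $c(q_k)$ is the coordinate of the direction $(q_iq_k)$, which moves nontrivially with $q_k$; this variation of the local data of the parabolic twist is what prevents a common torsion section from persisting. One then concludes that $\mathcal Z$ cannot dominate $C$ — contradiction — so for $q_k$ outside a finite set there is no finite orbit off $C_X$, while the points of $C_X$ are fixed by construction.

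\textbf{Main obstacle.} The delicate point is Step~2: making precise ``a torsion section of an elliptic fibration cannot stay a torsion section of another, variable, elliptic fibration unless forced algebraically'', and extracting the needed variation. In the pentagon setting this was handled by an explicit Euclidean-geometry picture (Figure~\ref{fig:pentagon_flex}) and the Darboux alternative for quadrilaterals; here the analogue should come from the theory of Jonquières twists and the formula~\eqref{eq:matrix_involution} for $\tilde\sigma_i^*$, combined with the fact (from \cite{finite_orbits}, via heights) that $\Per(\Gamma)$ is not Zariski dense unless $(X,\Gamma)$ is Kummer — which is excluded by the two preceding propositions. I would try to set up a ``bad parameter'' specialization or a monodromy/variation-of-Hodge-structure argument for the family of genus~$1$ fibrations $\eta_{q_i,q_k}$, or alternatively reduce, exactly as in the pentagon case, to a one-variable rigidity statement by freezing all points but $q_k$ and contemplating how the relevant translation number on the quartic pencil varies — the expectation being that it is non-constant in $q_k$, which is precisely what kills the persistence of the finite orbit.
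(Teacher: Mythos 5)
Your Step~2 is not a proof: you identify it yourself as ``the heart of the argument and the main obstacle'' and then only list strategies you \emph{would try} (torsion sections of the moving fibrations $\eta_{q_i,q_k}$, monodromy, variation of the translation number). None of these is carried out, and the analogue of the Darboux alternative that made the pentagon argument work is precisely what is missing here: you never establish that the constraint of lying on a torsion multisection of a fibration that varies with $q_k$ is incompatible with the algebraic branch $x(q_k)$ produced in Step~1. As written, the argument stops at the point where all the work remains to be done.

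The gap can be closed by a much simpler observation that makes the whole family/persistence apparatus of Step~1 unnecessary. Since $k\geq 4$, the subgroup $\Gamma_3=\langle\tilde\sigma_1,\tilde\sigma_2,\tilde\sigma_3\rangle$ involves only the \emph{fixed} points $q_1,q_2,q_3$, so by Proposition~\ref{pro:finitely_many_finite_orbits_Blanc} its finite orbits outside $C_X$ form a finite set $F$ that does not move when $q_k$ varies. Any finite $\Gamma$-orbit is a fortiori a finite $\Gamma_3$-orbit, hence contained in $F$. A short preliminary lemma (using (Hyp4) and the local description of the $\tilde\sigma_i$ on the exceptional divisors) shows each such orbit meets $F':=F\setminus(C_X\cup\Exc(\pi))$, which lives in the plane. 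It then suffices to choose $q_k$ so that $\sigma_{q_k}(\pi(F'))\cap\pi(F')=\emptyset$: this holds for all $q_k$ outside a finite subset of $C$, because one only needs the finitely many lines $(q_kx)$, $x\in\pi(F')$, to be pairwise distinct and the points of $\pi(F')$ to avoid the lines $(q_kp_{k,j})$ and the conic $D_{q_k}$ (the last condition is Lemma~\ref{lem:general_conic}). Then $\tilde\sigma_k$ maps every point of $F'$ outside $F$, so its image has infinite $\Gamma_3$-orbit, and no finite $\Gamma$-orbit survives off $C_X$. The lesson is that the arithmetic finiteness statement should be applied to a subgroup generated by the \emph{frozen} involutions, after which the new involution only needs to displace a fixed finite set --- no rigidity of torsion sections is required.
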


\begin{lem}  
Assume $k\geq 2$. If $x$ does not belong to $C_X$, there exists an element $f$ of $\Gamma$ such that 
$\pi(f(x))\notin C$.
\end{lem}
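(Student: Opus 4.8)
The statement asserts: if $x \notin C_X$, then some element of $\Gamma$ moves $x$ to a point whose image under $\pi\colon X \to \P^2_\C$ does not lie on $C$. The natural approach is to argue by contradiction: suppose that for every $f \in \Gamma$, the point $\pi(f(x))$ lies on $C$. Since $\Gamma$ fixes $C_X$ pointwise and acts on $X$, the full orbit $\Gamma \cdot x$ would then be contained in $\pi^{-1}(C)$. The set $\pi^{-1}(C)$ consists of the strict transform $C_X$ together with the exceptional divisors over the blown-up points $q_i$ and $p_{i,j}$ — but only those exceptional divisors sit over points of $C$, since all $5k$ centers lie on $C$. So the assumption forces $\Gamma \cdot x \subset C_X \cup \bigcup_i E(q_i) \cup \bigcup_{i,j} E(p_{i,j})$, a curve.

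First I would reduce to the case where $x$ lies on one of the exceptional divisors, say $x \in E(r)$ for $r$ one of the centers (if $\pi(x) \in C$ but $x \notin C_X$, then $x$ lies on some exceptional curve over a point of $C$; the center of that blow-up is one of the $q_i$ or $p_{i,j}$). The key mechanism is then \S\ref{par:action_of_sigma_II} and \S\ref{par:action_of_sigma_I}, \S\ref{par:action_of_sigma_on_NS}: for a point $r \in C$ distinct from the base points of $\sigma_i$, the involution $\tilde\sigma_i$ acts on $E(r)$ as a nontrivial involution with exactly two fixed points (corresponding to the tangent line $T_rC$ and to the line $(q_ir)$), and — crucially — a generic point of $E(r)$ is mapped by $\tilde\sigma_i$ to a point \emph{not} in the exceptional set $\mathrm{Exc}(\pi)$, i.e.\ to a point whose $\pi$-image is the honest third intersection point $r'$ of $(q_ir)$ with $C$, which is again on $C$. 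That last observation looks like it cooperates with the contradiction hypothesis rather than against it, so the real content is different: I must use that the \emph{third intersection point} moves.

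The heart of the argument: pick $i$ so that $r$ is not a base point of $\sigma_i$ (possible since $k \geq 2$ — there are at least two distinct involutions and at most one of them has $r$ among its five base points when $\pi(r)$ equals one of the $q_j$ or $p_{j,l}$; and if $x \in E(q_i)$ or $x \in E(p_{i,j})$ for the "bad" index, apply $\tilde\sigma_l$ for $l \neq i$, under which $\pi(r)$ is \emph{not} a base point by (Hyp2)). Apply $\tilde\sigma_i$: by \S\ref{par:action_of_sigma_II}, unless $x$ is one of the two exceptional fixed points of $\tilde\sigma_i|_{E(r)}$ or $x = \tilde p_{i,j}$ etc., the point $\tilde\sigma_i(x)$ lies off $\mathrm{Exc}(\pi)$ entirely, so $\pi(\tilde\sigma_i(x)) \in C$ is a smooth point of $C$ lying on the honest line through $q_i$; now a \emph{further} involution $\tilde\sigma_j$ ($j \neq i$) sends this point to $\pi$-image equal to the third intersection of $(q_j, \pi(\tilde\sigma_i x))$ with $C$ — and one checks that for the orbit to stay forever on $C$ we would need an infinite family of collinearity relations among points of $C$ that are incompatible once one tracks the group law on the elliptic curve $C$. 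More precisely: under the Abel–Jacobi identification, $\sigma_i$ acts on $C \setminus \{\text{base pts}\}$-neighborhoods trivially but the "third point" map $r \mapsto r'$ on $C$ is $p \mapsto -p + q_i$ in the group law (with suitable origin), so requiring $\Gamma \cdot x$ to map into $C$ forces $x$ itself to be a fixed point of this action, i.e.\ forces $\pi(x)$ to be a $\Gamma$-fixed point of $C$, hence $x \in C_X$ — contradiction. Alternatively, and more cleanly, once $\Gamma \cdot x \subset \pi^{-1}(C)$ I can invoke Lemma~\ref{lem:no_invariant_curve}: the Zariski closure $\overline{\Gamma \cdot x}$ is a $\Gamma$-invariant effective curve (if $\Gamma \cdot x$ is infinite) contained in $\pi^{-1}(C)$; its irreducible components are among $C_X, E(q_i), E(p_{i,j})$, none of which except $C_X$ is $\Gamma$-invariant (the exceptional divisors get moved around, e.g.\ $\tilde\sigma_i(E(q_i)) = \tilde D_i \neq E(q_i)$), forcing $\overline{\Gamma\cdot x} = C_X$ and $x \in C_X$; if $\Gamma \cdot x$ is finite, then $x \in \mathrm{Per}(\Gamma)$, and a direct check on the exceptional divisors (using (Hyp4) and \S\ref{par:action_of_sigma_II}) shows no point of $\pi^{-1}(C) \setminus C_X$ is $\Gamma$-periodic.

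\textbf{Main obstacle.} The delicate point is the bookkeeping of the finitely many "exceptional" positions of $x$ on a given $E(r)$ — the fixed points of $\tilde\sigma_i|_{E(r)}$ and the special points $\tilde p_{i,j}$, $E(q_i)\cap L_{i,j}$, $E(q_i)\cap \tilde D_i$ identified in \S\S\ref{par:action_of_sigma_I}–\ref{par:action_of_sigma_on_NS} — and checking that for each such position one still finds an element of $\Gamma$ escaping $\pi^{-1}(C)$, typically by first applying a different involution $\tilde\sigma_j$ to move $x$ to a generic position on its divisor and then applying $\tilde\sigma_i$. Hypotheses (Hyp2) and (Hyp4) are exactly what guarantee these special points are genuinely moved off the exceptional locus by some generator; assembling this into a clean case analysis, rather than the handful of sub-cases sketched above, is where the work lies.
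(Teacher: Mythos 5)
Your reduction is fine: assuming $\pi(f(x))\in C$ for all $f\in\Gamma$ forces $\Gamma\cdot x\subset\Exc(\pi)\setminus C_X$, since every blown-up center lies on $C$ and $C_X$ is pointwise fixed. But the escape mechanism you then propose is inverted, and this is a genuine gap. By \S~\ref{par:action_of_sigma_II}, when $r\in C$ is \emph{not} a base point of $\sigma_i$, the lift $\tilde\sigma_i$ \emph{preserves} $E(r)$, acting on it as an involution with two fixed points; so for such $i$ the point $\tilde\sigma_i(x)$ stays on $E(r)\subset\Exc(\pi)$ and its $\pi$-image is still $r\in C$. Your instruction to ``pick $i$ so that $r$ is not a base point of $\sigma_i$'' and the claim that $\tilde\sigma_i(x)$ then ``lies off $\Exc(\pi)$ entirely'' are exactly backwards: one escapes $\Exc(\pi)$ only by applying the involution for which $r$ \emph{is} a base point, since then $E(r)$ is sent to $L_{i,j}$ or $\tilde D_i$ (\S\S~\ref{par:action_of_sigma_I}, \ref{par:action_of_sigma_on_NS}), whose generic point is off the exceptional locus. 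Because the base point $r$ never moves under the other involutions, your elliptic-curve ``third point map'' argument never gets started: the $\pi$-image of the orbit is constant equal to $r$, and no collinearity relations along $C$ are generated.

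Your ``cleaner'' alternative does not close the gap either. For an infinite orbit, invoking Lemma~\ref{lem:no_invariant_curve} requires $k\geq 3$, whereas the statement is for $k\geq 2$ (one can instead note directly that any invariant curve inside $\Exc(\pi)$ would contain some $E(q_i)$ or $E(p_{i,j})$, which $\tilde\sigma_i$ maps off $\Exc(\pi)$). More importantly, the finite-orbit case — which is where the content of the lemma actually sits — is simply asserted (``a direct check \dots shows no point of $\pi^{-1}(C)\setminus C_X$ is $\Gamma$-periodic''); that direct check \emph{is} the lemma. What is actually needed is the two-step argument: for $x\in E(p_{i,j})$, apply $\tilde\sigma_i$ and land on $L_{i,j}$; if the image is still exceptional it is one of finitely many special points, and then (Hyp4) guarantees that applying $\tilde\sigma_l$ for $l\neq i$ followed by $\tilde\sigma_i$ again leaves $\Exc(\pi)$; the case $x\in E(q_i)$ reduces to this one via the conic $\tilde D_i$. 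None of this case analysis is carried out in your proposal, and the mechanism you describe for it would not work.
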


 \begin{proof} 
Observe that if  $\pi(\Gamma(x))\subset C$, then $\Gamma(x)\subset \Exc(\pi) \setminus C_X$. If 
$x\notin  \Exc(\pi)$, we are done. 
Otherwise $x$ belongs to some $E({p_{i,j}})$, or to some $E(q_i)$. 

In the first case, assume for concreteness that $x\in E({p_{1,1}})$ and set $y={\tilde \sigma_1}(x)$. By \S~\ref{par:action_of_sigma_I}, 
if  $y\in \Exc(\pi)$, then $y$ is   
 the intersection point of the strict transform $L_{1,1}$ of the line $(p_{1,1}q_1)$ either with 
 $E(q_1)$ or with $E(p_{1,1})$. The first possibility does not happen because   
 $L_{1,1}\cap E(q_1)$ is in $C_X$ and is fixed by ${\tilde \sigma_1}$ (see   \S~\ref{par:rational fibration}). 
In the second possibility, by (Hyp4), ${\tilde \sigma_2}(y)$ 
 is not on one of the strict transforms of the lines $(q_1p_{1,l})$, thus by 
 \S~\ref{par:action_of_sigma_I} again, $\tilde\sigma_1({\tilde \sigma_2}(y)) \notin \Exc(\pi)$ and we are done.

In the second case, assume for concreteness  that $x\in E(q_1)$ and, again, set $y={\tilde \sigma_1}(x)$. By  
 \S~\ref{par:action_of_sigma_on_NS}, $y$ belongs to  ${\tilde D_1}$. If  
 $y\in \Exc(\pi)$, then either
  $y\in {\tilde D_1}\cap E(p_{1, j})$ for some $j$, and we are back to the previous case, or 
   $y\in {\tilde D_1}\cap E(q_1)$ and then $x =   {\tilde q_1}\in C_X$ is fixed,   contradicting our assumptions. 
\end{proof}

\begin{proof}[Proof of  Proposition~\ref{pro:no_finite_orbit_strong_version}] 
We start by fixing a $k$-tuple of points $(q_1, \ldots, q_{k-1}, q_k')$ satisfying (Hyp1) to (Hyp4) and then deform $q_{k}'$ (hence the surface $X$) 
to achieve the desired finiteness.  
Consider the subgroup $\Gamma_3$ of $\Gamma$ generated by $\tilde\sigma_1$, $\tilde\sigma_2$, and $\tilde\sigma_3$. 
By Proposition~\ref{pro:finitely_many_finite_orbits_Blanc}, $\Gamma_3$ has only finitely many finite orbits outside $C_X$. 
By the previous lemma, each of these is the orbit $\Gamma(x_j)$ of some $x_j\notin C_X\cup \Exc(\pi)$. 
Let $F$ be the union of these finite orbits $\Gamma(x_j)$, and set $F'=F\setminus (C_X\cup \Exc(\pi))$. 
For a general choice of $q_k\in C$, (Hyp1-Hyp4) are satisfied and 
the lines $(q_kx)$ for $x\in \pi(F')$ are pairwise distinct, 
and $x$ does not belong to the lines $(q_kp_{k,j})$, for $1\leq j\leq 4$, nor to 
the conic $D_{q_k}$ from  \S~\ref{par:action_of_sigma_on_NS} (see Lemma \ref{lem:general_conic} below). 
Thus, $\sigma_k(\pi(F'))\cap \pi(F')=\emptyset$, and the orbit of $\tilde\sigma_k(z)$ under $\Gamma_3$ is infinite for any $z$ in $F'$. 
This shows that $\Gamma$ does not have any finite orbit, except for its fixed point set, which coincides with $C_X$. \end{proof}

\begin{lem}\label{lem:general_conic}
Fix $x\in \P^2\setminus C$. Then for general $q \in C$, $x$ does not belong to the conic $D_{q}$.
\end{lem}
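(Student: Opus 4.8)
~\textbf{Plan of proof.} The statement is about a single point $x\in\P^2\setminus C$ and the family of conics $D_q$ attached to the points $q\in C$ by the construction of \S~\ref{par:action_of_sigma_on_NS}: $D_q$ is the unique conic tangent to $C$ at $q$ and passing through the four points $p_{q,1},\ldots,p_{q,4}$ of tangency of the lines through $q$ with $C$. Recall from \S~\ref{par:action_of_sigma_on_NS} that on $C$ one has the linear equivalence $2q+p_{q,1}+\cdots+p_{q,4}\sim 6o$, where $o$ is a fixed inflexion point; this is exactly the statement that $D_q$ cuts out a divisor of degree $6$ on $C$ with a double point at $q$, and it pins down $D_q$ once $q$ is chosen. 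The plan is to view $q\mapsto D_q$ as a morphism from $C$ (or a Zariski-open subset of it, after removing the finitely many $q$ violating (Hyp1)) into the $\P^5$ of plane conics, and to show that the locus $\{q\in C : x\in D_q\}$ is a proper closed — hence finite — subset, i.e.\ that the map $q\mapsto D_q$ is not constantly equal to a conic through $x$.

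First I would set up the family: the condition ``conic tangent to $C$ at $q$'' imposes two linear conditions on the six coefficients of a conic, depending algebraically (in fact, regularly) on $q$ away from the inflexion points; passing through each $p_{q,j}$ imposes one further condition, and the $p_{q,j}$ themselves depend algebraically on $q$ (they are cut out by the discriminant of the restriction of a cubic form to the pencil of lines through $q$). Since $D_q$ is uniquely determined, the assignment $q\mapsto[D_q]\in\P^5$ extends to a morphism $\Phi\colon C'\to\P^5$ on a dense open $C'\subset C$. The condition $x\in D_q$ is then $\Phi(q)\in H_x$, where $H_x\subset\P^5$ is the hyperplane of conics through $x$. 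The set $\Phi^{-1}(H_x)$ is closed in $C'$, so it is either finite or all of $C'$; in the latter case every $D_q$ passes through $x$. Thus it suffices to exhibit a single $q\in C'$ with $x\notin D_q$, or equivalently to rule out that the whole one-parameter family of conics $D_q$ passes through the fixed point $x$.

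The main obstacle is this last exclusion, and I would handle it by a degeneration or a base-point count. One clean way: if $x$ lay on $D_q$ for all $q$, then the entire family $\{D_q\}_{q\in C}$ would have $x$ as a base point, in addition to the ``moving'' tangency data. But a pencil-like family of conics all through a common point $x$ and with a prescribed tangency to $C$ is far too constrained — for instance, one can let $q$ degenerate to an inflexion point (or simply vary $q$ along $C$ and compare the tangent lines $T_qC$, which are the tangent lines of $D_q$ at $q$ and which sweep out infinitely many directions), and observe that the conics $D_q$ cannot all contain a fixed point $x$ distinct from the points of $C$: two general members $D_{q}$ and $D_{q'}$ meet $C$ only along $C$-points (their intersection with $C$ is supported on $\{q,q',p_{q,j},p_{q',j}\}\subset C$), whereas $x\notin C$, so if $x\in D_q\cap D_{q'}$ then $x$ is one of the at most $4$ intersection points of the two conics off $C$; letting $q'$ vary continuously while $q$ is fixed forces $x$ to be locally constant among finitely many candidates, hence $x\in D_q$ for $q$ in a dense subset would already give a contradiction with $x\notin C$ unless the two conics share a component, which they do not for general $q,q'$ by irreducibility of the generic $D_q$. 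Making one of these arguments precise — most economically the one comparing $D_q\cap D_{q'}$ with $C$ — gives $x\notin D_q$ for general $q$, and since the bad locus is closed in $C'$ it is finite. One should also note that the finitely many $q$ excluded by (Hyp1), together with this finite bad locus and the finite loci coming from (Hyp2)--(Hyp4), still leave a cofinite set of admissible $q_k$, which is all that Proposition~\ref{pro:no_finite_orbit_strong_version} needs.
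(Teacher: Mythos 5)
Your reduction is the same as the paper's: the locus $\{q\in C: x\in D_q\}$ is Zariski closed in (an open subset of) $C$, so it is either finite or everything, and the whole content of the lemma is to rule out the second alternative by exhibiting one $q$ with $x\notin D_q$. But that is exactly the step your proposal does not actually carry out. Your main argument --- comparing $D_q\cap D_{q'}$ with $C$ --- is a non sequitur: the fact that $D_q\cap C$ is supported on $\{q,p_{q,1},\dots,p_{q,4}\}\subset C$ while $x\notin C$ in no way prevents $x$ from being a common point of all the $D_q$ \emph{off} $C$. A one-parameter family of conics can perfectly well have a base point away from $C$ (a pencil of conics has four of them), and the sentence ``letting $q'$ vary forces $x$ to be locally constant among finitely many candidates, hence a contradiction with $x\notin C$'' does not produce any contradiction: $x$ is a fixed point by hypothesis, and being one of the $\le 4$ intersection points of $D_q$ and $D_{q'}$ is consistent with everything you have said. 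Your alternative suggestion (degenerate $q$ to an inflexion point, or watch the tangent directions $T_qC$ sweep out the dual curve) is likewise not developed into an argument; I do not see how either concludes.

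The missing idea in the paper's proof is a specific choice of $q$: take $q\in C$ such that the line $(xq)$ is \emph{tangent} to $C$ at $q$ and $q$ is not an inflexion point (such a $q$ exists; this requires a small argument, given in a footnote, showing that not all tangent lines from $x$ can be inflexion lines). Since $D_q$ is tangent to $C$ at $q$ by construction, it is tangent to the line $(xq)=T_qC$ at $q$, so by B\'ezout the conic $D_q$ meets the line $(xq)$ only at $q$, with multiplicity $2$. In particular $x\notin D_q$, which is the single good parameter you needed. Without this (or some substitute computation, e.g.\ writing $D_q$ explicitly in coordinates and checking non-constancy of the condition $x\in D_q$), the proof is incomplete.
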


 \begin{proof}  
 Suppose that  $x$ is in  $D_q$ for every $q$
 in some Zariski dense open subset $U$ of $C$. 
By continuity, the same holds for $U=C$. In other words, $D_q$ should contain $x$ for every $q\in C$. Let us derive a contradiction.

One can choose a point\footnote{Such a point always exists. Otherwise, looking at the linear projection from $C$ to $\P^1\simeq\P(T_x\P^2)$, which is a ramified cover of degree $3$, one sees that there would be three inflexion lines meeting in $x$. 
So, assume $p$, $q$, $r$ are inflexion points with $T_pC\cap T_qC\cap T_rC=\set{x}$. Put $C$ in Weierstrass form $x_2^2x_3=x_1^3+bx_1x_3^2+cx_3^3$, with $r=[0:1:0]$, $T_rC=\set{x_3=0}$, and $x=[1:d:0]$ for some $d\in \C$. The lines passing through $x$ have equations $x_2=dx_1+ex_3$. Such a line intersects $C$ when $$ x_1^3-d^2x_1^2x_3+(b-2de)x_1x_3^2+(c-e^2)x_3^3=0,$$
and it is an inflexion line when  this cubic equation coincides with $(x_1-\tau x_3)^3$ for some triple root $\tau$. This implies that 
$3\tau=d^2$, $3\tau^2=b-2de$, and $e=(d^4-3b)/6d$, which means that $e$ is determined by $d$; in other words,  there is at most one inflexion line of slope $d$ (plus the line at infinity), and it is impossible, for a smooth cubic curve $C$, to have three inflexion lines with a common point. 
}   $q\in C$ such that 
$(xq)$ is tangent to $C$ at $q$ and $q$ is not an inflexion point of $C$. By  \S~\ref{par:action_of_sigma_on_NS}, 
  $D_q$ is tangent to $C$, hence to 
$(xq)$, at $q$,   so $D_q$ intersects $(xq)$ at $q$ with multiplicity $2$. Since $D_q$ is a conic, we conclude that  $D_q\cap (xq) = \set{q}$ (with multiplicity $2$). But then, $D_q$ does not contain $x$. 
\end{proof}

\subsection{Invariant measures} \label{subs:invariant_measures}

\begin{thm}\label{thm:blanc_no_invariant_measure} 
 Fix  a smooth cubic curve $C\subset \P^2$ defined over ${\overline{\Q}}$.  
Assume that $k\geq 4$, and consider a $(k-1)$-tuple of points $(q_1, q_2, \ldots, q_{k-1})$ in $C(\overline{\Q})$ 
satisfying hypotheses (Hyp1) to (Hyp4). Then, for $q_k\in C(\overline{\Q})$ outside a proper real analytic curve,
the pair $(X,\Gamma)$ determined by $(q_1, q_2, \ldots, q_{k})$ does not have any invariant probability measure   except for the probability measures supported on the fixed point set $C_X$. 
\end{thm}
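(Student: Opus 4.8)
The strategy is to combine the finiteness of finite orbits established in the previous section with the classification of invariant measures and the stiffness machinery recalled in the overview, arguing by contradiction. Suppose $\mu$ is an ergodic $\Gamma$-invariant probability measure on $X(\C)$ which is not supported on $C_X$. Since $C_X$ is the unique $\Gamma$-invariant curve (Lemma~\ref{lem:no_invariant_curve}), and more generally, by Proposition~\ref{pro:no_finite_orbit_strong_version}, for $q_k$ outside a finite set there are no finite orbits off $C_X$, the measure $\mu$ cannot be supported on a proper Zariski-closed set: any proper $\Gamma$-invariant Zariski-closed subset is contained in $C_X$ together with finitely many points which would have to be periodic, hence lie in $C_X$. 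So $\mu$ is Zariski diffuse. Now $\Gamma$ is non-elementary (it contains the parabolic elements $g_{i,j}$ of Lemma~\ref{lem:parabolic_gij}, and composing three distinct involutions gives loxodromic elements), and it contains a parabolic element, so Theorem~\ref{thm:intro_classification_invariant_measures} applies: $\mu$ either has a smooth positive density on $X\setminus Z$ for some proper invariant subvariety $Z$, or is carried by a $\Gamma$-invariant totally real surface $\Sigma\subset X\setminus Z$ with smooth positive density.

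The first task is to rule out the first alternative, where $\mu$ is absolutely continuous with respect to Lebesgue on $X(\C)$ with smooth positive density off $Z$. This is where the infinite-volume feature of $\vol_X^\infty=\Omega_X\wedge\overline{\Omega_X}$ comes in. Because $\Jac_\Omega(\Gamma)=\{1\}$ by \eqref{eq:jacobian-is-1}, the measure $\vol_X^\infty$ is $\Gamma$-invariant; but its total mass is infinite, so it cannot be normalized to a probability measure. If $\mu$ had a smooth positive density $\rho$ with respect to Lebesgue on $X\setminus Z$, one compares $\mu$ with $\vol_X^\infty$: the Radon–Nikodym density $h=d\mu/d\vol_X^\infty$ is then a measurable function, invariant under $\Gamma$ since both measures are invariant, hence constant by ergodicity; this forces $\mu$ to be a constant multiple of $\vol_X^\infty$ on $X\setminus(Z\cup C_X)$, contradicting that $\mu$ is a probability measure while $\vol_X^\infty$ has infinite mass (the relevant mass is infinite already away from $C_X$, because the divergence of the integral comes from the pole along $C_X$ but the singular locus is of measure zero and the bulk of the $\vol_X^\infty$-mass near $C_X$ on $X\setminus C_X$ is still infinite). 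This is essentially the mechanism alluded to in the Notes on the proof of Theorem~\ref{thm:blanc_examples}. I expect this to be the step requiring the most care: one must make sure that "smooth positive density on $X\setminus Z$" together with ergodic invariance genuinely pins $\mu$ down against the singular invariant volume, and handle the measure-zero set $Z\cup C_X\cup\mathrm{Sing}(X)$ correctly.

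The second task is to exclude the totally real surface alternative. Here one invokes Theorem~\ref{thm:stiffness_for_real_surfaces}: if $\mu$ lived on a $\Gamma$-invariant totally real surface $\Sigma$ carrying a $\Gamma$-invariant area form, then by the stiffness results on real surfaces and the classification, $\mu$ would be an ergodic invariant measure on $\Sigma$. But the restriction of $\Omega_X$ to (the smooth part of) such a $\Sigma$ gives a $\Gamma$-invariant area form whose total area is infinite — this is exactly property (4) in Theorem~\ref{thm:blanc_examples}, stating that on $X(\R)$ the total $\Omega_X$-area is infinite — so by the same Radon–Nikodym/ergodicity argument as above, $\mu$ would have to coincide with this infinite-mass area measure, again a contradiction. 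The role of "$q_k$ outside a proper real analytic curve" (rather than a finite set) is precisely to control the geometry of $X(\R)$ and the invariant real surface: one needs the real structure and the real locus to behave generically so that $\Sigma$, if it existed, is genuinely totally real and the area form is genuinely infinite; the exceptional set of parameters where this could fail is cut out by real-analytic conditions. Combining the two exclusions: the only ergodic $\Gamma$-invariant probability measures are supported on $C_X$, and since every invariant probability measure is a barycenter of ergodic ones, every invariant probability measure is supported on $C_X$. On $C_X$, which is pointwise fixed by $\Gamma$, every probability measure is invariant, so these are exactly the invariant measures.

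The main obstacle, as indicated, is the rigorous execution of the "compare to the infinite invariant volume and conclude by ergodicity" argument in the absolutely continuous case: one must be careful that the invariant density, a priori only defined and smooth off a proper subvariety $Z$, extends or is controlled well enough that ergodicity of $\mu$ (not of $\vol_X^\infty$, which is not even finite) still forces proportionality — the standard ergodic uniqueness statements are for finite invariant measures, so one should instead argue directly that $h=d\mu/d\vol_X^\infty$ is $\Gamma$-invariant and $\mu$-essentially constant, then integrate. A secondary technical point is verifying that hypotheses (Hyp1)–(Hyp4) together with "$q_k$ outside a proper real analytic set" indeed place us inside the hypotheses of Proposition~\ref{pro:no_finite_orbit_strong_version}, Lemma~\ref{lem:no_invariant_curve}, and Theorem~\ref{thm:intro_classification_invariant_measures} simultaneously, which is just a matter of intersecting the relevant generic conditions.
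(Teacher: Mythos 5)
Your treatment of the absolutely continuous alternative matches the paper's: one writes $\mu=\varphi\,\vol_X^\infty$ with $\varphi$ real analytic and positive off the (unique) invariant algebraic subset $C_X$, invariance of both measures plus ergodicity forces $\varphi$ to be constant, and the infinite total mass of $\vol_X^\infty$ gives the contradiction. That part is fine.

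The genuine gap is in your exclusion of the totally real surface alternative. You claim that the restriction of the invariant $2$-form to an invariant totally real surface $\Sigma$ yields an invariant area form of infinite total area, citing property (4) of Theorem~\ref{thm:blanc_examples}; but that property concerns the specific real locus $X(\R)$, whose infinite area comes from the pole of $\Omega_X$ along $C_X(\R)$. An arbitrary $\Gamma$-invariant totally real surface $\Sigma$ produced by the classification theorem has no reason to approach $C_X$ in a way that makes the restricted area diverge --- if $\Sigma$ stays away from the polar divisor, the induced invariant area form is finite and your Radon--Nikodym argument produces no contradiction at all (indeed it would then exhibit a legitimate invariant probability measure). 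This is precisely why the paper does \emph{not} run the infinite-volume argument in this case. Instead it argues as follows: by \cite[Thm~C]{invariant} there are only \emph{finitely many} totally real surfaces carrying a fully supported invariant probability measure for the subgroup $\Gamma_{k-1}=\langle\tilde\sigma_1,\dots,\tilde\sigma_{k-1}\rangle$; letting $\Sigma_{k-1}$ be their union and $z$ a smooth point of $\Sigma_{k-1}$ off $C$, the curve $r\mapsto\sigma_r(\pi(z))$ is complex algebraic, hence not contained in the totally real set $\pi(\Sigma_{k-1})$, so $B_{k-1}=\{r\in C:\ \sigma_r(z)\in\Sigma_{k-1}\}$ is a proper real analytic subset of $C$; choosing $q_k\notin B_{k-1}$ breaks the $\tilde\sigma_k$-invariance of every candidate $\Sigma$. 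This perturbation step is also the true explanation of the hypothesis ``$q_k$ outside a proper real analytic curve,'' which your proposal attributes instead to a vague genericity of the real structure. Finally, your appeal to Theorem~\ref{thm:stiffness_for_real_surfaces} is out of place here: that is a stiffness statement (stationary implies invariant), whereas $\mu$ is already assumed invariant.
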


\begin{proof}   
As in Proposition~\ref{pro:no_finite_orbit_strong_version} we
 start with a $k$-tuple of points $(q_1, q_2, \ldots, q_{k-1}, q_k')$ satisfying (Hyp1) to (Hyp4) and   deform $q_k'$ into $q_k$ so that every ergodic, invariant probability measure is supported on $C_X$.
According to Lemma~\ref{lem:no_invariant_curve}, there is no invariant curve except $C_X$. Thus, from Proposition~\ref{pro:no_finite_orbit_strong_version}, it suffices to 
exclude the existence of invariant measures  giving no mass to proper Zariski closed subsets. 
Let $\mu$ be such a measure. 
By Theorem~A  of \cite{invariant}, there are two possibilities:
\begin{enumerate}
\item either  $\mu$ is supported on a totally real, real analytic subset $\Sigma\subset X$, and $\mu$ is absolutely continuous with respect to the $2$-dimensional Lebesgue measure on $\Sigma$, with a real analytic density along the smooth locus of $\Sigma$;
\item or the support of $\mu$ is the whole surface $X$, and $\mu$ is absolutely continuous with respect to any smooth volume form, with a positive and real analytic density outside some invariant algebraic subset $Z$.
\end{enumerate}

Let us exclude the second case. We already know that $C_X$ is the only invariant algebraic subset, hence $\mu=
\varphi \; \vol_X^\infty$ for some real analytic function 
$\varphi\colon X\setminus C_X\to \R_+$. The ergodicity of $\mu$ and the invariance of $\vol_X^\infty$ imply that 
$\varphi$ is  constant. But then,  by Property (3) of \S~\ref{par:setting_and_omega}, 
$\mu(X)   =\infty$, which is a contradiction. 

To rule out the first case, we argue as follows. The real (singular) surface $\Sigma$ is 
 invariant under the action of $\Gamma_{k-1}=\langle \tilde\sigma_1, \tilde\sigma_2, \ldots, \tilde\sigma_{k-1}\rangle$, and   supports an invariant probability measure with a smooth density. According to Theorem~C of \cite{invariant}, there are only finitely many surfaces of this type. 
 We denote by $\Sigma_{k-1}\subset X$ the union of these real analytic subsets: it is the maximal, $2$-dimensional, real analytic subset of $X$ that supports a $\Gamma_{k-1}$-invariant probability measure with full support. To conclude, it suffices to show that, after perturbation of $q_k'$, the surface $\Sigma_{k-1}$ is not $\tilde\sigma_k$-invariant. 

For this, denote by $z$ a smooth point of $\Sigma_{k-1}$ with $\pi(z)\notin C$. As $r$ varies along $C$, the point $\sigma_r(\pi(z))$ describes a complex algebraic curve. This curve cannot be contained in $\pi(\Sigma_{k-1})$, because $\Sigma_{k-1}$ is totally real. Thus, the set 
$B_{k-1}=\{ r \in C; \; \sigma_r(z)\in \Sigma_{k-1}\}$ is a proper real analytic subset of $C$. 
Then, we choose a point $q_k\in C\setminus B_{k-1}$ 
such that $(q_1, \ldots, q_k) $ satisfies 
the conclusion of Proposition~\ref{pro:no_finite_orbit_strong_version}, and we are done. 
\end{proof}

\begin{rem}
Since the proof of the theorem goes by breaking down all possible invariant totally real surfaces, the argument  does not apply to the real case. Another argument will be given for the real setting in Theorem~\ref{thm:blanc_stiffness_real} below. 
\end{rem}

We believe that under the assumptions of Theorem~\ref{thm:blanc_no_invariant_measure}, 
every stationary measure is invariant. More precisely, every 
ergodic stationary measure should
have both Lyapunov exponents zero, therefore be invariant.  In the next sections, we establish this 
result for some real examples. 

\section{Real construction} \label{sec:real_construction}

In this section, we construct  examples  for which $X$ and the $\sigma_i$ are defined over the real numbers, $X(\R)$ is obtained by blowing-up $12$ points of $\P^2(\R)$ (while $X(\C)$ is obtained by blowing up $20$ points), and  the action of $\Gamma$ 
on  the 1-dimensional  homology of $X(\R)$ admits a positive Lyapunov exponent. 

The computations done in this section, in particular in \S~\ref{par:real_action_of_Jonq_involutions} and~\ref{par:non_elementary_real} follow closely the strategy developed by Bedford, Diller, and Kim in~\cite{bedford-diller:amer_j_math, diller-kim:experimental_math} to describe similar examples. 

\subsection{Topology of real rational surfaces}\label{subs:topology} Let $X_\R$ be a real projective surface obtained by blowing up $a$ points $r_i$ of $\P^2(\R)$, as well as $b$ pairs of complex conjugate points $\{s_j, \overline{s_j}\}\subset \P^2(\C)\setminus \P^2(\R)$. We shall look at the first homology group of $X(\R)$
with integral or rational  coefficients. For simplicity, consider some homogeneous coordinate system $[x:y:z]$ on $\P^2_\R$, and suppose that the points $r_i$ are not contained in the line at infinity $L_\infty=\{z=0\}$: they are contained in the affine plane $\mathbb{A}^2(\R)$ of points $[x:y:1]$ with $(x,y)\in \R^2$. We endow this plane $\R^2$ with the usual, counterclockwise, orientation. By convention, 
if $U\subset \R^2$ is a domain with piecewise smooth boundary $\partial U$, we orient $\partial U$ in such a way that  
at any point $m\in \partial U$, 
  $(u_m,v_m)$ forms a positively oriented basis   of $T_m\R^2$, where $u_m$ is 
 the unit normal vector   to $\partial U$   pointing outside $U$; then $v_m$ is 
   the unit tangent vector  to $\partial U$ that is compatible with the orientation of $\partial U$.

For each index $i$, we denote by $U_i$ a small disk centered at $r_i$, and we orient its boundary $C_i:=\partial U_i$ in the clockwise direction. We suppose that the $U_i$ are pairwise disjoint. The exceptional divisor $E_i$ obtained by blowing up $r_i$ is a projective line. 
Then  $E_i(\R)$ is a circle, that we orient in such a way that $C_i=2E_i$ modulo homotopy (see Figure~\ref{fig:blowup}).    Let $V$ be the complement of the $U_i$ in $\R^2$, with  the 
 orientation induced by the orientation of~$\R^2$. We orient $L_\infty$ in such a way that 
$\partial V = 2L_\infty+ \sum_i C_i.$ Then in $H^1(X(\R), \Z)$,
\[
2\ell_\infty + 2\sum_i \bfe_i =0
\]
where $\bfe_i=[E_i(\R)]$ is the homology class of $E_i(\R)$ and $\ell_\infty=[L_\infty(\R)]$ is the  homology class of $L_\infty(\R)$.
Then, $H^1(X(\R), \Z)$ is isomorphic to $\Z^a\oplus\Z/2\Z$, and $H^1(X(\R),\Q)$ is isomorphic to $\Q^a$. More precisely, a basis of  $H^1(X(\R),\Q)$ is provided by the classes $(\bfe_1, \ldots, \bfe_a)$. 

If $L$ is any line in $\P^2(\R)$ which is not vertical, we orient $L$ from left to right; in other words, one can parametrize $L$ by $x\in \R\mapsto [x: \alpha x+\beta:1]$ for some $\alpha, \beta$ in $\R$, and this parametrization is compatible with orientations. Letting 
 $U^+_L$ be the open half-plane above $L$,   its boundary in $\P^2(\R)$ is made of $L$ and the line at infinity. In $X(\R)$, this gives
 \begin{equation}
 \ell_\infty + \ell + 2\sum_{r_i\in U^+_L}\bfe_i+\sum_{r_j\in L}\bfe_j=0,
 \end{equation} 
(see Figure~\ref{fig:orientation} for a local picture of a blow-up at a boundary of a domain) hence
\begin{equation}\label{eq:formula_homology_line}
\ell = \sum_{r_i \; \text{strictly below} \;  L} \bfe_i -\sum_{r_i \; \text{strictly above} \;  L} \bfe_i
\end{equation}
in $H^1(X(\R);\Q)$. This formula works even if some of the $r_i$ are contained in $L$.
 
\subsection{Action of one Jonquières involution}\label{par:real_action_of_Jonq_involutions}  Consider a smooth real cubic curve $C\subset \P^2_\R$ such that $C(\R)$ is connected. We assume that $C$ is in Weierstrass form $y^2=x^3+ ux^2+vx+w$ and we 
orient $C(\R)$ from bottom to top, {i.e.}\ from negative values of $y$ to positive ones.

Let $q$ be a point of $C(\R)$ which is not an inflexion point. Let $\sigma$ be the Jonquières involution associated to $(C,q)$, as in Sections~\ref{par:setting_and_omega} and~\ref{par:description_of_sigma_i} above. Besides $q$, the four remaining base points $p_{j}$ of $\sigma$ are made of two real points $p_1$, $p_2$ and 
two complex conjugate points $\{ p_3, p_4=\overline{p_3}\}$; here, we shall assume that the position of $q$, $p_1$, and $p_2$ are as
on Figure~\ref{fig:points_on_cubic} below. 

\begin{figure}[h] 
\includegraphics[width=7cm]{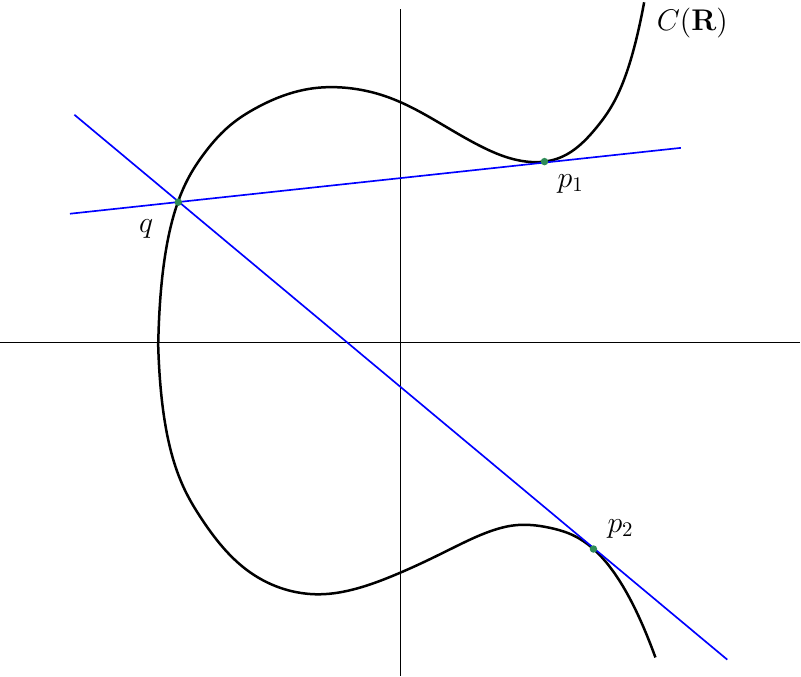}
\caption{}\label{fig:points_on_cubic}
\end{figure}

Denote by $X$ the surface obtained by blowing up the base points of $\sigma$, by $\pi\colon X\to \P^2$ the natural morphism, and by $\tilde{\sigma}$ the automorphism $\pi^{-1}\circ \sigma\circ \pi$ of $X$.
To describe the action of $\tilde{\sigma}$ on $H^1(X(\R);\Q)$, we use the following facts:

\smallskip

 \noindent--- the involution $\tilde{\sigma}$ permutes $E_{p_1}$ and the strict transform $L_{qp_1}\subset X$ of the line $(qp_1)\subset \P^2(\R)$.
More precisely, $L_{qp_1}$, $E_{p_1}$, and $C_X$ have a unique common point $\tilde{p}_1$. This point is fixed by $\tilde{\sigma}$ and 
the differential $D\tilde{\sigma}_{\tilde{p}_1}$ can be described with the help of Figure~\ref{fig:blowup}. There is a basis of $T_{\tilde{p}_1}X$ given by vectors $u$ and $v$ which are respectively tangent to $E_{p_1}$ and $L_{qp_1}$ and are compatible with their orientations; moreover, after scaling $v$ by some positive factor, we may assume that $-u+v$ is tangent to $C_X(\R)$ (and is compatible with its orientation). Then, the matrix of $D\tilde{\sigma}_{\tilde{p}_1}$ in this basis $(u,v)$ is  \begin{equation}
D\tilde{\sigma}_{\tilde{p}_1}=\left(\begin{array}{cc} 0 & -1 \\ -1 & 0\end{array}\right).
\end{equation}
Thus, $\tilde{\sigma}(E_1(\R))=-L_{qp_1}(\R)$, where the minus sign means that the orientation is reversed.  In homology, this gives 
\begin{equation}
\tilde{\sigma}^*\bfe_{p_1} = -\ell_{qp_1}=-\bfe_{p_2}.
\end{equation}
Here, for the second equality 
we used formula~\eqref{eq:formula_homology_line} and the fact that the unique point outside
  $(qp_1)$ is the point $p_2$ which is below it.
When we shall blow up more points, extra terms will be added.

\begin{figure}[h]
\begin{minipage}{6cm}\includegraphics[width=7cm]{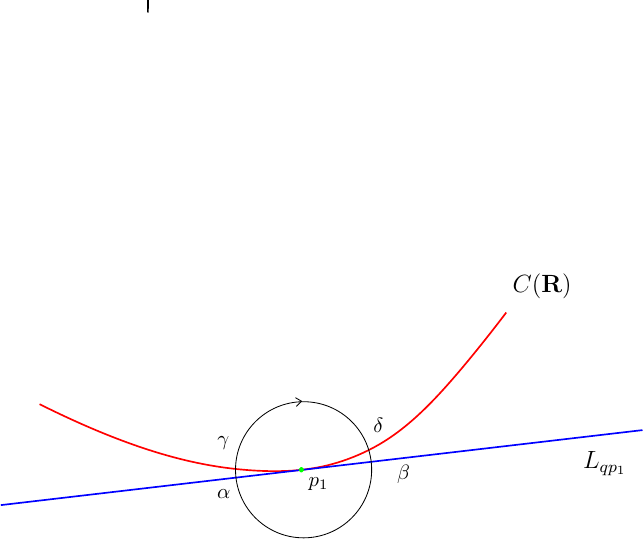}\end{minipage} \hspace{1cm}
\begin{minipage}{6cm}\includegraphics[width=5cm]{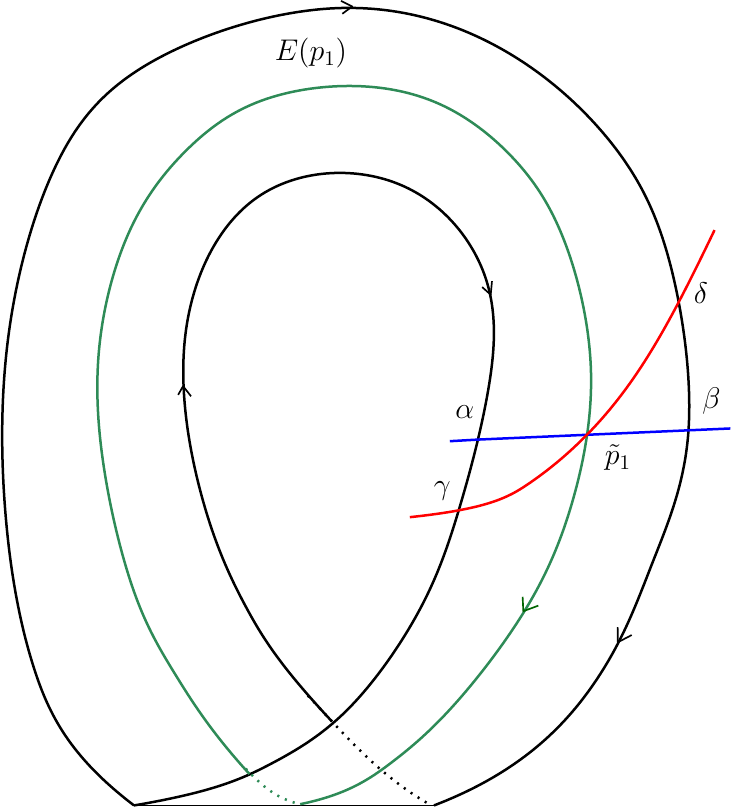}\end{minipage}
\caption{
{\small{This Möbius band is obtained from a blow-up of a small disk around $p_1$; its boundary is the preimage of the circle bounding this disk. On the right, the green curve is the exceptional divisor $E(p_1)$; the blue line is the strict transform of $(qp_1)$; the red curve is the strict transform of $C$. The involution $\tilde{\sigma}$ fixes the point of intersection of these three curves, permuting the green and blue curves.  The orientations are the ones defined previously. }}}\label{fig:blowup}
\end{figure}
 \noindent--- the picture is different at $p_2$ (because the concavity of $C$ is reversed), and we obtain 
\begin{equation}
\tilde{\sigma}^*\bfe_{p_2} = \ell_{qp_2}=-\bfe_{p_1}.
\end{equation}

\smallskip

 \noindent--- the ``image'' of $q$ by $\sigma$ is the conic $D_q$ that goes through $q$, $p_1$, $p_2$, 
 and the points $\{ p_3, p_4=\overline{p_3}\}$. It is tangent to $C$ at $q$; its real part is an ellipse, which we orient in the clockwise direction. Now, $D_q(\R)$
  bounds a disk $\Omega$; more precisely, $\Omega$ is the bounded domain of $\R^2$ whose boundary is (with our  convention on orientation from \S~\ref{subs:topology}) 
  $-D_q(\R)$ ({i.e.}\ $D_q(\R)$ but with the anti-clockwise direction).  Taking the preimage of $\Omega$ in $X(\R)$, this gives 
\begin{align}
\partial \Omega & = -D_q(\R) + E_{p_1}(\R) + E_{p_2}(\R) + E_q(\R) \\ 
[D_q(\R)] & = \bfe_q + \bfe_{p_1} + \bfe_{p_2}
\end{align}
and then we obtain 
\begin{equation}
\tilde{\sigma}^*\bfe_q = \bfe_q + \bfe_{p_1} + \bfe_{p_2}.
\end{equation}

Altogether, in the basis $(\bfe_q, \bfe_{p_1}, \bfe_{p_2})$ of $H^1(X(\R);\Q)$, the matrix for $\tilde{\sigma}^*$ is
\begin{equation}
\tilde{\sigma}^* = \left( \begin{array}{ccc} 1 & 0 & 0 \\ 1 & 0 & -1 \\ 1 & -1  & 0 \end{array} \right).
\end{equation}

\subsection{Action of three Jonquières involutions}\label{par:non_elementary_real} We now move on to the case when three involutions $\sigma_i$ are considered, each of them 
attached to a point $q_i$ of $C(\R)$. We suppose that the relative position of the points $q_i$ and $p_{i,j}$ are in the following order along $C$ (from bottom to top):  
\begin{equation}
 p_{1,2}, \; p_{2,2},  \; p_{3,2},  \; q_3,  \; q_2,  \; q_1,  \; p_{1,1},  \; p_{2,1},  \; p_{3,1}; 
\end{equation}
to obtain such a configuration, start with $q_1$ as in Figure~\ref{fig:points_on_cubic}, 
then choose $q_2\in C$ slightly below $q_1$ and $q_3$ slightly below $q_2$. Now, $X$ is the blow up of the plane at the fifteen points $q_i$, $p_{i,j}$, $1\leq i \leq 3$, $1\leq j \leq 4$, and the lifts of the $\sigma_i$ to $X$ are denoted ${\tilde \sigma_i}$.

To compute the action of $\tilde{\sigma}_1$ on $H^1(X(\R); \Q)$, we remark that the relative positions of the $q_i$ and $p_{i,j}$, impose the following properties:

\smallskip

 \noindent-- the points $p_{2,1}$ and $p_{3,1}$ are above the line $(q_1p_{1,1})$, and the points  $p_{1,2}$, $p_{2,2}$, $p_{3,2}$, $q_3$,   and  $q_2$ are below it. Thus, 
\begin{equation}
\tilde{\sigma}_1^*\bfe_{p_{1,1}}= - \bfe_{p_{1,2}} - \bfe_{q_2} + \bfe_{p_{2,1}} - \bfe_{p_{2,2}} - \bfe_{q_{3}} + \bfe_{p_{3,1}} - \bfe_{p_{3,2}}.
\end{equation}

\smallskip

 \noindent-- similarly, 
\begin{equation}
\tilde{\sigma}_1^*\bfe_{p_{1,2}}= - \bfe_{p_{1,1}} + \bfe_{q_2} - \bfe_{p_{2,1}} + \bfe_{p_{2,2}} + \bfe_{q_{3}} - \bfe_{p_{3,1}} + \bfe_{p_{3,2}}.
\end{equation}

\smallskip

 \noindent--- the ellipse $D_{q_1}(\R)$ bounds an open set $V_1$ that contains $q_2$, $q_3$, $p_{2,2}$, and $p_{3,2}$ in its interior. Thus, 
if we cut out small disks centered at those four points from $V_1$, and take its boundary in $X(\R)$, we obtain the equality 
\begin{equation}
[D_{q_1}(\R)]= \bfe_{q_1} + \bfe_{p_{1,1}}+ \bfe_{p_{1,2}} + 2\bfe_{q_2} + 2\bfe_{p_{2,2}} + 2\bfe_{q_3} + 2\bfe_{p_{3,2}}.
\end{equation}

\smallskip

 \noindent-- if $r$ is a point from $C(\R)$ that is not one of $q_1$, $p_{1,1}$, or $p_{1,2}$, and if one blows up that point, the curve $E_{r}$ is fixed by the lift $\tilde{\sigma}_1$. The line $(q_1r)$  and its strict transform $L_{q_1,r}$ are also invariant. Since $r$ is not one of the $p_{1,j}$, 
$(q_1r)$ and $C$ intersect transversely at~$r$: along $C$, $\sigma_1$ is the identity, and along $(q_1r)$, $\sigma_1$ is conjugate to $z\mapsto -z$, 
fixing  $r$ and another point $r'$ of $C$. Thus, on the blow up $E_{r}(\R)$, we see that $\tilde{\sigma}_1$ reverses the orientation (see Figure~\ref{fig:orientation}). 
\begin{figure}[h]
\begin{minipage}{6cm}\includegraphics[width=7cm]{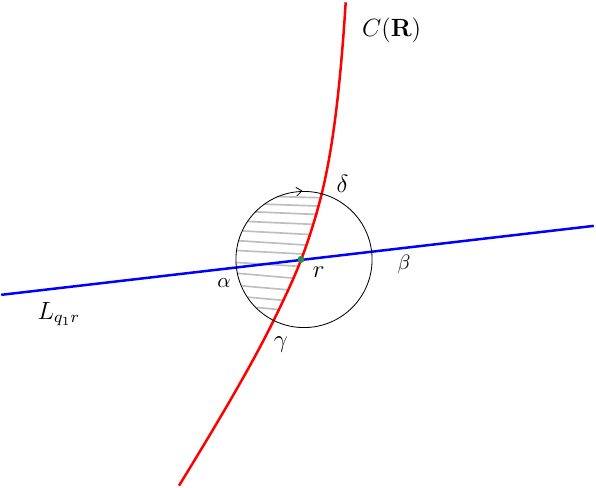}\end{minipage} \hspace{1cm}
\begin{minipage}{6cm}\includegraphics[width=5cm]{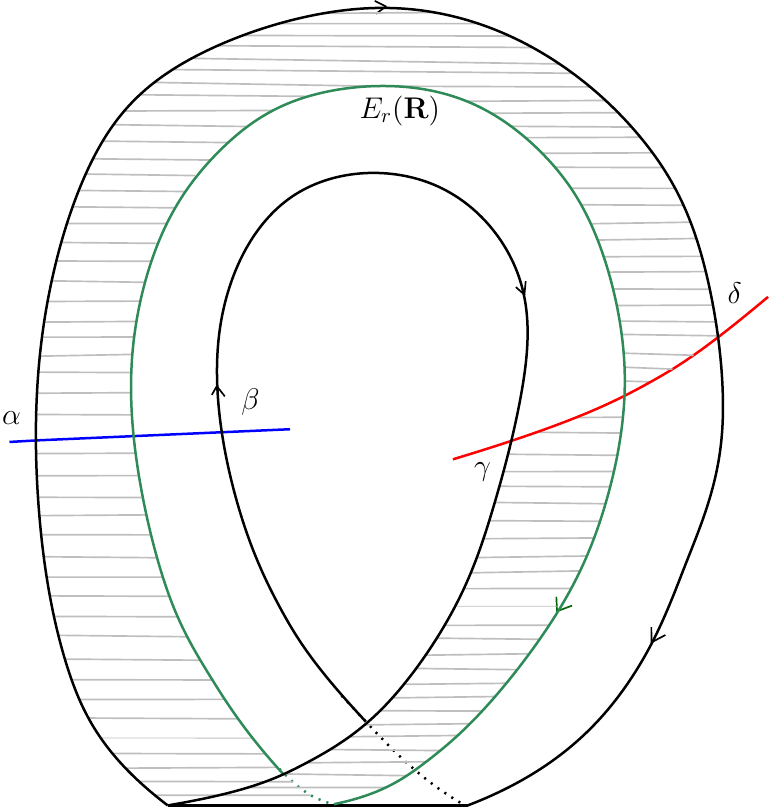}\end{minipage}
\caption{
{\small{The involution exchanges the hatched and the  plain sides of $C$, so in the blow-up it reverses the orientation of $E_r(\R)$}}}\label{fig:orientation}
\end{figure}
This gives  
\begin{equation}
\tilde{\sigma}_1^*(\bfe_{r})=-\bfe_{r}
\end{equation} 
 for $r\in \{q_2,p_{2,1}, p_{2,2}, q_3, p_{3,1}, p_{3,2}\}$.
 
\smallskip

Altogether, in the basis $(\bfe_{q_1}, \bfe_{p_{1,1}}, \bfe_{p_{1,2}} , \bfe_{q_2} , \bfe_{p_{2,1}} , \bfe_{p_{2,2}} , \bfe_{q_3} , \bfe_{p_{3,1}} , \bfe_{p_{3,2}} )$ of $H^1(X(\R);\Q)$, we obtain the following matrices for  $\tilde{\sigma}_1^*$ 
\begin{equation}
\tilde{\sigma}_1^*=\left(
\begin{array}{ccccccccc} 
1 & 0 & 0 & 0 & 0 & 0 & 0 & 0 & 0 \\
1 & 0 & -1 & 0 & 0 & 0 & 0 & 0 & 0 \\
1 & -1 & 0 & 0 & 0 & 0 & 0 & 0 & 0 \\
2 & -1 & 1 & -1 & 0 & 0 & 0 & 0 & 0 \\
0 & 1 & -1 & 0 & -1 & 0 & 0 & 0 & 0 \\
2 & -1 & 1 & 0 & 0 & -1 & 0 & 0 & 0 \\
2 & -1 & 1 & 0 & 0 & 0 & -1 & 0 & 0 \\
0 & 1 & -1 & 0 & 0 & 0 & 0 & -1 & 0 \\
2 & -1 & 1 & 0 & 0 & 0 & 0 & 0 & -1 
\end{array}
\right)
\end{equation}
Similarly, we obtain

\begin{equation}
\tilde{\sigma}_2^*=\left(
\begin{array}{ccccccccc} 
-1 & 0 & 0 & 2 & 1 & -1 & 0 & 0 & 0   \\
0 & -1 & 0 & 2 & 1 & -1 & 0 & 0 & 0   \\
0 & 0 & -1 & 0 & -1 & 1 & 0 & 0 & 0   \\
0 & 0 & 0 & 1 & 0 & 0 & 0 & 0 & 0  \\
0 & 0 & 0 & 1 & 0 & -1 & 0 & 0 & 0  \\
0 & 0 & 0 & 1 & -1 & 0 & 0 & 0 & 0  \\
0 & 0 & 0 & 2 & -1 & 1 & -1 & 0 & 0 \\
0 & 0 & 0 & 0 & 1 & -1 & 0 & -1 & 0 \\
0 & 0 & 0 & 2 & -1 & 1 & 0 & 0 & -1 
\end{array}
\right),
\end{equation}

\begin{equation}
\tilde{\sigma}_3^*=\left(
\begin{array}{ccccccccc} 
-1 & 0 & 0 & 0 & 0 & 0 & 2 & 1 & -1 \\
0 & -1 & 0 & 0 & 0 & 0 & 2 & 1 & -1 \\
0 & 0 & -1 & 0 & 0 & 0 & 0 & -1 & 1 \\
0 & 0 & 0 & -1 & 0 & 0 & 2 & 1 & -1 \\
0 & 0 & 0 & 0 & -1 & 0 & 2 & 1 & -1 \\
0 & 0 & 0 & 0 & 0 & -1 & 0 & -1 & 1 \\
0 & 0 & 0 & 0 & 0 & 0 & 1 & 0 & 0 \\
0 & 0 & 0 & 0 & 0 & 0 & 1 & 0 & -1 \\
0 & 0 & 0 & 0 & 0 & 0 & 1 & -1 & 0 
\end{array}
\right).
\end{equation}

\subsection{Positive Lyapunov exponent} 

Let $\Gamma^*$ be the image of $\Gamma=\langle \sigma_1, \sigma_2,\sigma_3\rangle $ in $\GL(H^1(X;\Z))$. This group preserves a three dimensional subspace, on which it acts by multiplication by $\pm 1$. The quotient is given by the linear map 
\begin{equation}\label{eq:quotient_map}
(x_1,x_2,x_3,x_4,x_5,x_6,x_7,x_8,x_9)\mapsto (x_1,x_2-x_3,x_4,x_5-x_6,x_7,x_8-x_9).
\end{equation}
On the quotient space, which we denote by $V$, the involutions act by the following matrices. 

\begin{equation}
A_1=\left(
\begin{array}{cccccc} 
1 & 0 & 0 & 0 & 0 & 0  \\
0 & 1 & 0 & 0 & 0 & 0  \\
2 & -1 & -1 & 0 & 0 & 0  \\
-2 & 2 & -1 & -1 & 0 & 0  \\
2 & -1 & 0 & 0 & -1 & 0 \\
-2 & 2 & 0 & 0 & 0 & -1 
\end{array}
\right).
\end{equation}

\begin{equation}
A_2=\left(
\begin{array}{cccccc} 
-1 & 0 & 2 & 1 & 0 & 0  \\
0 & -1 & 2 & 2 & 0 & 0  \\
0 & 0 & 1 & 0 & 0 & 0  \\
0 & 0 & 0 & 1 & 0 & 0  \\
0 & 0 & 2 & -1 & -1 & 0 \\
0 & 0 & -2 & 2 & 0 & -1 
\end{array}
\right).
\end{equation}

\begin{equation}
A_3=\left(
\begin{array}{cccccc} 
-1 & 0 & 0 & 0 & 2 & 1  \\
0 & -1 & 0 & 0 & 2 & 2  \\
0 & 0 & -1 & 0 & 2 & 1  \\
0 & 0 & 0 & -1 & 2 & 2  \\
0 & 0 & 0 & 0 & 1 & 0 \\
0 & 0 & 0 & 0 & 0 & 1 
\end{array}
\right).
\end{equation}

We shall denote by $\overline\Gamma^*$ the subgroup of $\SL_6(\Z)$ generated by 
 these involutions, and by $G$ the Zariski closure of $\overline\Gamma^*$ in $\SL_6$.
\begin{lem}\label{lem:representation_theory_Blanc} The following properties are satisfied:
\begin{enumerate}
\item The group $\overline\Gamma^*$ contains a non-Abelian free group.

\item The group $G$ is semi-simple. 

\item The action of $\overline\Gamma^*$ on $V\otimes \C$ is strongly irreducible. 

\item If $\nu$ is a probability measure on $\Gamma$ that satisfies Conditions~(S) and (M), then $\nu$ 
has a positive Lyapunov exponent on $H^1(X(\R);\R)$.
\end{enumerate}
\end{lem}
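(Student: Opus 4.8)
The plan is to verify the four assertions of Lemma~\ref{lem:representation_theory_Blanc} in order, since (4) will follow from (1)--(3) together with the Furstenberg positivity criterion, and then to feed this into the general machinery of Sections~\ref{subs:intro_stiffness}--\ref{subs:classification} to obtain Theorem~\ref{thm:blanc_examples}(8).

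For (1), I would exhibit an explicit pair of elements of $\overline\Gamma^*$ generating a free group by a ping-pong argument on $\P(V\otimes\C)$ or $\P(V\otimes\R)$: since the $A_i$ are explicit integer matrices, one computes the characteristic polynomial of, say, $A_1A_2$ and $A_2A_3$, checks that these are parabolic/loxodromic enough (or simply that some word has an eigenvalue off the unit circle), and then invokes the Tits-alternative-style argument already used in Lemma~\ref{lem:non-elementary_free_groups}. In fact the cleanest route is to note that $\Gamma$ itself is non-elementary on $\NS(X)$ by the earlier results (Blanc's theorem plus Lemma~\ref{lem:parabolic_gij}), and to argue that the present $6$-dimensional representation $V$ is faithful enough — but I expect a direct matrix computation is shorter. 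For (2), semisimplicity of $G=\overline{\overline\Gamma^*}^{\mathrm{Zar}}$: the group $\overline\Gamma^*$ preserves no nonzero invariant vector and (once (3) is known) acts irreducibly, so its Zariski closure has no nontrivial unipotent radical acting nontrivially; more concretely, one checks that $G$ is not contained in a parabolic subgroup and that the trace form or some invariant bilinear form is nondegenerate, forcing reductivity, and then the absence of a central torus (the $A_i$ are involutions up to the invariant $\pm1$ part, of determinant $1$) gives semisimplicity. For (3), strong irreducibility over $\C$: one shows first irreducibility by checking that the only subspaces of $V\otimes\C$ invariant under all three $A_i$ are $0$ and $V\otimes\C$ — a finite linear-algebra computation, looking at common eigenspaces and invariant flags of $A_1,A_2,A_3$ — and then that no finite union of proper subspaces is invariant, which follows because a finite-index subgroup of $\overline\Gamma^*$ still acts irreducibly (this is where the non-Abelian free subgroup from (1) is useful: it has no finite quotient forcing a block structure).

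With (1)--(3) in hand, assertion (4) is immediate from Furstenberg's theorem on random products of matrices: a probability measure $\nu$ on $\Gamma$ satisfying (S) pushes forward to a measure $\bar\nu$ on $\overline\Gamma^*$ whose support generates a group that is not contained in a compact subgroup (since it contains loxodromic-on-$V$ elements by (1)) and acts strongly irreducibly (by (3)) with semisimple Zariski closure (by (2)); condition (M) guarantees the logarithmic moment $\int\log\|A\|\,d\bar\nu<\infty$. Hence the top Lyapunov exponent $\lambda_1(\bar\nu)$ is simple and, by the Furstenberg formula / the fact that the exponent vanishes only when the group is, up to conjugacy and finite index, contained in a compact group, we get $\lambda_1(\bar\nu)>0$. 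Since $V$ is a quotient of $H^1(X(\R);\R)$ on which the complementary invariant subspace carries only $\pm1$ (zero exponent), the Lyapunov exponent of $\nu$ on $H^1(X(\R);\R)$ equals $\lambda_1(\bar\nu)>0$.

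The main obstacle I anticipate is assertion (3), the strong irreducibility over $\C$: verifying plain irreducibility of an explicit $6\times 6$ integer representation generated by three involutions is a bounded but genuinely tedious computation (one must rule out invariant subspaces of every dimension $1,2,3$), and upgrading to \emph{strong} irreducibility requires the extra argument that no finite-index subgroup preserves a decomposition — for which the honest thing is to combine (1) with the structure of $G$ from (2), or alternatively to compute the Galois/monodromy group of the characteristic polynomial of a well-chosen element and show it is large. Once the representation-theoretic facts (1)--(3) are nailed down, everything else — the reduction to the $6$-dimensional quotient $V$ via the map~\eqref{eq:quotient_map}, and the application of Furstenberg's positivity theorem — is routine. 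The remaining global input, namely that this positive exponent propagates from cohomology to actual dynamics (and in particular produces $f\in\Gamma$ with an eigenvalue $>1$ on $H^1(X(\R);\R)$, hence positive entropy on $X(\R)$), is then a formal consequence, as indicated after the statement of Theorem~\ref{thm:blanc_examples}.
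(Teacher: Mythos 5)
Your overall architecture matches the paper's (ping-pong for the free subgroup, structure theory of the Zariski closure for semisimplicity, then Furstenberg's positivity criterion for the exponent), and you correctly identify strong irreducibility as the crux. But that crux is exactly where your argument has a genuine gap. The claim that ``a finite-index subgroup of $\overline\Gamma^*$ still acts irreducibly because the non-Abelian free subgroup has no finite quotient forcing a block structure'' is false as stated: non-Abelian free groups have an abundance of finite quotients, and the presence of a free subgroup in no way prevents a finite-index subgroup from preserving a direct-sum decomposition (think of a free group acting through a monomial/imprimitive representation). Your fallback -- compute the Galois group of the characteristic polynomial of a well-chosen element -- is the right idea and is what the paper actually does, but it is left as a placeholder; without producing the element and the computation, neither irreducibility nor its strong form is established. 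There is also a circularity to untangle: your proof of (2) invokes (3), while your proof of (3) invokes ``the structure of $G$ from (2).''

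Concretely, the paper's proof rests on two explicit certificates that your proposal does not supply. First, $f=A_1A_2A_3$ has characteristic polynomial $(t-1)$ times an irreducible quintic, with a \emph{rational} eigenvector for the eigenvalue $1$; this vector forces any irreducible $G^o$-component containing it to be defined over $\Q$, and later kills the central torus. Second, a specific word $g$ of length $15$ has characteristic polynomial irreducible over $\Q$ with Galois group $\mathfrak S_6$; irreducibility over $\Q$ kills every proper rational invariant subspace (hence the unipotent radical, and, combined with the first certificate, yields $V_1=V$), and the Galois computation guarantees that every power $g^k$ still has six distinct eigenvalues, which is needed when passing to the finite-index subgroup $\overline\Gamma^*\cap G^o$. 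Your proposed alternative of directly ruling out invariant subspaces of dimensions $1,2,3$ for three $6\times 6$ matrices is not a routine finite check without such a certificate. Once these two elements are exhibited, your steps (1), (2) and (4) go through essentially as you describe (the Furstenberg step via strong irreducibility plus unboundedness is exactly the paper's), so the proposal is repairable, but as written the decisive step is missing.
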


From this lemma and the calculation of the spectral radius 

\begin{proof}
We shall need the following facts (computations of characteristic polynomials, eigenvectors, and 
Galois groups were done with {sagemath}):

\smallskip

\noindent{(a)} The element $f=A_1A_2A_3$ has characteristic polynomial 
\begin{equation}
P_f(t)=t^6 - 4t^5 - 3t^4 - 2t^3 + 5t^2 + 2t + 1
\end{equation}
 whose factorization in $\Q[t]$ is 
$P(t)= (t - 1) \times (t^5 - 3t^4 - 6t^3 - 8t^2 - 3t - 1)$. In particular, $f$ has six distinct eigenvalues, and only one of them is rational, namely $1$. By a direct computation we see that the only other real eigenvalue is $\lambda^+_f\simeq  4.679$, 
then there are two complex conjugate eigenvalues of modulus strictly between $1$ and $\lambda^+_f$, and two of modulus $<1$.
The  eigenvector corresponding to $\lambda^+_f$ is not mapped to any other eigenvector by $A_1$. And for every $k\geq 1$,  $f^k$ has also $6$ distinct eigenvalues (see below an argument for the similar case  of $g$).

\smallskip

\noindent{(b)} The element $g=A_1A_2A_3A_2A_1A_3A_1A_3A_1A_2A_3A_2A_1A_2A_3$ has characteristic polynomial 
\begin{equation}
P_g(t)=t^6 - 24 t^5 - 83 t^4 - 122 t^3 - 35 t^2 + 22 t + 1
\end{equation}
which  is irreducible in $\Q[t]$.
In particular, $g$ has six  distinct eigenvalues. Four  of them 
are real, with a single  one of maximal absolute value $\lambda^+_g>1$, and there are two complex conjugate eigenvalues of modulus $>1$. Moreover, for every $k\geq 1$,  $g^k$ has also $6$ distinct eigenvalues.

To prove this last point, it suffices to show that $(\lambda^+_g)^k$ is an algebraic number of
 degree $6$ for every $k\geq 1$. For this, one computes the Galois group of the splitting field $F$ of $P_g$: it is isomorphic to the symmetric group $\mathfrak S_6$, so $[F:\Q]= 6!$. The degree 6 extension $\Q(\lambda^+_g)$ of $\Q$
 is the subfield of $F$ fixed by a subgroup $H\subset \mathfrak S_6$ of index $6$. Note that $(\lambda^+_g)^k$ can not be rational for any $k\geq 1$, because since $\lambda^+_g$ is a unit, $(\lambda^+_g)^k$ would be equal to $1$, in contradiction with $\lambda^+_g>1$. Thus, if $(\lambda^+_g)^k$ had degree $<6$, there would be an intermediate extension $K\subset \Q(\lambda^+_g)$ of degree $d=2$ or $3$. 
 This extension would be the fixed field of a group $G\subset \mathfrak S_6$ of index $d$. 
For $d=3$ such a group does not exist. For $d=2$ we get $G=   \mathfrak A_6$ 
 and $\mathfrak A_6$ would contain $H$ as a subgroup of index $3$; this is a contradiction, 
 since the largest maximal subgroup of $\mathfrak A_6$ has index $6$.

\smallskip

\noindent{(c)}  The eigenvector of $f$ corresponding to the eigenvalue $1$ is defined over $\Q$. Thus, it is in general position with respect to the eigenvectors of $g$ (i.e.\ it is not contained in any proper $g$-invariant  subspace of $V$, because such a subspace
would be rational,  thereby producing  a rational factor of $P_g$) 
 
\smallskip

To show that $\overline\Gamma^*$ contains a non-Abelian free group, we use the fact 
that the eigenvector of $f$ for the leading eigenvalue $\lambda^+_f $ is not 
mapped to another eigenvector of $f$ by $A_1$. Thus  by the ping-pong lemma,
 if we set $h= A_1  \circ f \circ A_1^{-1}= A_2A_3A_1$, then the group generated by $f$ and $h$ contains a non-Abelian free  subgroup of $\GL(V)$ all of whose elements $\neq \mathrm{Id}$ have an eigenvalue $>1$ (see Lemma~\ref{lem:non-elementary_free_groups}). So, at this stage we know that $\Gamma$ is non-elementary. 

Consider the connected component of the identity $G^o\subset G$. The intersection $\overline\Gamma^*_0:=\overline\Gamma^*\cap G^o(\C)$ is a finite index subgroup of $\overline\Gamma^*$ and is a Zariski dense subgroup of $G^o$. Since $\overline\Gamma^*$ is contained in $\SL_m(\Z)$, the linear algebraic groups $G$ and $G^o$ are both defined over $\Q$. 
Let $R$ be the solvable radical of $G^o$ and let $U$ be its unipotent radical (see~\cite{Milne:book}, Chap. 6.h, page 135); they are defined over $\Q$ and are characteristic subgroups of $G^o$; in particular, they are normal in $G$. 
Let $F\subset V$ be the fixed point set of $U$. This vector subspace is defined over $\Q$, its dimension is positive, and it is $g$-invariant. Since the characteristic polynomial of $g$ is irreducible over $\Q$, we infer that $F=V$ and $U=\set{\id_V}$. 
This implies that $R$ is a torus; over $\C$, $R$ is diagonalizable and $R(\C)$ isomorphic to $(\C^\times)^r$ for some $r\leq 5$. The group $G$ acts by conjugacy on $R$; since the automorphism group of $R$ is discrete (isomorphic to $\GL_r(\Z)$) and $G^o$ is connected, we deduce that $R$ is central in $G^o$. In particular, $R$ commutes with $f^k$ and with $g^k$ if $k$ is chosen to insure that $f^k$ and $g^k$ are in $G^o$. Thus, $R$ being connected, each of the sixth complex eigenlines of $f^k$ is $R$-invariant, and the same holds for the eigenlines of $g^k$. By Property~(c) above, this implies that $R$ is made of homotheties, and since $R\subset \SL_6$ we deduce that $R$ is trivial. 
Thus $G^o$ is semi-simple, and so is $G$. 

By the first property, $G^o(\R)$ is not bounded, so at least one of its semisimple factors is a (non-Abelian) non-compact almost simple real Lie group.

Now, $V\otimes \C$ is a direct sum of irreducible representations $V_i$ of $G^o$. Choose $k\geq 1$ such that $f^k\in G^o$. One of the $V_i$, say $V_1$, must contain the eigenline of $f^k$ corresponding to the eigenvalue $1$; thus, $V_1$ contains an element of $V(\Q)$, and since $\overline\Gamma^*_0$ 
is Zariski dense in $G^o$ and is defined over $\Z$, we deduce that, in fact, $V_1$ is defined over $\Q$. By Property~(b), $V_1=V$, and $G^o$ acts irreducibly on $V$. Thus, $\overline\Gamma^*_0$ acts strongly irreducibly on $V$, and so does $\overline\Gamma^*$.

Finally, since the action of $\overline\Gamma^*$ on $V$ is strongly irreducible and unbounded, 
Theorem~3.31 of~\cite{benoist-quint_book} shows the positivity of the first Lyapunov exponent in $V$, hence 
in  $H^1(X(\R);\R)$, and the proof is complete. 
\end{proof}

Putting together all the results in this section, we have established the following result.  

\begin{thm}\label{thm:lyapunov_real}
Let $C$ be a smooth, real, plane cubic whose real part $C(\R)$ is connected. There exists a non-empty 
open subset $U\subset C(\R)^3$ such that if $(q_1, q_2, q_3)$ belongs to $U$, and $\nu$ is a probability measure on the corresponding group
$\Gamma = \bra{\tilde \sigma_1, \tilde \sigma_2, \tilde \sigma_3}$, 
generating $\Gamma$ and   with a finite first moment,  
 then the top Lyapunov exponent of the action of $\Gamma$ on $H^1(X(\R), \R)$ is positive. 
\end{thm}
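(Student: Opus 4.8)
The plan is to assemble Theorem~\ref{thm:lyapunov_real} from the local computations of \S\S\ref{subs:topology}--\ref{par:non_elementary_real} together with Lemma~\ref{lem:representation_theory_Blanc}. First I would fix, for a cubic $C$ with $C(\R)$ connected, a reference configuration $(q_1^0,q_2^0,q_3^0)\in C(\R)^3$ in which the points $q_i^0$ and their auxiliary base points $p_{i,j}^0$ occur along $C(\R)$ in the cyclic order displayed in \S\ref{par:non_elementary_real} (namely $p_{1,2},p_{2,2},p_{3,2},q_3,q_2,q_1,p_{1,1},p_{2,1},p_{3,1}$), and such that hypotheses (Hyp1)--(Hyp4) hold; such a configuration exists because, starting from $q_1^0$ as in Figure~\ref{fig:points_on_cubic} and then taking $q_2^0$ slightly below $q_1^0$ and $q_3^0$ slightly below $q_2^0$, all the relevant positional and genericity conditions are satisfied, and each of them is an open condition on $(q_1,q_2,q_3)$. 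At this reference configuration the matrices $\tilde\sigma_i^*$ acting on $H^1(X(\R);\Q)$ are exactly the $9\times 9$ matrices written in \S\ref{par:non_elementary_real}, hence the induced matrices on the quotient space $V$ (via the map~\eqref{eq:quotient_map}) are the $6\times 6$ matrices $A_1,A_2,A_3$.

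Next I would invoke Lemma~\ref{lem:representation_theory_Blanc}: for the reference configuration, property~(4) gives that any probability measure $\nu$ on $\Gamma=\bra{\tilde\sigma_1,\tilde\sigma_2,\tilde\sigma_3}$ satisfying (S) and (M) has a positive first Lyapunov exponent for the action on $V$, hence on $H^1(X(\R);\R)$ (since $V$ is a $\Gamma^*$-equivariant quotient of $H^1(X(\R);\R)$, a positive exponent on the quotient forces a positive exponent on the total space). The open set $U$ is then obtained by perturbation: the order of the points $q_i,p_{i,j}$ along $C(\R)$, and conditions (Hyp1)--(Hyp4), are all stable under small displacements of $(q_1,q_2,q_3)$, and as long as they hold the homological formulas of \S\ref{par:non_elementary_real} produce precisely the \emph{same} integer matrices $\tilde\sigma_i^*$ (the entries are determined combinatorially by the ordering, not by the precise positions, thanks to formula~\eqref{eq:formula_homology_line}). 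So on a whole neighbourhood $U$ of $(q_1^0,q_2^0,q_3^0)$ the representation $\Gamma^*\curvearrowright H^1(X(\R);\Z)$ is literally unchanged, and the conclusion of Lemma~\ref{lem:representation_theory_Blanc}(4) applies verbatim.

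The bulk of the real content is Lemma~\ref{lem:representation_theory_Blanc} itself, and I expect its proof to be the main obstacle — but the excerpt already proves it, so here my job is only to cite it correctly and to make the reduction airtight. The one point requiring genuine care is the transfer from $V$ back to $H^1(X(\R);\R)$: I would note that $H^1(X(\R);\R)$ splits $\Gamma^*$-equivariantly (at least up to finite index) as the three-dimensional subspace $\ker$ of~\eqref{eq:quotient_map} on which $\Gamma^*$ acts by signs, plus a complement mapping isomorphically to $V$; on the sign part the top exponent is $0$, on the $V$ part it is positive by the Lemma, so the top exponent on $H^1(X(\R);\R)$ equals the top exponent on $V$ and is positive. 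I would also remark that the finite first moment hypothesis on $\nu$ is exactly what is needed to run Furstenberg's theory on the finite-dimensional representation $V$ (this is condition~(M) restricted to the linear action, which is automatic since $\Gamma^*$ is a finitely generated matrix group and $\nu$ has finite support or finite first moment). This gives Theorem~\ref{thm:lyapunov_real}.
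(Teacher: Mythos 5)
Your proposal is correct and takes essentially the same route as the paper: the paper's proof of Theorem~\ref{thm:lyapunov_real} is precisely the assembly of the explicit matrices of \S\ref{par:non_elementary_real} with Lemma~\ref{lem:representation_theory_Blanc}(4), applied at a reference configuration and propagated to an open set because the homological formulas depend only on the (open) positional conditions. Your two supplementary observations --- that the matrices are locally constant in the parameters, and that a positive top exponent on the quotient $V$ forces one on $H^1(X(\R);\R)$ since the Lyapunov spectrum of the total space contains that of the quotient --- are exactly the steps the paper leaves implicit.
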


We now extend  this theorem to the case of $4$ points instead of $3$. So, we blow-up one more point $q_4\in C(\R)$, with $q_4$  between $q_1$ and $q_3$, and we still denote by $X$  the surface and by $\Gamma$ the group generated by the $4$ involutions. 
The cohomology group $H^1(X(\R);\Q)$ of the new surface $X(\R)$ has dimension $12$, 
with coordinates $(x_1, \ldots, x_{12})$; 
the subspace defined by 
\begin{equation}
x_2=x_3, x_5=x_6, x_8=x_9,  x_{11}=x_{12}, x_1   = x_4= x_7 = x_{10} =0
\end{equation} 
is invariant, and the restriction of $\Gamma$ to this subspace factors through a finite group. 
Let $W$ be the quotient space, which  is of dimension~$8$.

Choose an index $j\in \set{1, 2, 3, 4}$ and consider the subgroup $\Gamma_j$ of $\Gamma$ generated by the $\tilde{\sigma_i}$ with $i\neq j$. 
Then, there is a finite index subgroup $\Gamma_j^o$ of $\Gamma_j$ whose action on $W\otimes \C$ is reducible: there is an invariant subspace  $T_j$ of dimension $2$ (on which $\Gamma_j$ acts diagonally with eigenvalues equal to $\pm 1$), 
and the quotient space $W/T_j$ is a strongly irreducible representation $V_j$, defined over $\Q$, and of dimension $6$ 
(it is isomorphic to the representation $V$ studied in Lemma~\ref{lem:representation_theory_Blanc}). Pick $g_j$ in $\Gamma_j^o$   satisfying  Property~(b) of the proof of Lemma~\ref{lem:representation_theory_Blanc}. 
Then $g_j$ preserves a unique subspace $W_j$ of $W$ of dimension $6$, defined over $\Q$, which projects surjectively onto $V_j$.

Now, take   a finite index subgroup $\Gamma'$ of $\Gamma$   and let
 $K\subset W$ be a $\Gamma'$-invariant subspace. Changing the $\Gamma_j^o$ into 
  finite index subgroups, we may assume that they preserve $K$. Projecting $K$ to $V_j$, Lemma~\ref{lem:representation_theory_Blanc} implies that either $K$ is contained in $T_j$ 
 (and then $\dim(K)\leq 2$), or that $K$ is mapped onto $V_j$ (and then $\dim(K)\geq 6$). If $K$ is contained in one of the $T_j$, one checks that its projection onto some other $V_i$ is non-trivial, which gives a contradiction. It follows that 
  the projection of $K$ onto each $V_j$ is surjective. Since   $K$ is  $g_j$-invariant, 
  it contains $W_j$, and finally we obtain that $K=W$. 
Thus, the  action of $\Gamma$ on $W$ is strongly irreducible, and we get:

\begin{thm}\label{thm:lyapunov_real2}
Let $C$ be a smooth, real, plane cubic whose real part $C(\R)$ is connected. There exists a non-empty 
open subset $U\subset C(\R)^4$ such that if $(q_1, q_2, q_3, q_4)$ belongs to $U$, and $\nu$ is a probability measure on the corresponding group
$\Gamma = \bra{\tilde \sigma_1, \tilde \sigma_2, \tilde \sigma_3, \tilde \sigma_4}$, 
generating $\Gamma$ and  with a finite first moment,  
 then the top Lyapunov exponent of the action of $\Gamma$ on $H^1(X(\R), \R)$ is positive. 
\end{thm}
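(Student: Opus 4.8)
The strategy is to bootstrap from the three-point case (Theorem~\ref{thm:lyapunov_real}, via Lemma~\ref{lem:representation_theory_Blanc}) to the four-point case by an induction on the number of involutions, using that any proper invariant subspace for the big group would have to survive under restriction to each of the four subgroups $\Gamma_j$. Concretely, one blows up a fourth point $q_4\in C(\R)$ lying between $q_1$ and $q_3$, so that all the preceding configuration hypotheses on the relative positions of the $q_i$ and $p_{i,j}$ persist, and one identifies the $8$-dimensional quotient representation $W$ of $H^1(X(\R);\Q)$ obtained after killing the obvious $\pm 1$-eigenspace (of dimension $4$, coming from the exceptional curves over the pairs $p_{i,1},p_{i,2}$ and over the $q_i$ themselves, exactly as in~\eqref{eq:quotient_map}). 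The point is that for each $j\in\set{1,2,3,4}$ the restriction $W$ of the subgroup $\Gamma_j=\bra{\tilde\sigma_i : i\neq j}$ splits (over $\C$, after passing to a finite index subgroup $\Gamma_j^o$) as $T_j\oplus V_j$ in the sense of a filtration, where $T_j$ has dimension $2$ with eigenvalues $\pm1$, and $V_j\cong V$ is the $6$-dimensional strongly irreducible representation of Lemma~\ref{lem:representation_theory_Blanc}.

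\textbf{Key steps.} First I would record the matrices of $\tilde\sigma_1^*,\ldots,\tilde\sigma_4^*$ on $H^1(X(\R);\Q)$ for this $12$-point configuration, by the same homological bookkeeping used in \S~\ref{par:real_action_of_Jonq_involutions}--\S~\ref{par:non_elementary_real}: each $\tilde\sigma_i^*$ sends $\bfe_{q_i}$ to the class of the conic $D_{q_i}$, swaps (up to sign and the lines-below-correction of~\eqref{eq:formula_homology_line}) the classes $\bfe_{p_{i,1}}$ and $\bfe_{p_{i,2}}$ via the strict transforms of $(q_ip_{i,j})$, and acts by $-1$ on the exceptional curves over the points it does not touch (Figure~\ref{fig:orientation}). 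Then I would verify the invariant $\pm1$-subspace and pass to $W$. Next, for each $j$ I would exhibit an element $g_j\in\Gamma_j^o$ whose characteristic polynomial on $V_j$ is irreducible over $\Q$ (the analogue of Property~(b) in the proof of Lemma~\ref{lem:representation_theory_Blanc}, transported by the isomorphism $V_j\cong V$); such a $g_j$ then preserves a \emph{unique} $6$-dimensional $\Q$-rational subspace $W_j\subset W$ surjecting onto $V_j$ — unique because any two such would differ by a $g_j$-invariant rational subspace of the kernel $T_j$, on which $g_j$ acts with rational eigenvalues, contradicting that the projection is onto a space where $g_j$ has irreducible characteristic polynomial. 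Finally, take a finite index $\Gamma'\subset\Gamma$ and a $\Gamma'$-invariant subspace $K\subset W$; shrinking the $\Gamma_j^o$ so they all preserve $K$, the strong irreducibility of $V_j$ forces each projection $K\to V_j$ to be either zero (so $K\subset T_j$, whence $\dim K\le 2$) or surjective (so $\dim K\ge 6$). A short case analysis — if $K\subset T_j$ for one $j$, its image in some other $V_i$ is nonzero, hence surjective, forcing $\dim K\ge 6$, a contradiction with $\dim K\le 2$ — shows $K$ surjects onto every $V_j$; being $g_j$-invariant it then contains every $W_j$, and since the $W_j$ span $W$ (which one checks by a rank computation, or because $W/\sum W_j$ would be a common quotient where every $\tilde\sigma_i$ acts with finite order, impossible given that $\Gamma$ contains loxodromic-type elements on $H^1$), one gets $K=W$. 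Thus the $\Gamma$-action on $W$, hence on $H^1(X(\R);\R)$, is strongly irreducible and unbounded, and Theorem~3.31 of~\cite{benoist-quint_book} yields the positive top Lyapunov exponent for any $\nu$ satisfying (S) and (M).

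\textbf{Main obstacle.} The delicate point is not the abstract induction but the concrete linear-algebra input: one must actually produce the four matrices for the $12$-point configuration correctly (the sign conventions from~\eqref{eq:formula_homology_line} and the orientation reversals of Figures~\ref{fig:blowup}--\ref{fig:orientation} are where errors creep in), confirm that the $\pm1$-subspace is exactly $4$-dimensional so that $W$ has dimension $8$, and — most importantly — check that each $V_j$ really is isomorphic (as a $\Q$-representation of $\Gamma_j$) to the representation $V$ of Lemma~\ref{lem:representation_theory_Blanc}, rather than merely having the same dimension. Since $\Gamma_j$ is generated by three of the four involutions and the relative cyclic order of the relevant $q_i,p_{i,j}$ along $C(\R)$ is the same as in the three-point case, this isomorphism should hold on the nose, but making it precise (and thereby transporting Properties (a)--(c) of the $g$ and $f$ in that lemma to each $g_j$) is the step that requires care. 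A secondary, milder obstacle is ensuring the open set $U\subset C(\R)^4$ is nonempty: one starts from a $4$-tuple realizing the prescribed cyclic order and satisfying (Hyp1)--(Hyp4), and all the genericity conditions invoked (the conics $D_{q_i}$ in general position, the $g_j$ having irreducible characteristic polynomial) are Zariski-open and verified at a single explicit point by the \texttt{sagemath} computations already cited, so they hold on a dense open subset.
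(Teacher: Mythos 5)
Your proposal is correct and follows essentially the same route as the paper: the same $8$-dimensional quotient $W$, the same filtration $T_j\subset W$ with strongly irreducible $6$-dimensional quotient $V_j\cong V$ for each subgroup $\Gamma_j$, the same elements $g_j$ with irreducible characteristic polynomial pinning down a unique rational complement $W_j$, the same dichotomy for an invariant subspace $K$ (contained in some $T_j$ or surjecting onto every $V_j$, hence containing every $W_j$), and the same appeal to Benoist--Quint for the positivity of the exponent. Your explicit justifications of the uniqueness of $W_j$ and of the fact that the $W_j$ span $W$ are points the paper leaves implicit, but they do not change the argument.
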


\section{Dynamics on the real surface} \label{sec:stiffness_blanc}

In this section we complete the proof of Theorem~\ref{thm:blanc_examples}.

 \subsection{Preliminaries from ergodic theory}

\begin{pro}\label{prop:ergodicity_volume}
Let  $X$ be  a complex projective surface (resp. a real projective surface). Let $\Gamma$ be a non-elementary subgroup of 
$\Aut(X)$ (resp. of $\Aut(X_\R)$) containing a parabolic element. Then the action of $\Gamma$ is ergodic with respect to the 
 Lebesgue measure on $X$ (resp. on $X(\R)$). 
\end{pro}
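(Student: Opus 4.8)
The statement asserts ergodicity of a non-elementary group $\Gamma$ containing a parabolic element with respect to the (smooth) Lebesgue measure, both in the complex case on $X$ and the real case on $X(\R)$. The natural strategy is to reduce this to the classification of invariant measures for such groups, Theorem~\ref{thm:intro_classification_invariant_measures} (equivalently \cite[Thm A]{invariant}), together with the special structure of parabolic elements. The key point is that Lebesgue measure $\leb$ (or $\leb_{X(\R)}$) is $\Gamma$-quasi-invariant with a smooth positive Radon--Nikodym cocycle, so the abstract ergodic decomposition of $\Gamma$ acting on $(X,\leb)$ expresses $\leb$ as an integral of ergodic $\Gamma$-invariant probability measures $\mu_t$, each absolutely continuous with respect to $\leb$ (when $\Gamma$ preserves a volume form, as on non-rational $X$, these are honestly $\Gamma$-invariant; in general one works with the stationary measure $\nu\ast$ formalism, but in the volume-preserving or Blanc-type setting we have a genuine invariant class). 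So it suffices to show that there is essentially only \emph{one} such ergodic component with positive density, i.e. that any two ergodic $\Gamma$-invariant probability measures $\mu_1,\mu_2$ that are absolutely continuous with positive density on a common open set must coincide.

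\textbf{First step: reduce to Zariski diffuse components.} An ergodic component $\mu_t$ with a smooth positive density cannot charge a proper Zariski closed subset (a hypersurface has zero Lebesgue measure), so every $\mu_t$ appearing in the decomposition of $\leb$ is Zariski diffuse. Therefore Theorem~\ref{thm:intro_classification_invariant_measures} applies: each such $\mu_t$ either has a smooth positive density on $X\setminus Z_t$ for some proper $\Gamma$-invariant subvariety $Z_t$, or is carried by a $\Gamma$-invariant totally real surface $\Sigma_t$. In the complex case, the second alternative is impossible for a component of Lebesgue measure: a totally real surface is $\leb$-null in $X$, so it contributes zero mass to $\leb$. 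Hence every component of $\leb$ is of the first type; the same dichotomy in the real case forces $\mu_t$ to have smooth positive density on $X(\R)\setminus Z_t$.

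\textbf{Second step: use parabolics to merge components.} Let $h\in\Gamma$ be a parabolic automorphism preserving the fibers of a genus~$1$ fibration $\pi\colon X\to B$ (after replacing $h$ by an iterate, it preserves each fiber and acts as a translation, uniquely ergodic, on a full-measure set of smooth fibers). Given two ergodic $\Gamma$-invariant components $\mu_1,\mu_2$ with smooth positive densities, disintegrate each along $\pi$. On any fiber $F$ where $h$ acts as a uniquely ergodic translation, the $h$-invariance forces the conditional of $\mu_i$ on $F$ to equal the Haar measure of that fiber (normalized); since this holds $\pi_*\mu_i$-a.e. and the densities are positive, $\pi_*\mu_1$ and $\pi_*\mu_2$ are mutually absolutely continuous on $B$, and then $\mu_1,\mu_2$ are mutually absolutely continuous on $X$. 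Two mutually absolutely continuous ergodic $\Gamma$-invariant measures are equal. This shows the ergodic decomposition of $\leb$ has a single non-atomic component, hence $\leb$ is ergodic. (In the real case the same fiberwise argument works, using that $h$ restricted to $X(\R)$ still has the uniquely ergodic behaviour on the real loops of smooth fibers; this is exactly the mechanism described in the ``Notes on the proof'' after Theorem~\ref{thm:intro_classification_invariant_measures}.)

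\textbf{Main obstacle.} The delicate point is the passage from ``mutual absolute continuity of conditionals on almost every fiber'' to ``$\leb$ is ergodic'': one must be careful that $\pi_*\leb$ is itself quasi-invariant under the residual action of $\Gamma$ on $B$ and that the set of ``good'' fibers (where $h$ is uniquely ergodic) is $\Gamma$-saturated up to null sets. Equivalently, one must verify that the local homogeneity property extracted from one parabolic element, combined with the existence of a \emph{second} parabolic element with a distinct invariant fibration (which exists because $\Gamma$ is non-elementary, by \S\ref{par:parabolic_field_kodaira}), rigidifies the conditional structure enough to pin down the measure class globally --- this is precisely the content invoked from \cite{invariant}, so in practice the proof is short: cite Theorem~\ref{thm:intro_classification_invariant_measures} to exclude all components except the absolutely continuous one, and note that absolute continuity plus ergodicity of the group action on the measure \emph{class} (which follows from the same classification, since the a.c. component is unique) gives ergodicity of $\leb$ itself.
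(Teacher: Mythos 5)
There is a genuine gap at the very first step. Your argument rests on decomposing Lebesgue measure into ergodic $\Gamma$-\emph{invariant probability} measures $\mu_t$ and then applying Theorem~\ref{thm:intro_classification_invariant_measures} to each component. Such a decomposition exists only when the Lebesgue class contains a finite invariant measure. The proposition, however, must hold (and is used in this paper) precisely in situations where it does not: for the Blanc surfaces of Part~\ref{part:ergodic_blanc}, the only $\Gamma$-invariant volume has \emph{infinite} total mass, and Theorem~\ref{thm:blanc_no_invariant_measure} shows there are no Zariski diffuse invariant probability measures at all --- yet Proposition~\ref{prop:ergodicity_volume} is invoked there (in Lemma~\ref{lem:absolute_continuity}) to conclude that a $\Gamma$-invariant set of positive volume is conull. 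For a merely quasi-invariant measure one can still form an ergodic decomposition into quasi-invariant components, but those are not invariant probability measures and the classification theorem does not apply to them; your parenthetical appeal to ``the stationary measure formalism'' does not repair this, since stationary measures are a different object and their classification is not what is being used. So the reduction collapses exactly in the cases that matter.

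The paper's proof avoids this entirely and is more elementary: take a $\Gamma$-invariant measurable set $A$ with $\vol_X(A)>0$. Since $\Gamma$ is non-elementary and contains a parabolic element, it contains two independent Halphen twists $g,h$ with distinct invariant genus~$1$ fibrations $\pi_g,\pi_h$. Unique ergodicity of $g$ on almost every fiber of $\pi_g$ forces the $g$-invariant set $A$ to be, up to a null set, a union of $\pi_g$-fibers $\pi_g^{-1}(B_g)$ with $B_g$ of positive measure; such a saturated set meets almost every $\pi_h$-fiber in a set of positive Haar measure, and $h$-invariance plus unique ergodicity of $h$ on those fibers then forces $A$ to be conull. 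Your fiberwise computation in the ``second step'' is close in spirit to this, but it is deployed at the level of invariant probability measures rather than invariant sets; rephrasing it as a statement about invariant sets (which makes sense for any quasi-invariant measure) is exactly what is needed to make the argument work in full generality.
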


\begin{proof}
This result is contained in \cite{cantat_groupes, invariant} but it was not explicitly stated there, so we provide details in the case of complex surfaces. We only deal with the case where $X$ is not Abelian, in which case 
$\Gamma$ is automatically countable.   We closely follow \cite[\S 4]{invariant} and freely use the vocabulary from that paper. Let $A\subset X$ be a 
measurable subset such that $\vol_X(A\Delta\gamma\inv A) = 0$, where $\vol_X$ is  the probability 
 measure associated to some Kähler form on $X$. Replacing $A$ by $\bigcap_{\gamma\in \Gamma}\gamma\inv A$, we may assume that $A$ is $\Gamma$-invariant. Assuming that $\vol_X(A)>0$, we have to show that $\vol_X(A)=1$. As in \cite[\S 4.3.1]{invariant},   $\Gamma$ contains a 
  ``special'' subgroup $\langle{g, h\rangle}$, freely generated by two independent Halphen twists. Denote by 
 $\pi_g$, $\pi_h:X\to \P^1$ the associated invariant fibrations. Recall that for almost every $w\in \P^1$, the action 
 of $g$ (resp. $h$) on $\pi_g\inv(w)$ (resp. $\pi_h\inv(w)$) is uniquely ergodic, and the unique invariant measure is the Haar measure. Since $A$ is $g$-invariant, there is a subset $B_g\in \P^1$ of positive measure such that $\vol_X(A\Delta \pi_g^{-1}(B_g))=0$. Consequently $A$ intersects any $\pi_h$-fiber on a set of positive Haar measure, and then using the $h$-invariance,  $A = X$ up to a set of volume~$0$. 
\end{proof}

\begin{pro}\label{pro:sum_exponents}
Let $X$ be a compact Kähler surface. Let $\nu$ be a probability measure on $\Aut(X)$ that satisfies the moment condition (M) and let $\mu$ be an ergodic $\nu$-stationary measure.  Let $\eta$ be a non-trivial meromorphic $2$-form on $X$ such that
\begin{enumerate}[\em (i)]
\item $\displaystyle \int \log^+\vert \jac_\eta(f)(x)\vert \d\mu(x) \d\nu(f)<+\infty$;
\item $\mu$ gives zero mass to the set of  zeroes and poles of $\eta$.
\end{enumerate}
Then the Lyapunov exponents of $\mu$ satisfy:
\[
\lambda^{-}+\lambda^{+} = \int \log(\abs{\jac_\eta f (x)}^2) \d\mu(x)\d\nu(f).
\]
In particular,  if $\eta$  is  
invariant, $\lambda^{-} + \lambda^{+} = 0$. 
\end{pro}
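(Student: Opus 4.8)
The statement is essentially a formula relating the sum of Lyapunov exponents of a stationary measure to the integral of (twice the logarithm of) the Jacobian of a meromorphic $2$-form. The natural approach is to reduce the identity to the classical Furstenberg formula for the top exponent applied to the determinant cocycle. Recall that, for the non-invertible skew product $F_+\colon \Omega\times X\to\Omega\times X$, Oseledets' theorem applied fiberwise gives the two exponents $\lambda^-\le\lambda^+$, and that the second exterior power of the differential cocycle $(\omega,x)\mapsto D_xf^1_\omega\in\GL(T_xX,T_{f^1_\omega(x)}X)$ has as its (single) Lyapunov exponent the sum $\lambda^-+\lambda^+$. This is the one-dimensional cocycle $(\omega,x)\mapsto \det D_xf^1_\omega$, computed in any fixed smooth trivialization of $\bigwedge^2 TX$, e.g. the one furnished by a Kähler form $\kappa$; with respect to $\kappa$ this is exactly $\jac_\kappa f(x)$.

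First I would fix a smooth volume form (say the one coming from a Kähler metric $\kappa$) and write the sum of the exponents via the subadditive/Furstenberg formula
\begin{equation}
\lambda^-+\lambda^+=\lim_{n\to\infty}\frac1n\int \log\bigl|\jac_\kappa f^n_\omega(x)\bigr|^2\,\d(\nu^\N\times\mu)(\omega,x)
=\int \log\bigl|\jac_\kappa f(x)\bigr|^2\,\d\mu(x)\,\d\nu(f),
\end{equation}
the last equality being the stationarity of $\mu$ together with the cocycle relation $\jac_\kappa f^n_\omega(x)=\prod_{k=0}^{n-1}\jac_\kappa f(f^k_\omega(x))$; the moment condition (M) guarantees integrability of $\log\|Df\|_{C^0}$, hence of $\log|\jac_\kappa f|$, so the limit-free formula is legitimate. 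Next I would compare $\jac_\kappa f$ with $\jac_\eta f$. Writing $\eta=\varphi\,\kappa_{\mathbb C}$ (abusing notation: $\kappa_{\mathbb C}$ a fixed local holomorphic generator of $\bigwedge^2 T^*X$ and $\varphi$ a meromorphic function with zeros and poles exactly on the zero/pole divisor of $\eta$), a direct change-of-variables computation gives, on the complement of the zero/pole set,
\begin{equation}
\jac_\eta f(x)=\frac{\varphi(f(x))}{\varphi(x)}\,\jac_\kappa f(x),
\end{equation}
so that $\log|\jac_\kappa f(x)|^2=\log|\jac_\eta f(x)|^2-\bigl(\log|\varphi(f(x))|^2-\log|\varphi(x)|^2\bigr)$.

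The remaining step is to integrate this identity against $\d\mu\,\d\nu$ and check that the coboundary term $\int(\log|\varphi\circ f|^2-\log|\varphi|^2)\,\d\mu\,\d\nu$ vanishes. By stationarity of $\mu$, $\int\log|\varphi(f(x))|^2\,\d\mu(x)\,\d\nu(f)=\int\log|\varphi(x)|^2\,\d\mu(x)$, provided both integrals are finite, i.e. provided $\log|\varphi|\in L^1(\mu)$. This is exactly where hypotheses (i) and (ii) enter: (ii) says $\mu$ gives no mass to $\{\varphi=0\}\cup\{\varphi=\infty\}$, so $\log|\varphi|$ is $\mu$-a.e. finite; and (i) (the integrability of $\log^+|\jac_\eta f|$) combined with the already-known integrability of $\log|\jac_\kappa f|$ controls $\log^+|\log|\varphi\circ f|-\log|\varphi||$ and, via a standard argument using stationarity to upgrade one-sided integrability to two-sided, yields $\log|\varphi|\in L^1(\mu)$. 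Once $\log|\varphi|\in L^1(\mu)$, the coboundary integrates to zero and we obtain $\lambda^-+\lambda^+=\int\log|\jac_\eta f(x)|^2\,\d\mu(x)\,\d\nu(f)$. If moreover $\eta$ is $\Gamma$-invariant up to the unitary character (as in Remark~\ref{rem:pm1}, where $\jac_\eta(f)=\pm1$), the integrand is identically $0$ and $\lambda^-+\lambda^+=0$.

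\textbf{Main obstacle.} The only delicate point is the integrability claim $\log|\varphi|\in L^1(\mu)$, i.e. controlling how much mass $\mu$ can place near the polar/zero locus of $\eta$: a priori $\mu$ could concentrate logarithmically slowly near that divisor and break the coboundary cancellation. The resolution is to use hypothesis (i) together with the cocycle identity: since $\log|\jac_\eta f|$ is $\log^+$-integrable and $\log|\jac_\kappa f|$ is genuinely integrable, the function $\psi(x):=\log|\varphi(x)|$ satisfies $\psi\circ f-\psi\in L^1(\mu\times\nu)$ in the sense that its positive part is integrable; by a Birkhoff/stationarity argument (the standard trick that an $L^1$-coboundary whose positive part is integrable has integrable negative part and integrates to zero, once one knows $\psi$ is a.e. finite), one deduces $\psi\in L^1(\mu)$ and the coboundary integrates to $0$. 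I would carry out this measure-theoretic step carefully, but everything else is routine bookkeeping with change-of-variables formulas and the stationarity of $\mu$.
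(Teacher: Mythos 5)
Your proposal is correct, and it shares the paper's skeleton: reduce to the determinant cocycle, whose Lyapunov exponent is $\lambda^-+\lambda^+$, and transfer from a smooth volume form to $\eta$ via the density $\varphi=\eta\wedge\overline{\eta}/\vol_X$ and the cocycle identity $\abs{\jac_\eta f(x)}^2=(\varphi(f(x))/\varphi(x))\,\jac_{\vol}(f)(x)$. Where you genuinely diverge is in the one delicate step, the coboundary term $\log\varphi\circ f-\log\varphi$. You kill it by an integrated argument: its positive part is in $L^1$ by (i) and (M), $\log\varphi$ is a.e.\ finite by (ii), and a coboundary over the $F_+$-invariant measure $\nu^{\N}\times\mu$ with integrable positive part is integrable with integral zero. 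That lemma is correct (compare $\tfrac1n(\psi\circ F_+^n-\psi)\to 0$ in measure with Birkhoff applied to the positive and negative parts), but note it yields $\int(\psi\circ F_+-\psi)=0$ directly; your stronger intermediate claim that $\log\abs{\varphi}\in L^1(\mu)$ is not needed and does not obviously follow. The paper instead argues pointwise: Birkhoff gives a.s.\ convergence of $\tfrac1n\log\abs{\jac_\eta f^n_\omega(x)}$, and since $\mu$ gives no mass to $\mathrm{div}(\eta)$, ergodicity supplies a subsequence $n_j$ along which $f^{n_j}_\omega(x)$ stays at positive distance from $\mathrm{div}(\eta)$, so $\tfrac1{n_j}\log\bigl(\varphi(f^{n_j}_\omega(x))/\varphi(x)\bigr)\to 0$ and no integrability analysis of $\log\varphi$ is required. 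Your route buys an honest $L^1$ statement for the coboundary; the paper's recurrence trick is shorter. One presentational point: define $\varphi$ globally as the density of $\eta\wedge\overline{\eta}$ against $\vol_X$ rather than via a local holomorphic generator of $\bigwedge^2T^*X$, which need not exist globally.
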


Before starting the proof, let us recall some notation from \S~\ref{subs:intro_stiffness}: we denote 
$\Omega=\Aut(X)^\N$ and $F_+$   the skew-product transformation of $\Omega\times X$ associated to 
the random dynamics, 
acting as the one-sided shift on $\Omega$ and by automorphisms on $X$. 
The measure 
 $\nu^\N\times \mu$ is $F_+$ invariant. One may also consider the 2-sided 
shift $\vartheta\colon \Aut(X)^\Z\to  \Aut(X)^\Z$ and the corresponding 
 skew-product $F$  on $\Aut(X)^\Z\times X$, defined by   $F (\xi,x)= (\vartheta \xi, f_0(x))$, 
 where $\xi = (f_i)_{i\in \Z}$.   
 The natural extension of $\nu^\N\times \mu$ will be denoted by $\m$: it is invariant and ergodic and 
 its projection on $\Aut(X)^\Z$ is $\nu^\Z$. Beware that 
 $\m$ differs from $\nu^\Z\times \mu$ unless 
  $\mu$ is   invariant.

\begin{proof}   Fix a Kähler metric $\kappa_0$ on $X$. Fix a trivialization of the tangent bundle $TX$, given by a measurable family of linear isomorphisms 
$L(x)\colon T_xX\to \C^2$ such that (a) $\det(L(x))=1$ and (b) $1/c\leq \norm{L(x)}+\norm{L(x)^{-1}}\leq c$, for some constant $c>1$; here, 
the determinant is relative to a volume form $\vol_X$ on $X$ and the standard volume form on $\C^2$, and  the 
norm is with respect to $(\kappa_0)_x$ 
on $T_xX$ and the standard euclidean metric on $\C^2$.  

For $(\xi, x)\in \Aut(X)^\Z\times X$ and $n\geq 0$, the differential $D_xf^n_\xi$ is expressed in this trivialization as a
matrix  
\begin{equation}
A^{(n)}(\xi, x)=L(f^n_\xi(x)) \circ D_xf^n_\xi\circ L(x)^{-1}.
\end{equation} 
Denote by $\chi^{-}_n(\xi, x)\leq \chi^{+}_n(\xi, x)$ the  singular values of   
$A^{(n)}(\xi, x)$. Then $\m$-almost surely, $\unsur{n}\log \chi^{\pm}_n(\xi, x)$ converges towards $\lambda^{\pm}$ as $n$ goes to $+\infty$. 

The form $\eta\wedge{\overline{\eta}}$ can be written $\eta\wedge {\overline{\eta}}=\varphi(x) \vol_X$ for some
function  $\varphi\colon X\to [0,+\infty]$. Locally, one can write $\eta=h(x) dx_1\wedge dx_2$ where $(x_1,x_2)$ 
are local holomorphic coordinates  and $h$ is a meromorphic function; then $\varphi(x)\vol_X= \abs{h(x)}^2 dx_1\wedge dx_2\wedge d{\overline{x_1}}\wedge d\overline{x_2}$. The jacobian $\jac_\eta$ satisfies 
\begin{equation}
\vert \jac_\eta(f)(x)\vert^2=\frac{\varphi(f(x))}{\varphi(x)}\jac_\vol(f)(x)
\end{equation}
for every $f\in \Aut(X)$ and $x\in X$.
Using $\det(L(x))=1$, we get 
\begin{equation}\label{eq:determinant}
\det (A^{(n)}(\xi, x)) = \jac_\vol(f^n_\xi)(x),
\end{equation}
and then  
\begin{equation}\label{eq:jacobian}
\unsur{n}\log \chi^{-}_n(\xi, x) +  \unsur{n}\log \chi^{+}_n(\xi, x)  =  \frac{2}{n}\log \abs{\jac_\eta f^n_\xi(x)} - \unsur{n}\log(\varphi(f^n_\xi(x))/\varphi(x)).
\end{equation}
By the Oseledets theorem, the left hand side of \eqref{eq:jacobian} converges almost surely to $\lambda^{-}+\lambda^{+}$.
Since the Jacobian $\jac_\eta$ is multiplicative along orbits, {i.e.}\   $\jac_\eta f^n_\xi(x)  = \prod_{k=0}^{n-1} 
\jac_\eta f_{\vartheta^k\xi}(f^k_\xi x)$, the integrability condition and the ergodic theorem imply that, almost surely,    
\begin{align}
 \lim_{n\to\infty} \frac{1}{n} \log\abs{\jac_\eta f_\xi^n(x)}   
&=  \int \log\abs{\jac_\eta f_\xi^1 (x)} d\m(\xi, x) \\
&\notag =  \int \log\abs{\jac_\eta f_\omega^1 (x)} d(\nu^\N\times \mu)(\omega, x)  \\
&\notag  =  \int \log\abs{\jac_\eta f(x)} \d\mu(x)\d\nu(f).
\end{align}
Let ${\mathrm{div}}(\eta)$ be the set of zeroes and poles of $\eta$. Since $\mu$ is ergodic and does not charge ${\mathrm{div}}(\eta)$, we deduce that for $\m$-almost every $(\xi, x)$, 
there is a sequence $(n_j)$  such that $f^{n_j}_\xi(x)$ stays at  
positive distance from ${\mathrm{div}}(\eta)$; along such a sequence, $\log\vert \varphi(f^{n_j}_\xi(x))/\varphi(x)\vert$ 
stays bounded, and the right hand side of \eqref{eq:jacobian} tends to 
$2\int \log\abs{\jac_\eta f(x)} \d\mu(x)\d\nu(f)$. This concludes the proof. 
\end{proof}
 
 \subsection{Stiffness} 
 
\begin{thm}\label{thm:blanc_stiffness_real}  
Let $C$ be a smooth, real, plane cubic which is defined over $\overline{\Q}$ and whose real part 
$C(\R)$ is connected. One can find four points $q_1$, $q_2$, $q_3$, $q_4$ in $C(\R)$ such 
that  the following properties hold.  
Let $X$ be the Blanc surface constructed  in~\S~\ref{sec:invariant_curves_parabolic}
  and let  $\Gamma= \langle  \tilde{\sigma}_1, \tilde{\sigma}_2, \tilde{\sigma}_3, 
  \tilde{\sigma}_4\rangle\subset \Aut(X_\R)$ be the  group generated by the four Jonqui\`eres involutions.

Let $\nu$ be any probability measure on $\Gamma$ which satisfies Conditions (S) and (M). 
Then, any ergodic  $\nu$-stationary  measure on $X(\R)$ 
 is  the Dirac mass $\delta_x$ at 
some point $x\in C_X(\R)$.  Such measures have vanishing Lyapunov exponents. 
\end{thm}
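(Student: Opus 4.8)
The strategy is to combine the structural results already established for these examples with the stiffness results from the overview and the positivity of the top Lyapunov exponent on $H^1(X(\R);\R)$ proved in Section~\ref{sec:real_construction}. First, I would choose $q_1,q_2,q_3,q_4$ in $C(\R)$ so that simultaneously: (i) hypotheses (Hyp1)--(Hyp4) hold; (ii) the conclusion of Proposition~\ref{pro:no_finite_orbit_strong_version} holds, so that the only finite orbits of $\Gamma$ are the fixed points in $C_X$; and (iii) the tuple lies in the open set $U\subset C(\R)^4$ of Theorem~\ref{thm:lyapunov_real2}, so that any $\nu$ generating $\Gamma$ and with finite first moment has a positive top Lyapunov exponent on $H^1(X(\R);\R)$. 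Each of these is an open-dense (or complement-of-countably-many-proper-closed) condition on $(q_1,\ldots,q_4)$, so a common choice exists; since everything is defined over $\overline{\Q}$ we may also keep $C$ and the $q_i$ in $\overline{\Q}$, which is needed to invoke~\cite{finite_orbits}.

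Next, fix an ergodic $\nu$-stationary measure $\mu$ on $X(\R)$. The first dichotomy is whether $\mu$ is Zariski diffuse. If it is not, then by~\cite[Prop. 10.6]{stiffness} it is supported on a proper Zariski closed subset; by Lemma~\ref{lem:no_invariant_curve} the only $\Gamma$-invariant curve is $C_X$, and on $C_X(\R)$ the group $\Gamma$ acts trivially (it fixes $C_X$ pointwise), so $\mu$ must be supported on $C_X(\R)$ together with finitely many isolated periodic points; by the choice in (ii) there are no isolated finite orbits, so $\mu = \delta_x$ for some $x\in C_X(\R)$, and since $\Gamma$ fixes $x$ its Lyapunov exponents are those of the (trivial, up to finite index) action on $T_xX(\R)$ — more precisely, they are computed from the cocycle $\log\|D_x f\|$, and since $x\in C_X$ is fixed by every generator the relevant linear maps $D_x f$ generate a group that is, up to finite index, unipotent (it preserves the tangent line to $C_X$ and acts trivially on it, being a derivative of an involution fixing $C_X$ pointwise), whence both exponents vanish. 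So I must rule out the Zariski diffuse case.

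Assume then $\mu$ is Zariski diffuse. The key point is to show its Lyapunov exponents vanish; once that is done, the invariance principle of Crauel~\cite{crauel} gives that $\mu$ is $\Gamma$-invariant, and then Theorem~\ref{thm:blanc_no_invariant_measure} (or rather the real analogue argument) forces a contradiction — but actually it is cleaner to argue directly: a $\Gamma$-invariant Zariski diffuse measure on $X(\R)$ is, by~\cite[Thm A, case (a)]{invariant} applied to the parabolic-containing non-elementary group $\Gamma$ (parabolics exist by Lemma~\ref{lem:parabolic_gij}), a measure with smooth positive density on $X(\R)$ outside a proper invariant algebraic set, hence (the only such set being $C_X$) equal to $\varphi\,\vol_X^\infty$ with $\varphi$ constant by ergodicity and invariance of $\vol_X^\infty$; but $\vol_X^\infty$ has infinite total mass by Property~(3) of \S~\ref{par:setting_and_omega}, contradiction. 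So the heart of the proof is the vanishing of the Lyapunov exponents for a Zariski diffuse ergodic $\nu$-stationary $\mu$. For this I argue by contradiction: suppose $\mu$ is hyperbolic (it cannot have two exponents of the same sign, as that makes it atomic by a standard Pesin argument, contradicting Zariski diffuseness; and by Proposition~\ref{pro:sum_exponents} applied to the $\Gamma$-invariant form $\Omega_X$ we have $\lambda^-+\lambda^+=0$, so the only remaining possibility besides $\lambda^-=\lambda^+=0$ is $\lambda^-<0<\lambda^+$). Then, since $\Gamma$ preserves the area form $\Omega_X$ on $X(\R)\setminus C_X(\R)$ and $\mu$ gives no mass to $C_X(\R)$, I would like to invoke the Brown--Rodriguez Hertz machinery~\cite{br} together with Theorem~\ref{thm:non_random}: either the Oseledets stable field $E^s$ is non-random — in which case $\mu$ is $\Gamma$-invariant by Theorem~\ref{thm:non_random}, and we are back to the contradiction above — or $E^s$ is random, in which case~\cite[Thm 3.4]{br} (applied on the totally real surface $X(\R)$ with its invariant area form) gives that $\mu$ is invariant, again a contradiction.

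\textbf{The main obstacle.} The delicate point — and the step I expect to be hardest — is the application of~\cite{br}: that theorem is stated for volume-preserving actions on a compact surface, whereas here the invariant area form $\Omega_X$ on $X(\R)$ is singular (infinite mass, pole along $C_X(\R)$) and $\mu$ merely gives zero mass to $C_X(\R)$ without any uniform control near it. One must either check that the hypotheses of~\cite[Thm 3.4]{br} are robust enough to localize away from $C_X(\R)$ (using that $\mu$-a.e. orbit returns infinitely often to a fixed compact set disjoint from $C_X(\R)$, exactly as in the proof of Proposition~\ref{pro:sum_exponents}), or route the argument through Theorem~\ref{thm:non_random} alone, which is cleaner but then requires independently showing that a random $E^s$ is impossible — and this is precisely where the positive Lyapunov exponent on $H^1(X(\R);\R)$ from Theorem~\ref{thm:lyapunov_real2} enters, via the Ahlfors--Nevanlinna current / cohomology-class mechanism described in the ``Notes on the proof'' of Theorem~\ref{thm:non_random}, adapted to the real locus (stable manifolds are real curves, and one uses the action on $H^1(X(\R);\R)$ rather than $H^2(X;\R)$). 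Getting this real-locus adaptation of the stable-manifold/current argument to run is the crux; the rest is bookkeeping with the already-cited theorems.
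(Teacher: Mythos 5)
Your skeleton matches the paper's: the same choice of points (Hyp1--4, Proposition~\ref{pro:no_finite_orbit_strong_version}, the open set of Theorem~\ref{thm:lyapunov_real2}), Proposition~\ref{pro:sum_exponents} applied to $\Omega_X$ to get $\lambda^-+\lambda^+=0$, Crauel's invariance principle in the zero-exponent case, and the classification of invariant measures together with the infinite mass of $\vol_{X(\R)}^\infty$ and the connectedness of $X(\R)\setminus C_X(\R)$ to reduce the invariant case to Dirac masses on $C_X(\R)$; your treatment of the fixed points' exponents is essentially the paper's Step~4.

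The gap sits exactly where you say the crux is, and neither of your two escape routes is the one that closes it. In the random-$E^s$ case, \cite[Thm 3.4]{br} does \emph{not} give invariance of $\mu$: it gives that $\mu$ is a fiberwise SRB measure (absolutely continuous conditionals along unstable manifolds). In a genuinely volume-preserving setting one could then identify $\mu$ with the invariant volume, but here the invariant form has infinite mass, so that shortcut is unavailable --- which is precisely why your contradiction never closes. The paper's actual chain is: (i) from SRB and $\lambda^++\lambda^-=0$, the random Ledrappier--Young formula $h_\mu(X,\nu)=\lambda^+\dim^u(\mu)=|\lambda^-|\dim^s(\mu)$ forces $\dim^s(\mu)=\dim^u(\mu)=1$, hence absolutely continuous conditionals along stable manifolds as well; (ii) absolute continuity of the stable and unstable laminations plus a Ledrappier--Sinai local-product argument upgrades this to $\mu$ being absolutely continuous with respect to Lebesgue measure on $X(\R)$; (iii) a separate harmonic-function argument (Lemma~\ref{lem:absolute_continuity}, which rests on Proposition~\ref{prop:ergodicity_volume}, the ergodicity of Lebesgue measure for non-elementary groups containing parabolics) shows that no stationary probability measure can be absolutely continuous with respect to the infinite invariant volume. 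None of these three ingredients appears in your proposal, and (iii) in particular is a genuine additional argument, not bookkeeping. Your alternative of re-running the Ahlfors--Nevanlinna/cohomology mechanism on $H^1(X(\R);\R)$ to exclude a random $E^s$ is both unsubstantiated and unnecessary: the non-random case is already disposed of by Theorem~\ref{thm:non_random} applied on the complex surface, and the positivity of the exponent on $H^1(X(\R);\R)$ is not what drives the stiffness proof (it is used for assertion (9) of Theorem~\ref{thm:blanc_examples}); it is the \emph{random} case that requires the SRB-to-absolute-continuity chain above.
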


In fact, the $q_i$ can be chosen in some explicit open subset of $C(\R\cap\overline{\Q})^4$.

\begin{proof} 
We   first  choose the points $(q_1, q_2, q_3, q_4')$ in $C(\R\cap \overline \Q)$ 
such that the properties (Hyp1-4) hold, as well as the conclusions 
of Theorem~\ref{thm:lyapunov_real2}. Then we move 
$q_4'$ to $q_4$ exactly as in   Proposition~\ref{pro:no_finite_orbit_strong_version} to make sure 
 that  the conclusions of this proposition are satisfied.

\noindent{\bf{Step 1.--}}  
Here we show that every 
\emph{invariant} ergodic probability
measure is a Dirac mass $\delta_x$ for some $x\in C_X(\R)$. 
The argument is similar to that of Theorem~\ref{thm:blanc_no_invariant_measure}. 
First, the invariant meromorphic 2-form is real and induces  a volume form $\vol_{X(\R)}^\infty$, which is not locally integrable along $C_X(\R)$. 
By  Lemma~\ref{lem:no_invariant_curve} and Proposition~\ref{pro:no_finite_orbit_strong_version} every proper Zariski closed invariant subset is contained in $C_X$ (hence   fixed pointwise). Thus, by~\cite{invariant}, if $\mu$ is any ergodic invariant measure on 
$X(\R)$ then either $\mu$ is a Dirac mass on $C_X(\R)$ or $\mu=\varphi \; \vol_{X(\R)}^\infty$, 
for some real analytic function  $\varphi$ on $X(\R)\setminus C_X(\R)$. To exclude this second possibility, 
note that $C(\R)$ is connected and has a unique point at infinity, so $\P^2(\R)\setminus C(\R)$ is  connected, and so is $X(\R)\setminus C_X(\R)$. 
Then, the ergodicity of $\mu$ and the  invariance of $\vol_{X(\R)}^\infty$ imply that $\varphi$ is 
constant $\mu$-almost everywhere; since $\varphi$ is analytic, it must be locally constant, and since $X(\R)\setminus C_X(\R)$ is connected, $\varphi$ is constant;   but  then $\mu(X(\R)) = \infty$, which is absurd.

\noindent{\bf{Step 2.--}} Now, pick   a probability measure  $\nu$ on $\Gamma$ satisfying (S) and (M). Let $\mu$ be an ergodic, $\nu$-stationary measure on  $X(\R)$.

\begin{lem}\label{lem:absolute_continuity}
 The measure $\mu$ cannot be absolutely continuous with respect to the Lebesgue measure on $X(\R)$. 
 \end{lem}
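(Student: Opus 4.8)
\textbf{Proof strategy for Lemma~\ref{lem:absolute_continuity}.} The plan is to derive a contradiction from the volume preservation property established in Proposition~\ref{pro:sum_exponents}, combined with the ergodicity of the Lebesgue measure and the non-integrability of the invariant volume form near $C_X(\R)$. Suppose $\mu$ is absolutely continuous with respect to the Lebesgue measure on $X(\R)$. Since $X$ and $\Gamma$ are defined over $\R$ and $\Gamma$ preserves the meromorphic $2$-form $\Omega_X$ (with $\Jac_\Omega(\Gamma)=\{1\}$ by Property~(4) of \S~\ref{par:setting_and_omega}), $\Omega_X$ induces a real invariant volume form $\vol_{X(\R)}^\infty$ on $X(\R)\setminus C_X(\R)$ with total mass $+\infty$. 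First I would note that $\mu$ gives no mass to $C_X(\R)$: indeed $C_X(\R)$ is a real curve, hence Lebesgue-null in the real surface $X(\R)$, so an absolutely continuous $\mu$ cannot charge it. Thus $\mu$ is supported (up to a null set) in $X(\R)\setminus C_X(\R)$ and is Zariski diffuse.

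The key step is to apply Proposition~\ref{pro:sum_exponents} with $\eta=\Omega_X$: one must check the integrability hypothesis~(i), namely $\int\log^+|\Jac_\Omega(f)(x)|\,\d\mu(x)\,\d\nu(f)<\infty$, which is immediate here since $\Jac_\Omega(f)=1$ for every $f\in\Gamma$, so the integrand vanishes identically; hypothesis~(ii) holds because $\mu$ does not charge $C_X(\R)$, which is exactly the divisor of $\Omega_X$. The conclusion is that $\lambda^-+\lambda^+=0$. On the other hand, an absolutely continuous $\nu$-stationary measure on a surface cannot have both Lyapunov exponents equal to zero while being non-atomic and non-invariant in a way that would contradict—actually the cleaner route is: by the invariance principle (Crauel, as recalled in \S~\ref{subs:intro_stiffness}), if $\lambda^+=\lambda^-=0$ then $\mu$ is $\Gamma$-invariant. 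Combined with Step~1 of Theorem~\ref{thm:blanc_stiffness_real}, the only ergodic invariant measures are Dirac masses on $C_X(\R)$, which are not absolutely continuous with respect to the Lebesgue measure on the surface. This contradiction would already suffice. If instead $\lambda^-<0<\lambda^+$ (the hyperbolic case), then $\mu$ absolutely continuous forces, by a standard Pesin-theoretic argument, the existence of an SRB-type structure; but more directly, an absolutely continuous ergodic stationary measure that is invariant under the $\Gamma$-action (once we know it must be invariant, or once we invoke the volume-preservation) must be $\vol_{X(\R)}^\infty$ restricted appropriately, and then the connectedness of $X(\R)\setminus C_X(\R)$ together with ergodicity forces $\mu$ proportional to $\vol_{X(\R)}^\infty$, whose total mass is infinite—absurd.

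Let me restructure the argument to its essential core. The cleanest path: since $\Jac_\Omega\equiv 1$, Proposition~\ref{pro:sum_exponents} gives $\lambda^-+\lambda^+=0$ for \emph{every} ergodic $\nu$-stationary $\mu$ not charging $C_X(\R)$. If $\mu$ is absolutely continuous, it does not charge $C_X(\R)$. Now either both exponents vanish, in which case Crauel's invariance principle makes $\mu$ invariant, contradicting Step~1 (an a.c.\ measure is not a sum of Dirac masses on a curve); or $\lambda^-<0<\lambda^+$. In the latter case I would invoke that an absolutely continuous measure has a density $\varphi$ with respect to $\vol_{X(\R)}^\infty$ on $X(\R)\setminus C_X(\R)$, and use the $\Gamma$-invariance of $\vol_{X(\R)}^\infty$ together with the change-of-variables formula: for $f\in\Gamma$, $f_\varstar\mu = (\varphi\circ f^{-1})\vol_{X(\R)}^\infty$, so stationarity $\mu=\int f_\varstar\mu\,\d\nu(f)$ reads $\varphi=\int \varphi\circ f^{-1}\,\d\nu(f)$, i.e.\ $\varphi$ is $\check\nu$-harmonic for the Markov operator $P_{\check\nu}$. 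Since the $\Gamma$-action on $X(\R)$ is ergodic with respect to Lebesgue measure (Proposition~\ref{prop:ergodicity_volume}, applicable because $\Gamma$ is non-elementary and contains a parabolic element by Lemma~\ref{lem:parabolic_gij}), and since $\vol_{X(\R)}^\infty$ is in the same measure class as Lebesgue on $X(\R)\setminus C_X(\R)$, any $P_{\check\nu}$-harmonic $L^1$ function is constant; but a nonzero constant times $\vol_{X(\R)}^\infty$ has infinite mass, contradicting that $\mu$ is a probability measure.

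\textbf{Main obstacle.} The delicate point is the harmonicity-to-constancy step: strictly, $P_{\check\nu}$-harmonic functions need not be constant merely from ergodicity of the action with respect to an invariant measure class—one needs the harmonic function to be genuinely invariant, which requires a martingale/exchangeability argument (a harmonic function that is $L^1$ against an invariant measure is invariant under the random dynamics, cf.\ the maximal ergodic theorem or a stopping-time argument), after which ergodicity of $\Gamma$ applies. Alternatively, the whole difficulty is sidestepped by staying with the first horn: reduce directly to the case $\lambda^+=\lambda^-=0$ by excluding hyperbolicity through the argument that an a.c.\ hyperbolic stationary measure would have positive fiber entropy and hence (via the subsequent Lemma, or via \cite{br} as in the Notes on the proof of Theorem~\ref{thm:blanc_examples}) would have to be invariant anyway—bringing us back to Step~1. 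I expect the referee-proof write-up will choose whichever of these routes the authors have already set up in \S~\ref{sec:stiffness_blanc}; the conceptual content is that $\Jac_\Omega\equiv 1$ plus infinite volume is the obstruction that no finite a.c.\ measure can absorb.
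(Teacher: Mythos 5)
You have correctly identified the heart of the matter: writing $\mu=\varphi\,\vol_{X(\R)}^\infty$, using $\Jac_\Omega\equiv 1$ to get the harmonicity relation $\varphi=\int\varphi\circ f^{-1}\,\d\nu(f)$, and aiming to contradict the infinite total mass of $\vol_{X(\R)}^\infty$ via ergodicity. This is exactly the paper's strategy. But the step you flag as the ``main obstacle'' is a genuine gap, not a routine verification: ergodicity of the $\Gamma$-action on the Lebesgue class does not by itself force an $L^1$ harmonic density to be constant, since ergodicity constrains invariant \emph{sets}, not harmonic \emph{functions}. The paper closes this gap with a specific device you do not supply. Set $\xi_M=\min(\xi,M)$; concavity of $t\mapsto\min(t,M)$ makes $\xi_M$ superharmonic, and integrating the superharmonicity inequality against the \emph{invariant} measure $\vol_X^\infty$ (legitimate because $\xi_M\le\xi\in L^1$) upgrades it to an equality, so $\xi_M$ is harmonic too. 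Then $\xi-\xi_M$ and $\xi_M$ would both give stationary measures, so ergodicity (extremality) of $\mu$ forces $\int_{\{\xi>M\}}(\xi-M)\,\d\vol_X^\infty=1$ for every admissible $M$, which pins $\xi$ down to a multiple of an indicator function $1_A$. Only at that point does ergodicity of the action enter, applied to the almost-invariant \emph{set} $A$ via Proposition~\ref{prop:ergodicity_volume}, yielding $A=X(\R)$ and the contradiction with infinite volume. Without this truncation-and-extremality argument (or a worked-out martingale substitute), your proof does not go through.

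Two further remarks on your structure. First, the case split on Lyapunov exponents is unnecessary: the paper's argument is purely measure-theoretic and applies to any ergodic a.c.\ stationary measure, so invoking Proposition~\ref{pro:sum_exponents} and Crauel's principle inside this lemma adds nothing (those tools are used \emph{after} the lemma, in Step~3 of Theorem~\ref{thm:blanc_stiffness_real}). Second, your fallback of ``excluding hyperbolicity'' by arguing that an a.c.\ hyperbolic stationary measure would have to be invariant is circular in the paper's architecture: the whole point of Lemma~\ref{lem:absolute_continuity} is to serve as the terminal contradiction in the chain hyperbolic $\Rightarrow$ SRB $\Rightarrow$ Ledrappier--Young $\Rightarrow$ absolutely continuous, so it cannot presuppose that hyperbolic stationary measures have already been ruled out.
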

 
 \begin{proof}
Equivalently, let us show that $\mu$ is not absolutely continuous with respect to  
the invariant infinite volume $\vol_{X(\R)}^\infty$.  For notational ease, in this proof we write $X$ for $X(\R)$ 
(actually the  result   holds  for  both the complex and the real variety). 
Reasoning by contradiction, we assume that there is a function 
$\xi\colon X\to \R_+$ such that $\xi \in L^1(X, \vol_X^\infty)$ and 
$\mu=\xi \;  \vol_X^\infty$. 
The stationarity of $\mu$ and the invariance of $\vol_X^\infty$ under the action of $\Gamma$ give
\begin{align}
\xi & =  \int_\Gamma  \xi\circ f^{-1}    {\mathrm{d}}\nu(f)
\end{align}
$\vol_X^\infty$-almost everywhere. For $M\geq 0$, we  set $\xi_M=\min(\xi, M)$ and 
     obtain
\begin{align}\label{eq:harmonic_inequality}
\xi_M & \geq  \int_\Gamma  \xi_M \circ f^{-1}    {\mathrm{d}}\nu(f)
\end{align}
applying, say, Jensen's inequality. 
Then   
\begin{align}
\int_{X } \xi_M \d\vol_X^\infty & \geq   \int_\Gamma  \int_{X } \xi_M\circ f^{-1}   \d\vol_X^\infty  {\mathrm{d}}\nu(f)  \\
&= \int_{X } \xi_M \d\vol_X^\infty
\end{align}
 because $\vol_X^\infty$ is invariant and $\xi_M$ is integrable. 
This shows that the Inequality~\eqref{eq:harmonic_inequality} is in fact an equality  $\vol_X^\infty$-almost everywhere. Thus, $\xi$ and $\xi_M$ are both $\nu$-harmonic. 

By Markov's inequality, there exists $M>0$ such that 
\begin{equation}
0 < \vol_X^\infty(\{\xi >M\} )< +\infty.
\end{equation}
Given such an $M$, the real number
\begin{equation}
\alpha:=\int_{\set{\xi>M}} (\xi-M)\d\vol_X^\infty = \int_{\set{\xi>M}} (\xi-\xi_M)\d\vol_X^\infty 
\end{equation}
 satisfies $0 <\alpha\leq 1$ because $\mu$ is a probability measure. 
If $\alpha<1$,  $\mu$ could be decomposed as a convex combination
$\mu = \alpha \mu_M^++(1-\alpha)\mu_M^-$, with 
\begin{align}
\mu_M^+=\alpha^{-1}(\xi-\xi_M) \vol_X^\infty  \; \text{ and } \; 
\mu_M^-=(1-\alpha)^{-1} \xi_M\vol_X^\infty .
\end{align}
Such a decomposition must 
  be trivial, because the harmonicity of $\xi_M$ and of $\xi$ imply that $\mu_M^-$ and $\mu_M^+$ are stationary measures, while $\mu$ is assumed to be ergodic. 
  Thus, $\alpha=1$, for any such~$M$. This shows that there is measurable subset $A\subset X $    such that 
\begin{enumerate}
\item[(a)] $\vol_X^\infty(A)\in\,  ]0,\infty[$;
\item[(b)] $\xi = \vol_{X}^\infty(A)\inv 1_A$, where $1_A$ is the characteristic function of $A$. 
\end{enumerate}
The stationarity relation for $\xi$ implies that $A$ is $\supp(\nu)$-almost invariant, hence 
$\Gamma$-invariant, up to a subset  of zero volume. Proposition~\ref{prop:ergodicity_volume}   then implies that $A=X(\R)$ up to a subset of zero volume. This is a contradiction because $\mu$ is a probability measure and $\vol_{X}^\infty(X)=+\infty$.
 \end{proof}

\noindent{\bf{Step 3.--}}  
Let $\mu$ be an ergodic $\nu$-stationary measure as in Step~2. 
Assume by way of contradiction
 that $\mu$ is not invariant.  

A first observation is that $\mu(C_X(\R)=0$, so $\mu$ is Zariski diffuse. 
By Proposition~\ref{pro:sum_exponents},  $\lambda^++\lambda^- = 0$. Therefore,  
Crauel's invariance principle implies that   $\mu$ must be hyperbolic. 
Thus, by Theorem~\ref{thm:non_random} and~\cite[Thm. 3.4]{br}, $\mu$ is a fiberwise  SRB measure. Specifically, this means that working in 
$\Aut(X(\R))^\Z\times X(\R)$, and considering a measurable partition 
$\mathcal P^u$ subordinate to the family of local Pesin unstable manifolds, then 
for $\m$-almost every $(\xi, x)$, 
the conditional $\m^u_{(\xi, x)}:= \m (\cdot \vert \mathcal P^u(\xi, x))$ is absolutely continuous 
with respect to the Lebesgue measure on $W^u(\xi, x)$ (see~\cite{br} and~\cite[\S 7]{stiffness} for details on these concepts, and the paragraph following Proposition~\ref{pro:sum_exponents} for the notation).

The same construction can be done with a local stable partition $\mathcal P^s$ to get stable conditional measures $\m^s_{(\xi, x)}$. 
These conditional measures admit a pointwise Hausdorff dimension at $(\xi, x)$ which is defined almost everywhere and is constant by ergodicity. 
We denote these dimensions by $\dim(\mu^{u/s})$.
 In this context, the analogue of the Ledrappier-Young formula holds\footnote{The first equality in formula~\eqref{eq:LY} is proven in~\cite{qian_xie}. For 
random dynamical systems there is a dissymmetry between the future and the past, because
$F\inv$ is not the skew product map associated to a independent, identically distributed, random dynamical system. So we can not just 
consider $F\inv$ to get the second inequality. Fortunately, only minor adaptations are required for this: they
are described in the paragraphs following Theorem 2.1 in~\cite{liu_xie} (see also \cite{liu-kifer} for a unified discussion, with additional pointers to the literature} and asserts that for $\m$-almost every $(\xi, x)$, 
\begin{equation}\label{eq:LY}
h_\mu(X, \nu) = \lambda^+ \dim^u(\mu) = \abs{\lambda^-} \dim^s(\mu),
\end{equation}
where $h_\mu(X, \nu)$ is the fiber entropy (see Section~\ref{sec:classification} for a brief account on 
this notion). Since 
$ \dim^u(\mu)  =1$ and $\lambda^+ = \abs{\lambda^-}$, we conclude that 
$\dim^s(\mu)=1$ as well.  As in deterministic dynamics, 
this implies that $\mu^s$ is absolutely continuous with 
respect to the Lebesgue measure along stable manifolds\footnote{This is proven for  the unstable direction in 
\cite{liu-qian}, and the adaptation to the stable direction is explained in~\cite{bahnmuller-liu}.}.  
Thus,  $\m$ has absolutely continuous conditionals along both stable and unstable manifolds.  

The next important property is the absolute continuity of the local stable and unstable laminations, which follows the lines of the classical deterministic case. 
A detailed treatment for the stable lamination is given in~\cite[Chap. III]{liu-qian}, and a unified treatment for the stable and unstable laminations (with less details) is in~\cite[Thm 2.2.12]{liu-kifer}. 

At this stage we can directly adapt Theorems 5.1 and 5.5 of~\cite{ledrappier_sinai}, which implies that the conditionals of $\m$ 
along the fibers $\set{\xi}\times X(\R)$ are almost surely absolutely continuous with respect to the Lebesgue measure, 
and we conclude that $\mu$ itself is absolutely continuous. 
But   Lemma~\ref{lem:absolute_continuity} asserts that this is impossible. 
This contradiction shows that $\mu$ is invariant. Thus, applying Step~2,  we conclude that 
every ergodic stationary measure is a Dirac mass $\delta_x$ at some point $x\in C_X(\R)$.

\noindent{\bf{Step 4.--}} It  remains to show that the point masses on $C_X$, viewed as stationary measures, have zero 
Lyapunov exponents. This is elementary: let $x\in X(\R)$ (possibly among the  $\tilde q_i$)  and 
 let $(v_1, v_2)$ be a basis of $T_xX$ such that $v_1$ is tangent to $C_X$. Then $(\tilde \sigma_i)_\varstar(v_1) = v_1$  so the matrix of $(\tilde \sigma_i)_\varstar$ in the basis $(v_1, v_2)$ is upper triangular, and the other eigenvalue of this upper-triangular matrix is $-1$ (cf. the proof of 
 Formula~\eqref{eq:jacobian-is-1}). So any product of such matrices is of the form  $\lrpar{\begin{smallmatrix} 1 & * \\ 0&\pm1\end{smallmatrix}}$ and the result follows. 
\end{proof}

\subsection{Orbit closures}

\begin{thm}\label{thm:blanc_orbit_closures_real}
Let $C$, $q_1, \ldots , q_4$, $X$ and $\Gamma$ be as in Theorem~\ref{thm:blanc_stiffness_real}. Then every $x\in X(\R)\setminus C_X(\R)$ has a dense orbit in $X(\R)$. 
\end{thm}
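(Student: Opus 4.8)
The statement to prove is that every real point outside the invariant curve $C_X(\R)$ has a dense $\Gamma$-orbit in $X(\R)$. The natural strategy is to combine the dynamical information already assembled in Theorem~\ref{thm:blanc_stiffness_real} with the general orbit-closure machinery from~\cite{invariant, hyperbolic}. First I would fix a probability measure $\nu$ on $\Gamma$ satisfying Conditions~(S) and~(M) (say finitely supported, generating $\Gamma$), so that the results on $\nu$-stationary measures apply. The core idea: let $x\in X(\R)\setminus C_X(\R)$ and set $K=\overline{\Gamma\cdot x}$, a closed $\Gamma$-invariant subset of the compact surface $X(\R)$. By the fixed-point-theorem argument recalled in~\S\ref{subs:generalities}, $K$ supports at least one ergodic $\nu$-stationary measure $\mu$. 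By Theorem~\ref{thm:blanc_stiffness_real}, $\mu=\delta_{x_0}$ for some $x_0\in C_X(\R)$; hence $x_0\in K$, i.e. $\overline{\Gamma\cdot x}$ meets $C_X(\R)$.

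From there the plan is to upgrade ``the orbit closure accumulates on $C_X(\R)$'' to ``the orbit closure is everything''. The tool is the classification of closed invariant sets from~\cite[\S 8]{invariant}: since $\Gamma$ is non-elementary and contains a parabolic element (Lemma~\ref{lem:parabolic_gij}), for a closed invariant set $F$ one controls its accumulation locus $\mathrm{Acc}(F)$ unless it is contained in the explicit invariant algebraic set $\mathrm{STang}_\Gamma$. Here I would argue that $\mathrm{STang}_\Gamma\subset C_X$ (or more precisely that the only invariant algebraic curve is $C_X$, by Lemma~\ref{lem:no_invariant_curve}, and that the isolated tangency points, being $g$-periodic for every parabolic $g$, lie in $C_X\cup\Per(\Gamma)=C_X$ by Proposition~\ref{pro:no_finite_orbit_strong_version}). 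Since $x\notin C_X(\R)$, the orbit $\Gamma\cdot x$ is infinite, and its accumulation set is not reduced to points of $C_X(\R)$: one can push a point of $\Gamma\cdot x$ near $C_X(\R)$ (we just showed $\overline{\Gamma\cdot x}\cap C_X(\R)\neq\emptyset$) and then apply a Halphen twist $g_{i,j}$ transverse to $C_X$ to smear the orbit along a genus~$1$ fiber, producing accumulation points off $C_X(\R)$. Thus $\mathrm{Acc}(\Gamma\cdot x)\not\subset\mathrm{STang}_\Gamma$, and the classification of~\cite[\S 8]{invariant} forces $\overline{\Gamma\cdot x}$ to be $X(\R)$ (it is the only closed invariant set whose accumulation locus is not contained in $\mathrm{STang}_\Gamma$, given that there is no invariant totally real surface: Step~1 of the proof of Theorem~\ref{thm:blanc_stiffness_real} rules out any invariant measure — hence any invariant surface — off $C_X(\R)$).

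An alternative, perhaps cleaner route would be to invoke uniform expansion. By Theorem~\ref{thm:criterion_uniform_expansion} (applied to $X(\R)$, or by the real analogue discussed in~\S\ref{subs:UE}), $\nu$ is uniformly expanding once (1) every finite $\Gamma$-orbit is uniformly expanding and (2) there is no $\Gamma$-invariant curve. Condition~(2) off $C_X$ is Lemma~\ref{lem:no_invariant_curve} together with Proposition~\ref{pro:no_finite_orbit_strong_version}; condition~(1) reduces to the points of $C_X(\R)$, and Step~4 of the proof of Theorem~\ref{thm:blanc_stiffness_real} shows these have a $\pm1$ eigenvalue, so they are \emph{not} uniformly expanding — which means the raw criterion fails globally, but precisely on $C_X$. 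The right framework is then~\cite[Thm 10.1]{hyperbolic} relativized away from the invariant curve: on $X(\R)\setminus C_X(\R)$ the dynamics is uniformly expanding, hence (by the Hopf-type argument) ergodic for $\vol^\infty_{X(\R)}$, and every infinite orbit equidistributes, in particular is dense. I expect the main obstacle to be making this ``relative'' application rigorous: the surface $X(\R)\setminus C_X(\R)$ is non-compact and the invariant volume is infinite there, so one cannot quote the compact statements of~\cite{hyperbolic} verbatim. The cleanest fix is probably to stay with the first route and carefully verify, using Lemma~\ref{lem:no_invariant_curve}, Proposition~\ref{pro:no_finite_orbit_strong_version}, and the explicit description of $\mathrm{STang}_\Gamma$ for Halphen pencils, that the only closed $\Gamma$-invariant subsets of $X(\R)$ are: subsets of $C_X(\R)$, and $X(\R)$ itself — from which the theorem is immediate.
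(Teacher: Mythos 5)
Your overall architecture matches the paper's: reduce to showing $\mathrm{Acc}(\Gamma\cdot x)\not\subset C_X(\R)$, then conclude via the classification of closed invariant sets from \cite[\S 8]{invariant}, with $\mathrm{STang}_\Gamma=C_X$ thanks to Lemma~\ref{lem:no_invariant_curve} and Proposition~\ref{pro:no_finite_orbit_strong_version}. Two remarks on the endgame. First, the stationary-measure argument showing $\overline{\Gamma\cdot x}$ meets $C_X(\R)$ is correct but unnecessary: by compactness $\mathrm{Acc}(\Gamma\cdot x)\neq\emptyset$, so either it already contains a point off $C_X(\R)$ (done), or the orbit accumulates $C_X(\R)$, which is the only case needing work. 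Second, your parenthetical ``there is no invariant totally real surface'' is false as stated — $X(\R)$ itself is one, and it carries no finite invariant measure, so the measure-theoretic exclusion does not apply to it. The correct conclusion is that $\mathrm{Acc}(\Gamma\cdot x)\setminus C_X(\R)$, being locally equal to an invariant real surface inside $X(\R)$, is open and closed in $X(\R)\setminus C_X(\R)$, which is connected (this connectedness is verified in Step~1 of the proof of Theorem~\ref{thm:blanc_stiffness_real}); hence it is all of $X(\R)\setminus C_X(\R)$.

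The genuine gap is the sentence ``apply a Halphen twist $g_{i,j}$ \dots to smear the orbit along a genus~$1$ fiber, producing accumulation points off $C_X(\R)$.'' This is exactly where the paper's proof does all of its work, and it is not automatic. As a point $x_j\in\Gamma\cdot x$ approaches $C_X(\R)$, the real fiber $X_{b_j}(\R)$ of the invariant fibration $\pi_{12}$ through $x_j$ converges in the Hausdorff topology to the whole singular fiber $C_X(\R)\cup M_{12}(\R)\cup E_{q_3}(\R)\cup\cdots$, and the rotation angle of $g_{12}$ on $X_{b_j}(\R)$ tends to $0$, since $g_{12}$ fixes $C_X$ pointwise. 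So any bounded number of iterates of $g_{12}$ moves $x_j$ only slightly and could a priori keep the orbit in the part of the fiber shadowing $C_X(\R)$. To rule this out one needs (i) the fact that the full orbit of any nontrivial circle rotation $R$ meets the fundamental interval $[t,R(t)]$ for every $t$, and (ii) a choice of such an interval located uniformly away from $C_X(\R)$: the paper takes $[y_{b},g_{12}(y_{b})]$ converging to a nondegenerate segment $[y_0,g_{12}(y_0)]\subset M_{12}(\R)\setminus\{r\}$, nondegenerate because $g_{12}$ restricted to $M_{12}$ is the nontrivial parabolic map $z\mapsto z+2$. Both points rest on the local analysis of $\pi_{12}$ near its singular fiber (Morse singularity at $r$, nearby real fibers being circles winding once around $C_X(\R)$ and around $M_{12}(\R)$) carried out in Steps~1--2 of the paper's proof. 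Without this, your argument does not exclude the possibility that $\mathrm{Acc}(\Gamma\cdot x)\subset C_X(\R)$.
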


\begin{proof} In~\cite{invariant}, we defined an invariant algebraic subset $\mathrm{STang}_\Gamma$, 
which is the union of the maximal invariant curve and a finite set, and we proved that for every $x\in X(\C)$, either $\Gamma\cdot x$ is dense in $X(\C)$, or 
$\mathrm{Acc}(\Gamma\cdot x)\setminus \mathrm{STang}_\Gamma$ 
is locally equal to some $\Gamma$-invariant real surface. 
In our situation, $\mathrm{STang}_\Gamma   = C_X$, because $C_X$ is the maximal invariant curve and every orbit outside $C_X$ is infinite. 
For $x\in X(\R)\setminus C_{X}(\R)$, we deduce that 
if $\mathrm{Acc}\lrpar{\Gamma\cdot x}\setminus C_{X}(\R)$ is non-empty, then it is open 
(and closed) in $X(\R)\setminus C_{X}(\R)$, so by connectedness $\Gamma\cdot x$ is dense in $X(\R)$. To sum up, all we need to show is that $\mathrm{Acc}\lrpar{\Gamma\cdot x}\setminus C_{X}(\R)\neq\emptyset$ for every 
$x\in X(\R)\setminus C_{X}(\R)$. For this, we use the structure 
of the invariant fibrations of parabolic elements in $\Gamma$. 

\noindent{\bf{Step 1.--}} Geometry of the invariant fibration of $g_{ii'}$.

Fix two distinct indices, say $i=1$ and $i'=2$. In this step 
 we study the geometry of the invariant fibration $\pi_{12}$ of 
$g_{12}  = \tilde \sigma_1\circ \tilde \sigma_2$ near $C_X$.

Let us first work over $\C$. Recall from \S~\ref{subs:parabolic_blanc} that the fibration comes from the pencil of quadrics going through $q_1$ and $q_2$ with multiplicity 2 and through the $p_{i,j}$ with multiplicity 1 for $i=1,2$. 
In the surface $X_{12}$ obtained by blowing up these $10$ points (as in Lemma~\ref{lem:parabolic_gij}),
it corresponds to the linear system $\abs{C_{X_{12}} +M_{12}}$, where $M_{12}$ 
is the strict transform of the line $(q_1q_2)$. We   denote by $\pi_{12}\colon X_{12}\to \P^1$ this fibration, and   fix an affine coordinate $z$ on $\P^1$ such that  $\pi_{12}(C_{X_{12}}\cup M_{12})=0$ and $\pi_{12}(X_{12}(\R))\subset \P^1(\R)$.

Let $r$ be the third intersection point of $C$ and $(q_1q_2)$; we also denote  by $r$ its incarnation in $X_{12}$ or $X$ (because $r$ is not blown up).  It is the only intersection point of $C_{X_{12}}$ and $M_{12}$. 

Now, $M_{12}$ is a smooth rational curve with self-intersection $-1$, so it can be blown down to get a new, smooth projective surface $Y$.
The fibration $\pi_{12}$ gives a genus $1$ fibration $\pi_Y\colon Y\to \P^1$ and $C_{X_{12}}$ is a smooth fiber $C_Y$ of $\pi_Y$. In a small tubular neighborhood of $C_Y$, the fibration $\pi_Y$ is a submersion (otherwise, $C_Y$ would be a multiple fiber, and $C_X$ would have multiplicity $>1$ in $C_{X_{12}} +M_{12}$). The curve $M_{12}$ is contracted to a point $r_Y\in Y$, and $M_{12}$ is the exceptional divisor of the blow-up of $Y$ at $r_Y$.
Thus, the geometry of $\pi_{12}\colon X_{12}\to \P^1$ near $M_{12}$ is the geometry of a smooth foliation after a blow-up, and:
\begin{enumerate}
\item $\pi_{12}$ has a Morse singularity at the point $r\in X_{12}$; there are local coordinates such that $\pi_{12}(x,y)=xy$, with the two coordinate axes corresponding to $C_X$ and $M_{12}$, respectively;
\item if $b$ is close to $0$, the fiber $\pi_{12}^{-1}(b)$ is  close to $C_{X_{12}}\cup M_{12}$; it is the pull-back of a smooth fiber of $\pi_Y$ close to $C_Y$; as $b$ approaches $0$,   $\pi_{12}^{-1}(b)$ converges towards $C_{X_{12}}\cup M_{12}$ in the Hausdorff topology, and in the $C^1$ topology in the complement of $r$;
\item in  the real surface, $C_{X_{12}}(\R)$ is a topological circle with a Möbius band as tubular neighborhood, and so is $M_{12}(\R)$;   the smooth fibers of $\pi_{12}$ in $X(\R)$ near $C_{X_{12}}(\R)\cup M_{12}(\R)$ are topological circles,   turning once around  $C_{X_{12}}(\R)$  and  around $M_{12}(\R)$. 
\end{enumerate}

\begin{figure}
\begin{minipage}{17cm}
\includegraphics[width=4.5cm]{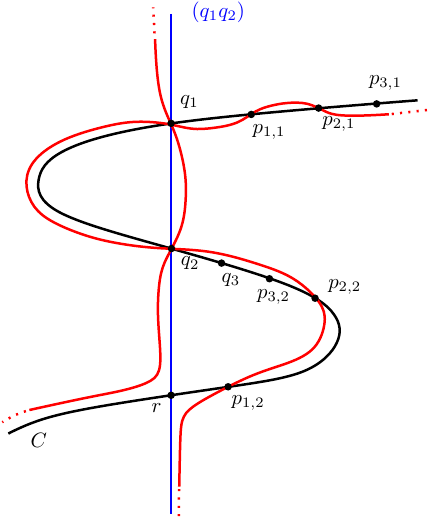}
\includegraphics[width=5.5cm]{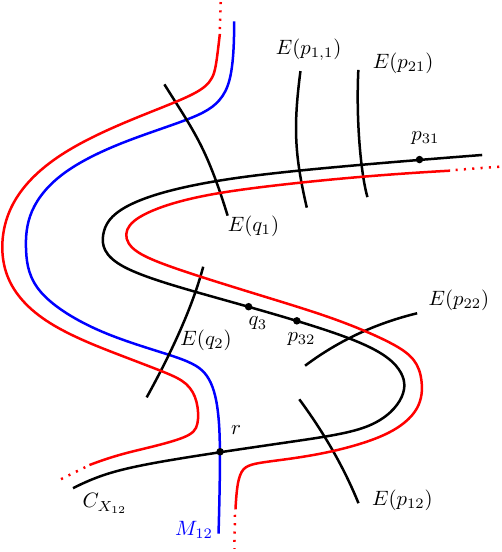}
\includegraphics[width=5.5cm]{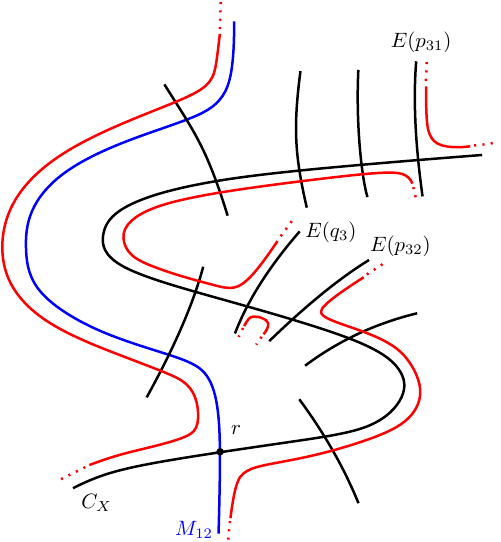}
\end{minipage}
\caption{\begin{small}Schematic view of a real fiber of $\pi_{12}$ (in red) 
close to the singular fiber on $\P^2$ (left),  on $X_{12}$ (middle)  and on $X$ (right). All components 
(including the red fiber, except on the left)
 are topological circles.   \end{small}}
 \label{fig:fibration}
\end{figure}
 
If we blow-up the  remaining points
to construct $X(\R)$, $C_{X_{12}}(\R)\cup M_{12}(\R)$ is replaced by its total transform: we add $6$ topological circles corresponding to the exceptional 
divisors obtained by blowin up  $q_3$, $q_4$, $p_{3, 1},\ldots p_{4,2}$, and 
the picture near each of these circles is similar to the one near $M_{12}$ (see Figure~\ref{fig:fibration} for a visual illustration).

\noindent{\bf{Step 2.--}} Dynamics of $g_{12}$ on smooth fibers. 

If we fix an affine coordinate $z$ on $(q_1q_2)$ such that $r=\infty$, $q_2=0$ and $q_1 =1$, then $\sigma_2(z)  = -z$ and $\sigma_1(z)  = 2-z$. So $\sigma_1\circ \sigma_2 (z) = z+2$, that is, 
on $M_{12}$
 $g_{12}$ acts as a parabolic transformation with fixed point at $r$.

For $b\in \P^1(\R)$ near $0$, denote by $Y_b(\R)$ the real part of $\pi_{Y}^{-1}(b)$; thus, $Y_0(\R) $ is the smooth fiber $C_Y(\R)$.
The complex curve $Y_b(\C)$ is  smooth of genus $1$; it is a quotient $\C/L_b$, where $L_b=\Z+\Z\tau(b)$ is a lattice in $\C$ 
and $Y_b(\R)$ corresponds to  $\R/\Z$ (because $C_Y(\R)$ and the fibers near it are connected). 
Thus, in a neighborhood $N$ of $0$, we get   a  real analytic map $\tau$
  with values in the upper half plane, and,  
 a map $\varphi_b\colon t\in \R/\Z\to \varphi_b(t)\in Y(\R)$ parametrizing $Y_b(\R)$,   depending analytically on $(b,t)\in N\times \R/\Z$. Pulling back  $\varphi_b$ by the natural birational map $\varpi_{12}\colon X_{12}\to Y$ (resp.\ $X\to Y$), we get a family of parametrizations $\tilde{\varphi_b}$ of the fibers $X_{12,b}$ (resp. $X_{b}$) of $\pi_{12}$, for $b\in N\setminus\set{0}$. Since $g_{12}$ acts by translation
along the curves $Y_b$ (resp. $X_b$, $b\neq 0$), and $g_{12}$ is the identity on $C_X$, we see that $\varphi_b$ conjugates $g_{12\vert Y_b}$ to 
a rotation of the circle with angle $\alpha(b)$ converging towards $0$ as $b$ goes to $0$.

\noindent{\bf{Step 3.--}} Conclusion. 

Pick $x\in X(\R)\setminus C_X(\R)$ such that $\Gamma\cdot x$ accumulates $C_X(\R)$; 
fix a sequence $(x_j)$ of distinct points of  $\Gamma \cdot x$ converging  to $C_X(\R)$, and set $b_j=\pi_{12}(x_j)$. 
Step~1 shows that $X_{b_j}(\R)$ converges in the Hausdorff topology to $C_X\cup M_{12}\cup E_{q_3}\cup  \cdots \cup E_{p_{4,2}}$; this convergence holds 
 in the $C^1$ topology away from the 
 singular points. Take an arbitrary point $y_0\in M_{12}(\R)\setminus \set{r}$ and 
consider  the segment $[y_0, g_{12}(y_0)]$ in 
$M_{12}(\R)$. By the description of $g_{12}$ in the first paragraph of
 Step~2, $[y_0, g_{12}(y_0)]$ is disjoint from $r$. If $I$ is a small interval containing $y_0$ and transverse to the fibration $\pi_{12}$, then the intervals $I$ and $J:=g_{12}(I)$ intersect each fiber $X_b$ near $M_{12}$ transversely into two points  $y_b$ and 
$g_{12}(y_b)$; as $b$ goes to $0$, the segment $[y_b, g_{12}(y_b)]\subset X_b(\R)$ converges towards the segment $[y_0, g_{12}(y_0)]\subset M_{12}$.

The following lemma is elementary and left to the reader. 

\begin{lem}
Let $R: \R/\Z \to \R/\Z$ be a rotation of angle  $\alpha\neq 0$.  Fix $t\in \R/\Z$ and let $K$ be the shortest closed segment joining $t$ to $R(t)$. Then for any $s\in \R/\Z$, the $R$-orbit of $s$ intersects~$K$. 
\end{lem}

This  lemma shows that the orbit of $x_j$ under $g_{12}$  must
 intersect $[y_{b_j}, g_{12}(y_{b_j})]$, hence so does $\Gamma\cdot x$. 
 Taking $j\to \infty$,  this  implies that $\Gamma\cdot x$ accumulates $[y_0, g_{12}(y_0)]$, which 
 is contained in $M_{12}(\R)\setminus C_X(\R)$. As explained before Step~1, this completes the  proof of the theorem. 
\end{proof}

 \subsection{Conclusion of the proof of Theorem~\ref{thm:blanc_examples}.}  We pick $C$ and the $q_i$ as in Theorem~\ref{thm:blanc_stiffness_real}. 
 Assertion~(1) follows from the smoothness of $C$ and the genus formula, and Assertion~(2) is a theorem of Blanc.
 Assertions~(3) and~(4) on $\Omega_X$ are described in Section~\ref{part:ergodic_blanc}.
 Property~(5) follows from Proposition~\ref{pro:no_finite_orbit_strong_version}.
 Assertion~(6) on the generic density of orbits is an elementary 
  consequence of Proposition~\ref{prop:ergodicity_volume}: indeed for any non-empty open set $U$, the set of $x\in X$ such that $(\Gamma\cdot x)\cap U = \emptyset$ has zero Lebesgue measure.  
The statement~(7) on orbit closures is Theorem~\ref{thm:blanc_orbit_closures_real}. The stiffness property~(8) is established in Theorem~\ref{thm:blanc_stiffness_real}, and finally,  
 Assertion~(9) is Theorem~\ref{thm:lyapunov_real2}.  
\qed
 
We conclude the  paper with an open question.  
By Breiman's ergodic theorem,
 for every $x\in X(\R)\setminus C(\R)$ and $\nu^N$-almost every $\omega$, 
 any cluster value of the sequence of 
 empirical measures $\unsur{n}\sum_{k=1}^n \delta_{f_\omega^k(x)}$ 
 is a probability measure on $C$, a priori depending on $x$, $\omega$ 
 and a choice of subsequence.  The question is about the complexity of the set of limiting measures:
 
\begin{que}
Which probability measures do arise in this way? 
Do the sequences of empirical measures typically converge or, on the contrary,  does  ``historic behavior'' occur? 
\end{que}

\newpage

 \bibliographystyle{plain}
\bibliography{biblio-serge}
 \end{document}